\title[Tropical and non-Archimedean limits of volume forms]
{Tropical and non-Archimedean limits of degenerating families of volume forms}
\author{S{\'e}bastien Boucksom
  \and
  Mattias Jonsson}
\address{CMLS, \'Ecole polytechnique\\
CNRS, Universit\'e Paris-Saclay\\ 
 91128 Palaiseau Cedex\\
  France}
\email{sebastien.boucksom@polytechnique.edu}
\address{Dept of Mathematics\\
  University of Michigan\\
  Ann Arbor, MI 48109-1043\\
  USA}
\address{Mathematical Sciences\\
  Chalmers University of Technology
  and University of Gothenburg\\
  SE-412 96 G\"oteborg\\
  Sweden}
\email{mattiasj@umich.edu}
\date\today
\subjclass[2010]{Primary: 32Q25, Secondary: 14J32, 14T05, 53C23, 32P05, 14G22}
\numberwithin{equation}{section}       
\theoremstyle{plain}
\newtheorem{thm}{Theorem}[section]
\newtheorem{prop}[thm]{Proposition}
\newtheorem{lem}[thm]{Lemma}
\newtheorem{cor}[thm]{Corollary}
\newtheorem{prop-def}[thm]{Proposition-Definition}
\newtheorem*{ThmA}{Theorem A}
\newtheorem*{ThmC}{Theorem C}
\newtheorem*{CorB}{Corollary B}
\theoremstyle{definition}
\newtheorem{defi}[thm]{Definition}
\theoremstyle{remark}
\newtheorem{ex}[thm]{Example}
\newtheorem{rmk}[thm]{Remark}
\newcounter{alphasect}
\def\alphainsection{0}
\let\oldsection=\section
\def\section{%
  \ifnum\alphainsection=1%
\e    \addtocounter{alphasect}{1}
  \fi%
\oldsection}%
\renewcommand\thesection{%
 \ifnum\alphainsection=1%
   \Alph{alphasect}%
 \else
   \arabic{section}%
 \fi%
}%
\newcommand{\A}{{\mathbb{A}}}
\newcommand{\C}{{\mathbb{C}}}
\newcommand{\DD}{{\mathbb{D}}}
\newcommand{\G}{{\mathbb{G}}}
\newcommand{\N}{{\mathbb{N}}}
\renewcommand{\P}{{\mathbb{P}}}
\newcommand{\Q}{{\mathbb{Q}}}
\newcommand{\R}{{\mathbb{R}}}
\newcommand{\Z}{{\mathbb{Z}}}
\newcommand{\bF}{{\bar{F}}}
\newcommand{\cB}{{\mathcal{B}}}
\newcommand{\cH}{{\mathcal{H}}}
\newcommand{\cL}{{\mathcal{L}}}
\newcommand{\cM}{{\mathcal{M}}}
\newcommand{\cO}{{\mathcal{O}}}
\newcommand{\cU}{{\mathcal{U}}}
\newcommand{\cV}{{\mathcal{V}}}
\newcommand{\cX}{{\mathcal{X}}}
\newcommand{\fp}{{\mathfrak{p}}}
\newcommand{\simto}{\overset\sim\to}
\renewcommand{\a}{\alpha}
\renewcommand{\b}{\beta}
\newcommand{\la}{\lambda}
\newcommand{\om}{\omega}
\newcommand{\e}{\varepsilon}
\newcommand{\f}{\varphi}
\newcommand{\p}{\psi}
\newcommand{\D}{\Delta}
\newcommand{\eg}{e.g.\ }
\newcommand{\ie}{i.e.\ }
\newcommand{\loccit}{\textit{loc.\,cit.\ }}
\newcommand{\qand}{{\quad\text{and}\quad}}
\newcommand{\Xan}{{X^\mathrm{an}}}
\newcommand{\red}{\mathrm{red}}
\newcommand{\rel}{\mathrm{rel}}
\newcommand{\snc}{\mathrm{snc}}
\newcommand{\lo}{\mathrm{log}}
\renewcommand{\div}{\mathrm{div}}
\newcommand{\emb}{\operatorname{emb}}
\newcommand{\id}{\operatorname{id}}
\newcommand{\MA}{\operatorname{MA}}
\newcommand{\PA}{\operatorname{PA}}
\newcommand{\hol}{\operatorname{hol}}
\newcommand{\Hom}{\operatorname{Hom}}
\newcommand{\Ker}{\operatorname{Ker}}
\newcommand{\Log}{\operatorname{Log}}
\newcommand{\ord}{\operatorname{ord}}
\newcommand{\Sk}{\operatorname{Sk}}
\newcommand{\Spec}{\operatorname{Spec}}
\newcommand{\supp}{\operatorname{supp}}
\newcommand{\val}{\operatorname {val}}
\newcommand{\vol}{\operatorname{Vol}}
\newcommand{\Res}{\operatorname{Res}}
\newcommand{\reg}{\mathrm{reg}}
\newcommand{\hyb}{\mathrm{hyb}}
\newcommand{\an}{\mathrm{an}}
\newcommand{\An}{\mathrm{An}}
\newcommand{\cro}[1]{[\![#1]\!]}
\newcommand{\lau}[1]{(\!(#1)\!)}
\newcommand{\unipar}{{t}}
\begin{document}
\begin{abstract}
  We study the asymptotic behavior of volume forms on a degenerating family of 
  compact complex manifolds. Under rather general conditions, we prove that the 
  volume forms converge in a natural sense to a Lebesgue-type measure on a certain 
  simplicial complex.
  In particular, this provides a measure-theoretic version of a conjecture by 
  Kontsevich--Soibelman and Gross--Wilson, bearing on maximal degenerations 
  of Calabi--Yau manifolds. 
\end{abstract}

\maketitle
\setcounter{tocdepth}{1}
\tableofcontents 
%
%
%
%
%
%
\section*{Introduction} 
As is well-known, there is a natural bijection between (smooth,
positive) volume forms on a complex manifold and smooth Hermitian
metrics on its canonical bundle. Consequently, the data of a smooth
family $(\nu_t)_{t\in\DD^*}$ of volume forms on a holomorphic family
$(X_t)_{t\in\DD^*}$ of compact complex manifolds is equivalent to
that of a proper holomorphic submersion $\pi\colon X\to\DD^*$
together with a smooth metric $\p$ on the relative canonical bundle
$K_{X/\DD^*}$. 

We say that the family $(\nu_t)$ has 
\emph{analytic singularities} at $t=0$ if the following conditions
hold:
\begin{itemize}
\item[(i)]
  $\pi\colon X\to\DD^*$ is meromorphic at $0\in\DD$ in the sense
  that it extends to a proper, flat map
  $\pi\colon\cX\to\DD$, with $\cX$ normal;
\item[(ii)]
  $\cX$ can be chosen so that $K_{X/\DD^*}$ extends to a $\Q$-line
  bundle $\cL$ on $\cX$, and $\p$ extends continuously to $\cL$.
\end{itemize}
When~(i) holds, we call $\cX$ a \emph{model} of $X$. 
Using resolution of singularities, 
we can always choose $\cX$ as an \emph{snc model}, that is, 
$\cX$ is smooth and $\cX_0=\sum_{i\in I} b_i E_i$ has simple normal
crossing support. To $\cX$ is then associated a dual complex $\D(\cX)$, 
with one vertex $e_i$ for each $E_i$, and a face $\sigma$ 
for each connected component $Y$ of a non-empty intersection 
$E_J=\bigcap_{i\in J} E_i$ with $J\subset I$. 

In the spirit of the Morgan-Shalen topological compactification 
of affine varieties~\cite{MS}, we introduce a natural ``hybrid'' space
\begin{equation*}
  \cX^\hyb:=X\coprod\D(\cX)
\end{equation*}
associated to $\cX$; it is equipped with a topology 
defined in terms of a tropicalization map $X\to\Delta(\cX)$,
measuring the logarithmic rate of convergence of 
local coordinates compatible with $\cX_0$.

Our first main result says that, after normalizing to unit mass,
the volume forms $\nu_t$ admit a ``tropical'' limit inside $\cX^\hyb$.
\begin{ThmA} 
  Let $(\nu_t)_{t\in\DD^*}$ be a family of volume forms on a 
  holomorphic family $X\to\DD^*$ of compact complex
  manifolds, with analytic singularities at $t=0$. 
  The asymptotic behavior of the total mass of $\nu_t$ is then given by
  \begin{equation*}
    \nu_t(X_t)\sim c|t|^{2\kappa_{\min}}(\log|t|^{-1})^d
  \end{equation*}
  with $c\in\R_+^*$, $\kappa_{\min}\in\Q$ and $d\in\N^*$, where $d\le n:=\dim X_t$. 
  Further, given any snc model $\cX\to\DD$
  of $X\to\DD^*$ such that $K_{X/\DD^*}$ extends to a $\Q$-line bundle
  on $\cL$ on $\cX$ and $\p$ extends to a continuous metric on $\cL$, 
  the rescaled measures
  \begin{equation*}
    \mu_t:=\frac{\nu_t}{|t|^{2\kappa_{\min}}(2\pi\log|t|^{-1})^d},
  \end{equation*}
  viewed as measures on $\cX^\hyb$, converge weakly to a 
  Lebesgue type measure $\mu_0$ on a $d$-dimensional subcomplex 
  $\D(\cL)$ of $\D(\cX)$. 
\end{ThmA}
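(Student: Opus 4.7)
The plan is to perform an explicit asymptotic analysis of $\nu_t$ in local coordinates adapted to the snc decomposition $\cX_0=\sum_{i\in I} b_iE_i$, to glue the contributions via a partition of unity subordinate to a covering of $\cX$ by snc charts, and then to invoke Laplace's method to extract the leading behavior. Near a point of the open stratum $E_J^\circ$ associated to $J=\{i_1,\dots,i_k\}\subset I$, choose local coordinates $(z_1,\dots,z_{n+1})$ on $\cX$ with $E_{i_j}=\{z_j=0\}$ and $t=u\prod_{j=1}^k z_j^{b_{i_j}}$ for a local unit $u$. Writing $\cL=K_{\cX/\DD}+\sum_i\tilde{a}_iE_i$ with $\tilde{a}_i\in\Q$, a local frame of $\cL$ differs from one of $K_{\cX/\DD}$ by $\prod_j z_j^{\tilde{a}_{i_j}}$; combining this with the Leray-residue formula $\Omega_t\propto(z_1/t)\,dz_2\wedge\dots\wedge dz_{n+1}$ for the holomorphic fiberwise volume form gives the explicit expression
\[
\nu_t\ \propto\ \frac{e^{\psi}}{|t|^2}\,|z_1|^{2(1-\tilde{a}_{i_1})}\prod_{j=2}^{k}|z_j|^{-2\tilde{a}_{i_j}}\bigwedge_{j=2}^{n+1}\tfrac{i}{2}\,dz_j\wedge d\bar z_j
\]
for $\nu_t$ restricted to the fiber near $E_J^\circ$.

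Next, pass to polar coordinates $z_j=r_je^{i\theta_j}$ for $j\in J$ and rescaled tropical coordinates $x_j:=\log r_j/\log|t|$, which are precisely the barycentric coordinates of the tropicalization map on the face $\sigma_J$ of $\D(\cX)$. Each factor $r_j\,dr_j$ contributes $|t|^{2x_j}\log|t|^{-1}\,dx_j$, the fiber constraint $\prod_j r_j^{b_{i_j}}=|u|^{-1}|t|$ becomes the barycentric relation $\sum_j b_{i_j}x_j=1$, and the angular constraint $\sum_j b_{i_j}\theta_j\equiv\arg t$ cuts out a $(k-1)$-torus of fixed total mass. Integrating out the angular and smooth transverse directions, the pushforward of $\nu_t$ to $\sigma_J$ takes the form $|t|^{2(A(x)-1)}(\log|t|^{-1})^{k-1}g_t(x)\,d\Leb_{\sigma_J}(x)$, where $A$ is the piecewise affine function on $\D(\cX)$ determined by $A(e_i)=(1-\tilde{a}_i)/b_i$ at each vertex and $g_t\to g_0$ is a positive continuous density obtained by averaging $e^{\psi}$ over the angles and the transverse fiber. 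Setting $\kappa_{\min}:=\min_{\D(\cX)}A-1$ then defines the subcomplex $\D(\cL):=\{A=1+\kappa_{\min}\}\subset\D(\cX)$, of some dimension $d\le n$, by piecewise linearity of $A$.

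Laplace's method applied to the nonnegative linear function $A(x)-1-\kappa_{\min}$ on each face $\sigma$ of $\D(\cX)$ of dimension $k-1$, along which $A$ attains its minimum on a $d_\sigma$-dimensional subface, gives
\[
\int_\sigma |t|^{2(A(x)-1-\kappa_{\min})}g_t(x)\,dx\ \sim\ \mathrm{const}\cdot(\log|t|^{-1})^{-(k-1-d_\sigma)}
\]
as $|t|\to 0$; contributions from faces where $A$ does not attain its global minimum are negligible. Summing over the top-dimensional faces of $\D(\cL)$ (those with $d_\sigma=d$) yields the mass asymptotic $\nu_t(X_t)\sim c|t|^{2\kappa_{\min}}(\log|t|^{-1})^d$, and the rescaled measure $\mu_t$ concentrates on $\D(\cL)\subset\D(\cX)\subset\cX^\hyb$. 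For weak convergence on $\cX^\hyb$, testing against $f\in C^0(\cX^\hyb)$ reduces, by the very definition of the hybrid topology via the tropicalization map, to the convergence of $\trop_*\mu_t$ to $\mu_0$ on the finite-dimensional simplicial complex $\D(\cX)$, which is immediate from the local computation above.

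The principal difficulty will be verifying that the local expressions glue coherently across overlapping charts and across the boundaries between faces of $\D(\cX)$: one must show that the local densities $g_t$ combine to a globally well-defined positive Lebesgue-type density on each top-dimensional face of $\D(\cL)$, and that both the function $A$ and its minimum locus are intrinsic invariants, independent of the particular snc model. A further subtlety is that the metric $\psi$ is only assumed continuous rather than smooth, so the Laplace-type asymptotics must be carried out with uniform control over a merely continuous weight, rather than via a classical stationary-phase expansion.
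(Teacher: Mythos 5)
Your proposal follows essentially the same route as the paper's proof (Theorem~\ref{thm:genconv}): adapted snc coordinate charts, logarithmic polar/tropical coordinates, concentration of the pushforward onto the faces where the piecewise-affine function (your $A-1$, the paper's $\kappa$) attains its minimum, a partition of unity to globalize, and reduction of weak convergence on $\cX^\hyb$ to convergence of the tropicalized measures. The paper implements your ``Laplace method'' via the explicit rescaling $Q_t$ of Lemma~\ref{L301} together with dominated convergence (which is exactly what handles the merely continuous metric), identifies your unspecified limiting density $g_0$ as the residual measure $\int_{Y_\sigma}\Res_{Y_\sigma}(\p)$ with the arithmetic factor $b_\sigma^{-1}$ coming from the integral-affine normalization, and formalizes the reduction step you call ``immediate'' as Lemma~\ref{L306}, via a Stone--Weierstrass argument.
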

The invariant $\kappa_{\min}$ and the subcomplex $\D(\cL)$ only depend on $\cL$ (and not on the metric on $\cL$). Consider the \emph{logarithmic relative canonical bundle} 
$$
K^\lo_{\cX/\DD}:=K_\cX+\cX_{0,\red}-\pi^*(K_{\DD}+[0])=K_{\cX/\DD}+\cX_{0,\red}-\cX_0
$$
and write $K^{\lo}_{\cX/\DD}=\cL+\sum_{i\in I} a_i E_i$ with $a_i\in\Q$. Setting $\kappa_i:=a_i/b_i$, we then have $\kappa_{\min}=\min_{i\in I}\kappa_i$, and $\D(\cL)$ is the subcomplex of $\D(\cX)$ whose vertices $e_i$ correspond to those $i\in I$ achieving the minimum. 

On the other hand, the limit measure $\mu_0$ does depend on $\p$;
it is given by 
$$
\mu_0=\sum_{\sigma}\left(\int_{Y_\sigma}\Res_{Y_\sigma}(\p)\right)b_\sigma^{-1}\la_\sigma. 
$$
Here, $\sigma$ ranges over the $d$-dimensional faces of $\D(\cL)$, with corresponding strata $Y_\sigma\subset\cX_0$, $\Res_{Y_\sigma}(\p)$ is a naturally defined residual positive measure on $Y_\sigma$, $\la_\sigma$ is the Lebesgue measure of $\sigma$ normalized by its natural integral affine structure, and $b_\sigma\in\Z_{>0}$ is an arithmetic coefficient. 

\medskip
The study of the asymptotics of integrals is a very classical subject and has been 
pursued by many people; see for example the book~\cite{AGZV}. 
The assertions in Theorem~A are closely related to results by Chambert-Loir 
and Tschinkel (who also worked over general local fields and in an adelic setting). 
Specifically, the estimate for $\nu_t(X_t)$, suitably averaged over $t$, is essentially
equivalent to~\cite[Theorem~1.2]{CLT10}.
It also appears in~\cite[\S3.1]{KS01} and is exploited in~\cite{BHJ2}.

The convergence result for the measures $\mu_t$ is also closely related 
to~\cite[Corollary~4.8]{CLT10}, where, however, 
the limit measure lives on $\cX_0$ and not on 
$\D(\cX)$.\footnote{A.~Chambert-Loir has pointed out
  that~\cite[Corollary~4.8]{CLT10} is sufficiently precise, so that when applying it to 
  toric blowups of $\cX$ one can see the form of the limit measure $\mu_0$ 
  in Theorem~A.}
The main new feature of Theorem~A is the precise and explicit convergence of the measure 
$\mu_t$ to a ``tropical'' limit $\mu_0$, living on a simplicial complex.

\medskip
The following examples illustrate Theorem~A. First consider the subvariety
\begin{equation*}
  \cX
  :=\{(z_0^{n+1}+\dots+z_n^{n+1})+\e tz_0\cdot\ldots\cdot z_n=0\}\subset\C\times\P^n,
\end{equation*}
where $0<\e\ll1$.
Write $X:=\mathrm{pr}_1^{-1}(\C^*)$.
The fiber $X_t$ over $t\in\DD^*$ is a Calabi-Yau manifold, and 
we can choose a nonvanishing holomorphic $n$-form $\eta_t$ on $X_t$
to define a smooth metric $\p$ on $K_{X/\DD^*}$ 
that extends continuously to $\cL=K_{\cX/\DD}$. 
In the terminology of Theorem~A we have 
$\nu_t:=2^{-n}i^{n^2}\eta_t\wedge\overline\eta_t$.
Here $\cX_0$ is smooth, so $\D(\cX)$ is a single point.
Thus $\nu_t(X_t)\sim c$ for some $c>0$, and the limit measure 
$\mu_0$ is a point mass.

Now consider instead 
\begin{equation*}
  \cX
  :=\{t\e(z_0^{n+1}+\dots+z_n^{n+1})+z_0\cdot\ldots\cdot z_n=0\}\subset\C\times\P^n.
\end{equation*}
In this case, $\D(\cL)=\D(\cX)$ is a union of $(n+1)$ simplices of dimension $n$, and
topologically a sphere. We have $\nu_t(X_t)\sim c(\log|t|^{-1})^n$ and the limit measure
is a weighted sum of Lebesgue measures on each simplex. In fact, it is clear
by symmetry that the weights are equal; this also follows from Theorem~C below.

\medskip
We also prove a logarithmic version of Theorem~A, for a log smooth klt pair $(X,B)$,
and a metric $\p$ on $K_{(X,B)/\DD^*}$, see Theorem~\ref{T301}. 

\medskip 
The space $\cX^\hyb$ and the measure $\mu_0$ depend on the choice of snc model $\cX$. 
We obtain a more canonical situation by considering all possible snc models 
simultaneously. 
Namely, the set of snc models of $X$ is directed, 
and in~\S\ref{S315} we define a locally compact (Hausdorff) topological space 
\begin{equation*}
  X^\hyb:=\varprojlim_\cX \cX^\hyb,
\end{equation*}
fibering over $\DD$, with central fiber
$X^\hyb_0:=\varprojlim\D(\cX)$.
For any $\cX$, the dual complex $\Delta(\cX)$ embeds
in the central fiber $X^\hyb_0$ of $X^\hyb$.
\begin{CorB}
  With assumptions and notation as in Theorem~A, 
  the measures $\mu_t$, viewed as measures on $X^\hyb$, 
  converge weakly to a measure $\mu_0$.
  Further, $\mu_0$ is a Lebesgue type measure on a 
  $d$-dimensional complex in $X^\hyb_0$.
\end{CorB}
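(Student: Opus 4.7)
The plan is to deduce Corollary~B from Theorem~A by exploiting the projective limit structure $X^\hyb=\varprojlim_\cX\cX^\hyb$. First, I would observe that the snc models $\cX$ satisfying the hypotheses of Theorem~A (those on which $K_{X/\DD^*}$ extends to a $\Q$-line bundle $\cL$ with $\p$ extending continuously) form a cofinal subset of the directed set of all snc models: given an arbitrary snc model $\cX'$ and some fixed good snc model $\cX_0$, Hironaka lets us dominate both by a common snc model $\tcX$, to which $\cL$ and $\p$ pull back continuously along $\tcX\to\cX_0$. It therefore suffices to carry out the projective-limit construction along this cofinal subsystem.

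The crucial intermediate point is \emph{compatibility}: for any two such models $\cX'$ dominating $\cX$, the induced continuous transition map $p\colon\cX'^\hyb\to\cX^\hyb$ must satisfy $p_*\mu_0^{\cX'}=\mu_0^\cX$, where $\mu_0^\cX$ and $\mu_0^{\cX'}$ are the limit measures supplied by Theorem~A. Since $p$ is the identity on the open part $X\subset\cX^\hyb$ on which each $\mu_t$ ($t\neq 0$) is supported, one has $p_*\mu_t=\mu_t$. Theorem~A applied to $\cX$ gives $\mu_t\to\mu_0^\cX$ weakly on $\cX^\hyb$, while Theorem~A applied to $\cX'$ combined with continuity of $p_*$ for weak convergence of finite Radon measures on compact Hausdorff spaces gives $p_*\mu_t\to p_*\mu_0^{\cX'}$; uniqueness of weak limits then forces $p_*\mu_0^{\cX'}=\mu_0^\cX$.

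With compatibility in hand, $(\mu_0^\cX)_\cX$ is a compatible family of finite Radon measures on the compact slices $\cX^\hyb|_{\overline{\DD}_r}$ over any closed subdisk of $\DD$ of radius $r<1$, so the standard projective-limit theorem for Radon measures on inverse systems of compact Hausdorff spaces produces a unique finite Radon measure $\mu_0$ on $X^\hyb$, supported in $X^\hyb_0$, with $(p_\cX)_*\mu_0=\mu_0^\cX$ for every cofinal $\cX$. To upgrade to weak convergence $\mu_t\to\mu_0$ on $X^\hyb$, I would restrict to $X^\hyb|_{\overline{\DD}_r}$ (which contains the supports of all $\mu_t$ with $|t|\le r$ and of $\mu_0$) and use that pullbacks $g\circ p_\cX$ of continuous functions from finite-level hybrid spaces form a point-separating unital subalgebra of $C(X^\hyb|_{\overline{\DD}_r})$; density then follows from Stone--Weierstrass, reducing convergence against general continuous test functions to convergence against such pullbacks, which is precisely what Theorem~A provides on each $\cX^\hyb$. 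Finally, since for cofinal $\cX$ the support $\D(\cL_\cX)\subset\D(\cX)$ of $\mu_0^\cX$ is a $d$-dimensional subcomplex compatible under the dual-complex retractions, its inverse limit is a $d$-dimensional complex in $X^\hyb_0$ on which $\mu_0$ restricts to the induced Lebesgue-type measure.

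The main obstacle I anticipate is the compatibility verification $p_*\mu_0^{\cX'}=\mu_0^\cX$: conceptually transparent, it requires care because the two hybrid spaces $\cX^\hyb$ and $\cX'^\hyb$ are distinct compactifications of the same $X$, and one must track the pushforward through the boundary retraction carefully. Once this is in place, the remainder of the argument is standard projective-limit and Stone--Weierstrass bookkeeping.
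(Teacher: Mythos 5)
Your proposal is correct and takes essentially the same route as the paper: its proof of Corollary~B consists of observing that $X^\hyb=\varprojlim_\cX\cX^\hyb$ induces a homeomorphism $\cM(X^\hyb)\simto\varprojlim_\cX\cM(\cX^\hyb)$ and then invoking Theorem~\ref{thm:genconv} on each (cofinal) snc model on which $\cL$ and $\p$ extend, and your cofinality, pushforward-compatibility-via-uniqueness-of-weak-limits, and Stone--Weierstrass steps are exactly the details behind that homeomorphism and the resulting convergence. The identification of $\mu_0$ with the Lebesgue-type measure carried by the embedded complex $i_\cX(\D(\cL))$ is treated with the same brevity in the paper (it ultimately rests on the invariance of the skeletal measure under pullback to higher models, cf.\ Lemma~\ref{L305}), so your final step matches its level of detail.
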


\medskip  
Now consider the case when $X\to\DD^*$ is projective. As we now 
explain, the central fiber of $X^\hyb$ is then a \emph{non-Archimedean} space.
Namely, $X$ induces a smooth projective variety $X_K$ over the non-Archimedean
field $K$ of complex formal Laurent series, to which we can associate
a Berkovich analytification $X_K^\an$. 
Similarly, any projective snc model $\cX\to\DD$ of $X$ induces a
projective model $\cX_R$ over the valuation ring $R$ of $K$. 
The dual complex $\D(\cX)$ then has a canonical realization as a 
compact $\Z$-PA subspace $\Sk(\cX)\subset X_K^\an$, the \emph{skeleton}
of $\cX$. In fact, it is well known (see~\eg\cite{siminag}) that 
there is a homeomorphism $X_K^\an\simto\varprojlim_\cX\Sk(\cX)$, 
so we can identify the central fiber of the space $X^\hyb$ with
the analytification $X_K^\an$. In fact, as shown in Appendix~\ref{S319}, using
ideas from~\cite{BerkHodge}, we can view the restriction of 
$X^\hyb\to\DD$ to a closed subdisc $\overline{\DD}_r$ as the analytification of the base
change of $X$ to a suitable Banach ring $A_r$. 

\medskip
Assuming $X\to\DD^*$ is projective, we can describe the limit measure $\mu_0$ and its
support $\Sk(\cL)\simeq\Delta(\cL)$ inside $X_K^\an$ in more
detail. The skeleton $\Sk(\cL)$ is of purely non-Archimedean nature,
and can be seen as a mild generalization of the
\emph{Kontsevich--Soibelman skeleton} 
introduced in~\cite{KS06} and studied in~\cite{MN,NX13,NX16}. 
The \emph{skeletal measure} $\mu_0$, on the other hand, depends on
both Archimedean and non-Archimedean data.
Namely, it is supported on the skeleton $\Sk(\cL)$, but depends on
the choice of metric on the restriction of the line bundle $\cL$ to the central
fiber $\cX_0$ (viewed as a complex space) of any snc model $\cX$.

We also study both the skeleton and the skeletal measure in the more
general case when the model $\cX$ is allowed to have mild (dlt) singularities. 

\medskip
One major motivation for studying the above general setting comes from
degenerations of Calabi--Yau manifolds. Thus suppose $X\to\DD^*$ is a
projective holomorphic submersion, meromorphic at $0\in\DD$, such that 
$K_{X/\DD^*}=\cO_X$. 
Any trivializing section $\eta\in H^0(X,K_{X/\DD^*})$ then defines a
family $\eta_t:=\eta|_{X_t}$ of trivializations of $K_{X_t}$, and
hence a smooth family of volume forms $\nu_t:=|\eta_t|^2$ 
with analytic singularities at $t=0$.
Indeed, for any snc model $\cX\to\DD$, $\eta$ extends to a nowhere vanishing section 
of $\cL:=\cO_\cX$, and $\p:=\log|\eta|$ defines a smooth metric on $\cL$.

The total mass $\nu_t(X_t)=\int_{X_t}|\eta_t|^2$ is then nothing 
but the $L^2$ (or Hodge) metric on the direct image of 
$K_{X/\DD^*}$, whose asymptotic behavior at $t=0$ is described 
in a very precise way by Schmid's nilpotent orbit 
theorem~\cite[Theorem 4.9]{Sch} (compare for 
instance~\cite[Proposition 2.1]{GTZ13b}). 

On the other hand, the skeleton $\Sk(\cL)$ described above coincides
in the current context with the Kontsevich--Soibelman skeleton
$\Sk(X)$~\cite{KS06,MN,NX13}. Its dimension $d$, which features as the
exponent of the log term in the asymptotics of the mass, measures how
``bad'' the degeneration is. 
Further, the family $X\to\DD^*$ 
admits a relative minimal model $\cX$, with certain
mild (dlt) singularities~\cite{KNX}, and the essential skeleton can
be identified with the dual complex of $\cX$~\cite{NX13}. In
particular, $d=0$ if and only if $X$ can filled in with a central
fiber $\cX_0$ which is a Calabi--Yau variety with klt singularities. 

At the other end of the spectrum, $d=n=\dim X_t$ if and only if $X$ 
is maximally degenerate, \ie a ``large complex structure limit''. 
In that case, the essential skeleton $\Sk(X)$ is shown to be a 
pseudomanifold in~\cite{NX13}. Building on this, we prove:
\begin{ThmC} 
  Let $X\to\DD^*$ be a smooth projective family
  of Calabi--Yau varieties, meromorphic at $0\in\DD$. Assume that $X$
  is maximally degenerate and has semistable reduction. 
  Then the skeletal measure $\mu_0$ 
  is a multiple of the integral affine Lebesgue measure on $\Sk(X)$.
\end{ThmC}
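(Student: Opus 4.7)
The plan is to reduce Theorem~C to a statement about the density of $\mu_0$ on each top-dimensional face of $\Sk(X)$, and then to use the classical residue theorem on each $1$-dimensional stratum to propagate equality of these densities across the skeleton. In the maximally degenerate semistable Calabi--Yau setting we have $d=n$, all $b_i=1$ (so $b_\sigma=1$), and by \cite{NX13} the essential skeleton $\Sk(X)=\D(\cL)$ is a connected $n$-dimensional pseudomanifold. Each top-dimensional face $\sigma\subset\Sk(X)$ corresponds to an $(n+1)$-fold intersection $E_{J_\sigma}$ whose stratum $Y_\sigma=\{p_\sigma\}$ is a single point, so Theorem~A collapses to
\begin{equation*}
\mu_0 = \sum_\sigma c_\sigma\,\la_\sigma, \qquad c_\sigma := \Res_{Y_\sigma}(\p) > 0,
\end{equation*}
and it suffices to prove that $c_\sigma$ is independent of $\sigma$.

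Fix a nowhere vanishing relative $n$-form $\eta\in H^0(X,K_{X/\DD^*})$ trivializing $\cL=\cO_\cX$. Near $p_\sigma$ pick semistable coordinates $(z_0,\dots,z_n)$ with $t=z_0\cdots z_n$ and $E_i=\{z_i=0\}$ for $i\in J_\sigma$. Since $\kappa_i=\kappa_{\min}$ for every $i\in J_\sigma$ (the defining property of a skeletal face), $\eta$ writes locally as $\eta = u(z)\prod_{i\in J_\sigma} z_i^{\kappa_{\min}}\,\alpha$, where $\alpha = \frac{dz_1}{z_1}\wedge\cdots\wedge\frac{dz_n}{z_n}$ is a local generator of $K^\lo_{\cX/\DD}$ at $p_\sigma$ and $u$ is holomorphic with $u(p_\sigma)\ne 0$; unwinding the definition of $\Res_{Y_\sigma}(\p)$ identifies $c_\sigma$ with a fixed positive multiple of $|u(p_\sigma)|^2$. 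Now let $\sigma_1,\sigma_2$ be two top-dimensional faces sharing a codimension-one face $\tau$, so that $Y_\tau$ is a smooth compact curve passing through $p_{\sigma_1}$ and $p_{\sigma_2}$. The iterated Poincar\'e residue of $\eta\wedge\frac{dt}{t}$ along the $n$ divisors defining $Y_\tau$ is a meromorphic $1$-form $\beta$ on $Y_\tau$: at an intersection $Y_\tau\cap E_k$ with $\kappa_k=\kappa_{\min}$, $\beta$ has a simple pole whose residue is the value of the local unit $u$ at that point, whereas at an intersection with $\kappa_k>\kappa_{\min}$ the positive-order vanishing of $\eta$ along $E_k$ compensates the log pole of $\alpha$, and $\beta$ extends holomorphically. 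By the pseudomanifold property exactly two top-dimensional faces, namely $\sigma_1$ and $\sigma_2$, contain $\tau$; hence $\beta$ has precisely the two simple poles $p_{\sigma_1}, p_{\sigma_2}$ with residues $u(p_{\sigma_1}), u(p_{\sigma_2})$. The residue theorem on the compact Riemann surface $Y_\tau$ then gives $u(p_{\sigma_1})+u(p_{\sigma_2})=0$, whence $c_{\sigma_1}=c_{\sigma_2}$.

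Strong connectedness of the $n$-dimensional pseudomanifold $\Sk(X)$ implies that any two top-dimensional faces are linked by a chain of successive neighbors through codimension-one faces, so iterating the adjacency equality gives $c_\sigma\equiv c$ for some common positive constant $c$, and $\mu_0 = c\sum_\sigma\la_\sigma$ is a multiple of the integral affine Lebesgue measure on $\Sk(X)$, as claimed. The main technical hurdle is a careful description of the pole structure of $\beta$ on $Y_\tau$, in particular ruling out poles at intersections of $Y_\tau$ with divisors $E_k$ satisfying $\kappa_k>\kappa_{\min}$; this is a local computation in semistable coordinates based on the explicit form $\eta=u(z)\prod z_i^{\kappa_{\min}}\alpha$ and the defining property of $\Sk(\cL)\subset\D(\cX)$. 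Granted this, the Calabi--Yau hypothesis enters only through the existence of a global nowhere vanishing $\eta$, and the conclusion becomes an elegant combination of the combinatorial structure of the skeleton (pseudomanifold) with the analytic residue theorem.
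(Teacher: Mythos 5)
Your strategy coincides with the paper's: identify the coefficient of $\la_\sigma$ in $\mu_0$ with the squared modulus of an iterated Poincar\'e residue at the point stratum $p_\sigma$, apply the residue theorem on the compact curve $Y_\tau$ attached to a shared codimension-one face, rule out extra poles by the non-branching property, and propagate equality by strong connectedness. However, one step fails as written: the form you take residues of. By your own local expression $\eta=u\prod_{i\in J_\sigma}z_i^{\kappa_{\min}}\alpha$, the form $\eta\wedge\frac{dt}{t}$ vanishes to order $\kappa_{\min}$ along each divisor in $J_\tau$ when $\kappa_{\min}>0$ (and has higher-order poles when $\kappa_{\min}<0$), so its iterated residue along those $n$ divisors is identically zero (resp.\ not defined as a form with simple poles), and the asserted simple poles with residues $u(p_{\sigma_1})$, $u(p_{\sigma_2})$ do not exist. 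The paper instead works with $\om=\frac{dt}{t^{\kappa_{\min}+1}}\wedge\eta$, observing that semistability forces $\kappa_{\min}\in\Z$ so this twist is legitimate; with this correction (equivalently, replacing $\eta$ by $t^{-\kappa_{\min}}\eta$, which only rescales $\mu_0$) your pole analysis and the residue-theorem step go through exactly as you describe.

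A second, smaller point: the non-branching and strong connectedness properties are quoted from \cite{NX13} for \emph{triangulations} of $\Sk(X)$, and the decomposition of $\Sk(X)$ by the faces of $\D(\cL)$ need not be a triangulation (some $E_J$ may have several connected components). The paper therefore passes to the barycentric subdivision of $\D(\cX)$, realized by a toroidal modification $\cX'$ (still snc, with $b_\sigma=1$ for the top faces even though $\cX'_0$ may no longer be reduced), and runs the residue argument there. Your direct application of (a) and (b) to the faces of $\D(\cL)$ can be justified, but it needs a short descent argument from a refining triangulation to the coarser polyhedral structure, which your write-up does not address.
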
 
This theorem also holds in the purely non-Archimedean setting of 
Calabi--Yau varieties defined over the field of Laurent series.
The semistable reduction condition means that $X$ admits an snc model 
$\cX$ with $\cX_0$ reduced. This condition is always 
satisfied after a finite base change.

\medskip  
Theorem~C describes measure-theoretic degenerations of Calabi--Yau
varieties. Let us briefly discuss the case of \emph{metric}
degenerations.
Consider a smooth projective family $X\to\DD^*$ of Calabi--Yau
varieties, meromorphic at $0\in\DD$, and suppose the
family is polarized, that is, we are given a relative ample line
bundle $A$ on $X$. By Yau's theorem~\cite{Yau78}, each fiber $X_t$ carries a
unique Ricci-flat K\"ahler metric $\om_t$ in the cohomology 
class of $A_t$.

By~\cite{Wan,Tos15,Taka}, the diameter $D_t$ of $(X_t,\om_t)$ 
remains bounded if and only if $d=0$, that is, $X$ admits a
model $\cX$ such that $\cX_0$ has klt singularities.
In this case, it is shown in~\cite{RZ11,RZ13}, 
building in part on~\cite{DS}, that $(X_t,\om_t)$ 
converges in the Gromov-Hausdorff sense to the 
Calabi--Yau variety $\cX_0$, endowed with the metric completion 
of its singular Ricci-flat K\"ahler metric in the sense of~\cite{EGZ}. 

The maximally degenerate case $d=n$ is the object of the \emph{Kontsevich--Soibelman
  conjecture}~\cite{KS06}\footnote{Essentially the same 
  conjecture was stated independently by Gross--Wilson~\cite{GW00} and
  Todorov.}, which states that $(X_t,D_t^{-2}\om_t)$ (which has
diameter one) converges in the Gromov-Hausdorff sense to the essential
skeleton $\Sk(X)$ endowed with a piecewise smooth metric of
Monge-Amp\`ere type, \ie locally given as the Hessian of a convex
function satisfying a real Monge-Amp\`ere equation. 
This conjecture has been verified for abelian varieties see
e.g.~\cite{OdakaCollapse} but is largely open in general.
The ``mirror'' situation, when one fixes the complex structure and 
degenerates the cohomology class of the Ricci-flat K\"ahler metric
(along a line segment in the K\"ahler cone), is better 
understood~\cite{GW00, Tos09, Tos10,GTZ13a,GTZ13b,HT14,TWY14}.
By performing a ``hyper-K\"ahler rotation'', this implies a version of 
the Kontsevich--Soibelman conjecture for special cases of Type~III 
degenerations of K3 surfaces~\cite{GW00}. 

\smallskip
Theorems~A and~C indicate a possible approach to the 
Kontsevich--Soibelman conjecture. Indeed, recall that the metric 
$\omega_t$ for $t\in\DD^*$ is constructed as the curvature form of a smooth
metric $\phi_t$ on $A_t$, where $\phi_t$ in turn is 
obtained as a solution of the complex Monge-Amp\`ere equation
$\MA(\phi_t)=\mu_t$.

On the central fiber $X^\hyb_0=X_K^\an$ of $X^\hyb$, it was shown 
in~\cite{nama} that there exists a metric on the line bundle
$A_K^\an$, unique up to scaling, solving the 
non-Archimedean Monge-Amp\`ere equation
$\MA(\phi_0)=\mu_0$ (at least when $X$ is defined over an algebraic
curve). It is now tempting to approach the Kontsevich--Soibelman
conjecture by studying the behavior of $\phi_t$ as $t\to0$.
However, this seems to be a delicate issue since there is no a priori
reason why the weak continuity at $t=0$ of $t\mapsto\mu_t$ would imply continuity
of the solutions $t\mapsto\phi_t$.

\medskip
Instead of Calabi-Yau manifolds, it would be interesting to study degenerating 
families $X\to\DD^*$ of canonically polarized projective manifolds, where the 
metric on $K_{X_t}$ would be the K\"ahler-Einstein metric or the Bergman metric,
and prove versions of Theorems~A and~C in this context.

\medskip
The paper is organized as follows. 
After recalling various facts in~\S\ref{S313} we define
in~\S\ref{sec:hyb} the hybrid space $\cX^\hyb$ 
associated to an SNC model $\cX$.
The proof of Theorem~A is given in~\S\ref{S314}.
In~\S\ref{S315} we define the space $X^\hyb$ associated to 
a degeneration as an inverse limit of the spaces $\cX^\hyb$,
and prove Corollary~B. 
Various notions of skeleta are defined and studied in~\S\ref{S316},
and in~\S\ref{sec:skeletal} we formalize the notion of a residually 
metrized model of the canonical bundle, and associate to such an
object a positive measure on the relevant Berkovich space.
Degenerations of Calabi--Yau varieties are studied in~\S\ref{S317} where
we prove Theorem~C. In~\S\ref{S312} we study various extensions,
and in Appendix~\ref{S308} we recall the Berkovich analytification
of a scheme over a Banach ring.

\medskip\noindent
\textbf{Acknowledgement}.\
We are very grateful to Johannes Nicaise and Chenyang Xu for explaining 
the behavior of Poincar\'e residues in the present context.
We also thank Vladimir Berkovich, Antoine Chambert-Loir, 
Antoine Ducros and Charles Favre 
for useful comments leading up to this work, Bernard Teissier for
help with the Hironaka flattening theorem, 
and Matt Baker and Valentino Tosatti for comments on a preliminary version
of this manuscript. Finally we thank the referees for useful comments.
Boucksom was supported by the ANR project GRACK\@. 
Jonsson was supported by NSF grants DMS-1266207 and DMS-1600011, 
a grant from the Knut and Alice Wallenberg foundation
and a grant from the United States---Israel 
Binational Science Foundation.
%
%
%
%
\section{Preliminaries}\label{S313}
The goal of this section is to fix conventions and notation for
metrics and measures, and to recall a few basic facts on 
integral affine structures. We also make a few calculations regarding 
tropicalizations that will be useful in the proof of Theorem~A.
%
%
%
%
\subsection{Metrics}
We use additive notation for line bundles and metrics over an analytic space $X$, both in the complex and non-Archimedean setting. This amounts to the following two rules:
\begin{itemize}
\item[(i)] if for $i=1,2$, $\phi_i$ is a metric on a line bundle $L_i$ and $a_i\in\Z$, then $a_1\phi_1+a_2\phi_2$ is a metric on $a_1L_1+a_2L_2$;
\item[(ii)] a metric on the trivial line bundle $\cO_X$ is of the form $|\cdot|e^{-\phi}$ for a function $\phi$ on $X$, and we identify the metric with $\phi$. 
\end{itemize}
If $s$ is a section of a line bundle $L$ on $X$, then $\log|s|$ stands
for the corresponding (possibly singular) metric on $L$ in which $s$
has length 1. 
For any metric $\phi$ on $L$, the above rules imply that $\log|s|-\phi$ is a function on $X$, and
\begin{equation*}
  |s|_\phi:=|s|e^{-\phi}=\exp(\log|s|-\phi)
\end{equation*}
is the pointwise length of $s$ in the metric $\phi$. 

A metric on a $\Q$-line bundle $L$ is a collection $(\phi_m)_m$
of metrics on $mL$, for $m$ sufficiently divisible, such that 
$\phi_{jm}=j\phi_m$. 

The line bundle $\cO_X(D)$ associated to any Cartier divisor 
$D$ on $X$ comes with a canonical singular metric $\phi_D$, 
smooth outside $D$. This fact extends to $\Q$-divisors, 
by interpreting $\phi_D$ as a metric on a $\Q$-line bundle.
In the complex case at least, the curvature current of $\phi_D$, 
correctly normalized, coincides with the integration current on $D$.
%
%
%
%
\subsection{Measures and forms}\label{S310}
Any finite-dimensional real vector space $V$ comes equipped with a
Lebesgue (or Haar) measure $\la$, uniquely defined up to a multiplicative constant.
Any lattice $\Lambda\subset V$ allows us to normalize $\la$ by
$\la(V/\Lambda)=1$.

To any top-dimensional differential form $\omega$ on a $C^\infty$ manifold $X$ is associated a positive measure
$|\omega|$ on $X$. For example, if $\Lambda\subset V$ is a 
lattice as above, $m_1,\dots,m_n$ is a basis of the dual lattice,
then $|dm_1\wedge\dots\wedge dm_n|$ is a Lebesgue measure on $V$
normalized by $\Lambda$. 

If $X$ is a complex manifold of dimension $n$, and $\Omega$ is a section of $K_X$, 
that is, a holomorphic $n$-form, 
we define $|\Omega|^2$ as the positive measure
\begin{equation*}
  |\Omega|^2:=\frac{i^{n^2}}{2^n}|\Omega\wedge\bar\Omega|.
\end{equation*}
The normalization is chosen so that the measure associated to the form $dz=dx+idy$
on $\C$ is Lebesgue measure $|dz|^2=|dx\wedge dy|$ on $\C\simeq\R^2$.

This construction induces a natural bijection between smooth metrics on the canonical bundle $K_X$ and (smooth, positive) volume forms on $X$, which associates to a smooth metric $\psi$ on $K_X$ the volume form $e^{2\psi}$ locally defined by 
\begin{equation*}
  e^{2\psi}
  :=\frac{i^{n^2}|\Omega\wedge\bar\Omega|}{2^n|\Omega|^2_\psi}
  =\frac{|\Omega|^2}{|\Omega|^2 e^{-2\psi}}
\end{equation*}
for any local section $\Omega$ of $K_X$. If $\psi'$ is another metric on $K_X$, then 
$$
e^{2\psi'}=e^{2(\psi'-\psi)} e^{2\psi},
$$
where $e^{2(\psi'-\psi)}$ is the usual exponential of the smooth
function $2(\psi'-\psi)\in C^\infty(X)$. This can be used to make
sense of $e^{2\psi}$ as a positive measure for any (possibly singular)
metric $\psi$ on $K_X$. 
Similarly, $e^{2\p/m}$ is a volume form for every metric $\p$ on
$mK_X$, $m\in\Z$. 

Now assume $(X,B)$ is a \emph{pair} in the sense of the Minimal
Model Program, \ie $X$ is a normal complex space and $B$ is a  (not
necessarily effective) $\Q$-Weil divisor on $X$ such that 
\begin{equation*}
  K_{(X,B)}:=K_X+B
\end{equation*}
is a $\Q$-line bundle. Denote by $\phi_B$ the canonical singular metric on
$B|_{X_\reg}$, viewed as a $\Q$-line bundle. If $\psi$ is smooth
metric on the $\Q$-line bundle $K_{(X,B)}$, then $\psi-\phi_B$ is a smooth
metric on $K_{X_\reg\setminus B}$, and $e^{2(\psi-\phi_B)}$ is thus a
volume form on $X_\reg\setminus B$.\footnote{Here and in what follows,
  we write $X\setminus D$ for the complement of the support of a (not
  necessarily reduced) divisor $D$ in a complex space $X$.} 

A pair $(X,B)$ is \emph{subklt} if for some
(or, equivalently, any) log resolution $\rho\colon X'\to X$ of $(X,B)$, the unique 
$\Q$-divisor $B'$ such that $\rho^*K_{(X,B)}=K_{(X',B')}$ and $\rho_*B'=B$ has 
coefficients $<1$. The pair $(X,B)$ is \emph{klt} if $B$ is further effective.  

\begin{lem}\label{lem:subklt} 
  For any continuous metric $\psi$ on $K_{(X,B)}$, $(X,B)$ is subklt if and only if the 
  measure $e^{2(\psi-\phi_B)}$ has locally finite mass near each point of $X$. 
\end{lem}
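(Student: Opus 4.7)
The plan is to pass to a log resolution, where the question reduces to an explicit local integrability calculation.

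First I would pick a log resolution $\rho\colon X'\to X$ of $(X,B)$ and let $B'$ be the $\Q$-divisor on $X'$ characterized by $\rho^*K_{(X,B)}=K_{(X',B')}$ and $\rho_*B'=B$, so that by definition $(X,B)$ is subklt iff every coefficient of $B'$ is $<1$. Set $\psi':=\rho^*\psi$, a continuous metric on $K_{(X',B')}$, and let $\phi_{B'}$ be the canonical singular metric on $B'$. On the open set $V\subset X$ over which $\rho$ is an isomorphism, $\psi'-\phi_{B'}$ agrees with the pullback of $\psi-\phi_B$, while $X\setminus V$ has codimension $\ge 2$. Since $\rho$ is proper, I can identify local finiteness of $e^{2(\psi-\phi_B)}$ near $p\in X$ with local finiteness of $e^{2(\psi'-\phi_{B'})}$ near each point of the compact set $\rho^{-1}(p)$. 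I may therefore replace $(X,B,\psi)$ with $(X',B',\psi')$ and assume that $X$ is smooth and $B$ is snc.

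The core step is the local coordinate computation. Near any chosen point of $X$, pick local coordinates $(z_1,\dots,z_n)$ with $B=\sum_i c_i\{z_i=0\}$, $c_i\in\Q$ (those components not passing through the point being assigned $c_i=0$). In the local frame $dz:=dz_1\wedge\dots\wedge dz_n$ of $K_X$ and the trivialization of $\cO(B)$ whose canonical section is $z^c:=\prod_i z_i^{c_i}$, the canonical metric $\phi_B$ is represented by the function $\sum_i c_i\log|z_i|$. Writing $\psi$ as a continuous local function $\widehat\psi(z)$ in the induced frame $dz\otimes z^c$ of $K_{(X,B)}$ then gives
$$
e^{2(\psi-\phi_B)}=e^{2\widehat\psi(z)}\prod_i|z_i|^{-2c_i}\,|dz|^2
$$
on the locus where all $z_i\neq 0$.

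Since $e^{2\widehat\psi}$ is positive and continuous, local integrability near $z=0$ is governed entirely by the density $\prod_i|z_i|^{-2c_i}$, and Fubini together with the one-variable bound $\int_0^1 r^{1-2c}\,dr<\infty\Leftrightarrow c<1$ yields finiteness iff $c_i<1$ for every $i$. Ranging over all local charts on $X'$, this says $e^{2(\psi-\phi_B)}$ is locally finite near every point of $X$ iff every coefficient of $B'$ is $<1$, which is precisely the subklt condition. The only genuine obstacle is the pullback step in the first paragraph: one must verify carefully that the canonical metric $\phi_B$ interacts with the $\Q$-line bundle identification $\rho^*K_{(X,B)}=K_{(X',B')}$ in such a way that $\rho^*(\psi-\phi_B)=\psi'-\phi_{B'}$ on the open locus where both sides are defined, and that the resulting pushforward of measures preserves local finiteness; once this naturality is in place, the snc computation above gives both implications.
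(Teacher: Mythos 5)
Your proof is correct and follows essentially the same route as the paper: pull back via a log resolution using the identity $\rho^*e^{2(\psi-\phi_B)}=e^{2(\rho^*\psi-\phi_{B'})}$, then settle the log smooth case by the local computation with density $\prod_i|z_i|^{-2c_i}$, integrable precisely when all coefficients are $<1$. The paper states the pullback identity as "immediate to check" and the snc case as "trivial", so your write-up simply spells out the same two steps in more detail.
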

\begin{proof} 
  With the above notation it is immediate to check that 
  $$
  \rho^*e^{2(\psi-\phi_B)}=e^{2(\rho^*\psi-\phi_{B'})}.
  $$ 
  We are thus reduced to a log smooth pair $(X',B')$, \ie $X'$ is smooth
  and $B'$ has snc support, and the proof is then trivial. 
\end{proof}
When $(X,B)$ is subklt, we may thus view $e^{2(\psi-\phi_B)}$ as a finite positive 
(Radon) measure on $X$, putting no mass on Zariski closed subsets. 
Such measures are called \emph{adapted} in~\cite{EGZ,BBEGZ}. 
%
%
\subsection{Integral piecewise affine spaces}\label{S301} 
The following discussion roughly follows~\cite[p.59]{KKMS} and~\cite[\S1]{BerkContr}. 

If $P$ is a rational polytope in $\R^n$, that is,
the convex hull of a finite subset of $\Q^n$, 
denote by $M_P\subset C^0(P)$ the finitely generated free abelian group obtained by restricting to $P$ affine functions with
coefficients in $\Z$ (constant term included). Denote by $1_P$ the constant function on $P$ with value $1$, and set 
$$
\vec{M}_P:=M_P/M_P\cap\Q 1_P.
$$ 
Denote also by $b_P\in\N$ the greatest integer such that 
$b_P^{-1}1_P\in M_P$. 

The data of $(P,M_P)$ modulo homeomorphism is called an
\emph{(abstract) $\Z$-polytope}. The functions in $M_P$ are called
\emph{integral affine}, or $\Z$-affine.

The evaluation map defines a canonical realization 
$P\hookrightarrow (M_P)^\vee_\R$ as a codimension one rational polytope, 
with tangent space $T_P$ identified with $(\vec{M}_P)^\vee_\R$. 
Further, the lattice $T_{P,\Z}:=\Hom(\vec{M}_P,\Z)\subset T_P$ 
yields a normalized Lebesgue measure $\la_P$ on $P$.

The main example for us is as follows. 
\begin{lem}\label{lem:integral} Given $b_0,\dots,b_p\in\N^*$, view
$$
\sigma=\left\{w\in\R_+^{p+1}\mid\sum_{i=0}^p b_i w_i=1\right\}
$$ 
as a $\Z$-simplex. Then $b_\sigma=\gcd(b_i)$, and 
$$
\vol(\sigma)=\frac{b_\sigma}{p!\prod_i b_i}.
$$
\end{lem}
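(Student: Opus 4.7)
The plan is to dispatch~(i) by directly computing $M_\sigma\cap\Q\cdot 1_\sigma$, and to prove~(ii) by comparing the lattice-normalized measure $\lambda_\sigma$ on $\sigma$ to the Euclidean $p$-volume on $T_\sigma$ inherited from $\R^{p+1}$. Throughout, set $d:=\gcd(b_i)$ and $b:=(b_0,\ldots,b_p)\in\Z^{p+1}$.

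For~(i), I would observe that $M_\sigma$ is the image of the restriction map $\Z\langle 1,w_0,\ldots,w_p\rangle\to C^0(\sigma)$. Since $\sigma$ spans the affine hyperplane $\{\sum b_iw_i=1\}$, any real affine function vanishing on $\sigma$ is a scalar multiple of $\sum b_iw_i-1$; integrality of the coefficients forces that scalar to be an integer, so the kernel of the restriction is $\Z\cdot(-1,b_0,\ldots,b_p)$. A constant $r\cdot 1_\sigma$ with $r\in\Q$ then lies in $M_\sigma$ iff there are integers $a_0,\ldots,a_p$ and $\lambda\in\R$ with $a_0-r=-\lambda$ and $a_i=\lambda b_i$; integrality of the $a_i$ is equivalent to $\lambda\in\tfrac{1}{d}\Z$, which yields $r=a_0+\lambda\in\tfrac{1}{d}\Z$. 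Conversely every such $r$ works, so $M_\sigma\cap\Q\cdot 1_\sigma=\tfrac{1}{d}\Z\cdot 1_\sigma$ and $b_\sigma=d$.

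For~(ii), my first step is to describe $T_{\sigma,\Z}$ explicitly. Since $\vec M_\sigma$ is generated by $\bar w_0,\ldots,\bar w_p$ with the single relation $\sum b_i\bar w_i=0$ (coming from $\sum b_iw_i=1$ in $M_\sigma$), the dual lattice $T_{\sigma,\Z}=\Hom(\vec M_\sigma,\Z)$, sitting inside $T_\sigma=\{v\in\R^{p+1}:\sum b_iv_i=0\}$, coincides with $\Z^{p+1}\cap T_\sigma$. The plan is then to express $\lambda_\sigma(\sigma)$ as the ratio of the Euclidean $p$-volume of $\sigma$ in $\R^{p+1}$ to the Euclidean covolume of $T_{\sigma,\Z}$.

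For the covolume I would use the short exact sequence
$$
0\to T_{\sigma,\Z}\to\Z^{p+1}\xrightarrow{\phi}\Z\to 0,\qquad\phi(v):=\sum_i(b_i/d)v_i,
$$
which is exact because $\gcd(b_i/d)=1$: the orthogonal decomposition $\R^{p+1}=T_\sigma\oplus\R b$ sends $\Z^{p+1}$ onto the subgroup of $\R b$ generated by $(d/\|b\|^2)\,b$, of Euclidean length $d/\|b\|$, so multiplicativity of covolumes forces $\mathrm{covol}_{\mathrm{Eucl}}(T_{\sigma,\Z})=\|b\|/d$. For the Euclidean $p$-volume of $\sigma$ I would invoke the coarea formula applied to $\pi(w):=\sum b_iw_i$ (for which $|\nabla\pi|=\|b\|$) on the positive orthant, combined with the homothety $\pi^{-1}(t)\cap\R_+^{p+1}=t\sigma$ and Fubini on a product of exponentials:
$$
\tfrac{1}{\prod_ib_i}=\int_{\R_+^{p+1}}e^{-\pi}\,dw=\int_0^\infty\frac{e^{-t}\,t^p\,\mathcal{H}^p(\sigma)}{\|b\|}\,dt=\frac{p!\,\mathcal{H}^p(\sigma)}{\|b\|},
$$
giving $\mathcal{H}^p(\sigma)=\|b\|/(p!\prod_ib_i)$. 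Taking the ratio yields $\lambda_\sigma(\sigma)=d/(p!\prod_ib_i)=b_\sigma/(p!\prod_ib_i)$. The only real care needed is keeping the various lattices and normalizations straight; no step poses a substantial obstacle.
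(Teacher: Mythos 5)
Your argument is correct, and it reaches the volume formula by a genuinely different route than the paper. The paper transports $\sigma$ to the standard simplex $\sigma'=\{w'\in\R_+^{p+1}\mid\sum_i w'_i=1\}$ via the diagonal map $\phi(w)=(b_iw_i)$, notes $\vol(\sigma')=1/p!$, and reduces everything to the single lattice index $[T_{\sigma',\Z}:\phi(T_{\sigma,\Z})]=\prod_ib_i/\gcd_i(b_i)$, computed from a short exact sequence comparing $\Z^{p+1}/\phi(\Z^{p+1})$ with $\Z/\gcd(b_i)\Z$; no auxiliary metric enters, and the statement $b_\sigma=\gcd(b_i)$ is dismissed as clear. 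You instead work with the Euclidean structure of $\R^{p+1}$: you identify $T_{\sigma,\Z}=\Z^{p+1}\cap b^{\perp}$, obtain its covolume $\|b\|/d$ (writing $d=\gcd_ib_i$) from the orthogonal projection of $\Z^{p+1}$ onto $\R b$ and multiplicativity of covolumes, compute $\mathcal{H}^p(\sigma)=\|b\|/(p!\prod_ib_i)$ by the coarea formula applied to $w\mapsto\sum_ib_iw_i$ together with the exponential integral $\int_{\R_+^{p+1}}e^{-\sum b_iw_i}\,dw=1/\prod_ib_i$, and take the ratio, the factor $\|b\|$ cancelling. Both proofs are complete; the paper's is shorter and purely arithmetic, while yours is more computational but self-contained and has the added merit of proving $b_\sigma=\gcd(b_i)$ in detail. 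Two cosmetic points: the relation lattice in your presentation of $\vec M_\sigma$ is generated by $\sum_i(b_i/d)\bar w_i=0$, not by $\sum_ib_i\bar w_i=0$ (the latter is $d$ times the generator when $d>1$); this is harmless for your purpose, since passing to $\Hom(\vec M_\sigma,\Z)$ kills the discrepancy and your identification of $T_{\sigma,\Z}$ stands. Likewise, in part~(i) an integral affine function involves $p+2$ integers (a constant term plus the $p+1$ coefficients of $w_0,\dots,w_p$), so your indexing $a_0,\dots,a_p$ with $a_0$ doubling as the constant is off by one, though the intended argument is clear and correct.
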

\begin{proof} 
  Note that $T_{\sigma,\Z}=\{w\in\Z^{p+1}\mid\sum_i b_i w_i=0\}$. 
  The linear isomorphism $\phi\colon\R^{p+1}\to\R^{p+1}$ given by 
  $\phi(w_j)=(b_j w_j)$ takes $\sigma$ to the standard simplex
  $$
  \sigma'=\{w'\in\R_+^{p+1}\mid\sum_i w'_j=1\},
  $$
  and hence
  $$
  [T_{\sigma',\Z}\colon\phi(T_{\sigma,\Z})]\vol(\sigma)=\vol(\sigma')=\frac{1}{p!}.
  $$ 
  Write $T_{\sigma',\Z}$ as the kernel of $\chi\colon\Z^{p+1}\to\Z$ defined
  by $\chi(w')=\sum_i w'_i$. Then
  $\phi(T_{\sigma,\Z})=\ker\chi\cap\phi(\Z^{p+1})$, $\chi(\phi(\Z^{p+1}))=\gcd(b_i)\Z$, 
  and the exact sequence
  $$
  0\to\frac{\ker\chi}{\ker\chi\cap\phi(\Z^{p+1})}\to
  \frac{\Z^{p+1}}{\phi(\Z^{p+1})}\to\frac{\Z}{\chi(\phi(\Z^{p+1}))}\to 0
  $$
  gives as desired 
  $$
  [T_{\sigma',\Z}\colon\phi(T_{\sigma,\Z})]=\frac{\prod_i b_i}{\gcd(b_i)}.
  $$ 
  Finally, the first assertion is clear. 
\end{proof}
\begin{rmk}\label{R301}
  By setting $w_0=b_0^{-1}(1-\sum_{i=1}^pb_iw_i)$, we can identify $\sigma$
  with the simplex $\sum_1^p b_iw_i\le 1$ in $\R_+^p$. The normalized 
  Lebesgue measure on $\sigma$ is then given by 
  $\la_\sigma=b_\sigma^{-1}|dw_1\wedge\dots\wedge dw_p|$.
\end{rmk}
A \emph{compact rational polyhedron} $K$ in $\R^n$ is a finite union
of rational polytopes $P_i$, which may then be arranged so that
$P_i\cap P_j$ is either empty or a common face of
$P_i$ and $P_j$. We then say that $(P_i)$ is a \emph{subdivision} of
$K$, and call the subdivision \emph{simplicial} if each $P_i$ is a
simplex. A continuous function on $K$ is \emph{integral piecewise affine}
($\Z$-PA for short) if $f|_{P_i}\in M_{P_i}$ for some subdivision of
$K$.  These functions form a subgroup $\PA_\Z(K)\subset C^0(K)$, 
and the data of $(K,\PA_\Z(K))$ modulo homeomorphism is called a 
\emph{compact $\Z$-PA space}. 

The \emph{normalized Lebesgue measure} of $K$ is defined as
$$
\la_K=\sum_{\dim P_i=\dim K}{\bf 1}_{P_i}\la_{P_i}
$$
for some (and hence any) subdivision into $\Z$-polytopes. 

Note that a $\Z$-polytope $P$ can be regarded as a $\Z$-PA space
and that $M_P\subset\PA_\Z(P)$.
%
%
\subsection{Tropicalizations and polar coordinates}\label{S302}
The material in this section is surely well known, but we include the
details for lack of a suitable reference. The calculations here are used in the 
proof of Theorem~\ref{thm:genconv} (which implies Theorem~A).

Let $N\simeq\Z^{p+1}$ be a lattice, $M=\Hom(N,\Z)$ the dual lattice,
$\C[M]$ the semigroup ring and $T=\Spec\C[M]=N\otimes\C^*$ the algebraic torus.
A basis for $N$ induces a dual basis
$(m_0,\dots,m_p)$ for $M$ and elements $z_i\in\C[M]$, $0\le i\le p$,
such that $\C[M]=\C[z_0^{\pm1},\dots,z_p^{\pm1}]$ and $T\simeq(\C^*)^{p+1}$.

Let $\Omega\in H^0(T,K_T)$ be the $T$-invariant global section
given in coordinates by 
\begin{equation*}
  \Omega=\frac{dz_0}{z_0}\wedge\dots\wedge\frac{dz_p}{z_p}.
\end{equation*}
Note that $\Omega$ is independent of the choice of coordinates, up
to a sign. Its associated measure 
\begin{equation*}
  \rho:=|\Omega|^2
\end{equation*}
is $T$-invariant, and hence a Haar measure on $T$.

We can write this measure in (logarithmic) polar coordinates via the canonical 
\emph{tropicalization map} $L\colon T\to N_\R$, given in the basis above by 
\begin{equation*}
  L=(-\log|z_0|,\dots,-\log|z_p|).
\end{equation*}
Note that $L$ sits in the exact sequence $1\to K\to T\to N_\R$ obtained by tensoring with $N$ the exact sequence $1\to S^1\to\C^*\to\R\to 0$ induced by $z\mapsto-\log|z|$. In particular, $K=N\otimes S^1\simeq(S^1)^{p+1}$ is a compact torus, and $L\colon T\to N_\R$ is a principal $K$-bundle. 

On the one hand, let $\omega$ be the  translation invariant real $(p+1)$-form on the
tropical torus $N_\R\simeq\R^{p+1}$ given by
\begin{equation*}
  \omega=dm_0\wedge\dots\wedge dm_p.
\end{equation*}
This form is again independent of the choice of basis, up to a sign,
and its associated measure $\la:=|\omega|$ is the Lebesgue (or Haar) measure
on $N_\R$ normalized by $N$.

On the other hand, since $L:T\to N_\R$ is a principal $K$-bundle, each fiber $K_w=L^{-1}(w)$ has a unique $K$-invariant
probability measure $\rho_w$. Then $\rho$ has a fiber decomposition
$$
\rho=(2\pi)^{p+1}\la(dw)\otimes\rho_w,
$$
\ie
\begin{equation}\label{e302}
  \int_Tf\,d\rho
  =(2\pi)^{p+1}\int_{N_\R}\left(\int_{K_w}f\,d\rho_w\right)\la(dw),
\end{equation}
for any $f\in C^0_c(T)$. Concretely, we can use logarithmic polar coordinates on $T$:
\begin{equation*}
  z_j=\exp(-w_j+2\pi i\theta_j)
\end{equation*}
for $0\le j\le p$; then $\rho_w=|d\theta_0\wedge\dots\wedge
d\theta_p|$,
and 
\begin{equation*}
  \rho
  =\left|\frac{dz_0}{z_0}\wedge\dots\wedge\frac{dz_p}{z_p}\right|^2
  =(2\pi)^{p+1}|dw_1\wedge\dots\wedge dw_n|\otimes\rho_w.
\end{equation*}

\medskip
We will need the same analysis on certain subgroups of $T$. 
Fix an element $m\in M$ and let $\chi=\chi^m\colon T\to\C^*$
be the corresponding character. Let $b\in\Z_{>0}$ be the largest
integer such that $b^{-1}m\in M$.
In the bases above, we can write 
$m=\sum_{i=0}^pb_im_i$ and $\chi=\prod_iz_i^{b_i}$, 
where $b_i\in\Z$; then $b=\gcd_ib_i$.
On the other hand, we can pick a basis such that 
$m=bm_0$ and $\chi=z_0^b$. This is useful for computations.

For $t\in\C^*$, $T_t:=\chi^{-1}(t)$ is a complex manifold 
with $b$ connected components. Note that $T':=X_1$ is an algebraic subgroup of $T$
and that $T_t$ is a torsor for $T'$ for any $t\in\C^*$. The $T$-invariant $(p+1)$-form $\Omega$ induces in a canonical way a $T'$-invariant $p$-form $\Omega_t$ on $T_t$, obtained as the restriction to $T_t$ of any choice of holomorphic $p$-form $\Omega'$ on $T$ such that 
$\frac{d\chi}{\chi}\wedge\Omega'=\Omega$. 
In general coordinates as above, we can pick
\begin{equation*}
  \Omega'
  =\frac1{\#J}\sum_{j\in J}\frac{(-1)^j}{b_j}
  \frac{dz_0}{z_0}\wedge\dots\wedge\widehat{\frac{dz_j}{z_j}}
  \wedge\dots\wedge\frac{dz_p}{z_p},
\end{equation*}
where $J=\{j\mid b_j\ne 0\}$. 
In special coordinates, so that $m=bm_0$ and $\chi=z_0^b$,
we then have $\Omega'=\frac1b\frac{dz_1}{z_1}\wedge\dots\wedge\frac{dz_p}{z_p}$, and hence 
\begin{equation*}
  T_t=\bigcup_{u^b=t}\{z_0=u\}
  \qand
  \Omega_t
  =\frac1b\frac{dz_1}{z_1}\wedge\dots\wedge\frac{dz_p}{z_p}
  \bigg|_{T_t}.
\end{equation*}

Note that $\rho_1:=|\Omega_1|^2$ is Haar measure on $T'$, whereas
$\rho_t:=|\Omega_t|^2$ is a $T'$-invariant measure on $T_t$.
In the special case $p=0$, $T_t$ consists of $b$ points, and $\rho_t$
gives mass $\frac1{b^2}$ to each of them.

\smallskip  
Next we study the analogous situation in the tropical torus $N_\R$. 
Viewing $m$ as a linear form on $N_\R$, set $H_s:=m^{-1}(s)$ for $s\in\R$. 
The lattice $N'=\Ker m\subset N$ defines an integral affine 
structure on $H_s$, and hence a normalized Lebesgue measure $\la_s$. Note that
\begin{equation*}
  |\omega'|_{H_s}|=\frac1b\la_s
\end{equation*}
for any choice of $p$-form $\omega'$ on $N_\R$ such that 
$dm\wedge\omega'=\omega$. In general coordinates, we pick
\begin{equation*}
  \omega'
  =\frac1{\#J}\sum_{j\in J}\frac{(-1)^j}{b_j}
  dm_0\wedge\dots\wedge\widehat{dm_j}
  \wedge\dots\wedge dm_p,
\end{equation*}
where $J=\{j\mid b_j\ne 0\}$. 
In special coordinates, $\omega'=\frac1b dm_1\wedge\dots\wedge dm_p$.

\smallskip  
Finally we describe $\rho_t$ in polar coordinates. 
The tropicalization map $L\colon T\to N_\R$ induces a principal $T'\cap K$-bundle $T_t\to H_s$ with $s=-\log|t|$, and hence an invariant probability measure on $\rho_{t,w}$ on each fiber $K_{t,w}:=T_t\cap K_w$. We claim that 
$$
\rho_t=\frac{(2\pi)^p}{b}\la_s(dw)\otimes\rho_{t,w},
$$
\ie
\begin{equation}\label{e301}
  \int_{T_t}f\,d\rho_t=\frac{(2\pi)^p}{b}\int_{H_s}\left(\int_{K_{t,w}}f\,\rho_{t,w}\right)\la_s(dw),
\end{equation}
for any $f\in C^0_c(T_t)$, where $s=\log|t|^{-1}$.

The proof is essentially the same as that of~\eqref{e302}.
We work in special coordinates, so that $\chi=z_0^b$ and
$m=bm_0$. 
Then $T_t=\{z_0^b=t\}$ has $b$ connected components $T_t^{(l)}$, $1\le
l\le b$, and 
\begin{equation*}
  \rho_t
  =|\Omega_t|^2
  =\frac1{b^2}\left|\frac{dz_1}{z_1}\wedge\dots\wedge\frac{dz_p}{z_p}\right|^2.
\end{equation*}
The restriction of the tropicalization map to $T_t^{(l)}$ amounts to the change of coordinates
$z_j=u_j^{(l)}\exp(-w_j+2\pi i\theta_j)$ for $1\le j\le p$, where the 
$u_j^{(l)}$ are constants with $|u_j^{(l)}|=1$. In these coordinates,
\begin{equation*}
  \rho_t|_{T_t^{(l)}}
  =\frac{(2\pi)^p}{b^2}|dm_1\wedge\dots\wedge dm_n|\otimes
  |d\theta_1\wedge\dots\wedge d\theta_p|.
\end{equation*}
Here $\frac1b|d\theta_1\wedge\dots\wedge d\theta_p|$ induces the
measure $\rho_{t,w}$ on $K_{t,w}$, whereas 
$|dm_1\wedge\dots\wedge dm_s|$ is Lebesgue measure $\la_s$ on $H_s$.
Hence~\eqref{e301} follows.
%
%
%
%
\section{The hybrid space associated to an snc model}\label{sec:hyb} 
In this section, we show how to perform a topological surgery in a complex manifold, replacing a simple normal crossing divisor with its dual complex. Our construction is similar to the one used by Morgan-Shalen in~\cite[\S I.3]{MS}, and can even be traced back to the pioneering work of Bergman~\cite{Berg}. 
%
%
%
%
\subsection{The dual complex}\label{sec:dual}
 Let $D$ be an effective divisor with simple normal crossing (snc) support in a 
complex manifold $\cX$. By definition, $D=\sum_{i\in I} b_i E_i$ with $b_i\in\N^*$ 
and $(E_i)_{i\in I}$ a finite family of smooth irreducible divisors such that
$$
E_J:=\bigcap_{i\in J} E_i
$$
is either empty or smooth of codimension $|J|$ (with finitely many connected components)
for each $\emptyset\ne J\subset I$. 
A connected component $Y$ of a non-empty $E_J$ is called a
\emph{stratum}. Together with $\cX\setminus D=E_\emptyset$, the locally closed
submanifolds $\mathring{Y}:=Y\setminus\bigcup_{i\in I\setminus J} E_i$
define a partition of $\cX$.

The \emph{dual complex} $\D(D)$ is the simplicial complex\footnote{This is understood in the slightly generalized sense that the intersection of two faces is a \emph{union} of common faces.} defined as follows: to each stratum $Y$ corresponds a simplex 
$$
\sigma_Y=\left\{w\in\R_+^J\mid\sum_{i\in J} b_i w_i=1\right\}, 
$$
and $\sigma_{Y}$ is a face of $\sigma_{Y'}$ if and only if $Y'\subset Y$. 
This description equips $\D(D)$ with an integral affine structure, 
by which we mean a compatible choice of integral affine structures on each 
simplex $\sigma$. This further induces a $\Z$-PA structure on $\D(D)$. 

We write $Y_\sigma$ for the stratum of a face $\sigma$. Each point $\xi\in D$
belongs to $\mathring{Y_\xi}$ for a unique stratum $Y_\xi$, obtained as
the connected component of $E_{J_\xi}$ containing $\xi$, 
with $J_\xi=\{i\in I\mid \xi\in E_i\}$. 
We denote by $\sigma_\xi:=\sigma_{Y_\xi}$ the corresponding face of $\D(D)$. 
%
%
%
%
\subsection{The hybrid topology}\label{sec:hybtop}
Next we define a natural topology on the disjoint union
$$
\cX^\hyb:=(\cX\setminus D)\coprod\D(D).
$$
Consider a connected open set $\cU\subset \cX$ meeting $D$ and 
local coordinates $z=(z_0,\dots,z_n)$ on $\cU$. 
We say that the pair $(\cU,z)$ 
is \emph{adapted} (to $D$) if the following conditions hold:
\begin{itemize}
\item[(i)] 
  if $E_0,\dots,E_p$ are the irreducible components of $D$
  intersecting $\cU$, then
  $\cU\cap E_0\cap\dots\cap E_p$, if nonempty, equals 
  $\cU\cap\mathring{Y}$ 
  for a component $Y$ of $E_0\cap\dots\cap E_p$;
\item[(ii)] 
  $z_i$ is an equation of $E_i\cap \cU$ with $|z_i|<1$, $0\le i\le p$.
\end{itemize}
We call $Y=Y_\cU$ the stratum of $\cU$, and denote by 
$$
\sigma_\cU=\left\{w\in\R^{p+1}\mid\sum_{i=0}^p b_i w_i=1\right\}
$$ 
the corresponding face of $\D(D)$. 
The function $f_{\cU,z}:=\prod_{i=0}^p z_i^{b_i}$ is an equation of $D$
in $\cU$, with $|f_{\cU,z}|<1$, 
and we get a continuous map $\Log_{\cU}\colon \cU\setminus D\to\sigma_Y$ by setting
$$
\Log_{\cU}=\left(\frac{\log|z_i|}{\log|f_\cU|}\right)_{0\le i\le p}.
$$
For any two adapted coordinate charts $(\cU,z)$, $(\cU',z')$, with the same stratum $Y$,
we have $z'_i=u_i z_i$ with $u_i$ nonvanishing on $\cU\cap \cU'$, for
$i=0,\dots,p$ (after a possible reindexing); it follows that
\begin{equation}\label{equ:Log'}
  \Log_{\cU'}=\Log_{\cU}+O\left(\frac{1}{\log|f_{\cU,z}|^{-1}}\right)
\end{equation}
locally uniformly on $\cU\cap \cU'$. We next show how to globalize this construction. 
\begin{prop}\label{prop:globlog} 
  There exists an open neighborhood
  $\cV\subset \cX$ of $D$ and a continuous map 
  $\Log_\cV\colon \cV\setminus D\to\D(D)$ such that for each adapted
  coordinate chart $(\cU,z)$ with $\cU\subset \cV$ we have 
  $\Log_\cV(\cU\setminus D)\subset\sigma_\cU$ and
  \begin{equation}\label{equ:globlog}
    \Log_\cV=\Log_{\cU}+O\left(\frac{1}{\log|f_{\cU,z}|^{-1}}\right)
  \end{equation}
uniformly on compact subsets of $\cU$. 
\end{prop}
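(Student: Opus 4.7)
The plan is to construct $\Log_\cV$ by gluing the locally defined maps $\Log_{\cU_\alpha}$ via a smooth partition of unity subordinate to a carefully chosen cover of $D$ by adapted charts. The one non-trivial preliminary is to arrange the cover so that its combinatorics reflects the face structure of $\D(D)$: I would build, by induction on codimension, a locally finite family of adapted coordinate charts $(\cU_\alpha,z_\alpha)$ with $\cU_\alpha$ a small polydisc-type neighborhood of a piece of its stratum $Y_{\cU_\alpha}$, requiring that whenever $\cU_\alpha\cap\cU_\beta\ne\emptyset$, the strata $Y_{\cU_\alpha}$ and $Y_{\cU_\beta}$ lie in the closure of a common stratum $Y(\alpha,\beta)$, so that $\sigma_{\cU_\alpha}$ and $\sigma_{\cU_\beta}$ are both faces of the simplex $\sigma_{Y(\alpha,\beta)}\subset\D(D)$. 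Set $\cV:=\bigcup_\alpha\cU_\alpha$.

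Choosing a partition of unity $(\chi_\alpha)$ subordinate to this cover, I would define, for $\xi\in\cV\setminus D$,
$$
\Log_\cV(\xi):=\sum_\alpha\chi_\alpha(\xi)\,\Log_{\cU_\alpha}(\xi),
$$
the sum being taken in the ambient simplex of $\D(D)$ containing all $\sigma_{\cU_\alpha}$ with $\chi_\alpha(\xi)>0$ as faces (which exists by the combinatorial property of the cover). Each $\Log_{\cU_\alpha}(\xi)\in\sigma_{\cU_\alpha}$ is viewed in this larger simplex via the face inclusion; the convex combination then lies in that simplex and hence in $\D(D)$. Continuity is built in by the smoothness of $\Log_{\cU_\alpha}$ on $\cU_\alpha\setminus D$ and of the $\chi_\alpha$.

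Three verifications remain. For the containment $\Log_\cV(\cU\setminus D)\subset\sigma_\cU$ in any adapted chart $\cU\subset\cV$: if $\xi\in\cU$ and $\chi_\alpha(\xi)>0$, then $\cU_\alpha\cap\cU\ne\emptyset$, and by the cover property $\sigma_{\cU_\alpha}$ is a face of $\sigma_\cU$, so each summand lies in $\sigma_\cU$ and hence so does the convex combination. For the estimate, on a compact subset of $\cU$ each $\Log_{\cU_\alpha}(\xi)$ differs from $\Log_\cU(\xi)$ by $O(1/\log|f_{\cU,z}|^{-1})$: when $Y_{\cU_\alpha}=Y_\cU$ this is exactly \eqref{equ:Log'}, and when $\sigma_{\cU_\alpha}$ is a proper face of $\sigma_\cU$ the ``missing'' coordinates of $\Log_{\cU_\alpha}(\xi)$ correspond to indices $i\in J_\cU\setminus J_{\cU_\alpha}$ where the local equation $z_i$ is bounded below on $\cU_\alpha$ while $\log|f_{\cU,z}|\to-\infty$, yielding an $O(1/\log|f_{\cU,z}|^{-1})$ discrepancy. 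Averaging with $\chi_\alpha$ preserves the bound since $\sum_\alpha\chi_\alpha=1$.

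The main obstacle is the first step: arranging a cover whose overlaps respect the face structure of the stratification. This is a geometric/combinatorial construction on the snc divisor $D$ and is the only place where one must work carefully; once the cover is in hand, the partition-of-unity construction and the three verifications are essentially routine.
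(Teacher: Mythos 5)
Your construction follows the same route as the paper's proof (a cover by adapted charts with combinatorial compatibility, a partition of unity, convexity of simplices, and the estimate \eqref{equ:Log'}), but the compatibility hypothesis you impose on the cover is too weak, and this is precisely the point that needs care. You only require a condition on \emph{pairwise} overlaps: if $\cU_\alpha\cap\cU_\beta\ne\emptyset$, then $\sigma_{\cU_\alpha}$ and $\sigma_{\cU_\beta}$ are faces of a common simplex. But the convex combination $\sum_\alpha\chi_\alpha\Log_{\cU_\alpha}$ at a point $\xi$ requires a \emph{single} simplex containing $\sigma_{\cU_\alpha}$ for \emph{all} $\alpha$ with $\chi_\alpha(\xi)>0$, and pairwise common cofaces do not produce a joint one. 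For instance, let $D=E_0+E_1+E_2$ be a cycle of curves on a surface, with $E_0\cap E_1$, $E_1\cap E_2$, $E_2\cap E_0$ nonempty but $E_0\cap E_1\cap E_2=\emptyset$, and take three adapted charts meeting only $E_0$, only $E_1$, only $E_2$ respectively, with a common point away from $D$: the three vertices have pairwise cofaces (the edges), yet span no $2$-simplex of $\D(D)$, so the ``ambient simplex \dots which exists by the combinatorial property of the cover'' does not exist and your sum is undefined there. This is exactly why the paper's Lemma~\ref{lem:adapted} (an instance of Clemens's theorem) demands $\bigcap_{\beta\in B}\cV_\beta\ne\emptyset\Rightarrow\bigcap_{\beta\in B}Y_\beta\ne\emptyset$ for \emph{every finite} subfamily $B$, and why its proof then produces, around each point $\xi$, a neighborhood $W$ and a single face $\sigma_W$ with $\sigma_\alpha\subset\sigma_W$ for all $\alpha$ whose $\supp\chi_\alpha$ meets $W$ (working with the supports of the $\chi_\alpha$ also handles continuity and local finiteness cleanly). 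Your proposed ``induction on codimension'' would have to deliver this stronger statement; as written it is not established, and you have identified the cover as the main obstacle without actually formulating the property that makes the gluing work.

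A secondary issue: in verifying $\Log_\cV(\cU\setminus D)\subset\sigma_\cU$ you assert that $\cU_\alpha\cap\cU\ne\emptyset$ forces $\sigma_{\cU_\alpha}$ to be a face of $\sigma_\cU$. The compatibility condition concerns pairs of charts from the chosen cover, not a cover chart together with an arbitrary adapted chart $\cU\subset\cV$ given afterwards; and even for two cover charts it only yields a common coface, not a face relation. Indeed a cover chart meeting $\cU$ may meet components of $D$ that avoid $\cU$, in which case $\sigma_\cU$ is a proper face of $\sigma_{\cU_\alpha}$, not the other way around; what one then gets on compact subsets of $\cU$ is only that the extra barycentric coordinates are $O\left(1/\log|f_{\cU,z}|^{-1}\right)$, i.e.\ the content of \eqref{equ:globlog}, rather than an exact inclusion into $\sigma_\cU$. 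So that step of your verification should not rest on a face inclusion; the quantitative statement has to come from \eqref{equ:Log'}, as in the paper. (Also, your phrase that $Y_{\cU_\alpha}$ and $Y_{\cU_\beta}$ ``lie in the closure of a common stratum'' is stated backwards relative to the paper's conventions: what is needed is a common stratum contained in both, equivalently $Y_{\cU_\alpha}\cap Y_{\cU_\beta}\ne\emptyset$; your subsequent sentence shows you intend the correct face relation, so this is only a slip of wording.)
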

This will be accomplished by means of a partition of unity, using the following elementary special case of~\cite[Theorem 5.7]{Cle}.  
\begin{lem}\label{lem:adapted} 
  There exists a family $((\cV_\a,z_\a))_{\a\in A}$ of adapted
  coordinate charts, such that $(\cV_\a)_\a$ forms a locally finite
  covering of $D$ and such that the strata $Y_\a$ of the $\cV_\a$ satisfy
  \begin{equation}\label{equ:interadapted}
    \bigcap_{\b\in B} \cV_\b\ne\emptyset\Longrightarrow\bigcap_{\b\in B} Y_\b\ne\emptyset
  \end{equation}
  for every finite $B\subset A$.
\end{lem}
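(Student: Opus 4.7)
The idea is to build the charts from small polydiscs in local coordinates adapted to the stratification, then verify the strata compatibility condition by exploiting the connectedness of $\cV_\alpha \cap E_K$ for subsets $K$.

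First, for each point $\xi \in D$ I would pick local coordinates $z = (z_0, \ldots, z_n)$ centered at $\xi$ such that, writing $J_\xi := \{i \in I : \xi \in E_i\}$, we have $E_i = \{z_i = 0\}$ on a neighborhood of $\xi$ for every $i \in J_\xi$. Shrinking, one obtains a polydisc neighborhood $\cU_\xi$ of $\xi$ with $|z_i|<1$ such that (a) $\cU_\xi \cap E_j = \emptyset$ for every $j \in I \setminus J_\xi$ (using that only finitely many $E_j$ meet any compact set and that $\xi$ avoids those not in $J_\xi$), and (b) for every subset $K \subset J_\xi$, the set $\cU_\xi \cap E_K = \{z \in \cU_\xi : z_i = 0 \text{ for } i \in K\}$ is a subpolydisc, in particular connected. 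It is immediate that $(\cU_\xi, z)$ is an adapted coordinate chart in the sense of \S\ref{sec:hybtop}: the divisor components meeting $\cU_\xi$ are exactly those indexed by $J_\xi$, condition~(i) holds because $\cU_\xi \cap \bigcap_{i \in J_\xi} E_i$ is a connected subset of the stratum $Y_\xi$, and condition~(ii) is built in. Moreover $Y_{\cU_\xi} = Y_\xi$.

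Next, I would extract a locally finite refinement. Since $D$ is closed in the paracompact space $\cX$ and the $\cU_\xi$ for $\xi \in D$ cover $D$, standard paracompactness yields a locally finite family $(\cV_\alpha)_{\alpha \in A}$ of such adapted polydiscs (indexed by chosen centers $\xi_\alpha$) still covering $D$; set $Y_\alpha := Y_{\xi_\alpha}$ and $J_\alpha := J_{\xi_\alpha}$.

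The key remaining point---and the place where the polydisc shape is really used---is~\eqref{equ:interadapted}. Suppose $B \subset A$ is finite and pick $\eta \in \bigcap_{\beta \in B} \cV_\beta$. For every $\beta \in B$, property~(a) applied to $\cV_\beta$ forces $J_\eta \subset J_\beta$, hence $E_{J_\beta} \subset E_{J_\eta}$. By property~(b), $\cV_\beta \cap E_{J_\eta}$ is a nonempty connected subpolydisc (it contains $\eta$), so it lies in a single connected component of $E_{J_\eta}$, necessarily the one containing $\eta$, namely $Y_\eta$. Since $Y_\beta$ is the component of $E_{J_\beta}$ containing $\xi_\beta \in \cV_\beta$, and $\cV_\beta \cap E_{J_\beta} \subset \cV_\beta \cap E_{J_\eta} \subset Y_\eta$, connectedness of $Y_\beta$ together with $Y_\beta \cap \cV_\beta \ne \emptyset$ gives $Y_\beta \subset Y_\eta$. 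Thus $Y_\eta \subset \bigcap_{\beta \in B} Y_\beta$, and in particular the latter intersection is nonempty.

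The main (and only nontrivial) obstacle is Step~4, the stratum compatibility~\eqref{equ:interadapted}; the argument relies crucially on choosing the $\cV_\alpha$ so that $\cV_\alpha \cap E_K$ is connected for every $K \subset J_\alpha$, which is what makes polydisc neighborhoods the right choice rather than arbitrary adapted charts.
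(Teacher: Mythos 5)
Your construction of the individual charts and the paracompactness refinement are fine (each $(\cU_\xi,z)$ is indeed adapted, and locally finite covers by arbitrarily small polydiscs exist), but the verification of~\eqref{equ:interadapted} contains a genuine gap. First, the last step is a non sequitur with the inclusion reversed: what your argument establishes is $Y_\beta\subset Y_\eta$ for every $\beta\in B$, which gives $\bigcup_{\beta}Y_\beta\subset Y_\eta$, not $Y_\eta\subset\bigcap_{\beta}Y_\beta$, and in particular it does not produce a common point of the $Y_\beta$. (Note also that the chosen point $\eta\in\bigcap_\beta\cV_\beta$ need not lie on $D$; if $J_\eta=\emptyset$ there is no stratum $Y_\eta$ at all, so even the weaker conclusion fails.) Second, the gap is not merely expository: a cover satisfying your conditions~(a) and~(b) need not satisfy~\eqref{equ:interadapted}, because (a) and (b) constrain each polydisc only relative to the components passing through its own center and nothing coordinates the sizes of charts attached to different, possibly disjoint, strata. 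For instance, if $E_0$ and $E_1$ are disjoint irreducible components of $D$ that pass close to one another in $\cX$ (or, on a surface, if $y_1\ne y_2$ are two points of $E_0\cap E_1$), then charts $\cV_1\ni\xi_1\in E_0$ and $\cV_2\ni\xi_2\in E_1$ can satisfy (a) and (b) and still overlap in a region disjoint from $D$, while $Y_1=E_0$ and $Y_2=E_1$ (resp.\ $\{y_1\}$ and $\{y_2\}$) are disjoint, so~\eqref{equ:interadapted} fails.

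What is missing is exactly the global compatibility that makes the lemma nontrivial: the charts must be chosen so that two of them can meet only when their strata are incident (one contained in the other), for then a finite family of pairwise nested strata has a nonempty intersection. Achieving this requires building the cover stratum by stratum, keeping each chart inside a controlled neighborhood of its stratum and away from every stratum not containing its center, with radii chosen in a mutually coordinated way---in effect a compatible system of tubular neighborhoods of the strata. This is the content of the result the paper invokes here: the lemma is quoted as an elementary special case of~\cite[Theorem~5.7]{Cle}, and is not reproved; your direct construction would need the additional coordination step above to be correct.
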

\begin{proof}[Proof of Proposition~\ref{prop:globlog}] 
  Pick an open
  cover $(\cV_\a)_\a$ as in Lemma~\ref{lem:adapted}, and denote by
  $\Log_\a\colon \cV_\a\setminus D\to\sigma_\a$ the corresponding maps. 
  Set $\cV:=\bigcup_\a \cV_\a$, and pick a partition of unity $(\chi_\a)$
  subordinate to $(\cV_\a)$. 
  We claim that for each $\xi\in \cV$ there exists an open neighborhood
  $W$ of $\xi$ 
  and a face $\sigma_W$ of $\D(D)$ such that 
  $$
  W\cap\supp\chi_\a\ne\emptyset\Longrightarrow\sigma_\a\subset\sigma_W
  $$
  for any $\a\in A$. Indeed, using~\eqref{equ:interadapted} it is easy to see that 
  \begin{equation*}
    W:=\bigcap_{\a\mid \xi\in \cU_\a} \cV_\a\setminus
    \bigcup_{\a\mid\xi\notin\supp\chi_\b}\supp\chi_\b
  \end{equation*}
  satisfies this property. By convexity of $\sigma_W$, it follows that
  $\Log_\cV:=\sum_\a\chi_\a\Log_{\cV_\a}$ is well-defined on 
  $W\setminus D$, and hence yields a continuous map 
  $\Log_\cV\colon \cV\setminus D\to\D(D)$. 
  The last property is a direct consequence of~\eqref{equ:Log'}. 
\end{proof}
We extend the previous map as 
$$
\Log_\cV\colon \cV^\hyb:=(\cV\setminus D)\cup\D(D)\to\D(D)
$$
by setting $\Log_\cV=\id$ on $\D(D)$.
\begin{defi}\label{defi:hybtop} 
  The \emph{hybrid topology} on $\cX^\hyb:=(\cX\setminus D)\cup\D(D)$ 
  is defined as the coarsest topology such that:
  \begin{itemize}
  \item[(i)] 
    $\cX\setminus D\hookrightarrow \cX^\hyb$ is an open embedding;
  \item[(ii)] 
    For every open neighborhood $\cV$ of $D$ in $\cX$, the 
    set $(\cV\setminus D)\cup\D(\cX)$ is open in $\cX^\hyb$;
  \item[(iii)] 
    $\Log_\cV\colon \cV^\hyb\to\D(D)$ is continuous. 
  \end{itemize}
\end{defi}
Using~\eqref{equ:globlog}, this definition is easily seen to be
independent of the choice of map $\Log_\cV$. If $D$ is compact and $K\subset
\cX$ is a compact neighborhood of $D$, then one easily checks that the
corresponding subset $K^\hyb=(K\setminus D)\cup\D(D)$ is compact
(Hausdorff). 
When $D=b_0E_0$ has only one irreducible component, 
$K^\hyb$ is simply the Tychonoff one-point 
compactification of $K\setminus D$. 
\begin{ex}\label{E301}
  Set $\cX=\DD^2$ and $D=E_0+E_1$ the union of the coordinate
  axes, with coordinates $(z_0,z_1)$. Then $\cU=\cX$ is itself an
  adapted coordinate chart. In these coordinates, 
  $\Log_{\cU}\colon \cU\setminus D\to\sigma_\cU$ becomes the
  map $(\DD^*)^2\to[0,1]$ 
  sending $(z_0,z_1)$ to $\log|z_1|/\log|z_0z_1|$. 
  As a consequence, given $\zeta\in\R_+^*$ and $0<\e\ll1$, 
  the closure in $\cX^\hyb$ of the closed subset 
  \begin{equation*}
    F_\e:=\{0<|z_0|,|z_1|\le\e, |z_0|^{\zeta+\e}\le|z_1|\le|z_0|^{\zeta-\e}\}\subset\DD^2
  \end{equation*}
  is given by $\bF_\e=F_\e\cup I_\e$, where
  $I_\e:=\{w\in[0,1]\mid \frac{\zeta-\e}{1+\zeta-\e}\le w\le\frac{\zeta+\e}{1+\zeta+\e}\}$.
  Further, the sets $\bF_\e$, for $0<\e\ll1$ form a basis of closed neighborhoods of 
  the point $\frac{\zeta}{1+\zeta}\in[0,1]$ in $\cX^\hyb$.
  See Figure~\ref{F301}.
\end{ex}
\begin{figure}
 \includegraphics[width=12cm]{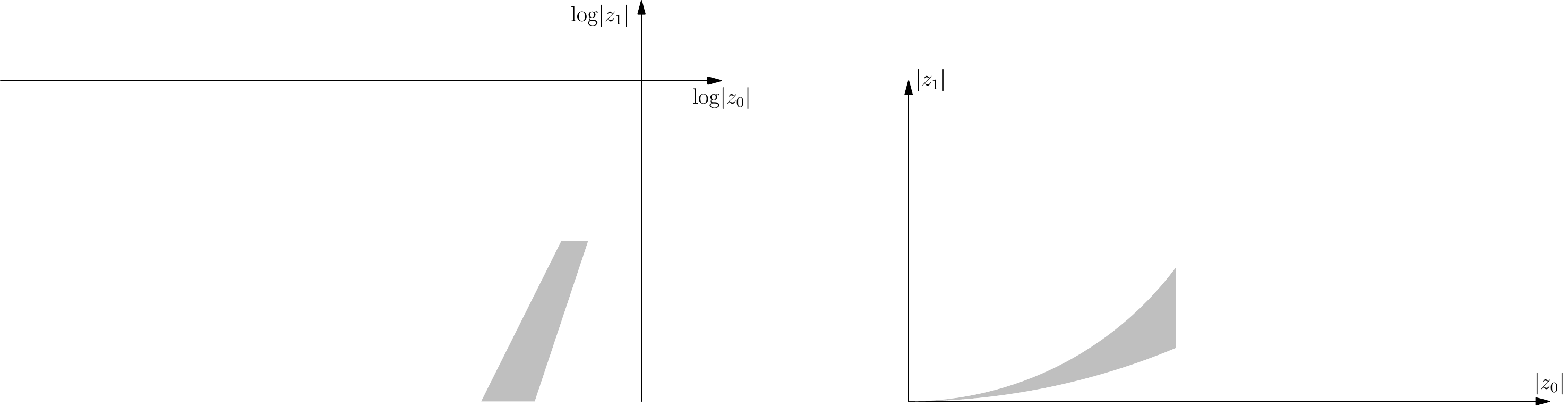}
  \caption{The figure shows the closed subset $F_\e$ in Example~\ref{E301}.}\label{F301}
\end{figure}
%
%
%
%
\section{Proof of Theorem~A}\label{S314}
In this section, we describe in more detail the objects involved in
Theorem A, and then provide a proof. 
We work purely in the complex analytic category here. 
%
%
%
%
\subsection{Residual measures}\label{sec:rescplex}
Let $\pi\colon\cX\to\DD$ be an snc degeneration, \ie a proper, surjective
holomorphic map from a connected complex manifold to the
unit disc in $\C$, whose restriction to $X:=\pi^{-1}(\DD^*)$ is a submersion
and such that $\cX_0:=\pi^{-1}(0)=\sum_{i\in I} b_i E_i$ has snc support. 
Note that $X_t:=\pi^{-1}(t)$ is non-singular for $t\in\DD^*$.
The \emph{dual complex} $\D(\cX)$ is defined as that of $\cX_0$;
it is equipped with its natural $\Z$-PA structure. 
The \emph{logarithmic canonical bundle} of $\cX$ is 
$$
K^\lo_\cX:=K_\cX+\cX_{0,\red}. 
$$
Setting $K^\lo_{\DD}:=K_\DD+[0]$, we define the \emph{relative logarithmic canonical bundle} as
$$
K^\lo_{\cX/\DD}:=K^\lo_\cX-\pi^*K^\lo_{\DD}=K_{\cX/\DD}+\cX_{0,\red}-\cX_0.
$$
Now suppose we are given a $\Q$-line bundle $\cL$ on $\cX$ 
extending $K_{X/\DD^*}$. We then have a unique decomposition
\begin{equation*}
  K^\lo_{\cX/\DD}=\cL+\sum_{i\in I}a_i E_i
\end{equation*}
with $a_i\in\Q$. Set $\kappa_i:=a_i/b_i$ and $\kappa_{\min}:=\min_i\kappa_i$. 

\begin{defi} We denote by $\D(\cL)$ the subcomplex of $\D(\cX)$ such that a face $\sigma$ of $\D(\cX)$ is in $\D(\cL)$ if and only if each vertex of $\sigma$ achieves $\min_i\kappa_i$. 
\end{defi}
In general, $\D(\cL)$ is neither connected nor pure dimensional.
We say that a face of $\D(\cL)$ is \emph{maximal} if it is not 
contained in a larger face of $\D(\cL)$.
\begin{lem}
  Let $Y\subset\cX_0$ be a stratum corresponding to face $\sigma$ of $\D(\cX)$, 
  and denote by $J\subset I$ the set of irreducible 
  components $E_i$ cutting out $Y$. Then 
  \begin{equation*}
    B^\cL_Y:=\sum_{i\notin J}(1-(a_i-\kappa_{\min}b_i))E_i|_Y
  \end{equation*}
  is a $\Q$-divisor on $Y$ with snc support, and we have a canonical identification 
  \begin{equation*}
    \cL|_Y=K_{(Y,B^\cL_Y)}:=K_Y+B^\cL_Y
  \end{equation*}
  as $\Q$-line bundles. If we further assume that $\sigma$ is a
  maximal face of $\D(\cL)$, then $B^\cL_Y$ has coefficients $<1$,
  so the pair $(Y,B^\cL_Y)$ is subklt.
\end{lem}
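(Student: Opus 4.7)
The plan is to address the three assertions in sequence: the snc support of $B^\cL_Y$, the line bundle identification $\cL|_Y = K_{(Y,B^\cL_Y)}$ via adjunction, and the subklt bound for $B^\cL_Y$ via maximality of $\sigma$ in $\D(\cL)$.

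For the snc support, the assertion follows directly from the snc hypothesis on $\cX_0 = \sum_i b_i E_i$: for each $i \notin J$, the intersection $E_i \cap Y$ is either empty or smooth of codimension one in $Y$, and as $i$ varies over $I \setminus J$ these intersections cut out an snc configuration on $Y$ by transversality of the $E_i$. Thus $B^\cL_Y$ is visibly a $\Q$-divisor with snc support, and the pair $(Y, B^\cL_Y)$ is log smooth.

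For the identification $\cL|_Y = K_{(Y, B^\cL_Y)}$ as $\Q$-line bundles, I would restrict the defining equation $K^\lo_{\cX/\DD} = \cL + \sum_i a_i E_i$ to $Y$ and simplify. The three key ingredients are: (a) since $\pi$ contracts $Y$ to $0 \in \DD$ and $K_\DD$ is trivial, the pull-back $\pi^*(K_\DD + [0])$ restricts trivially to $Y$; equivalently, $\cO_\cX(\cX_0)|_Y$ is trivial, yielding the crucial relation $\sum_i b_i E_i|_Y \equiv 0$ in $\Pic(Y) \otimes \Q$; (b) iterated adjunction along $Y \subset E_J$ gives $K_\cX|_Y = K_Y - \sum_{i \in J} E_i|_Y$; and (c) $\cX_{0,\red}|_Y = \sum_i E_i|_Y$. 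Combining (b) and (c) yields $K^\lo_{\cX/\DD}|_Y \equiv K_Y + \sum_{i \notin J} E_i|_Y$. Then I would use (a) to rewrite $\sum_i a_i E_i|_Y \equiv \sum_i b_i(\kappa_i - \kappa_{\min}) E_i|_Y$, and observe that the contributions indexed by $i \in J$ drop out precisely because $\kappa_i = \kappa_{\min}$ for every vertex $i \in J$ of $\sigma \in \D(\cL)$. What remains is exactly $\sum_{i \notin J}(1 - b_i(\kappa_i - \kappa_{\min})) E_i|_Y$, which matches $B^\cL_Y$.

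For the subklt claim, since $(Y, B^\cL_Y)$ is log smooth by the first step, subkltness reduces to showing that every coefficient of $B^\cL_Y$ is $<1$. The coefficient attached to $E_i \cap Y$ equals $1 - b_i(\kappa_i - \kappa_{\min})$, which is $<1$ if and only if $\kappa_i > \kappa_{\min}$. Suppose, for contradiction, that some $i \notin J$ with $E_i \cap Y \neq \emptyset$ satisfies $\kappa_i = \kappa_{\min}$. Then $J \cup \{i\}$ corresponds to a face $\sigma'$ of $\D(\cX)$ strictly containing $\sigma$, and all of its vertices achieve $\kappa_{\min}$; hence $\sigma' \in \D(\cL)$, contradicting the maximality of $\sigma$ in $\D(\cL)$. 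Thus all coefficients are $<1$, and $(Y, B^\cL_Y)$ is subklt.

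I expect the main obstacle to be the careful bookkeeping in steps (a)--(c) — in particular, keeping track of which restrictions of $\Q$-line bundles are trivial, and exploiting the relation $\sum_i b_i E_i|_Y \equiv 0$ at exactly the right moment — together with recognizing that the hypothesis ``$\sigma$ is a face of $\D(\cL)$'' is precisely what forces the $i \in J$ contributions to cancel and produce the clean formula for $B^\cL_Y$.
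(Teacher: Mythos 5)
Your proof is correct and is essentially the paper's own argument: the identification comes from the canonical triviality of $\cO_{\cX_0}(\cX_0)$ restricted to $Y$ (i.e.\ $\sum_i b_iE_i|_Y\equiv 0$, via $dt/t$) combined with adjunction along $Y\subset E_J$ realized by Poincar\'e residues, and the subklt statement follows from maximality of $\sigma$ in $\D(\cL)$ exactly as in the paper. One remark: you invoke $\kappa_i=\kappa_{\min}$ for all $i\in J$, i.e.\ you implicitly assume that $\sigma$ is a face of $\D(\cL)$, which is stronger than the stated hypothesis that $\sigma$ is a face of $\D(\cX)$; but this extra assumption is genuinely needed for the formula as written with $\kappa_{\min}$ (for a face having a vertex with $\kappa_i>\kappa_{\min}$ the uncancelled term $-\sum_{i\in J}(a_i-\kappa_{\min}b_i)E_i|_Y$ survives and the two sides can even differ in degree on curve strata), and since the paper only applies the lemma to (maximal) faces of $\D(\cL)$, your reading is the right one rather than a gap.
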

\begin{proof} 
  The first point is a simple consequence of the triviality of the
  normal bundle 
  $\cO_{\cX_0}(\cX_0)$ together with the adjunction formula 
  \begin{equation*}
    K_Y=(K_\cX+\sum_{i\in J} E_i)|_Y,
  \end{equation*}
  canonically realized by Poincar\'e residues once an order on $J$ has
  been chosen. 
  When $\sigma$ is a maximal face of $\D(\cL)$, each $E_i$ meeting $Y$ 
  properly satisfies $\kappa_i>\kappa_{\min}$, which implies that $B^\cL_Y$
  has coefficients $<1$.
\end{proof}

If $\p$ is a continuous metric on $\cL$, $\p|_Y$ may thus be viewed 
as a metric on $K_{(Y,B^\cL_Y)}$. 
When $\sigma$ is a maximal face of $\Delta(\cL)$,
the pair $(Y,B^\cL_Y)$ is subklt, 
and Lemma~\ref{lem:subklt} applies. 
This leads to the following notion.
\begin{defi} 
  Let $Y$ be a stratum corresponding to a maximal face of
  $\D(\cL)$. The \emph{residual measure} on $Y$ of a continuous metric
  $\p$ on $\cL$ is the (finite) positive measure on $Y$ defined by
  \begin{equation*}
    \Res_Y(\p):=\exp\left(2(\p|_Y-\phi_{B^\cL_Y})\right). 
  \end{equation*}
\end{defi}
This measure can be more explicitly described as follows. 
At each point $\xi\in Y$, pick local coordinates $(z_0,\dots,z_n)$ such
that $z_0,\dots,z_p$ are local equations for the components
$E_0,\dots,E_p$ of $\cX_0$ that pass through $\xi$, 
indexed so that $J=\{0,\dots,d\}$, where $0\le d\le p$,
and such that $t=\prod_{j=0}^pz_j^{b_j}$
The logarithmic form
$$
\Omega:=\frac{dz_0}{z_0}\wedge\dots\wedge\frac{dz_p}{z_p}\wedge dz_{p+1}\wedge\dots\wedge dz_n
$$
is a local trivialization of $K^\lo_{\cX}$, and hence induces a local trivialization 
$\Omega^\rel=\Omega\otimes(dt/t)^{-1}$ of $K^\lo_{\cX/\DD}$. We may then view $\tau:=\prod_{i=0}^p z_i^{a_i}\Omega^\rel$ as a local $\Q$-generator of $\cL$. Under the identification $\cL|_Y=K_{(Y,B^\cL_Y)}$, we have 
\begin{equation*}
  \tau|_Y=\prod_{i=d+1}^p z_i^{a_i-\kappa_{\min} b_i}\Res_Y(\Omega)
\end{equation*}
with
\begin{equation*}
  \Res_Y(\Omega)=\frac{dz_{d+1}}{z_{d+1}}\wedge\dots\wedge\frac{dz_p}{z_p}\wedge dz_{p+1}\wedge\dots\wedge dz_n\bigg|_Y.
\end{equation*}
We infer
\begin{equation}\label{equ:rescplex}
  \Res_Y(\p)=|\tau|^{-2}_\p\prod_{i=d+1}^p|z_i|^{2(a_i-\kappa_{\min} b_i-1)}
  \bigg|\bigwedge_{i=d+1}^ndz_i\bigg|^2.
\end{equation}
%
%
%
%
\subsection{Statement and first reductions}\label{S320}
It will be convenient to introduce the quantity
\begin{equation*}
  \la(t):=(\log|t|^{-1})^{-1},
\end{equation*}
for $t\in\DD^*$. Note that $\la(t)\to0$ as $t\to 0$. 

Let $\cX^\hyb:=X\coprod\D(\cX)$ be the locally compact 
hybrid space constructed in~\S\ref{sec:hyb}. It comes with a
proper map $\pi\colon\cX^\hyb\to\D$ extending 
$\pi\colon X\to\DD^*$ and such that $\D(\cX)=\pi^{-1}(0)$.
The next result implies Theorem A in the introduction. 
\begin{thm}\label{thm:genconv} 
  Let $\pi\colon\cX\to\DD$ be an snc degeneration, 
  $\cL$ a $\Q$-line bundle on $\cX$ extending $K_{X/\DD^*}$, 
  and $\p$ a continuous metric on $\cL$. 
  Define $\kappa_{\min}$ as above, and set $d:=\dim\D(\cL)$. 
  Then, viewed as measures on $\cX^\hyb$,  
  \begin{equation*}
    \mu_t:=\frac{\la(t)^d}{(2\pi)^d|t|^{2\kappa_{\min}}}e^{2\p_t}
  \end{equation*}
  converges weakly to 
  \begin{equation*}
    \mu_0:=\sum_{\sigma}\left(\int_{Y_\sigma}\Res_{Y_\sigma}(\p)\right)b_\sigma^{-1}\la_\sigma, 
  \end{equation*}
  where $\sigma$ ranges over the $d$-dimensional faces of $\D(\cL)$. 
  Here $\la_\sigma$ denotes normalized Lebesgue measure on $\sigma$
  and $b_\sigma=\gcd_{i\in J}b_i$, where $\cX_0=\sum_i b_iE_i$
  and $E_i$, $i\in J$ are the divisors defining $\sigma$.
\end{thm}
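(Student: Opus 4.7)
The plan is to prove the weak convergence stratum by stratum via a local computation in adapted coordinate charts. Fix $f\in C^0_c(\cX^\hyb)$, and decompose it via a partition of unity based on Lemma~\ref{lem:adapted} into pieces supported either in an adapted chart $(\cU,z)$ or entirely off $\cX_0$. Pieces supported off $\cX_0$ contribute nothing in the limit, since $\mu_t$ has locally bounded density there while $\la(t)^d\to 0$. Moreover, the same local analysis below will show that charts whose stratum corresponds to a face of $\D(\cX)$ either outside $\D(\cL)$, or of dimension strictly less than $d$ in $\D(\cL)$, produce an extra positive power of $\la(t)$ and hence do not contribute. So everything reduces to the case where $f$ is supported near a stratum $Y=Y_\sigma$ corresponding to a $d$-dimensional (hence maximal) face $\sigma$ of $\D(\cL)$.

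For such $\sigma$, pick adapted coordinates $(z_0,\dots,z_n)$ on $\cU$ so that $E_0,\dots,E_p$ are the components through $\xi\in Y$ and $J=\{0,\dots,d\}$ labels those cutting out $Y$. Maximality of $\sigma$ in $\D(\cL)$ forces $\kappa_i=\kappa_{\min}$ for $i\in J$ and $\kappa_i>\kappa_{\min}$ for $i\in\{d+1,\dots,p\}$. Using $K^\lo_{\cX/\DD}=\cL+\sum a_iE_i$, the expression $\tau=\prod_{i=0}^pz_i^{a_i}\Omega^\rel$ is a local $\Q$-generator of $\cL$, and writing $\p=\log|\tau|+h$ with $h$ continuous gives $e^{2\p_t}=e^{2h}|\tau|^2|_{X_t}$. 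Substituting the explicit formula $\Omega^\rel|_{X_t}=b_0^{-1}\bigwedge_{i=1}^p(dz_i/z_i)\wedge\bigwedge_{i=p+1}^n dz_i$ (the projection $X_t\cap\cU\to\cU'$ onto $(z_1,\dots,z_n)$ being $b_0$-sheeted) and using the constraint $\prod z_i^{b_i}=t$ to trade $|z_0|^{2a_0}$ for $|t|^{2\kappa_{\min}}\prod_{i\ge 1}|z_i|^{-2\kappa_{\min}b_i}$ (valid because $\kappa_0=\kappa_{\min}$), one arrives at
\begin{equation*}
\mu_t\big|_{X_t\cap\cU}=\frac{\la(t)^d\,e^{2h}}{(2\pi)^d b_0^2}\prod_{i=1}^d|z_i|^{-2}\prod_{i=d+1}^p|z_i|^{2(a_i-\kappa_{\min}b_i-1)}\prod_{i=1}^n|dz_i|^2\Big|_{X_t}.
\end{equation*}
The transverse exponents $a_i-\kappa_{\min}b_i-1>-1$ for $d<i\le p$ guarantee local integrability near $\{z_i=0\}$.

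To pass to the limit, introduce logarithmic polar coordinates $z_i=\exp(-w_i+2\pi i\theta_i)$ for $i=1,\dots,d$; then $|dz_i/z_i|^2=2\pi|dw_i\wedge d\theta_i|$ cancels the $(2\pi)^{-d}$, and the rescaling $\tilde w_i=\la(t)w_i$ cancels $\la(t)^d$. The constraint $\prod z_i^{b_i}=t$ becomes $\sum_{i=0}^p b_i\tilde w_i=1$, which in the limit reduces to the simplex relation $\sum_{i\in J}b_i\tilde w_i=1$ defining $\sigma$, because $|z_i|$ for $i\in\{d+1,\dots,p\}$ stays bounded below on $\supp f$ and forces $\tilde w_i\to 0$. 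The angular relation $\sum b_i\theta_i\equiv\arg(t)/(2\pi)\pmod 1$ yields $b_0$ sheets in $\theta_0$ over $(\theta_1,\dots,\theta_d)$, all contributing equally in the limit since $f|_\sigma$ is angle-independent. Integrating $(\theta_1,\dots,\theta_d)$ over $[0,1)^d$ gives $1$; summing the $b_0$ sheets and combining with the $b_0^{-2}$ from $\mu_t$ yields the net factor $b_0^{-1}$, so
\begin{equation*}
\int_{X_t\cap\cU}f\,d\mu_t\;\longrightarrow\;\frac{1}{b_0}\left(\int_Y\Res_Y(\p)\right)\int_\sigma f(\tilde w)\,|d\tilde w_1\wedge\cdots\wedge d\tilde w_d|,
\end{equation*}
where the transverse factor is recognized as $\Res_Y(\p)$ via~\eqref{equ:rescplex}. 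The final arithmetic identification $b_0^{-1}|d\tilde w_1\wedge\cdots\wedge d\tilde w_d|=b_\sigma^{-1}\la_\sigma$ follows from the computation underlying Lemma~\ref{lem:integral}, namely that in the parametrization $(\tilde w_1,\dots,\tilde w_d)$ of $\sigma$ the lattice $T_{\sigma,\Z}$ has index $b_0/b_\sigma$ in $\Z^d$. Summing over all $d$-dimensional faces recovers $\mu_0$.

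The main obstacle is justifying the exchange of limit and integration. For the transverse directions this is a dominated convergence argument relying on the subklt condition for $(Y,B^\cL_Y)$ provided by Lemma~\ref{lem:subklt} and the maximality of $\sigma$: these make $\prod_{i=d+1}^p|z_i|^{2(a_i-\kappa_{\min}b_i-1)}$ locally integrable near $Y$, uniformly in $t$ on a fixed neighborhood. For the tropical and angular directions the rescaled densities are supported in a bounded subset of $\R^d\times[0,1)^d$, so convergence there is immediate. The subsidiary vanishing claims in the reduction step come from the same local formula but with either $\kappa_0>\kappa_{\min}$ (yielding a subleading factor $|t|^{2(\kappa_0-\kappa_{\min})}$) or fewer than $d$ indices having $\kappa_i=\kappa_{\min}$ (yielding extra positive powers of $\la(t)$).
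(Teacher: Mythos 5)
Your proposal is correct, and its analytic core is the same as the paper's: reduce via a partition of unity (Lemma~\ref{lem:adapted}) to adapted charts, pass to logarithmic polar coordinates, rescale by $\la(t)$, recognize the transverse factor as $\Res_{Y}(\p)$, and kill the degenerate charts by the mass estimate (the paper's~\eqref{equ:massK}). The organization differs in two respects. First, the paper never tests $\mu_t$ directly against $f\in C^0_c(\cX^\hyb)$: it proves convergence of the pushforwards $(\Log_\cU)_*(\chi\mu_t)$ (Lemma~\ref{lem:gencv}) and then upgrades to weak convergence on $\cX^\hyb$ by a Stone--Weierstrass argument (Lemma~\ref{L306}). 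Your direct route is fine, but it tacitly uses that continuity of $f$ on the compact hybrid space gives $f=f\circ\Log_{\cU}+o(1)$ uniformly as $t\to0$ (this is exactly what the hybrid topology and Proposition~\ref{prop:globlog} provide), and this should be said explicitly, e.g.\ when you claim the limiting integrand is angle-independent and depends only on $\tilde w$. Second, the paper puts all of $z_0,\dots,z_p$ into polar form using the fibration analysis of~\S\ref{S302} and then separates the transverse directions by the change of variables $Q_t$ of Lemma~\ref{L301}, whereas you polarize only the $d$ min-achieving directions and keep $z_{d+1},\dots,z_p$ as complex variables with the integrable density $\prod_{i>d}|z_i|^{2(a_i-\kappa_{\min}b_i-1)}$; your sheet count $b_0$ and the lattice-index identity $b_0^{-1}\,|d\tilde w_1\wedge\dots\wedge d\tilde w_d|=b_\sigma^{-1}\la_\sigma$ are correct (consistent with Lemma~\ref{lem:integral}: each face receives total mass $(d!\prod_{i\in J}b_i)^{-1}\int_{Y_\sigma}\Res_{Y_\sigma}(\p)$), so the constants agree with the paper's. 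One justification needs repair: it is not true that $|z_i|$, $d<i\le p$, stays bounded below on $\supp f$, since the support of $f$ may well meet the deeper strata; the concentration of the limit on the face $\sigma$ instead follows, as you indicate in your last paragraph, from the pointwise convergence $\tilde w_i=\log|z_i|/\log|t|\to 0$ for a.e.\ fixed transverse point combined with dominated convergence against the integrable transverse density (equivalently, from the decay factor $|t|^{2\sum_{i>d}b_i\tilde w_i(\kappa_i-\kappa_{\min})}$ used in the paper); with that correction the argument is complete.
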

We start by making a few reductions.
First, we may---and will---assume in what follows that
$\kappa_{\min}=0$. Indeed, $t$ defines a nonvanishing section of
$\cO_\cX(\cX_0)$, and hence a smooth metric $\log|t|$, so we may replace 
$\cL$ and $\p$ with $\cL-\kappa_{\min}\cX_0$ and
$\p-\kappa_{\min}\log|t|$, respectively, and end up with $\kappa_{\min}=0$.

Since $\min_i a_i/b_i=\kappa_{\min}=0$, we then have $a_i\ge 0$, with
equality if and only if $E_i$ corresponds to a vertex of $\D(\cL)$. 

Next we reduce the assertion of Theorem~\ref{thm:genconv} to a local problem. Let $Y\subset\cX_0$ be the  stratum of an arbitrary face $\sigma$ of $\D(\cX)$, and denote by $E_0,\dots,E_p$ the components of $\cX_0$ cutting out $Y$, ordered so that 
$$
\kappa_0=\dots=\kappa_q<\kappa_{q+1}\le\dots\le\kappa_p.
$$
We can then make the identification
$$
\sigma=\left\{w\in\R_+^{p+1}\mid b\cdot w=1\right\}
$$
with $b=(b_0,\dots,b_p)\in\Z_{>0}^{p+1}$. 
Set $b'=(b_0,\dots,b_q)\in\Z_{>0}^{q+1}$ and 
$$
\sigma':=\left\{w'\in\R_+^{q+1}\mid b'\cdot w'=1\right\}.
$$
Then $\sigma'$ is a face of $\sigma$ under the embedding 
$\R_+^{q+1}\hookrightarrow\R_+^{p+1}$ given by $w'\to(w',0)$.
Let $Y'\supset Y$ be the corresponding stratum of $\cX_0$.

Note that $\sigma$ contains a face of $\D(\cL)$ if and only if $\kappa_0=0$; in that case, the face is unique, equal to $\sigma'$ (which then implies $q\le d$). 

Pick $x\in\mathring{Y}$, and choose local coordinates
$z=(z_0,\dots,z_n)$ at $x$ such that $z_i$ is a local equation of
$E_i$ for $0\le i\le p$ and 
$$
t=\prod_{i=0}^p z_i^{b_i}
$$ 
We may assume that $z$ is defined on a polydisc $\cU\simeq\DD(r)^{p+1}\times\DD^{n-p}$ with $0<r\ll 1$. Decompose
$$
z=(z_0,\dots,z_n)\in\cU\simeq\DD(r)^{p+1}\times\DD^{n-p}
$$ 
as 
$$
z=(z',z'',y)\in\DD(r)^{q+1}\times\DD(r)^{p-q}\times\DD^{n-p},
$$
where we view $y$ as a point of $\cU\cap Y\simeq\DD^{n-p}$, and $(z'',y)$ as
a point of 
$\cU\cap Y'\simeq\DD(r)^{p-q}\times\DD^{n-p}$. 

The coordinate chart $(\cU,z)$ is adapted to $\cX_0$ in the sense of~\S\ref{sec:hybtop}, with 
$$
\Log_{\cU}\colon\cU\setminus\cX_0\to\sigma
$$ 
given by 
$$
\Log_{\cU}=\left(\frac{\log|z_i|}{\log|t|}\right)_{0\le i\le p}. 
$$
We aim to establish the following result. 
\begin{lem}\label{lem:gencv} 
  Pick $\chi\in C^0_c(\cU)$. If $\kappa_0=0$ and $q=d$, then
  \begin{equation*}
    \lim_{t\to0}(\Log_{\cU})_*(\chi \mu_t)
    =\left(\int_{Y'}\chi\Res_{Y'}(\p)\right)b_{\sigma'}^{-1}\la_{\sigma'}
  \end{equation*}
  in the weak topology of measures on $\sigma$, 
  with $\sigma'$ the unique $d$-dimensional face of $\D(\cL)$ 
  contained in $\sigma$. 
  Otherwise (\ie if $\kappa_0>0$ or $q<d$) 
  $(\Log_{\cU})_*(\chi \mu_t)\to 0$. 
\end{lem}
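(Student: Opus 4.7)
The plan is to unfold $\mu_t$ in the adapted local coordinates, apply the polar decomposition~\eqref{e301} to $|\Omega^\rel_t|^2$, and carry out a Laplace asymptotic analysis in the tropical variables.

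Taking $\tau := \prod_{i=0}^p z_i^{a_i}\Omega^\rel$ as a local $\Q$-generator of $\cL$, the function $f := \p - \log|\tau|$ is continuous on $\cU$, so
$$
\mu_t = \frac{\la(t)^d}{(2\pi)^d}\, e^{2f_t}\,\prod_{i=0}^p|z_i|^{2a_i}\,|\Omega^\rel_t|^2
$$
on $X_t\cap\cU$. Applying~\eqref{e301} to the torus factor yields $|\Omega^\rel_t|^2 = \frac{(2\pi)^p}{b_\sigma}\la_s(dw)\otimes\rho_{t,w}\otimes\prod_{j>p}|dz_j|^2$, where $s := \log|t|^{-1}$. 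Under the substitution $W := w/s\in\sigma$, Remark~\ref{R301} gives $\la_s = s^p\la_\sigma$, so
$$
(\Log_{\cU})_*(\chi\mu_t) = \frac{s^{p-d}(2\pi)^{p-d}}{b_\sigma}\, e^{-2sA(W)}\, G_t(W)\,\la_\sigma(dW),
$$
where $A(W) := \sum_{i=0}^p a_i W_i = \sum_{i>q}a_i W_i$ and $G_t(W)$ is the angular-and-disc fiber integral of $\chi e^{2f}$.

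Since $a_i\ge 0$ with equality iff $i\le q$, we have $A\ge 0$ on $\sigma$ and $A=0$ exactly on $\sigma'$. If $\kappa_0>0$, then $A\ge\kappa_0>0$ uniformly on $\sigma$, the exponential factor dominates the polynomial prefactor, and the limit is zero. Otherwise we rescale $v_i := sW_i$ for $i>q$, converting $\prod_{i>q}dW_i$ into $s^{-(p-q)}\prod dv_i$ and $2sA(W)$ into $2\sum_{i>q}a_iv_i$; combined with the prefactor, the effective scaling is $s^{q-d}$. If $q<d$ this tends to zero.

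In the remaining case $\kappa_0=0$, $q=d$, the scaling is trivial and one passes to the limit. For fixed $(W',v)$, the integration point has $z_i\to 0$ for $i\le d$ (approaching the stratum $Y'$) while $|z_i|=e^{-v_i}$ stays bounded for $d<i\le p$. Continuity of $\chi,f$ and dominated convergence (using the exponential decay in $v$) yield the pointwise limit of $G_t(W',v/s)$. Reversing the polar parameterization $|dz_i|^2 = 2\pi|z_i|^2|dv_i\,d\theta_i|$ for $i>d$ folds the $v$-integration with the $\theta_i$-averaging into a Cartesian integration over $z_i\in\C^*$, producing exactly the weights $\prod_{i>d}|z_i|^{2(a_i-1)}|dz_i|^2$ appearing in~\eqref{equ:rescplex}. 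By Lemma~\ref{lem:integral}, the residual angular integration over $\theta_0,\dots,\theta_d$ inside $K_{t,sW}$ contributes the normalization factor of $\la_{\sigma'}$, producing $b_{\sigma'}^{-1}\bigl(\int_{Y'}\chi\,\Res_{Y'}(\p)\bigr)\la_{\sigma'}$ in the limit. The main obstacle is the careful bookkeeping of the integer factors $b_\sigma$, $b_{\sigma'}$ and of the $(2\pi)$-powers through these reductions.
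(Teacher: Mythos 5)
Your plan is essentially the paper's own proof: express $\mu_t$ in logarithmic polar coordinates via the fibration computations of \S\ref{S302} (your use of~\eqref{e301} is the paper's Lemma~\ref{lem:fibermeas}), extract the exponential weight $e^{-2sA(W)}$ as in~\eqref{equ:normmeas}, dispose of the cases $\kappa_0>0$ and $q<d$ by the scaling count, and in the main case pass to the limit by dominated convergence and reassemble the transverse $(v,\theta)$-integrations into $\int_{Y'}\chi\Res_{Y'}(\p)$ exactly as in~\eqref{equ:rescplexbis}--\eqref{e310}; your rescaling $v_i=sW_i$ for $i>q$ is the paper's change of variables $Q_t$ in disguise.

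The one point where your sketch is not just unfinished but mislocated is the bookkeeping you defer at the end. Having invoked~\eqref{e301}, your fiber measures $\rho_{t,w}$ (and their limits $\rho_{x'',y}$) are normalized to be probability measures, so in this setup the ``residual angular integration over $\theta_0,\dots,\theta_d$ inside $K_{t,sW}$'' contributes total mass $1$ and cannot be the source of the factor $b_{\sigma'}^{-1}$; if you also extract a gcd factor there via Lemma~\ref{lem:integral} you will double count. In this normalization the conversion from $b_\sigma^{-1}\la_\sigma$ to $b_{\sigma'}^{-1}\la_{\sigma'}\otimes|dx''|$ comes entirely from the Jacobian of the tropical rescaling $v=sW''$ relative to the lattice-normalized measures: this is precisely the content of the paper's Lemma~\ref{L301}, whose proof parametrizes $\sigma$, $\sigma'$ and $\sigma_t$ compatibly and tracks the multiplicities $b_\sigma$, $b_{\sigma'}$ through the linear change of variables. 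Since the exact constant $b_{\sigma'}^{-1}$ is the whole point of the statement, you should replace the appeal to angular integration by this explicit Jacobian computation; with that correction the rest of your argument (uniform boundedness of $G_t$, exponential domination in $v$, and the identification $|z_i|^{2(a_i-1)}|dz_i|^2=2\pi e^{-2a_iv_i}|dv_i\,d\theta_i|$ matching~\eqref{equ:rescplex}) goes through as in the paper.
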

Granted this result, let us show how to prove
Theorem~\ref{thm:genconv}. 
For $0<r\ll1$, $\cV:=\pi^{-1}(\overline{\DD}_r)\subset\cX$ is an
compact neighborhood
of $\cX_0$ with a map $\Log_\cV\colon\cV^\hyb\to\D(\cX)$ as in
Proposition~\ref{prop:globlog}.
We will use
\begin{lem}\label{L306}
  Let $\mu_t$, $t\in\DD_r$ be a family of probability measures on $\cX^\hyb$
  such that $\mu_t$ is supported on $\cX_t$. Then $\lim_{t\to0}\mu_t=\mu_0$ 
  if and only if $\lim_{t\to0}(\Log_\cV)_*\mu_t=\mu_0$.
  Here the limits are in the sense of weak convergence of measures on
  $\cX^\hyb$ and $\D(\cX)$, respectively.
\end{lem}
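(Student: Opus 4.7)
The plan is to work inside the compact hybrid neighborhood $\cV^\hyb$ and exploit the tautology $\Log_\cV|_{\D(\cX)}=\id$. Since $\cX_0$ is compact and $\cV=\pi^{-1}(\overline{\DD}_r)$ is a compact neighborhood of $\cX_0$ in $\cX$, the recalled compactness criterion for hybrid neighborhoods gives that $\cV^\hyb$ is compact. Every $\mu_t$ with $|t|\le r$ is supported on $\cV^\hyb$; in particular $\mu_0$ is supported on $\pi^{-1}(0)=\D(\cX)$. So both convergences may be tested against $C(\cV^\hyb)$ and $C(\D(\cX))$ respectively.

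For the forward direction, assume $\mu_t\to\mu_0$ weakly on $\cX^\hyb$. Given $f\in C(\D(\cX))$, the function $f\circ\Log_\cV$ lies in $C(\cV^\hyb)$, hence
\begin{equation*}
\int f\,d(\Log_\cV)_*\mu_t=\int f\circ\Log_\cV\,d\mu_t\longrightarrow\int f\circ\Log_\cV\,d\mu_0=\int f\,d\mu_0,
\end{equation*}
where the last equality uses that $\mu_0$ is supported on $\D(\cX)$ and $\Log_\cV$ is the identity there.

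For the reverse direction, assume $(\Log_\cV)_*\mu_t\to\mu_0$ weakly on $\D(\cX)$. Fix $F\in C(\cV^\hyb)$, set $f:=F|_{\D(\cX)}\in C(\D(\cX))$, and put $h:=F-f\circ\Log_\cV\in C(\cV^\hyb)$. Then $h\equiv 0$ on $\D(\cX)$, so for each $\e>0$ there is an open neighborhood $U$ of $\D(\cX)$ in $\cV^\hyb$ with $|h|<\e$ on $U$. The closed set $\cV^\hyb\setminus U$ is compact and disjoint from $\pi^{-1}(0)$, so its image under the continuous map $\pi\colon\cV^\hyb\to\overline{\DD}_r$ is a compact subset of $\overline{\DD}_r$ avoiding $0$. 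Hence $\cX_t\subset U$ for all sufficiently small $|t|$, whence $|\int h\,d\mu_t|\le\e$ for such $t$. Combined with
\begin{equation*}
\int f\circ\Log_\cV\,d\mu_t=\int f\,d(\Log_\cV)_*\mu_t\longrightarrow\int f\,d\mu_0=\int F\,d\mu_0,
\end{equation*}
letting $\e\downarrow 0$ yields $\int F\,d\mu_t\to\int F\,d\mu_0$.

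The only point requiring care is the concentration $\cX_t\to\D(\cX)$ in $\cV^\hyb$ as $t\to 0$, which is what controls the tail term $\int h\,d\mu_t$; this is immediate from properness of $\pi$ on the compact space $\cV^\hyb$. Everything else is bookkeeping built on $\Log_\cV|_{\D(\cX)}=\id$.
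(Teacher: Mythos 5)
Your proof is correct, but it takes a different route from the paper for the nontrivial (reverse) implication. The paper reduces to a density statement: it forms the algebra spanned by products $f_1f_2f_3$ with $f_1\in\Log_\cV^*C^0(\D(\cX))$, $f_2\in\pi^*C^0(\DD_r)$, $f_3\in C^0_c(\cV\setminus\D(\cX))\cup\{1\}$, invokes Stone--Weierstrass to see this algebra is dense in $C^0$ of the compact hybrid neighborhood, and then checks convergence only on such products. You instead argue directly on an arbitrary test function $F$ via the decomposition $F=(F|_{\D(\cX)})\circ\Log_\cV+h$, where $h$ vanishes on $\D(\cX)$, and you control $\int h\,d\mu_t$ by the concentration statement that $\cX_t$ is eventually contained in any neighborhood of $\D(\cX)$ (a consequence of compactness of $\cV^\hyb$ and continuity of $\pi$), using that the $\mu_t$ are probability measures supported on $\cX_t$. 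Both arguments rest on the same structural inputs --- continuity of $\Log_\cV$ and $\Log_\cV=\id$ on $\D(\cX)$ --- but yours dispenses with Stone--Weierstrass (and with verifying that the algebra separates points), and it isolates explicitly the geometric fact that mass concentrates near the central fiber, which in the paper is absorbed into the roles of $f_2$ and $f_3$. The only points to keep in mind are the routine ones you already flag: all measures live on the compact set $\cV^\hyb$, so weak convergence on $\cX^\hyb$ may indeed be tested against $C^0(\cV^\hyb)$ (extend or cut off test functions), and the uniform bound $|\int h\,d\mu_t|\le\e$ uses the normalization of the $\mu_t$. With those observations, your proof is complete and, if anything, more elementary than the one in the paper.
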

By Lemma~\ref{L306} we must show that
\begin{equation*}
  (\Log_\cV)_*\mu_t\to\mu_0
  =\sum_{\sigma'}\left(\int_{Y'}\Res_{Y'}(\p)\right)b_{\sigma'}^{-1}\la_{\sigma'},
\end{equation*}
where $\sigma'$ ranges
over $d$-dimensional simplices in $\D(\cL)$.
But this is easily seen to follow from Lemma~\ref{lem:gencv}, 
using a partition of unity argument 
as in the proof of Proposition~\ref{prop:globlog}. 
\begin{proof}[Proof of Lemma~\ref{L306}]
  The direct implication follows from the continuity of $\Log_\cV$. 
  For the reverse implication, assume that 
  $\lim_{t\to0}(\Log_\cV)_*\mu_t=\mu_0$ and 
  consider the following three subsets of $C^0(\cV)$:
  $A_1$ is the set of functions of the form $\Log_\cV^*\f$, 
  where $\f\in C^0(\D(\cX))$;
  $A_2$ is the set of functions of the form 
  $\pi^*g$, where $g\in C^0(\DD_r)$; and
  $A_3=C^0_c(\cV\setminus\D(\cX))$ together with the constant function 1.
  Then the real vector space $A\subset C^0(\cV)$ spanned 
  by functions of the form $f_1f_2f_3$, with $f_i\in A_i$ is 
  easily seen to be an $\R$-algebra that separates points and contains all
  constant functions. By the Stone-Weierstrass Theorem, $A$ is
  dense in $C^0(\cV)$, so it suffices to prove that 
  $\lim\int f\mu_t=\int f\mu_0$ for $f\in A$. By linearity,
  we may assume $f=f_1f_2f_3$ with $f_i\in A_i$. 
  We may further assume $f_3=1$. Write $f_1=\Log_\cV^*\f$ and
  $f_2=\pi^*g$. Then
  \begin{multline*}
    \lim_{t\to0}\int_{X_t}f\mu_t
    =\lim_{t\to0}g(t)\int_{X_t}\f\circ\Log_\cV\mu_t\\
    =\lim_{t\to0}g(t)\int_{\D(\cX)}\f\ (\Log_\cV)_*\mu_t
    =g(0)\int_{\D(\cX)}\f\mu_0
    =\int f\mu_0,
  \end{multline*}
  which completes the proof.
\end{proof}
%
%
%
%
\subsection{Proof of Lemma~\ref{lem:gencv}}\label{S318}
As in~\S\ref{sec:rescplex}, we introduce the logarithmic form
$$
\Omega:=\frac{dz_0}{z_0}\wedge\dots\wedge\frac{dz_p}{z_p}\wedge dz_{p+1}\wedge\dots\wedge dz_n, 
$$
and the corresponding local trivialization $\Omega^\rel=\Omega\otimes(dt/t)^{-1}$ of $K^\lo_{\cX/\DD}$. The restriction $\Omega_t$ of $\Omega^\rel$ to the fiber 
$U_t:=\cX_t\cap\cU$ is a trivializing section of $K_{U_t}$, explicitly given by
\begin{equation*}
  \Omega_t
  =\frac1{p+1}\sum_{j=0}^p\frac{(-1)^j}{b_j}
  \frac{dz_0}{z_0}\wedge\dots\wedge\widehat{\frac{dz_j}{z_j}}
  \wedge\dots\wedge\frac{dz_p}{z_p}
  \wedge dz_{p+1}\wedge\dots\wedge dz_n\bigg|_{U_t}. 
\end{equation*}

For $t\in\DD^*$ close to 0, consider the map
$\Log_t\colon U_t\to\sigma\times(Y\cap\cU)$ 
defined by
\begin{equation*}
  \Log_t
  =(\Log_{\cU},y)
  =\left(\frac{\log|z_0|}{\log|t|},\dots,\frac{\log|z_p|}{\log|t|},z_{p+1},\dots,z_n\right).
\end{equation*}
Note the similarity to the situation considered in~\S\ref{S302}. 
More precisely, view $U:=\cU\cap X$ as embedded in $T\times\C^{n-p}$,
where $T=(\C^*)^{p+1}$, and consider the character
$\chi=\prod_{i=0}^pz_i^{b_i}$ on $T$. 
If $L\colon T\to\R^{p+1}$ is the tropicalization map, then
\begin{equation*}
  \Log_t=(\la(t)^{-1}L(z',z''),y).
\end{equation*}
Each fiber $\Log_t^{-1}(w,y)$ is a torsor for the 
(possibly disconnected) compact Lie group 
\begin{equation*}
  K=\left\{\theta\in(\R/\Z)^{p+1}\mid \sum_i b_i\theta_i=0\right\};
\end{equation*}
hence carries a unique $K$-invariant probability measure $\rho_{t,w,y}$.

The analysis in~\S\ref{S302} now gives the following 
expression for the volume form $|\Omega_t|^2$ on $U_t$
in logarithmic polar coordinates:
\begin{lem}\label{lem:fibermeas} 
For $h\in C^0_c(\cU)$ and $t\in\DD^*$ close to 0, we have
  \begin{equation}\label{equ:omt}
    \int_{U_t}h|\Omega_t|^2
    =(2\pi)^p\la(t)^{-p}\int_{\sigma\times(Y\cap\cU)}
    b_\sigma^{-1}\la_\sigma(dw)\otimes|dy|^2\int_{\Log_t^{-1}(w,y)}h\,\rho_{t,w,y},
  \end{equation}
  where $dy:=dz_{p+1}\wedge\dots\wedge dz_n$.
\end{lem}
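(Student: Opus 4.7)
The plan is to reduce the identity to the toric polar-coordinate formula~\eqref{e301} of~\S\ref{S302}, applied fiberwise over the holomorphic projection $U_t\to Y\cap\cU$, $(z',z'',y)\mapsto y$.

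First, I would factor $\Omega_t$ along the product structure. The explicit formula for $\Omega_t$ given just above the statement of the lemma already has $dy:=dz_{p+1}\wedge\dots\wedge dz_n$ appearing in each summand, so it factors as
\begin{equation*}
  \Omega_t=\Omega_t^T\wedge dy,\qquad \Omega_t^T:=\frac1{p+1}\sum_{j=0}^p\frac{(-1)^j}{b_j}\frac{dz_0}{z_0}\wedge\dots\widehat{\frac{dz_j}{z_j}}\dots\wedge\frac{dz_p}{z_p}\bigg|_{U_t^y},
\end{equation*}
where $U_t^y:=U_t\cap\{y=\mathrm{const}\}\subset T=(\C^*)^{p+1}$ is the torus slice. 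Hence $|\Omega_t|^2=|\Omega_t^T|^2\otimes|dy|^2$, and Fubini gives
\begin{equation*}
  \int_{U_t}h\,|\Omega_t|^2=\int_{Y\cap\cU}|dy|^2\int_{U_t^y}h(\cdot,y)\,|\Omega_t^T|^2.
\end{equation*}

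Second, for each fixed $y$ the slice $U_t^y$ is (up to restriction to the polydisc) the hypersurface $T_t=\chi^{-1}(t)$ studied in~\S\ref{S302} with $\chi=\prod_{i=0}^p z_i^{b_i}$ and $b=b_\sigma=\gcd_i b_i$, and $|\Omega_t^T|^2$ is the measure $\rho_t$ appearing there. Formula~\eqref{e301} then rewrites the inner integral as
\begin{equation*}
  \int_{U_t^y}h(\cdot,y)\,|\Omega_t^T|^2=\frac{(2\pi)^p}{b_\sigma}\int_{H_s}\la_s(du)\int_{L^{-1}(u)\cap U_t^y}h(\cdot,y)\,\rho_{t,u},
\end{equation*}
where $L(z)=(-\log|z_i|)_i$ is the tropicalization, $s:=\log|t|^{-1}$, and $H_s=\{u\in\R^{p+1}:\sum_i b_iu_i=s\}$ carries its normalized Lebesgue measure $\la_s$.

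Third, I would change variable from $u\in H_s$ to $w:=u/s\in\sigma$. The homothety $w\mapsto sw$ identifies $\sigma$ with $H_s$ and $\Log_\cU$ with $L/s$ on the torus factor, so $L^{-1}(sw)\cap U_t^y=\Log_t^{-1}(w,y)$, and the invariant probability measure $\rho_{t,sw}$ becomes exactly the $\rho_{t,w,y}$ of the statement. Both $\la_s$ and $\la_\sigma$ are normalized by the common tangent lattice $\{v\in\Z^{p+1}:\sum_i b_iv_i=0\}=T_{\sigma,\Z}$, and a homothety of ratio $s$ on a $p$-dimensional affine space multiplies Lebesgue density by $s^p$; hence $\la_s(du)=s^p\la_\sigma(dw)=\la(t)^{-p}\la_\sigma(dw)$. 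Assembling the three steps produces~\eqref{equ:omt}. The only point requiring real care is this last Lebesgue-normalization bookkeeping—matching the tangent lattice of $H_s$ with $T_{\sigma,\Z}$ and keeping track of the scaling factor $s^p$ that converts $\la_s$ into $\la(t)^{-p}\la_\sigma$; everything else is a direct transcription of~\eqref{e301} combined with Fubini.
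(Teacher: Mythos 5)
Your argument is correct and is exactly the route the paper intends: the paper gives no separate proof of Lemma~\ref{lem:fibermeas} beyond invoking the analysis of~\S\ref{S302}, and what you wrote---factoring $|\Omega_t|^2=|\Omega_t^T|^2\otimes|dy|^2$ and applying Fubini in $y$, using~\eqref{e301} on each torus slice with $b=b_\sigma=\gcd_i b_i$, then rescaling $H_s\to\sigma$ by the homothety of ratio $s=\la(t)^{-1}$ whose Jacobian $s^p$ is computed against the common lattice normalization $T_{\sigma,\Z}=\{v\in\Z^{p+1}\mid\sum_ib_iv_i=0\}$---is precisely that analysis made explicit. The only (harmless) point to note is that fibers of $\Log_t$ meeting the polydisc are automatically entire $K$-orbits, so your identification of $\rho_{t,u}$ with $\rho_{t,w,y}$ and the restriction of the $H_s$-integral to $\sigma$ are justified by the compact support of $h$.
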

As before, view
$\tau:=\prod_{i=0}^p z_i^{a_i}\Omega^\rel$ as a local
$\Q$-generator of $\cL$, and set
\begin{equation*}
  g:=-\log|\tau|_\p\in C^0(\cU).
\end{equation*}
By definition, we have
$\mu_t=(2\pi)^{-d}\la(t)^d|\Omega_t|^2/|\Omega_t|^2_{\psi_t}$, and
hence
  \begin{multline}\label{equ:normmeas}
    (2\pi)^{d-p}\la(t)^{p-d}|t|^{-2\kappa_0}\int\limits_{U_t}h\mu_t\\
    =\int\limits_{\sigma\times(Y\cap\cU)}|t|^{2\sum_{i=q+1}^p b_i w_i(\kappa_i-\kappa_0)}
    b_\sigma^{-1}\la_\sigma(dw)\otimes|dy|^2 
    \int he^{2g}\,\rho_{t,w,y}\\
    =\int\limits_{\sigma\times(Y\cap\cU)}
    e^{-2\la(t)^{-1}\sum_{i=q+1}^p b_i w_i(\kappa_i-\kappa_0)}
    b_\sigma^{-1}\la_\sigma\otimes|dy|^2 
    \int he^{2g}\,\rho_{t,w,y}
  \end{multline}
for every $h\in C^0_c(\cU)$, thanks to Lemma~\ref{lem:fibermeas}. 

We use the following change of variables.
For $t\in\DD^*$, consider the polytope
\begin{equation*}
  \sigma_t:=
  \{(w',x'')\in\R_+^{q+1}\times\R_+^{p-q}\mid 
  b'\cdot w'=1,\ b''\cdot x''\le\la(t)^{-1}\}
  \subset\sigma'\times\R_+^{p-q}
  \subset\R_+^{p+1},
\end{equation*}
where $b'=(b_0,\dots,b_q)$ and $b''=(b_{q+1},\dots,b_p)$.

\begin{lem}\label{L301}
  The continuous map $Q_t\colon\sigma_t\to\sigma$ defined by 
  \begin{equation*}
    Q_t(w',x'')=\left(\left(1-\la(t)b''\cdot x''\right)w',\la(t) x''\right)
  \end{equation*}
  restricts to a homeomorphism between the interior
  of $\sigma_t$ and the interior of $\sigma$.
  Further, its inverse maps the Lebesgue measure 
  $b_\sigma^{-1}\la_\sigma$ on $\sigma$ to the measure
  \begin{equation*}
    (Q_t^{-1})_*b_\sigma^{-1}\la_\sigma
    =\left(1-\la(t) b''\cdot x''\right)^q\la(t)^{p-q} b_{\sigma'}^{-1}\la'_{\sigma'}\otimes|dx''|,
  \end{equation*}
  on $\sigma_t$, where 
  $|dx''|$ is Lebesgue measure on $\R^{p-q}$ normalized by $\Z^{p-q}$.
\end{lem}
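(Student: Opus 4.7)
The lemma is a direct change-of-variables computation, and my plan is to verify the two assertions in sequence, in the style of Remark~\ref{R301}.

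First, to check that $Q_t$ maps $\sigma_t$ to $\sigma$ and restricts to a homeomorphism between interiors, I would compute
\[
b \cdot Q_t(w', x'') = (1 - \lambda(t)\, b'' \cdot x'')(b' \cdot w') + \lambda(t)(b'' \cdot x'') = 1
\]
using $b' \cdot w' = 1$, which shows $Q_t(\sigma_t) \subset \sigma$. Setting $S := 1 - \lambda(t)\, b'' \cdot x''$, which lies in $(0,1]$ on $\sigma_t$, the image coordinates $(Sw', \lambda(t)\, x'')$ are strictly positive precisely when $S > 0$ and $w'$, $x''$ have strictly positive entries, i.e.\ when $(w', x'')$ lies in $\mathrm{int}(\sigma_t)$. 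The inverse is then written explicitly as $(u', u'') \mapsto (u'/(b' \cdot u'),\, u''/\lambda(t))$, and both $Q_t$ and this inverse are continuous by inspection.

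For the measure identity, I would parameterize $\sigma_t$ by the free coordinates $(w'_1, \dots, w'_q, x''_{q+1}, \dots, x''_p)$ (solving for $w'_0$ via $b' \cdot w' = 1$) and $\sigma$ by $(u_1, \dots, u_p)$ (solving for $u_0$ via $b \cdot u = 1$). In these coordinates the map reads
\[
u_i = S\, w'_i \quad (1 \le i \le q), \qquad u_i = \lambda(t)\, x''_i \quad (q+1 \le i \le p),
\]
so the differential $DQ_t$ is block lower-triangular with diagonal blocks $S\, I_q$ and $\lambda(t)\, I_{p-q}$; the off-diagonal block coming from the $x''$-dependence of $S$ does not affect the determinant, which is $S^q \lambda(t)^{p-q}$. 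Expressing the Euclidean forms $|du_1 \wedge \dots \wedge du_p|$ and $|dw'_1 \wedge \dots \wedge dw'_q|$ in terms of $\lambda_\sigma$ and $\lambda_{\sigma'}$ via Remark~\ref{R301}, the change-of-variables formula then yields the asserted identity.

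The only delicate point is the bookkeeping of the $b_\sigma$ and $b_{\sigma'}$ prefactors, which must combine so that the $b_{\sigma'}^{-1}$ appears correctly on the right-hand side; the remaining content is a routine Jacobian calculation.
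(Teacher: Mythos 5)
Your overall strategy coincides with the paper's: the explicit inverse $(u',u'')\mapsto\big(u'/(b'\cdot u'),\,u''/\la(t)\big)$ settles the homeomorphism claim, and parametrizing the interiors by the free coordinates $(w'_1,\dots,w'_q,x'')$ and $(u_1,\dots,u_p)$ (eliminating $w'_0$, resp.\ $u_0$) gives a block-triangular differential with Jacobian $S^q\la(t)^{p-q}$, where $S=1-\la(t)\,b''\cdot x''$ (the nonzero off-diagonal block makes it upper-, not lower-, triangular in your ordering, but that is immaterial). All of this is correct and is exactly how the paper argues.

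The step you defer, however --- ``the bookkeeping of the $b_\sigma$ and $b_{\sigma'}$ prefactors'' --- is the only real content of the measure identity, and the route you propose (inserting Remark~\ref{R301} verbatim) does not close. Taking $\la_\sigma=b_\sigma^{-1}|du_1\wedge\dots\wedge du_p|$ and $\la_{\sigma'}=b_{\sigma'}^{-1}|dw'_1\wedge\dots\wedge dw'_q|$ at face value, the two sides of the asserted identity become $b_\sigma^{-2}S^q\la(t)^{p-q}\,|dw'|\otimes|dx''|$ and $b_{\sigma'}^{-2}S^q\la(t)^{p-q}\,|dw'|\otimes|dx''|$, and $b_\sigma=\gcd(b_0,\dots,b_p)$ differs from $b_{\sigma'}=\gcd(b_0,\dots,b_q)$ in general (e.g.\ $b_0=b_1=2$, $b_2=1$, $q=1$, $p=2$), so the prefactors do \emph{not} combine. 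The reason is that the constant in Remark~\ref{R301} is not the one dictated by the actual normalization of $\la_\sigma$ (Lebesgue measure normalized by $T_{\sigma,\Z}=\{v\in\Z^{p+1}\mid\sum_i b_iv_i=0\}$, as in Lemma~\ref{lem:integral}): projecting $T_{\sigma,\Z}$ to the chart $(w_1,\dots,w_p)$ yields a sublattice of $\Z^p$ of index $b_0/b_\sigma$, whence $\la_\sigma=(b_\sigma/b_0)\,|dw_1\wedge\dots\wedge dw_p|$ in that chart (check against $\vol(\sigma)=b_\sigma/(p!\,\prod_i b_i)$), and similarly $\la_{\sigma'}=(b_{\sigma'}/b_0)\,|dw'_1\wedge\dots\wedge dw'_q|$. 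Consequently $b_\sigma^{-1}\la_\sigma=b_0^{-1}|dw_1\wedge\dots\wedge dw_p|$ and $b_{\sigma'}^{-1}\la_{\sigma'}=b_0^{-1}|dw'_1\wedge\dots\wedge dw'_q|$, with the \emph{same} $b_0$ because $\sigma'$ contains the vertex of $E_0$ and both charts eliminate the coordinate $w_0$; only then does the identity reduce exactly to your Jacobian computation. So you must carry out this normalization calculation from the definition rather than quote Remark~\ref{R301} (the paper's own proof cites the remark in the same way, so the slip originates there, but verifying $b_\sigma^{-1}\la_\sigma=b_0^{-1}|dw_1\wedge\dots\wedge dw_p|$ is precisely what makes the prefactors ``combine'').
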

\begin{proof}
  The first statement is elementary. To prove the second, we must make
  sure to handle the ``multiplicities'' $b_\sigma$ and $b_{\sigma'}$ correctly.
  Parametrize the interior of $\sigma$ by coordinates 
  $(w_1,\dots,w_p)$ using $w_0=b_0^{-1}(1-\sum_1^pb_iw_i)$.
  By Remark~\ref{R301} we have
  \begin{equation*}
    b_\sigma\la_\sigma=|dw_1\wedge\dots\wedge dw_p|
  \end{equation*}
  Similarly, we parametrize the interiors of $\sigma'$ and $\sigma_t$
  using coordinates $(w_1,\dots,w_q)$ and 
  $(w_1,\dots,w_q,x''_{q+1},\dots,x''_p)$, respectively.
  Then  
  \begin{equation*}
    b_{\sigma'}\la_{\sigma'}=|dw_1\wedge\dots\wedge dw_q|.
  \end{equation*}
  The required formula now follows from an elementary computation.
\end{proof}
Using the map $Q_t$ and the fact that $\kappa_i-\kappa_0>0$
for $i>q$, it is easy to see that 
\begin{equation*}
  \int_\sigma
    e^{-2\la(t)^{-1}\sum_{i=q+1}^p b_i w_i(\kappa_i-\kappa_0)}
    \la_{\sigma}(dw)
  =O(\la(t)^{p-q}).
\end{equation*}
By~\eqref{equ:normmeas}, it follows that
\begin{equation}\label{equ:massK}
  \mu_t(U_t)=O(\la(t)^{d-q}|t|^{2\kappa_0}),  
\end{equation}
and hence $\mu_t(U_t)\to 0$ unless $\kappa_0=0$ and $q=d$, 
which we henceforth assume. Given $\f\in C^0(\sigma)$, our goal is now to show 
\begin{equation}\label{equ:cvut}
  \int_{U_t}(\f\circ\Log_{\cU})\chi\,\mu_t
  \to\left(\int_{\sigma'}\f b_{\sigma'}^{-1}\la_{\sigma'}\right). 
  \left(\int_{Y'}\chi\Res_{Y'}(\p)\right).
\end{equation}
Let us first express both sides of~\eqref{equ:cvut}
in logarithmic polar coordinates. 
We start by the left-hand side.
Set $f:=\chi e^{2g}\in C^0(\cU)$. 
By~\eqref{equ:normmeas} and Lemma~\ref{L301} we have
\begin{multline}\label{equ:normbis}
  (2\pi)^{d-p}\int_{U_t}(\f\circ\Log_{\cU})\chi\,\mu_t\\
  =\la(t)^{d-p}
  \int\limits_{\sigma\times(Y\cap\cU)}
  \f(w)
  e^{-2\la(t)^{-1}a''\cdot w''}
  b_\sigma^{-1}\la_\sigma(dw)\otimes|dy|^2 
  \int f\,\rho_{t,w,y}\\
  =\int\limits_{\sigma'\times\R_+^{p-d}\times(Y\cap\cU)}
  H_t(w',x'')
  b_{\sigma'}^{-1}\la_{\sigma'}(dw')\otimes|dx''|\otimes|dy|^2
  \int f\,\rho_{t,w',x'',y},
\end{multline}
where
\begin{equation*}
  H_t(w',x'')
  =\mathbf{1}_{\sigma_t}
  \f(Q_t(w',x''))e^{-2a''\cdot x''}(1-\la(t)b''\cdot x'')^d,
\end{equation*}
and $\rho_{t,w',x'',y}$ is the same measure as $\rho_{t,w,y}$
via the identification $Q_t(w',x'')=w$.

Note that $\lim_{t\to 0}Q_t(w',x'')=(w',0)$, so 
\begin{equation*}
  \lim_{t\to0}H_t(w',x'')
  =\mathbf{1}_{\sigma'\times\R_+^{p-d}}
  \f(w',0)e^{-2a''\cdot x''}. 
\end{equation*}
Consider the tropicalization map
\begin{equation*}
  S\colon Y'\cap\cU\to\R_+^{p-d}\times(Y\cap\cU)
\end{equation*}
given by $S=(-\log|z_{d+1}|,\dots,-\log|z_p|,y)$.
Each fiber $S^{-1}(x'',y)$ is a torsor for the 
compact torus $(\R/\Z)^{p-d}$ and hence carries a 
unique invariant probability measure $\rho_{x'',y}$.
As $t\to0$, the probability measure $\rho_{t,w',x'',y}$ 
converges weakly to $\rho_{x'',y}$ for any $w'\in\sigma'$.

By dominated convergence it follows that 
\begin{multline}\label{e303}
  \lim_{t\to0}(2\pi)^{d-p}\int_{U_t}(\f\circ\Log_{\cU})\chi\,d\mu_t\\
  =\int_{\sigma'\times\R_+^{p-d}\times(Y\cap\cU)}
  \f(w',0)e^{-2a''\cdot x''}b_{\sigma'}^{-1}\la_{\sigma'}(dw')\otimes|dx''|\otimes|dy|^2
  \int f\,\rho_{x'',y}\\
  =\left(\int_{\sigma'}\f b_{\sigma'}^{-1}\la_{\sigma'}\right)
  \left(\int_{\R_+^{p-d}}e^{-2a''\cdot x''}|dx''|\int_{Y\cap\cU}|dy|^2
    \int f\,\rho_{x'',y}\right).
\end{multline}
It only remains to compare the second factor of~\eqref{e303} 
to the second factor in~\eqref{equ:cvut}.
To this end, we again use logarithmic polar coordinates.
We have 
\begin{equation}\label{equ:rescplexbis}
  \chi\Res_{Y'}(\p)
  =f\prod_{i=d+1}^p|z_i|^{2a_i-2}|dz''|^2\otimes|dy|^2.
\end{equation}
For $d<j\le p$, set $z_j=e^{-x_j+2\pi i\theta_j}$  
with $x''\in\R_+^{p-d}$ and $\theta''\in(\R/\Z)^{p-d}$. 
Then 
\begin{equation}\label{e310}
  \int_{Y'\cap\cU}\chi \Res_{Y'}(\p)
  =(2\pi)^{p-d}\int_{\R_+^{p-d}}e^{-2a''\cdot x''}|dx''|
  \int_{Y\cap\cU}|dy|^2\int f\,\rho_{x'',y},
\end{equation}
which completes the proof of~\eqref{equ:cvut}, and hence of Theorem~\ref{thm:genconv}. 
%
%
%
%
\section{The limit hybrid model}\label{S315}
Let $\pi\colon X\to\DD^*$ be a proper submersion, with $X$ a connected
complex manifold. Assume that $\pi$ is meromorphic over $0\in\DD$
in the sense that it admits a \emph{model} $\pi\colon\cX\to\DD$, that is,
$\cX$ is a normal complex space, $\pi$ is a flat proper map, and we are given an
isomorphism $X\simeq\pi^{-1}(\DD^*)$ over $\DD^*$. We say that $\cX$ is an 
\emph{snc model} (of $X$) if $\cX$ is smooth and the Cartier divisor 
$\cX_0:=\pi^{-1}(0)$ has simple normal crossing support. Such models always exist by 
Hironaka's theorem.

To any snc model $\cX$ we can associate as in~\S\ref{sec:hyb} 
a hybrid space $\cX^\hyb$, that of course depends on $\cX$.
In this section we define a canonical hybrid space $X^\hyb$,
obtained as the inverse limit of the $\cX^\hyb$, that does not have
this defect. We then prove Theorem~B from the introduction.

In the projective case, we show that the both the central fiber
$X^\hyb_0$ and the closed subset $X^\hyb_{\overline{\DD_r}}$ can be viewed
as analytifications in the sense of Berkovich.
%
%
\subsection{Snc models and simple blowups}
Given any two models $\cX$, $\cX'$ of $X$, there is a canonical
bimeromorphic map $\cX'\dashrightarrow\cX$, and we say that $\cX'$
\emph{dominates} $\cX$ if this map is a morphism.
Any two models $\cX$, $\cX'$ are dominated by a third, for instance the 
normalization of the graph of $\cX\dashrightarrow\cX'$.
By Hironaka's theorem, any model is dominated by an snc model. 
Thus the set of models forms a directed set,
in which snc models are cofinal. 

Suppose $\cX$ is an snc model and that 
$\cX'$ is another model that dominates $\cX$ via $\rho\colon\cX'\to\cX$.
As in~\cite[Definition~22]{KS06} we say that $\rho$ is
a \emph{simple blowup} if it is a blowup along a 
smooth, connected complex subspace $W$ of $\cX_0$ meeting transversely (or not at all) every irreducible component of $\cX_0$ that does not contain it. In this case, $\cX'$ is also an snc model.
\begin{lem}\label{L303}
  Suppose $\cX$ and $\cX'$ are snc models, and that $\cX'$ dominates
  $\cX$ via $\rho\colon\cX'\to\cX$. 
  Then there exists a third snc model $\cX''$
  dominating $\cX'$, such that the induced map 
  $\cX''\to\cX$ is a composition of simple blowups.
\end{lem}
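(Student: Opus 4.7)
The plan is to invoke Hironaka's analytic flattening theorem followed by principalization. Since $\cX$ and $\cX'$ are both snc models of $X\to\DD^*$, the morphism $\rho\colon\cX'\to\cX$ restricts to an isomorphism over $\DD^*$, so its non-flat locus is concentrated over $0\in\DD$. First I would, possibly after replacing $\cX'$ by a further dominating snc model, realize $\rho$ as the blowup of $\cX$ along a coherent ideal sheaf $\cI\subset\cO_\cX$ cosupported on $\cX_0$; in the complex-analytic category this relies on Hironaka's analytic flattening theorem (presumably the role of Teissier's acknowledged help in the introduction).

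Next I would apply Hironaka's principalization theorem to the pair $(\cX,\cX_0)$ and the ideal $\cI$. This produces a tower of snc models
\[
  \cX''=\cY_N\xrightarrow{\rho_N}\cY_{N-1}\xrightarrow{\rho_{N-1}}\cdots\xrightarrow{\rho_1}\cY_0=\cX,
\]
in which each $\rho_k$ is the blowup of $\cY_{k-1}$ along a smooth centre $W_k$ lying in the central fibre $(\cY_{k-1})_0$ and meeting the other irreducible components of that central fibre in snc position, and such that the pullback $\cI\cdot\cO_{\cX''}$ is invertible. Connectedness of each $W_k$ is arranged by blowing up the connected components one at a time, which is legitimate because disjoint smooth centres may be blown up in any order without affecting the outcome. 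Each $\rho_k$ is then a simple blowup in the sense of the paper. By the universal property of the blowup, the invertibility of $\cI\cdot\cO_{\cX''}$ produces a unique morphism $\cX''\to\cX'$ over $\cX$; hence $\cX''$ dominates $\cX'$, and $\cX''\to\cX$ is a composition of simple blowups, as required.

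The main obstacle is the first step, namely producing a single coherent ideal on $\cX$ whose blowup dominates $\cX'$. In the algebraic setting this is a standard consequence of Raynaud--Gruson flattening, but in the bimeromorphic complex-analytic setting one genuinely needs Hironaka's analytic flattening theorem. The subsequent principalization is by now classical and admits a functorial algorithmic form (Bierstone--Milman, Villamayor, W\l odarczyk) that automatically respects the snc boundary, so only the minor bookkeeping of separating connected components of the centres is required to fit the paper's simple-blowup definition.
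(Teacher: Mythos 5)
Your proposal is correct and follows essentially the same route as the paper: Hironaka's flattening/Chow theorem to reduce to the case where the dominating model is the blowup of an ideal cosupported on $\cX_0$, then principalization of that ideal (Koll\'ar or W{\l}odarczyk) by a sequence of simple blowups, and the universal property of blowups to conclude that the result dominates $\cX'$. The only additions (separating connected components of centres, compatibility with the snc boundary) are minor points the paper leaves implicit.
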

We are grateful to Bernard Teissier for help with the following argument.
\begin{proof}
  By Hironaka's version of the Chow theorem (in turn a
  consequence of the flattening theorem),
  see~\cite[Corollary~2]{HirFlat},
  there exists a complex manifold $\cX''$ and a projective
  bimeromorphic morphism $\cX''\to\cX$ such that $\cX''$
  dominates $\cX'$. 
  Since $\cX'\to\cX$ is an isomorphism above $X$, the construction
  in~\cite{HirFlat} further guarantees that $\cX''\to\cX$ is an
  isomorphism above $X$. Indeed, the proof proceeds by blowing up
  well-chosen smooth centers contained in the
  non-flat locus of $\cX'\to\cX$, see D\'efinition~4.4.3~(2) in \loccit
  
  We may therefore assume that $\cX'\to\cX$ itself is projective,
  and more precisely the blowup of an ideal $I$ cosupported on $\cX_0$.
  By the principalization theorem for ideals, there exists a
  projective bimeromorphic morphism $\cX''\to\cX$ that is a 
  composition of simple blowups, such that the pullback of $I$
  to $\cX''$ is a principal ideal, see~\cite[Theorem~3.45]{KollarRes}
  or~\cite[Theorem~2.0.3]{WlodAnalytic}.
  In particular, $\cX''$ dominates $\cX'$.
\end{proof}
%
%
\subsection{Induced maps between dual complexes}\label{S303}
Suppose $\cX'$ and $\cX$ are snc models with $\cX'$ dominating 
$\cX$ via $\rho\colon\cX'\to\cX$. There is then an integral affine map
\begin{equation*}
  r_{\cX\cX'}\colon\Delta(\cX')\to\Delta(\cX),
\end{equation*}
defined as follows.
Consider any simplex $\sigma'$ of $\Delta(\cX')$ and let $Y'$ be the 
corresponding stratum. There exists a unique minimal stratum $Y$
of $\cX_0$ such that $\rho(Y')\subset Y$. Let $\sigma=\sigma_Y$ be
the corresponding simplex. Let $E_i$, $0\le i\le p$ 
(resp.\ $E'_j$, $0\le j\le p'$) be the irreducible
components of $\cX_0$ cutting out $Y$ (resp.\ $Y'$).
Then
\begin{equation*}
  \rho^*E_i=\sum_{j=0}^{p'} a_{ij}E'_j, 
\end{equation*}
for $0\le i\le p$, where $a_{ij}\in\Z_{>0}$.

We can realize the simplex $\sigma$ (resp.\ $\sigma'$)
as the subset $\{\sum_{i=0}^pb_iw_i=1\}\subset\R_+^{p+1}$ 
(resp.\ $\{\sum_{j=0}^{p'}b'_jw'_j=1\}\subset\R_+^{p'+1}$),
where $b_i$ (resp.\ $b'_j$) is the multiplicity of $E_i$ in $\cX_0$
(resp.\ of $E'_j$ in $\cX'_0$).
The restriction of $r_{\cX\cX'}$ to $\sigma'$ is then given by 
\begin{equation}\label{e305}
  w_i=\sum_{j=0}^{p'} a_{ij}w'_j.
\end{equation}
for $0\le i\le p$.
It is clear that $r_{\cX\cX'}$ defines a continuous, integral affine
map from $\D(\cX')$ to $\D(\cX)$. Further, if $\cX$, $\cX'$ and
$\cX''$ are snc models with $\cX''$ dominating $\cX'$, and 
$\cX'$ dominating $\cX$, then $r_{\cX\cX'}\circ r_{\cX'\cX''}=r_{\cX\cX''}$.

In general, it may happen that $\rho(Y')$ is a strict subvariety of $Y$,
and the linear map defining $r_{\cX\cX'}|_{\sigma'}$ could fail to be injective or surjective.
\begin{defi}
  With notation as above, we say that 
  $\sigma'$ is \emph{active} for $r_{\cX\cX'}$ 
  if the restriction $\rho|_{Y'}\colon Y'\to Y$ is a bimeromorphic morphism
  and the $\Q$-linear map defining $r_{\cX\cX'}|_{\sigma'}$ is an isomorphism.
  In this case, $\sigma'$ and $\sigma$ have the same dimension, and 
  $r_{\cX\cX'}$ maps $\sigma'$ homeomorphically 
  onto a $\Z$-subsimplex of $\sigma$ of the same dimension.
\end{defi}
Denote by $A_{\cX\cX'}$ the union of all simplices in 
$\D(\cX')$ that are active for $r_{\cX\cX'}$.
Our goal in this subsection is to prove the following result.
\begin{prop}\label{P301}
  Let $\cX$ and $\cX'$ be snc models, with $\cX'$ dominating $\cX$.
  Then $r_{\cX\cX'}$ maps $A_{\cX\cX'}$ homeomorphically onto $\D(\cX)$.
\end{prop}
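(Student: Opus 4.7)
My approach is to reduce the general case to that of a single simple blowup via Lemma~\ref{L303}, and then verify that atomic case by direct computation.

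The central tool is a \emph{composition principle}: for a tower $\cX\xleftarrow{\rho_1}\cX_1\xleftarrow{\rho_2}\cX_2$ of snc models, a simplex $\sigma_2\in\D(\cX_2)$ is active for $r_{\cX\cX_2}$ if and only if it is active for $r_{\cX_1\cX_2}$ and its image $r_{\cX_1\cX_2}(\sigma_2)$ lies in an active simplex of $\D(\cX_1)$ for $r_{\cX\cX_1}$. This holds because bimeromorphicity and linear isomorphism are preserved by composition and, when the relevant dimensions all agree, descend to each factor. Granting the principle, if the proposition holds for each of $\rho_1$ and $\rho_2$, then $r_{\cX_1\cX_2}$ restricts to a homeomorphism $A_{\cX\cX_2}\simto A_{\cX\cX_1}$, and composing with the given homeomorphism $A_{\cX\cX_1}\simto\D(\cX)$ yields the required result. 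By Lemma~\ref{L303}, I find $\cX''$ dominating $\cX'$ with $\cX''\to\cX$ a composition of simple blowups; using the atomic case (below) and the composition principle inductively, the proposition holds for $\cX''\to\cX$. Applying Lemma~\ref{L303} again to $\cX''\to\cX'$ produces $\cX'''$ with $\cX'''\to\cX'$ a composition of simple blowups, for which the proposition likewise holds. A further application of the composition principle to the factoring $\cX'\to\cX$ through $\cX''$ and $\cX'''$ then delivers the proposition for $\cX'\to\cX$.

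For the atomic case, consider a simple blowup $\rho:\cX'\to\cX$ along $W\subset\cX_0$. Set $J_0=\{i:W\subset E_i\}$, let $Z$ be the minimal stratum of $\cX_0$ containing $W$, and let $F$ be the exceptional divisor (of multiplicity $b_F=\sum_{i\in J_0}b_i$). A local computation in coordinates adapted to the snc-compatible center $W$ gives $r_{\cX\cX'}(\tilde e_i)=e_i$ and $r_{\cX\cX'}(e_F)=(b_F^{-1},\ldots,b_F^{-1})\in\sigma_Z$. If $W=Z$ (stratum case), this realizes $\D(\cX')$ as the stellar subdivision of $\D(\cX)$ at $\sigma_Z$: $r_{\cX\cX'}$ is itself a homeomorphism and every simplex is active. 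If $W\subsetneq Z$ (non-stratum case), I would argue that no simplex involving $e_F$ is active: for such a $\sigma'$ with stratum $Y'\subset F$, activeness would force the minimal stratum $Y$ of $\cX_0$ containing $\rho(Y')\subset W$ to satisfy $Y\subset W$; combining $\dim Y=\dim\cX-|J|$ with $\dim Y\leq\dim W-|J\setminus J_0|$ (from the transversality of $W$ with $\bigcap_{i\in J\setminus J_0}E_i$) yields $\dim W\geq\dim Z$, a contradiction. Hence $A_{\cX\cX'}$ consists of simplices not involving $e_F$; by snc-compatibility of $W$, no stratum of $\cX_0$ is absorbed into $W$, so strict transforms provide a canonical bijection of this subcomplex with $\D(\cX)$ under which $r_{\cX\cX'}$ acts as the identity on coordinates.

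The main obstacle I anticipate is the dimensional argument in the non-stratum case, which relies on the \emph{full} snc-compatibility of $W$ with $\cX_0$, not merely pairwise transversality with each $E_i$: without this, strata of $\cX_0$ could be absorbed into $W$, and unwanted active simplices involving $e_F$ could arise, breaking the bijection. This compatibility is implicit in the definition of a simple blowup (via the requirement that $\cX'$ be snc) and must be carefully invoked at multiple points in the atomic step.
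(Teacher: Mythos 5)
Your overall strategy—a composition principle for active simplices, reduction to compositions of simple blowups via Lemma~\ref{L303}, and a direct analysis of one simple blowup split into the cases $W=Z$ and $W\subsetneq Z$—is essentially the paper's own (its Lemmas~\ref{L401}, \ref{L304} and \ref{L302}), and your atomic case is correct, including the dimension count showing that no simplex containing the exceptional vertex is active when $W\subsetneq Z$ and that the strict-transform subcomplex maps by the identity onto $\D(\cX)$.

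The gap is in the final step of the reduction. The only consequence of the composition principle you actually establish is the \emph{ascent} statement: if the proposition holds for both maps of a tower, it holds for their composite. But at the end you know the proposition for the composites $\cX''\to\cX$ and $\cX'''\to\cX'$ (each a composition of simple blowups), not for the individual factors, and what you need is to \emph{descend} to the factor $\cX'\to\cX$; moreover $\cX'\to\cX$ does not factor through $\cX''$ or $\cX'''$ (rather $\cX''\to\cX$ factors through $\cX'$, and $\cX'''\to\cX'$ through $\cX''$), so the sentence ``a further application of the composition principle to the factoring $\cX'\to\cX$ through $\cX''$ and $\cX'''$'' is not a proof. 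What is needed—and what the paper isolates as Lemma~\ref{L304}—are the descent-type deductions from the identity $A_{\cX\cX''}=A_{\cX'\cX''}\cap r_{\cX'\cX''}^{-1}(A_{\cX\cX'})$: (i) surjectivity of $r_{\cX\cX''}\colon A_{\cX\cX''}\to\D(\cX)$ implies surjectivity of $r_{\cX\cX'}\colon A_{\cX\cX'}\to\D(\cX)$; (ii) the same applied to the tower $\cX'\leftarrow\cX''\leftarrow\cX'''$ gives surjectivity of $r_{\cX'\cX''}\colon A_{\cX'\cX''}\to\D(\cX')$; (iii) injectivity of $r_{\cX\cX''}$ on $A_{\cX\cX''}$ together with (ii) gives injectivity of $r_{\cX\cX'}$ on $A_{\cX\cX'}$, by lifting two points of $A_{\cX\cX'}$ with equal image to $A_{\cX'\cX''}$ and noting the lifts lie in $A_{\cX\cX''}$ by the set identity. (The paper avoids your $\cX'''$ by observing that surjectivity, once proved for an arbitrary dominating pair, applies to the pair $(\cX',\cX'')$; your $\cX'''$ accomplishes the same, but only once (i)--(iii) are actually carried out.) Finally, these arguments yield bijectivity only; to conclude a homeomorphism you should, as the paper does, invoke continuity of $r_{\cX\cX'}$, compactness of $A_{\cX\cX'}$ and Hausdorffness of $\D(\cX)$.
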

\begin{cor}\label{C401}
  The images under $r_{\cX\cX'}$ of the active simplices in $\D(\cX')$
  form a simplicial $\Z$-subdivision of $\D(\cX)$.
  As a consequence, there exists a unique, $\Z$-PA map 
  $i_{\cX'\cX}\colon\D(\cX)\to\D(\cX')$ such that $i_{\cX'\cX}(\D(\cX))=A_{\cX\cX'}$ and
  $r_{\cX\cX'}\circ i_{\cX'\cX}=\id$.
\end{cor}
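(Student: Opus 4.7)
The approach is to derive Corollary~\ref{C401} as a formal consequence of Proposition~\ref{P301}, which I assume supplies the homeomorphism $r\colon A_{\cX\cX'}\to\D(\cX)$, where I write $r=r_{\cX\cX'}$ and $A=A_{\cX\cX'}$. From there, the two assertions of the corollary---the subdivision property and the existence, uniqueness and $\Z$-PA nature of $i_{\cX'\cX}$---follow by combinatorial bookkeeping, with a single arithmetic point (unimodularity) that requires genuine work.

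First I will verify the subdivision claim. For each active simplex $\sigma'\in A$, the restriction $r|_{\sigma'}$ is a linear bijection onto its image $r(\sigma')\subset\D(\cX)$, with matrix $(a_{ij})\in\Z_{\ge 0}$ of full rank, so $r(\sigma')$ is a rational polytope and in fact a $\Z$-subsimplex of the face of $\D(\cX)$ in which it sits. The family $\{r(\sigma')\}_{\sigma'\in A}$ covers $\D(\cX)$ by surjectivity of $r|_A$, and by the homeomorphism property, for two active simplices $\sigma_1',\sigma_2'$ we have $r(\sigma_1')\cap r(\sigma_2')=r(\sigma_1'\cap\sigma_2')$; since $\sigma_1'\cap\sigma_2'$ is a common face of the $\sigma_i'$ in $\D(\cX')$ and each $r|_{\sigma_i'}$ is simplex-linear, this intersection is a common face of the two images. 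Together these observations yield a simplicial $\Z$-subdivision of $\D(\cX)$.

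Next I will construct $i_{\cX'\cX}$ as the inverse $(r|_A)^{-1}$. By Proposition~\ref{P301} this is a homeomorphism onto $A$ satisfying $r\circ i_{\cX'\cX}=\id$ and $i_{\cX'\cX}(\D(\cX))=A$; uniqueness with these properties is immediate from the bijectivity of $r|_A$. On each piece $r(\sigma')$ of the subdivision, $i_{\cX'\cX}$ coincides with $(r|_{\sigma'})^{-1}\colon r(\sigma')\to\sigma'$, which a priori is only $\Q$-affine. To conclude that $i_{\cX'\cX}$ is $\Z$-PA, I must show this inverse is in fact integer affine from $r(\sigma')$ (with its $\Z$-structure inherited from $\D(\cX)$) to $\sigma'$, equivalently that the lattice map induced by $r|_{\sigma'}$ is unimodular.

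The main obstacle is precisely this unimodularity step. My plan is to handle it by reduction to simple blowups via Lemma~\ref{L303}: the morphism $\cX'\to\cX$ is dominated by $\cX''\to\cX$ which is a composition of simple blowups, and unimodularity is stable under composition and inversion of the resulting $r$-maps. For a single simple blowup along a smooth center $W\subset\cX_0$ contained in the components $E_i$ for $i\in J_W$, the pullback formula $\rho^*E_i=E'_i+a_iE'$ (with $a_i=1$ if $i\in J_W$ and $a_i=0$ otherwise, together with $\rho^*E_j=E'_j$ for the $j\notin J_W$ that $W$ meets transversely) makes the combinatorial change explicit as a stellar subdivision of the face $\sigma_{Y_W}$ at the interior point $r(v_{E'})$; the integer matrix on each sub-simplex can then be checked directly to have determinant $\pm1$. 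Combining this with the reduction above completes the proof that $i_{\cX'\cX}$ is $\Z$-PA.
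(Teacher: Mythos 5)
Your proposal is essentially correct, and it runs along the same lines as the paper, which in fact states Corollary~\ref{C401} without any written proof: it is meant to be read off from Proposition~\ref{P301} together with the observation, built into the definition of an active simplex, that $r_{\cX\cX'}$ maps each active $\sigma'$ homeomorphically onto a $\Z$-subsimplex of the same dimension. Your covering/disjointness bookkeeping and the identification $i_{\cX'\cX}=(r_{\cX\cX'}|_{A_{\cX\cX'}})^{-1}$ are exactly this; note only that in the paper's slightly generalized simplicial complexes the intersection $\sigma'_1\cap\sigma'_2$ is a \emph{union} of common faces, and that faces of active simplices need not themselves be active, so the family of images is a subdivision in the sense of generating one. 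Where you go beyond the paper is in isolating and proving the integrality of the inverse (the $\Z$-PA claim), and your route -- reduce to compositions of simple blowups via Lemma~\ref{L303} and check determinant $\pm1$ on active simplices of a single simple blowup (where $\rho^*E_i=E'_i+E'$ or $E'_i$) -- is the natural one, matching the ad hoc strategy the paper itself uses to prove Proposition~\ref{P301}. Be aware, though, that the phrase ``unimodularity is stable under composition and inversion'' hides the actual mechanism: given an active $\sigma'$ for $r_{\cX\cX'}$, you should use Proposition~\ref{P301} applied to $\cX''\to\cX'$ to find a simplex $\sigma''$ active for $r_{\cX'\cX''}$ whose image is full-dimensional in $\sigma'$, invoke Lemma~\ref{L401} to see that $\sigma''$ is then active for $r_{\cX\cX''}$, and write $(r_{\cX\cX'}|_{\sigma'})^{-1}=r_{\cX'\cX''}|_{\sigma''}\circ(r_{\cX\cX''}|_{\sigma''})^{-1}$ on the full-dimensional rational subpolytope $r_{\cX\cX''}(\sigma'')\subset r_{\cX\cX'}(\sigma')$; since a $\Q$-affine map that is $\Z$-affine on a full-dimensional rational subpolytope is $\Z$-affine, this transfers integrality from the simple-blowup case to the general one. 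Finally, your ``stellar subdivision at $r(v_{E'})$'' picture only describes the case where the blowup center $W$ is itself a stratum; when $W\subsetneq Z$ the new vertex is not active, the active simplices are the strict transforms mapping by the identity matrix, and no subdivision of $\D(\cX)$ occurs -- harmless for the determinant computation, but worth stating correctly.
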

When $\pi$, $\pi'$ and $\rho$ are projective, one can prove Proposition~\ref{P301} 
using the algebraic tool of valuations. Here we follow an ad hoc approach,
based on Lemma~\ref{L303}.
\begin{lem}\label{L401}
  Suppose $\cX$, $\cX'$ and $\cX''$ are snc models, with 
  $\cX'$ dominating $\cX$ and $\cX''$ dominating $\cX'$.
  Let $\sigma''$ be a simplex of $\D(\cX'')$, and let $\sigma'$ be the 
  smallest simplex of $\D(\cX')$ containing $r_{\cX'\cX''}(\sigma'')$.
  Then $\sigma''$ is active for $r_{\cX\cX''}$ iff $\sigma''$ is active for
  $r_{\cX'\cX''}$ and $\sigma'$ is active for $r_{\cX\cX'}$.
  As a consequence, $A_{\cX\cX''}=A_{\cX'\cX''}\cap r_{\cX'\cX''}^{-1}(A_{\cX\cX'})$.
\end{lem}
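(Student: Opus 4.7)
The plan is to use the composition identity $r_{\cX\cX''}=r_{\cX\cX'}\circ r_{\cX'\cX''}$ from~\S\ref{S303} together with careful bookkeeping of the strata involved. Let $Y''$ be the stratum of $\sigma''$, let $Y'_{\sigma''}$ denote the minimal stratum of $\cX'_0$ containing $\rho'(Y'')$ (so that $r_{\cX'\cX''}(\sigma'')\subset\sigma_{Y'_{\sigma''}}$ and $\sigma'$ is a face of $\sigma_{Y'_{\sigma''}}$), let $Y'_\bullet\supset Y'_{\sigma''}$ be the stratum of $\sigma'$, and let $Y\subset Y_\ast$ denote the minimal strata of $\cX_0$ containing $(\rho\circ\rho')(Y'')$ and $\rho(Y'_\bullet)$, respectively. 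Using $(\rho\circ\rho')^*E_i=\rho'^*(\rho^*E_i)$ and applying~\eqref{e305} to $\rho$ and $\rho'$, I would first check that the $\Q$-linear map defining $r_{\cX\cX''}|_{\sigma''}$ factors as the composition of those defining $r_{\cX'\cX''}|_{\sigma''}$ and $r_{\cX\cX'}|_{\sigma_{Y'_{\sigma''}}}$.

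For the ``if'' direction, assume $\sigma''$ is active for $r_{\cX'\cX''}$ and $\sigma'$ is active for $r_{\cX\cX'}$. Activity of $\sigma''$ forces $r_{\cX'\cX''}|_{\sigma''}$ to be a homeomorphism onto a full-dimensional subsimplex of $\sigma_{Y'_{\sigma''}}$, which cannot lie in any proper face; hence $\sigma'=\sigma_{Y'_{\sigma''}}$ and $Y'_\bullet=Y'_{\sigma''}$. Combining the bimeromorphicity of $\rho'|_{Y''}\colon Y''\to Y'_{\sigma''}$ and of $\rho|_{Y'_{\sigma''}}\colon Y'_{\sigma''}\to Y_\ast$ (coming from the two activity hypotheses) yields $(\rho\circ\rho')|_{Y''}\colon Y''\to Y_\ast$ bimeromorphic; density of the image in $Y_\ast$ forces $Y_\ast=Y$. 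The two linear isomorphisms compose to an isomorphism, witnessing activity of $\sigma''$ for $r_{\cX\cX''}$.

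Conversely, assume $\sigma''$ is active for $r_{\cX\cX''}$. Injectivity of the composed linear map forces $r_{\cX'\cX''}|_{\sigma''}$ to be injective, so $r_{\cX'\cX''}(\sigma'')$ has dimension $\dim\sigma''=\dim\sigma$, whence $\dim\sigma_{Y'_{\sigma''}}\geq\dim\sigma''$, i.e.\ $\dim Y'_{\sigma''}\leq\dim Y''$. Since $(\rho\circ\rho')|_{Y''}$ is bimeromorphic, $\rho'|_{Y''}$ is generically injective with $\dim\rho'(Y'')=\dim Y''$; coupled with $\rho'(Y'')\subset Y'_{\sigma''}$, this forces $\dim Y'_{\sigma''}=\dim Y''$, so $\rho'(Y'')$ is dense in $Y'_{\sigma''}$ and $\rho'|_{Y''}$ is bimeromorphic onto $Y'_{\sigma''}$, with $\sigma'=\sigma_{Y'_{\sigma''}}$. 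This yields activity of $\sigma''$ for $r_{\cX'\cX''}$. Cancelling the now-invertible first factor in the linear composition, combined with density of $\rho(\rho'(Y''))$ in $Y$ (giving $Y_\ast=Y$) and the generic injectivity of $\rho|_{Y'_{\sigma''}}$ inherited from $\rho\circ\rho'$, establishes activity of $\sigma'$ for $r_{\cX\cX'}$. The final identity $A_{\cX\cX''}=A_{\cX'\cX''}\cap r_{\cX'\cX''}^{-1}(A_{\cX\cX'})$ then follows simplex by simplex. The main obstacle is the dimension count in the only-if direction that pins down $\sigma'=\sigma_{Y'_{\sigma''}}$, which is what makes the factorization of linear maps align compatibly with the two given dominations.
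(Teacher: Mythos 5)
Your argument is correct and follows essentially the same route as the paper: you use the composition identity $r_{\cX\cX''}=r_{\cX\cX'}\circ r_{\cX'\cX''}$ (and the corresponding composition of stratum morphisms), prove the ``if'' direction by composing the linear isomorphisms and bimeromorphic maps, and prove the ``only if'' direction by splitting the composite into an injective and a surjective factor and squeezing dimensions via the relation $\dim Y_\sigma+\dim\sigma=\mathrm{const}$, which is exactly the paper's mechanism. Your extra bookkeeping identifying the minimal strata and smallest simplices (e.g.\ $\sigma'=\sigma_{Y'_{\sigma''}}$, $Y_\ast=Y$) just makes explicit compatibilities that the paper's proof takes for granted, and your treatment of the final set identity is at the same level of detail as the paper's.
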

\begin{proof}
  To ease notation, set $r':=r_{\cX'\cX''}$ and $r:=r_{\cX\cX'}$.
  Let $\sigma$ be the smallest simplex of $\D(\cX)$ containing $r(\sigma')$.
  Write $Y$, $Y'$ and $Y''$ for the strata of $\cX_0$, $\cX'_0$ and $\cX''_0$
  corresponding to $\sigma$, $\sigma'$ and $\sigma''$, respectively.
  The restrictions $r'|_{\sigma''}\colon\sigma''\to\sigma'$
  and $r_{\sigma'}\colon\sigma'\to\sigma$ are given by $\Q$-linear maps,
  and we have induced morphisms $Y''\to Y'$ and $Y'\to Y$.

  First suppose that $\sigma''$ is active for $r'$ and $\sigma'$ 
  is active for $r$. Then $r'|_{\sigma''}$ and $r|_{\sigma'}$ are given 
  by $\Q$-linear isomorphisms; hence so is the composition 
  $r_{\cX\cX''}|_{\sigma''}$.
  Similarly, the maps $Y''\to Y'$ and $Y'\to Y$ are bimeromorphic morphisms; 
  hence so is the composition $Y''\to Y$.
  It follows that $\sigma''$ is active for $r_{\cX\cX''}$.

  Conversely, suppose $\sigma''$ is active for $r_{\cX\cX''}$. 
  Since the map $Y''\to Y$ is a bimeromorphic morphism, 
  the map $Y''\to Y'$ (resp.\ $Y'\to Y$) must be injective (resp.\ surjective).
  In particular, $\dim Y''\le\dim Y'$ and $\dim Y\le\dim Y'$. 
  Similarly, since the $\Q$-linear map defining 
  $r_{\cX\cX''}|_{\sigma''}=r|_{\sigma'}\circ r'|_{\sigma''}$ is an isomorphism,
  the $\Q$-linear map defining $r'|_{\sigma''}$ (resp.\ $r|_{\sigma'}$) must be
  injective (resp.\ surjective). 
  In particular, $\dim\sigma''\le\dim\sigma'$ and $\dim\sigma\le\dim\sigma'$. 
  Now 
  \begin{equation*}
    \dim Y''+\dim\sigma''=\dim Y'+\dim\sigma'=\dim Y+\dim\sigma=n-1,
  \end{equation*}
  so we infer that 
  $\dim Y''=\dim Y'=\dim Y$ and $\dim\sigma=\dim\sigma'=\dim\sigma''$.
  This further implies that the maps $Y''\to Y'$ and $Y'\to Y$ are 
  bimeromorphic morphisms, and that the $\Q$-linear maps 
  defining $r'|_{\sigma''}$ and $r|_{\sigma'}$ are isomorphisms. 
  Hence $\sigma''$ and $\sigma'$ are active for $r'$ and $r$, respectively.
\end{proof}
\begin{lem}\label{L304}
  Suppose $\cX$, $\cX'$ and $\cX''$ are snc models, with 
  $\cX'$ dominating $\cX$ and $\cX''$ dominating $\cX'$.
  \begin{itemize}
  \item[(a)]
    If $r_{\cX\cX''}\colon A_{\cX\cX''}\to\Delta(\cX)$ is surjective, then 
    so is $r_{\cX\cX'}\colon A_{\cX\cX'}\to\Delta(\cX)$.
  \item[(b)]
    If $r_{\cX\cX''}\colon A_{\cX\cX''}\to\Delta(\cX)$ is injective and 
    $r_{\cX'\cX''}\colon A_{\cX'\cX''}\to\Delta(\cX')$ is surjective, then 
    $r_{\cX\cX'}\colon A_{\cX\cX'}\to\Delta(\cX)$ is injective.
  \item[(c)]
    If $r_{\cX\cX'}\colon A_{\cX\cX'}\to\Delta(\cX)$ and 
    $r_{\cX'\cX''}\colon A_{\cX'\cX''}\to\Delta(\cX')$ are both surjective, then 
    so is $r_{\cX\cX''}\colon A_{\cX\cX''}\to\Delta(\cX)$.
  \item[(d)]
    If $r_{\cX\cX'}\colon A_{\cX\cX'}\to\Delta(\cX)$ and 
    $r_{\cX'\cX''}\colon A_{\cX'\cX''}\to\Delta(\cX')$ are both injective, then 
    so is $r_{\cX\cX''}\colon A_{\cX\cX''}\to\Delta(\cX)$.
  \end{itemize}
\end{lem}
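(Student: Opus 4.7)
The plan is to deduce all four statements from the formal diagram already in hand. By the compositionality $r_{\cX\cX''}=r_{\cX\cX'}\circ r_{\cX'\cX''}$ established in \S\ref{S303}, together with the set-theoretic identity $A_{\cX\cX''}=A_{\cX'\cX''}\cap r_{\cX'\cX''}^{-1}(A_{\cX\cX'})$ from Lemma~\ref{L401}, the three restrictions
\begin{equation*}
f:=r_{\cX\cX'}|_{A_{\cX\cX'}}\colon A_{\cX\cX'}\to\Delta(\cX),\quad g:=r_{\cX'\cX''}|_{A_{\cX'\cX''}}\colon A_{\cX'\cX''}\to\Delta(\cX'),\quad h:=r_{\cX\cX''}|_{A_{\cX\cX''}}\colon A_{\cX\cX''}\to\Delta(\cX)
\end{equation*}
satisfy $A_{\cX\cX''}=g^{-1}(A_{\cX\cX'})$ (as a subset of $A_{\cX'\cX''}$) and $h=f\circ g|_{A_{\cX\cX''}}$. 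Thus the lemma reduces to a purely formal statement about a composition of maps $f\circ g$ with the domain of $f$ equal to the image region distinguished by $g^{-1}$.

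Once this reduction is made, the four statements come out as standard diagram chases. For (a), given $p\in\Delta(\cX)$, pick $q\in A_{\cX\cX''}$ with $h(q)=p$ and take $f(g(q))=p$ with $g(q)\in A_{\cX\cX'}$. For (c), chain preimages: pick $q'\in A_{\cX\cX'}$ with $f(q')=p$, then $q\in A_{\cX'\cX''}$ with $g(q)=q'$; the latter automatically lies in $g^{-1}(A_{\cX\cX'})=A_{\cX\cX''}$, and $h(q)=p$. For (d), if $h(q_1)=h(q_2)$, then $f(g(q_i))$ agree, injectivity of $f$ gives $g(q_1)=g(q_2)$, and injectivity of $g$ on $A_{\cX'\cX''}\supset A_{\cX\cX''}$ gives $q_1=q_2$. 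For (b), given $q'_1,q'_2\in A_{\cX\cX'}$ with $f(q'_1)=f(q'_2)$, use surjectivity of $g$ to lift each to some $q_i\in A_{\cX'\cX''}$ with $g(q_i)=q'_i\in A_{\cX\cX'}$, which puts $q_i$ in $A_{\cX\cX''}$; then $h(q_1)=h(q_2)$ forces $q_1=q_2$ by injectivity of $h$, whence $q'_1=q'_2$.

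The real content is therefore entirely contained in Lemma~\ref{L401} and in the composition formula for the retractions $r_{\cX\cX'}$; the present lemma is a formal corollary. There is no genuine obstacle: once one writes down $h=f\circ g$ with the correct identification of the domain of $h$ as $g^{-1}(\mathrm{dom}(f))$, each implication is the appropriate elementary set-theoretic fact about compositions of maps. The only point requiring minor care in the write-up is to observe, in (b) and (d), that $A_{\cX\cX''}\subset A_{\cX'\cX''}$ so that one is entitled to invoke injectivity or surjectivity of $g$ on the larger set when chasing elements of the smaller one.
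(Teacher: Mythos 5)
Your proof is correct and follows essentially the same route as the paper: both deduce the lemma formally from the composition relation $r_{\cX\cX''}=r_{\cX\cX'}\circ r_{\cX'\cX''}$ and the identity $A_{\cX\cX''}=A_{\cX'\cX''}\cap r_{\cX'\cX''}^{-1}(A_{\cX\cX'})$ of Lemma~\ref{L401}, then chase elements. The paper only writes out case~(a) and leaves (b)--(d) to the reader; your write-up simply carries out those remaining chases, including the correct observation that lifts land in $A_{\cX\cX''}$ and that $A_{\cX\cX''}\subset A_{\cX'\cX''}$.
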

\begin{proof}
  This is formal consequence of the relations
  $r_{\cX\cX''}=r_{\cX\cX'}\circ r_{\cX'\cX''}$ and 
  $A_{\cX\cX''}=A_{\cX'\cX''}\cap r_{\cX'\cX''}^{-1}(A_{\cX\cX'})$.
  For example, let us prove~(a).
  Pick any point $w\in\D(\cX)$. The assumption implies that we can find
  $w''\in A_{\cX\cX''}$ with $r_{\cX\cX''}(w'')=w$. Then $w':=r_{\cX'\cX''}(w'')\in A_{\cX\cX'}$
  and $r_{\cX\cX'}(w')=w$.
  Thus~(a) holds. The proofs of~(b)--(d) are similar and left to the reader.
\end{proof}
\begin{lem}\label{L302}
  The assertions of Proposition~\ref{P301} hold when 
  $\rho$ is a simple blowup.
\end{lem}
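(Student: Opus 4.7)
The plan is to determine all simplices of $\D(\cX')$ together with the map $r := r_{\cX\cX'}$ by explicit local computation, splitting cases according to the codimension $s := \codim_{Y_{I_W}} W$ of the blow-up center. Set $I_W := \{i\in I : W\subset E_i\}$ and $I'_W := \{k\in I\setminus I_W : W\cap E_k\ne\emptyset\}$, let $\sigma_0\subset\D(\cX)$ be the simplex with vertex set $\{e_i : i\in I_W\}$, and set $p := |I_W|-1$. The blowup introduces a single new divisor $E'_\infty=\P(N_{W/\cX})$ with multiplicity $b'_\infty=\sum_{i\in I_W} b_i$, and from $\rho^*E_i=E'_i+E'_\infty$ for $i\in I_W$ together with $\rho^*E_k=E'_k$ otherwise, formula~\eqref{e305} yields $r(e'_\infty)=w^*_\infty$, the point in the relative interior of $\sigma_0$ with all barycentric coordinates equal to $1/b'_\infty$.

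I would next classify the simplices of $\D(\cX')$ in two families and check activeness. A simplex $\sigma'_J$ not involving $\infty$ corresponds to the strict transform of $Y_J$, a blowup along $W\cap Y_J$ that is non-empty iff $Y_J\not\subset W$; in that case $\rho|_{Y'_J}$ is birational and $r|_{\sigma'_J}$ is the identity embedding $\sigma_J\hookrightarrow\sigma_J$, so every such $\sigma'_J$ is active. For a simplex $\sigma'_{J\cup\{\infty\}}$, write $J_0:=J\cap I_W$ and $J_1:=J\cap I'_W$. A local coordinate computation on standard blowup charts realizes the stratum $E'_\infty\cap E'_J$ as a projective bundle $\P(N_{W/Y_{J_0}}|_{W_{J_1}})$ over $W_{J_1}:=W\cap\bigcap_{k\in J_1}E_k$, with fiber $\P^{p+s-|J_0|}$ and image $W_{J_1}\subset Y_{I_W\cup J_1}$ in $\cX_0$. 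Birationality of the restriction of $\rho$ forces the fiber to be a point, i.e.\ $|J_0|=p+s$, while the dimensional comparison $\dim\sigma'_{J\cup\{\infty\}}=\dim\sigma_{I_W\cup J_1}$ required by the $\Q$-linear isomorphism condition forces $|J_0|=p$. Both conditions hold simultaneously iff $s=0$ and $J_0=I_W\setminus\{i_0\}$ for some $i_0\in I_W$.

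The conclusion then splits by $s$. If $s>0$, no simplex involving $\infty$ is active, so $A_{\cX\cX'}$ consists only of strict transform simplices (in particular $\sigma'_{I_W}$, since $W\subsetneq Y_{I_W}$), and $r$ maps each identically to the corresponding simplex of $\D(\cX)$, yielding the required homeomorphism. If $s=0$, the active simplices are the strict transform simplices $\sigma'_J$ with $I_W\not\subset J$ together with the new simplices $\sigma'_{(I_W\setminus\{i_0\})\cup J_1\cup\{\infty\}}$ for $i_0\in I_W$ and $J_1\subset I'_W$ with $W_{J_1}\ne\emptyset$; the map $r$ sends each of the latter via a $\Z$-linear isomorphism onto the subsimplex of $\sigma_{I_W\cup J_1}$ with vertex set $\{e_j:j\in(I_W\setminus\{i_0\})\cup J_1\}\cup\{w^*_\infty\}$. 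As $i_0$ and $J_1$ vary, these images furnish precisely the stellar subdivision at $w^*_\infty$ of the closed star of $\sigma_0$ in $\D(\cX)$, which, combined with the unchanged strict-transform simplices, gives a $\Z$-simplicial subdivision of $\D(\cX)$ and hence the desired homeomorphism.

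The main obstacle is the local projective-bundle description of $E'_\infty\cap E'_J$ that underlies both steps: one has to carefully isolate the contribution of $J_0$, whose components impose linear constraints on the projective fiber of $\P(N_{W/\cX})$, from that of $J_1$, which merely restricts the base to $W_{J_1}$. Once this identification is in hand, the dimensional and birationality constraints conspire to force $s=0$ for any active $\infty$-simplex, simultaneously explaining why blow-up centers of positive codimension inside $Y_{I_W}$ produce no combinatorial change in $\D(\cX)$ and why codimension-$0$ centers yield exactly a stellar subdivision.
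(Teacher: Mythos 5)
Your argument is correct and follows essentially the same route as the paper's proof: your case split on $s=\codim_{Y_{I_W}}W$ is exactly the paper's dichotomy between $W$ strictly contained in the minimal stratum and $W$ equal to it, strict-transform simplices are treated identically, and your projective-bundle fiber count plus simplex-dimension comparison for the simplices containing $e'_\infty$ is the paper's toroidal calculation in different clothing, leading to the same conclusion that only the simplices omitting exactly one $i_0\in I_W$ are active and that their images give the stellar (``barycentric'') subdivision at $w^*_\infty$. The only caveats are cosmetic: activeness of a strict transform should be indexed by strata rather than index sets (the correct condition is $Y_J\not\subset W$, not $I_W\not\subset J$, since $E_{I_W}$ may have several components), and likewise $\sigma_0$, $W_{J_1}$ and $Y_{I_W\cup J_1}$ should be read per connected component, just as in the paper.
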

\begin{proof}
  This is well known (see~\eg~\cite[p.381]{KS06}) but we supply a proof
  for the convenience of the reader.
  To simplify notation, we set $r:=r_{\cX\cX'}$, 
  $A:=A_{\cX\cX'}$, $\D:=\D(\cX)$ and $\D':=\D(\cX')$.

  Let $W$ be the center of the blowup $\rho$, and 
  $Z$ the smallest stratum of $\cX_0$ containing $W$.
  Let $E_i$, $i\in I$ be the irreducible components of $\cX_0$,
  $J\subset I$ the subset such that $Z$ is an component of $E_J$,
  and $\sigma_Z$ the simplex defined by $Z$. 
  Let $E'_i$, $i\in I$ be the strict transform of $E_i$ to $\cX'$.
  Finally, let $E'$ be the exceptional divisor
  of $\rho$. It corresponds to a vertex $v'=v'_{E'}$ of $\D'$. 

  First assume $W\subsetneq Z$.
  In this case, $\D'$ is obtained from $\D$ by ``raising a tent over the 
  simplex $\sigma_Z$''. Let us be more precise.
  Consider a simplex $\sigma$ of $\D$, corresponding to a
  stratum $Y$ of $\cX_0$. 
  By the definition of a simple blowup, $W$ meets every irreducible component 
  of $\cX_0$ transversely (if at all). It follows that $Y$ cannot be contained in $W$, 
  so $\rho$ is a biholomorphism above a general point of $Y$.
  Thus the strict transform $Y'$ of $Y$ defines a stratum of $\cX'_0$
  as well as a simplex $\sigma'$ of $\D'$, whose vertices
  correspond to the strict transforms of the vertices of $\sigma$.
  In this case, $r$ maps $\sigma'$ onto $\sigma$, 
  and $\rho\colon Y'\to Y$ is a bimeromorphic morphism,
  so $\sigma'$ is active for $r$. 

  This proves that $r\colon A\to\D$ is surjective.
  To prove injectivity, consider a stratum $Y'$ of $\cX'_0$, 
  with corresponding simplex $\sigma'$ of $\D'$. 
  If $Y'$ is not contained in $E'$,
  then $\rho$ is a biholomorphism at the general point of $Y'$,
  $Y:=\rho(Y')$ is a stratum of $\cX_0$ of the same dimension
  as $Y'$, and $Y'$ is the strict transform of $Y$. Thus we are in the
  situation above. 
  On the other hand, if $Y'$ is contained in $E'$, then there exist 
  irreducible components $E_i$, $i\in J$ of $\cX_0$, having 
  strict transforms $E'_i$, $i\in J$, such that $\sigma'$ has 
  $v'$ and $v'_i$, $i\in J$ as vertices.
  Since $W$ is not a stratum of $\cX_0$, the smallest stratum $Y$
  containing $\rho(Y')$ is cut out by $E_i$, $i\in J$.
  It follows that $r$ maps the simplex $\sigma'$ onto the lower-dimensional simplex $\sigma$,
  so $\sigma'$ is not active for $r$. 
  Hence $r\colon A\to\D$ is injective.

  \smallskip
  Now assume $W=Z$ is stratum of $\cX_0$, defining a 
  simplex $\sigma$ with vertices $v_i$, $i\in J$.
  In this case, $\D'$ is obtained from $\D$ by a barycentric subdivision 
  of the simplex $\sigma_Z$. 
  Again, let us be more precise. The same argument as above shows that if $Y$ is
  a stratum of $\cX_0$ that is not contained in $W$, and $Y'$
  is the strict transform, then the simplex $\sigma'_{Y'}$ 
  is active for $r$ and $r(\sigma'_{Y'})=\sigma_Y$.
  Further, $\sigma'_{Y'}$ is the unique simplex in $\cX'_0$ that is active for $r$ and whose
  image under $r$ meets the interior of $\sigma_Y$.

  It remains to consider strata of $\cX_0$ contained in $Z$.
  This becomes a toroidal calculation.
  Let $Y$ be such a stratum, cut out by $E_i$, $i\in K$, where
  $J\subset K$. Then $\rho^{-1}(Y)$ consists of $|J|$ strata
  $Y'_i$, $i\in J$, each cut out by $E'$ and $E'_j$, $j\in K\setminus\{i\}$.
  The restriction $\rho|_{Y'_i}\colon Y'_i\to Y$ is a bimeromorphic morphism,
  and the corresponding simplex $\sigma'_i$ is active for $r$ and maps
  homeomorphically onto a simplex contained in $\sigma_Y$.
  Further, these simplices $r(\sigma'_i)$ have disjoint interiors and 
  cover $\sigma_Y$.
  Finally, if $Y'$ is a stratum of $\cX'_0$ contained in
  $E=\rho^{-1}(Z)$, then $Y=\rho(Y')$ is a stratum contained in $Z$,
  hence $Y'=Y'_i$ is one of the strata above.
  This completes the proof.
\end{proof}
\begin{proof}[Proof of Proposition~\ref{P301}]
  Since $r_{\cX\cX'}$ is continuous, $A_{\cX\cX'}$ is compact, and 
  $\D(\cX)$ is Hausdorff, it suffices to prove that 
  $r_{\cX\cX'}\colon A_{\cX\cX'}\to\D(\cX)$ is bijective.
  
  Using Lemma~\ref{L304}~(c)--(d) and Lemma~\ref{L302}, one proves by 
  induction on the 
  number of blowups that $r_{\cX\cX'}\colon A_{\cX\cX'}\to\D(\cX)$ is bijective when
  $\cX'\to\cX$ is a composition of simple blowups. 

  Now consider the general case. 
  Using Lemma~\ref{L303} we find an snc model $\cX''$ 
  dominating both $\cX$ and $\cX'$ and such that the morphism
  $\cX''\to\cX$ is a composition of simple blowups. 
  Thus $r_{\cX\cX''}\colon A_{\cX\cX''}\to\D(\cX)$ is bijective.
  By Lemma~\ref{L304}~(a), it follows that 
  $r_{\cX\cX'}\colon A_{\cX\cX''}\to\D(\cX)$ is surjective.
  Since $\cX$ and $\cX'$ were arbitrary snc models with $\cX'$ 
  dominating $\cX$, it follows that $r_{\cX'\cX''}\colon A_{\cX'\cX''}\to\D(\cX)$ 
  is also surjective. It now follows from Lemma~\ref{L304}~(b) that
  $r_{\cX\cX'}\colon A_{\cX\cX''}\to\D(\cX)$ is injective, which completes the proof.
\end{proof}
%
%
\subsection{Induced maps between hybrid spaces}\label{S304}
To any snc model $\cX$ of $X$ we associated in~\S\ref{sec:hyb} 
a hybrid space $\cX^\hyb=X\coprod\D(\cX)$. Let us briefly recall
the topology on $\cX^\hyb$ in the present context.
Extend $\pi\colon X\to\DD^*$ to a map
\begin{equation*}
  \pi\colon\cX^\hyb\to\DD
\end{equation*}
by declaring $\pi=0$ on $\D(\cX)$.
For $0<r\le1$, define $\cX_{\DD_r}:=\pi^{-1}(\DD_r)$.  
The construction in~\S\ref{sec:hyb} yields, for $0<r\ll1$, a tropicalization map 
\begin{equation*}
  \log_\cX\colon\cX_{\DD_r}\to\D(\cX)
\end{equation*}
uniquely defined up to an additive error term of size $O((\log|t|)^{-1})$.
The topology on $\cX^\hyb$ is the coarsest one such that 
$\log_\cX$ is continuous, $\pi$ is continuous, and the inclusion
$X\subset\cX^\hyb$ is an open embedding.

Now suppose $\cX'$ and $\cX$ are snc models, with $\cX'$ dominating 
$\cX$ via $\rho\colon\cX'\to\cX$. 
Define the map $\rho^\hyb\colon\cX'^\hyb\to\cX^\hyb$ to be the
identity on $X\subset\cX'$ and equal to the map $r_{\cX\cX'}$ on
$\D(\cX')$ defined in~\S\ref{S303}.
\begin{prop}\label{P302}
  The map $\rho^\hyb$ is continuous and surjective.
  Further, we have 
  \begin{equation}\label{e304}
    \Log_\cX\circ\rho^\hyb=r_{\cX\cX'}\circ\Log_{\cX'}+O((\log|t|)^{-1})
  \end{equation}
  on $X_{\DD^*_r}$ for $0<r\ll1$.
\end{prop}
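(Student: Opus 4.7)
The plan is to establish the estimate \eqref{e304} first; continuity and surjectivity of $\rho^\hyb$ then follow with very little extra work. Surjectivity is immediate: the restriction of $\rho^\hyb$ to $X$ is the identity, while on $\Delta(\cX')$ it coincides with $r_{\cX\cX'}$, which is surjective onto $\Delta(\cX)$ by Proposition~\ref{P301}.

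The heart of the argument is a local computation of $\Log_\cU\circ\rho$ in adapted charts. I would fix $\xi'\in\cX'_0$, choose an adapted coordinate chart $(\cU',z')$ for $\cX'$ at $\xi'$ with $E'_0,\dots,E'_{p'}$ the components of $\cX'_0$ through $\xi'$, set $\xi=\rho(\xi')$, and pick an adapted chart $(\cU,z)$ for $\cX$ at $\xi$ with components $E_0,\dots,E_p$ of $\cX_0$ through $\xi$. After shrinking $\cU'$ we may assume $\rho(\cU')\subset\cU$. Because $\rho^*E_i=\sum_j a_{ij}E'_j$ (with the convention of~\S\ref{S303}), each coordinate satisfies $\rho^*z_i=u_i\prod_j (z'_j)^{a_{ij}}$ with $u_i$ nowhere vanishing holomorphic on $\cU'$. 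Using that $t$ is shared by the two models and taking $-\log$ divided by $-\log|t|$ yields
\[
\Log_\cU\circ\rho(p)_i
=\sum_{j=0}^{p'} a_{ij}\,\Log_{\cU'}(p)_j
+O\bigl((\log|t|)^{-1}\bigr)
\]
uniformly on compact subsets of $\cU'$, since $\log|u_i|$ is bounded there. Comparison with \eqref{e305} identifies the main term with $r_{\cX\cX'}\circ\Log_{\cU'}$, giving the local form of \eqref{e304}.

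To globalize, I would combine these local estimates via the same partition-of-unity argument used in Proposition~\ref{prop:globlog}, applied to a locally finite cover of $\cX'_0$ by adapted charts $(\cU'_\a, z'_\a)$ whose images under $\rho$ sit inside adapted charts $(\cU_\a, z_\a)$ for $\cX$. The identity $\Log_{\cV'}=\Log_{\cU'_\a}+O((\log|t|)^{-1})$ on compacts in $\cU'_\a$, combined with the analogous identity on the $\cX$-side and the local estimate above, yields \eqref{e304} on $X_{\DD^*_r}^*$ for $0<r\ll1$, with the error uniform on compact subsets.

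With \eqref{e304} in hand, continuity of $\rho^\hyb$ follows by verifying the three conditions of Definition~\ref{defi:hybtop}. Condition~(i) is trivial since $\rho^\hyb|_X=\id_X$; condition~(ii) follows from continuity of $\rho\colon\cX'\to\cX$, since $(\rho^\hyb)^{-1}\bigl((\cV\setminus\cX_0)\cup\Delta(\cX)\bigr)=(\rho^{-1}(\cV)\setminus\cX'_0)\cup\Delta(\cX')$ is open; condition~(iii) is exactly where \eqref{e304} intervenes, together with continuity of $\Log_{\cV'}$ and continuity of $r_{\cX\cX'}\colon\Delta(\cX')\to\Delta(\cX)$ from~\S\ref{S303}. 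The main obstacle is purely bookkeeping: keeping track of which irreducible components pass through $\rho(\cU')$ versus $\cU'$ so that the matrix $(a_{ij})$ in $\rho^*z_i=u_i\prod_j(z'_j)^{a_{ij}}$ matches the one used to define $r_{\cX\cX'}$ in \eqref{e305}.
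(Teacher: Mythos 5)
Your proposal is correct and follows essentially the same route as the paper: surjectivity from Proposition~\ref{P301}, the local computation $\rho^*z_i=u_i\prod_j(z'_j)^{a_{ij}}$ in adapted charts compared with~\eqref{e305} (the paper simply normalizes the charts so the units $u_i$ disappear, whereas you absorb $\log|u_i|$ into the $O((\log|t|)^{-1})$ error), globalization via Proposition~\ref{prop:globlog}, and continuity by unwinding Definition~\ref{defi:hybtop}.
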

\begin{proof}
  Surjectivity follows from Proposition~\ref{P301}, and continuity
  from~\eqref{e304} after unwinding the definitions.
  It remains to establish~\eqref{e304}. Consider any point
  $\xi'\in\cX_0$ and set $\xi=\pi(\xi')$.
  We can find adapted coordinate charts $(\cU',z')$ at $\xi'$ on
  $\cX'$ and $(\cU,z)$ at $\xi$ on $\cX$ such that 
  $\rho(\cU')\subset\cU$ and such that the following holds:
  $t=\prod_{i=0}^pz_i^{b_i}$ in $\cU$, 
  $t=\prod_{j=0}^{p'}(z'_j)^{b'_j}$ in $\cU'$ and
  $\rho^*z_i=\prod_j(z'_j)^{a_{ij}}$. 
  Since the map $r_{\cX\cX'}$ is given by~\eqref{e305},
  the result now follows from Proposition~\ref{prop:globlog}.
\end{proof}
%
%
\subsection{The limit hybrid space}\label{S305}
Proposition~\ref{P302} allows us to introduce
\begin{defi}
  The \emph{hybrid space associated to $X$} is the topological space
  \begin{equation*}
    X^\hyb:=\varprojlim_\cX\cX^\hyb,
  \end{equation*}
  where $\cX$ runs over all snc models of $X$.
\end{defi}
Here $X^\hyb$ is equipped with the inverse limit topology. 
The maps $\pi\colon\cX^\hyb\to\DD$ define a continuous and proper map
\begin{equation*}
  \pi\colon X^\hyb\to\DD
\end{equation*}
We can identify $X$ with the open subset $\pi^{-1}(\DD^*)$.
Similarly, the compact subset $X^\hyb_0:=\pi^{-1}(0)$ can 
be identified with $\varprojlim_\cX\D(\cX)$.
For every snc model $\cX$ we have, by the definition of the inverse
limit, a continuous proper map $r_\cX\colon X^\hyb\to\cX^\hyb$.
We also have an embedding $i_\cX\colon\D(\cX)\to X^\hyb$
of $\D(\cX)$ onto a closed subset of $X^\hyb_0$. It satisfies
$r_\cX\circ i_\cX=\id$ on $\D(\cX)$.
\begin{rmk}
  It is not clear how to define a map 
  $\Log\colon X^\hyb\to\varprojlim_\cX\D(\cX)$,
  since each tropicalization map
  $\Log_\cX$ is only defined on $X_{\DD^*(r)}$, where $r=r_\cX$
  depends on $\cX$. 
  See~\S\ref{S307} for a substitute in the projective case.
\end{rmk}
%
%
\subsection{Convergence of measures}\label{S306}
For any locally compact Hausdorff space $Z$, let $\cM(Z)$ denote the space 
of signed Radon measures on $Z$. By definition we have 
$X^\hyb=\varprojlim_\cX\cX^\hyb$, and this induces a 
homeomorphism 
\begin{equation*}
  \cM(X^\hyb)\simto\varprojlim_\cX\cM(\cX^\hyb).
\end{equation*}
Theorem~\ref{thm:genconv} now implies the following result, which is
equivalent to Corollary~B in the introduction.
\begin{cor}
  Let $\pi\colon X\to\DD^*$ be a proper submersion that is meromorphic
  at $0\in\DD$, and let $\p$ be a continuous metric on
  $K_{X/\DD^*}$ with analytic singularities.
  Then there exists a positive measure $\mu_0$ on $X^\hyb_0$ such that if 
  $\mu_t:=\frac{\la(t)^de^{2\p_t}}{|t|^{2\kappa_{\min}}(2\pi)^d}$,
  then $\lim_{t\to0}\mu_t=\mu_0$ in the sense of weak convergence of
  measures on $X^\hyb$.
  Further, there exists a snc model $\cX\to\DD$ and a $\Q$-line bundle
  $\cL$ on $\cX$ extending $K_{X/\DD^*}$ such that $\psi$ extends to a
  smooth metric on $\cL$, and 
  \begin{equation*}
    \mu_0:=\sum_{\sigma}\left(\int_{Y_\sigma}\Res_{Y_\sigma}(\p)\right)b_\sigma^{-1}\la_\sigma, 
  \end{equation*}
  where $\sigma$ ranges over the $d$-dimensional faces of $\D(\cL)$. 
  Here $\la_\sigma$ denotes normalized Lebesgue measure on $\sigma$
  and $b_\sigma=\gcd_{i\in J}b_i$, where $\cX_0=\sum_i b_iE_i$
  and $E_i$, $i\in J$ are the divisors defining $\sigma$.
\end{cor}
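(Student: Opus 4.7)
The plan is to reduce the statement to Theorem~\ref{thm:genconv} by exploiting the inverse-limit description $X^\hyb=\varprojlim_{\cX'}\cX'^\hyb$ together with the fact that snc models of $X$ form a directed set under domination. First, the analytic-singularities hypothesis furnishes one particular snc model $\cX$ and a $\Q$-line bundle $\cL$ on $\cX$ extending $K_{X/\DD^*}$, to which $\p$ extends as a continuous metric. Let $\mu_0$ be the positive measure on $X^\hyb_0$ defined by the explicit Theorem~A formula on $\D(\cL)\subset\D(\cX)$, transported through the natural embedding $i_\cX\colon\D(\cX)\hookrightarrow X^\hyb_0$.

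Next I will establish weak convergence $\mu_t\to\mu_0$ on $X^\hyb$ by testing against each $\cX'^\hyb$. By properness of $\pi$ and the mass asymptotics already contained in Theorem~A, the measures $\mu_t$ have uniformly bounded mass and are supported in a fixed compact subset of $X^\hyb$ for $|t|\ll 1$. By a Stone--Weierstrass argument analogous to the one used in the proof of Lemma~\ref{L306}, but now separating functions pulled back from varying $\cX'^\hyb$ and from $\DD$, together with the inverse-limit description of $X^\hyb$, it will suffice to show that for every snc model $\cX'$ the pushforwards $(r_{\cX'})_*\mu_t$ converge weakly in $\cM(\cX'^\hyb)$ to $(r_{\cX'})_*\mu_0$.

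For a given $\cX'$, I will pick an snc model $\cX''$ dominating both $\cX$ and $\cX'$; such a $\cX''$ exists because snc models are cofinal in the directed set of all models of $X$. Let $\rho\colon\cX''\to\cX$ and $\rho'\colon\cX''\to\cX'$ be the induced morphisms. Then $\cL'':=\rho^*\cL$ is a $\Q$-line bundle on $\cX''$ extending $K_{X/\DD^*}$, and $\rho^*\p$ extends as a continuous metric on $\cL''$, so Theorem~\ref{thm:genconv} applied on $\cX''$ yields an explicit weak limit on $\cX''^\hyb$. Pushing forward along the continuous proper map $(\rho')^\hyb\colon\cX''^\hyb\to\cX'^\hyb$ of Proposition~\ref{P302} gives the desired convergence on $\cX'^\hyb$, and functoriality of $(\cdot)^\hyb$ (again from Proposition~\ref{P302}) guarantees that these pushforward limits are compatible as $\cX'$ varies, so they assemble into a single measure on $X^\hyb$. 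Specializing to $\cX'=\cX$ identifies that measure with $\mu_0$.

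The main obstacle will be the second paragraph: making rigorous the equivalence between weak convergence on $X^\hyb$ and simultaneous weak convergence of all pushforwards to the $\cX'^\hyb$. Because the inverse limit runs over an uncountable directed set and each $\cX'^\hyb$ is only locally compact, one needs to show that for sequences of measures with uniformly bounded mass and uniformly compact support, testing against the subalgebra $\bigcup_{\cX'}(r_{\cX'})^*C^0(\cX'^\hyb)\subset C^0(X^\hyb)$ suffices; density of this algebra in $C^0(K)$ for compact $K\subset X^\hyb$ by Stone--Weierstrass (noting that the $r_{\cX'}$ are continuous and jointly separate points) then reduces everything to the finite-level convergence supplied by Theorem~\ref{thm:genconv}. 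The remaining steps are straightforward functoriality.
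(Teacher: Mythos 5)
Your architecture is the same as the paper's: the paper records the homeomorphism $\cM(X^\hyb)\simeq\varprojlim_\cX\cM(\cX^\hyb)$ and invokes Theorem~\ref{thm:genconv}, applied to pullbacks of $(\cL,\p)$ to snc models dominating the given $\cX$. Your Stone--Weierstrass argument (note that a product of pullbacks from several models is a pullback from a common dominating model, by directedness, so the algebra you use is controlled by the finite-level limits), together with the domination-and-pushforward step via Proposition~\ref{P302}, is in effect a proof of the assertion the paper leaves implicit, and this part is fine: it yields a limit measure $\tilde\mu_0$ on $X^\hyb$ characterized by $(r_{\cX'})_*\tilde\mu_0=\mu_0^{\cX'}$ for every snc model $\cX'$, where $\mu_0^{\cX'}$ is the Theorem~\ref{thm:genconv} limit on $\cX'^\hyb$.

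The gap is in your last step, ``specializing to $\cX'=\cX$ identifies that measure with $\mu_0$.'' Specialization to $\cX'=\cX$ only gives $(r_\cX)_*\tilde\mu_0=\mu_0^{\cX}$; since $r_\cX$ collapses all of $X^\hyb_0=\varprojlim_{\cX'}\D(\cX')$ onto $\D(\cX)$, this does not determine $\tilde\mu_0$, and in particular does not show that $\tilde\mu_0$ is supported on $i_\cX(\D(\cL))$ and equals your candidate $(i_\cX)_*\mu_0^\cX$ --- which is exactly what the ``Further'' clause of the statement asserts. What is missing is the compatibility under the section maps: for every snc model $\cX'$ dominating $\cX$, with $\cL'=\rho^*\cL$ and $\p'=\rho^*\p$, one needs $(i_{\cX'\cX})_*\mu_0^{\cX}=\mu_0^{\cX'}$, where $i_{\cX'\cX}$ is the $\Z$-PA section of $r_{\cX\cX'}$ from Corollary~\ref{C401}; testing $\tilde\mu_0$ against $r_{\cX'}^*C^0(\cX'^\hyb)$ then forces $\tilde\mu_0=(i_\cX)_*\mu_0^\cX$. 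This compatibility is the invariance of the explicit skeletal measure under pulling back the model data: top-dimensional faces of $\D(\cL')$ map isomorphically onto $\Z$-subsimplices of faces of $\D(\cL)$ with the same $b_\sigma$, the corresponding strata map bimeromorphically, and the residual measures pull back; this is precisely the computation of Lemma~\ref{L305} (stated there in the non-Archimedean projective setting), whose complex-analytic analogue has to be invoked or reproved here. Without some such argument the formula for $\mu_0$ in terms of the chosen model $\cX$ is not established, only the existence of the limit and the description of its pushforward $(r_\cX)_*\tilde\mu_0$.
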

%
%
\subsection{The projective case}\label{S307}
Now consider the case when $X\to\DD^*$ is projective.\footnote{In the projective case, the existence of the spaces 
  $\cX^\hyb$ and $X^\hyb$ was observed by Kontsevich and Soibelman,
see~\cite[p.383]{KS06}.}
As we now explain, we can then view $X^\hyb$ and its central fiber as analytic
spaces.

The projectivity assumption 
means that $X$ can be viewed as a smooth subspace $\P^N\times\DD^*$, defined by homogeneous polynomials with
coefficients that are holomorphic functions on $\DD^*$ and 
meromorphic at $0\in\DD$. 

We can view 
these coefficients as complex formal Laurent series, 
that is, elements of the field $K:=\C\lau{\unipar}$. Given $r\in(0,1)$,
this field admits a natural non-Archimedean absolute value that is 
trivial on $\C^*$ and normalized by $|\unipar|=r$. In other words, 
we have $|\sum_ja_j t^j|=r^{\min\{j\mid a_j\ne0\}}$.

Further, the equations defining $X$ now define a smooth projective
variety $X_K$ over the field $K$. To this variety we can associate a
non-Archimedean space $X_K^\an$, namely the Berkovich analytification
of $X_K$ with respect to non-Archimedean norm on $K$.
This is a connected and locally connected compact (Hausdorff) space.

We claim that $X^\hyb_0$ is homeomorphic on $X_K^\an$. 
To see this, we note that, for the same reasons as above, every
projective snc model $\cX\to\DD$ of $X$ defines a projective 
snc model $\cX_R$ of $X_K$ over the valuation ring 
$R=\C\cro{\unipar}$ of $K$. 
Further, the dual complex $\D(\cX)$ of $\cX$ can be identified with 
the dual complex $\D(\cX_R)$ of $\cX_R$. Now, there exists a canonical
retraction map $r_\cX\colon X_K\to\D(\cX_K)$, and we have 
\begin{equation}\label{e306}
  X_K^\an\simto\varprojlim_{\cX\ \text{projective snc}}\D(\cX).
\end{equation}
This was announced in~\cite[Theorem~10, p.383]{KS06}; 
see~\eg~\cite[Corollary~3.2]{siminag} for details.
On the other hand, Lemma~\ref{L303} implies that in 
$X^\hyb_0=\varprojlim_\cX\D(\cX)$, we may take the limit over 
projective snc models. This implies that $X^\hyb_0\simeq X_K^\an$.

\medskip
Next we analyze the space $X^\hyb$ itself, using~Appendix~\ref{S308}. 
Fix $0<r<1$ and consider the Banach ring 
\begin{equation*}
  A_r:=\left\{f=\sum_{\a\in\Z} c_\a\unipar^\a\in\C\lau{\unipar}\ \bigg|\ 
    \|f\|_{\hyb}:=\sum_{\a\in\Z}\|c_\a\|_{\hyb}r^\a<+\infty\right\},
\end{equation*}
where $\|\cdot\|_\hyb$ is the maximum of the usual norm 
and the trivial norm on $\C$.
The Berkovich spectrum $\cM(A_r)$ of $A_r$ is homeomorphic to $\overline{\DD}_r$.

Every function that is holomorphic on $\DD^*$ and meromorphic at
$0\in\DD$ defines an element of $A_r$.
Hence we can define the base change $X_{A_r}\subset\P^N_{A_r}$ using 
the same homogeneous equations as above. 
Then $X_{A_r}$ is a scheme of finite
type over $A_r$, so its analytification $X_{A_r}^\An$ is a compact
Hausdorff space with a continuous map $\pi_r$ onto 
$(\Spec A_r)^\An=\cM(A_r)\simeq\overline{\DD}_r$.
(In Appendix~\ref{S319}, this analytification is denoted by $X^\hyb$,
but here we use $X_{A_r}^\An$ for clarity.)
We have a homeomorphism 
\begin{equation}\label{e307}
  \tau^*\colon\pi_r^{-1}(\overline{\DD}^*_r)
  \simto X_{\overline{\DD}^*_r}
  \simto X^\hyb_{\overline{\DD}^*_r}
\end{equation}
and another homeomorphism 
\begin{equation}\label{e308}
  \tau_0\colon\pi^{-1}(0)\simto X^\hyb_0\simto X_K^\an. 
\end{equation} 
We glue these together to a map $\tau\colon X_{A_r}^\An\to X^\hyb_{\overline{\DD}_r}$.
\begin{prop}
  The map $\tau\colon X_{A_r}^\An\to X^\hyb_{\overline{\DD}_r}$ is
  homeomorphism.
\end{prop}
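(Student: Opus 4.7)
The plan is to verify that $\tau$ is a continuous bijection between compact Hausdorff spaces. Both $X_{A_r}^\An$ (Berkovich analytification of a proper $A_r$-scheme, with $\cM(A_r)\simeq\overline{\DD}_r$ compact) and $X^\hyb_{\overline{\DD}_r}$ (preimage of a compact set under the proper map $\pi\colon X^\hyb\to\DD$) are compact Hausdorff, so continuity and bijectivity will suffice.

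Bijectivity is immediate from the construction of $\tau$ as the gluing of the two bijections $\tau^*$ and $\tau_0$ along the partitions $X_{A_r}^\An=\pi_r^{-1}(\overline{\DD}_r^*)\sqcup\pi_r^{-1}(0)$ and $X^\hyb_{\overline{\DD}_r}=X_{\overline{\DD}_r^*}\sqcup X^\hyb_0$, both respected by the corresponding structure maps to $\overline{\DD}_r$.

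For continuity, I would exploit the inverse-limit description $X^\hyb=\varprojlim_\cX\cX^\hyb$: by the universal property of the inverse limit, $\tau$ is continuous if and only if $\tau_\cX:=r_\cX\circ\tau\colon X_{A_r}^\An\to\cX^\hyb$ is continuous for every projective snc model $\cX$ of $X$. On the open subset $\pi_r^{-1}(\overline{\DD}_r^*)$, the map $\tau_\cX$ is the natural composition $\pi_r^{-1}(\overline{\DD}_r^*)\simeq X_{\overline{\DD}_r^*}\hookrightarrow\cX^\hyb$, which is continuous by functoriality of analytification. At a point of the central fiber, seen as a point of $X_K^\an$, continuity of the non-Archimedean retraction $X_K^\an\to\D(\cX)$ is standard (see~\eqref{e306}). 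The delicate case is boundary continuity: for a net $\xi_\alpha\in X_{t_\alpha}$ with $t_\alpha\to 0$ converging in $X_{A_r}^\An$ to some $x\in X_K^\an$, one must show $\Log_\cX(\xi_\alpha)\to r_\cX(x)$ in $\D(\cX)$.

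To establish this boundary continuity, the plan is to work in adapted coordinates $(\cU,z)$ near the center of $x$ on $\cX$, with $t=\prod_{i=0}^p z_i^{b_i}$. The hybrid norm $\|\cdot\|_\hyb$ on $A_r$ used in the definition of $X_{A_r}^\An$ (Appendix~\ref{S308}) is designed precisely so that a net $\xi_\alpha$ converging to $x$ over $t_\alpha\to 0$ satisfies the rescaled convergence $\log|z_i(\xi_\alpha)|/\log|t_\alpha|\to\log|z_i|_x/\log|t|_x$ for each $i$, the right-hand side being the $i$-th coordinate of $r_\cX(x)$ on the face of $\D(\cX)$ determined by $\cU$. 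Combined with Proposition~\ref{prop:globlog}, which says that the global tropicalization $\Log_\cX$ differs from $\Log_\cU$ only by an error of size $O((\log|t|^{-1})^{-1})$, this yields $\Log_\cX(\xi_\alpha)\to r_\cX(x)$. The main obstacle is carefully extracting this renormalized convergence from the definitions in Appendix~\ref{S308}; everything else is a formal consequence of compactness, the universal property of the inverse limit, and Proposition~\ref{prop:globlog}.
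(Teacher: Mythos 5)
Your proposal is correct and follows essentially the same route as the paper: bijectivity from the gluing, continuity reduced via the inverse-limit description to each $\tau_\cX$, and then a local check in adapted coordinates using Proposition~\ref{prop:globlog}, with compactness of $X_{A_r}^\An$ and Hausdorffness of the target finishing the argument. The ``renormalized convergence'' you flag as the remaining obstacle is exactly the paper's short computation: for a hybrid seminorm over $t\ne 0$ one has $|\cdot|=|\cdot|_\infty^{\log r/\log|t|_\infty}$, so $\log|z_i|_\infty/\log|t|_\infty$ agrees (up to normalization) with the Berkovich-continuous function $\log|z_i|^{-1}$ on the set $\{|z_i|<1\}$, which gives continuity up to the central fiber.
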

\begin{proof}
  It follows from~\eqref{e307} and~\eqref{e308} that $\tau$ is a
  bijection.
  Since $X_{A_r}^\An$ is compact and $X_{\overline{\DD}_r}^\hyb$ is
  Hausdorff, it only remains to prove that $\tau$ is continuous.
  It suffices to show that the corresponding map 
  $\tau_\cX\colon X_{A_r}^\An\to\cX^\hyb_{\overline{\DD}_r}$ is continuous for
  a given snc model $\cX$.
  For this, in turn, it suffices to show that $\Log_\cX\circ\tau_\cX$
  is continuous near the central fiber.
  
  Consider a coordinate chart $(\cU,z)$ adapted to $\cX_0$ in the
  sense of~\S\ref{sec:hybtop}. Let $E_0,\dots,E_p$ be the irreducible
  components of $\cX_0$ intersecting $\cU$. 
  Let $\hat\cU\subset X_{A_r}^\An$ be the set of seminorms satisfying 
  $|z_i|<1$ for $0\le i\le p$. Then we have 
  \begin{align*}
    \Log_\cX\circ\tau_\cX
    &=\left(\frac{\log|z_i|_\infty}{\log|t|_\infty}\right)_{0\le i\le p}+O((\log|t|_\infty)^{-1})\\
    &=(\log|z_i|^{-1})_{0\le i\le p}+O((\log|t|_\infty)^{-1})
  \end{align*}
  on $\hat\cU\setminus\pi^{-1}(0)$.
  Now the function $(\log|z_i|^{-1})_i$ is continuous on $\hat\cU$
  with values in the simplex 
  $\sigma=\R_+^{p+1}\cap\{\sum_0^pb_iw_i=1\}\subset\D(\cX)$.
  This completes the proof, since we can cover a neighborhood
  of the central fiber in $X_{A_r}^\An$ with sets of the type $\hat{\cU}$.
\end{proof}
%
%
%
%
\section{Berkovich spaces and skeleta}\label{S316}
Our goal in this section and the next is to study the limit measure $\mu_0$ 
appearing in Corollary~B in more detail. This measure lives on a Berkovich space
and its support has an integral piecewise affine structure.

In this section we undertake a fairly general study of metrics on the 
canonical bundle of a projective variety 
defined over a discretely valued field of residue characteristic zero.
To such a metric is associated a skeleton, a subset of the
underlying Berkovich space. In the setting of Corollary~B, the 
skeleton will be the support of the measure $\mu_0$.

The material here has overlap with~\cite{MN,NX13} and also 
draws on~\cite{TemkinMetric},  but we present some details for the convenience of
the reader.

\medskip
Until further notice, $X$ denotes a smooth, proper, geometrically connected
variety over the field $K:=k\lau{\unipar}$ of formal Laurent series with coefficients in
an algebraically closed field $k$ of characteristic $0$. We set
$n:=\dim X$ and denote by $X^\an$ the Berkovich analytification of $X$
with respect the non-Archimedean absolute value $|\cdot|=r^{\ord_0}$
on $K$, for some fixed $r\in(0,1)$.

While $X^\an$ comes equipped with a structure sheaf, we shall merely
consider it as a topological space. Since $X$ is proper, $X^\an$ is
compact. There is a natural continuous surjective map $X^\an\to X$
such that the preimage of a (scheme) point $\xi\in X$ is identified
with the set of real-valued valuations\footnote{Here we use additive
  terminology; the multiplicative norm associated to $v$ is $r^v$.} 
$v$ on the residue field of $\xi$ satisfying $v|_{k^*}\equiv 0$ and 
$v(\unipar)=1$.
In particular, the preimage $X^{\val}$ of the generic point of $X$ 
consists of real-valued valuations of the function field $F(X)$.
%
%
%
%
\subsection{Models}
Set $S:=\Spec k\cro{\unipar}$. Following the convention of~\cite{MN}, we
define a \emph{model} of $X$ to be a normal separated scheme $\cX$, 
flat and of finite type (but possibly non-proper) over $S$, together
with an identification of the generic fiber of the structure morphism
$\pi\colon\cX\to S$ with $X$. 

For any two models $\cX$, $\cX'$, the identifications of the generic fibers
with $X$ induces a unique birational map $\cX'\dashrightarrow\cX$. 
We say that $\cX'$ \emph{dominates} $\cX$ if this map is a proper morphism.
Any two models can be dominated by a third.

For any model $\cX$ and every irreducible component $E$ of $\cX_0$, 
we set $b_E:=\ord_E(\unipar)$, and view the divisorial valuation 
$$
v_E:=b_E^{-1}\ord_{E}
$$
as an element of $X^{\val}\subset X^\an$. 
The set of such points is a dense subset $X^\div\subset X^\an$. 

We usually denote by $\cX_0=\sum_{i\in I} b_i E_i$ the irreducible
decomposition of the central fiber, and write $E_J:=\bigcap_{i\in J}
E_i$ for $J\subset I$. We say that $\cX$ is \emph{snc} if ($\cX$ is
regular and) $\cX_{0,\red}$ has simple normal crossing support. Since
$k$ has characteristic $0$, this means that each non-empty $E_J$ is
smooth over $k$, of codimension $|J|$ in $\cX$.

More generally, a model $\cX$ is \emph{toroidal} if $\cX\setminus\cX_0\subset\cX$ is a strict toroidal embedding in the sense of~\cite{KKMS}, \ie is formally isomorphic, at each closed point of $\cX_0$, to the inclusion of $\G_{m,k}^{n+1}$ in a toric $k$-variety, and such that each $E_i$ is normal (which then implies that each non-empty $E_J$ is normal).

Every model $\cX$ contains a largest snc Zariski open subset $\cX_\snc\subset\cX$. By Hironaka's theorem, $\cX$ is dominated by an snc model $\cX'$ such that the induced birational morphism $\mu\colon\cX'\to\cX$ is projective, and an isomorphism over $\cX_\snc$. 

If $\cX$ is a model of $X$, the set $\cX^\an\subset X^\an$ of
semivaluations that admit a center (or reduction)
on $\cX_0$ is a closed subset; it can be viewed as the generic fiber
of a suitable formal scheme~\cite[2.2.2]{MN}.
By the valuative criterion of properness, we have $\cX^\an=\cX'^\an$ for
each proper morphism of models $\cX'\to\cX$, and $\cX^\an=X^\an$ if
$\cX$ is proper (over $S$, that is). The reduction map 
$c_\cX\colon\cX^\an\to\cX_0$, taking a semivaluation to
its center, is anticontinuous.\footnote{Anticontinuity means that the inverse image of an open set is closed.}

The set $\cX^\div:=\cX^\an\cap X^\div$ consists of all 
divisorial valuations $v$ on $F(\cX)=F(X)$ that are centered on $\cX_0$, 
trivial on $k$ and such that $v(\unipar)=1$. 
%
%
%
%
\subsection{Model metrics}
If $L$ is a line bundle on $X$, a \emph{model} $\cL$ of $L$ is a
$\Q$-line bundle $\cL$ on a \emph{proper} model $\cX$, together with
an identification $\cL|_X=L$. It defines a \emph{model metric}
$\phi_\cL$ on the Berkovich analytification $L^\an$ of $L$. 
If $\cL'$ is another model of $L$, determined on a proper 
model $\cX'$ of $X$, then $\phi_\cL=\phi_{\cL'}$ 
if and only if the pull-backs of $\cL$ and $\cL'$ to some higher model $\cX''$ coincide. 

A model of $\cO_X$ is given by a $\Q$-Cartier divisor $D$ supported on
the central fiber of a proper model $\cX$; the corresponding model
metric will then be identified with the \emph{model function} $\phi_D\colon
X^\an\to\R$ defined by $\phi_D(v)=v(D)$. It satisfies
\begin{equation}\label{equ:minmod}
\inf_{X^\an}\phi_D=\min_E\phi_D(v_E)
\end{equation}
where $E$ runs over the irreducible components of $\cX_0$. 
%
%
%
%
\subsection{Log canonical divisors}
If $\cX$ is a regular model, $\pi\colon\cX\to S$ is a locally complete
intersection morphism, 
so the dualizing sheaf $\om_{\cX/S}$ is a well-defined line bundle
(see~\cite[\S4.1]{MN} for a more detailed discussion). For an
arbitrary (normal) model, we may thus introduce the \emph{relative
  canonical divisor (class)} $K_{\cX/S}$ as the Weil divisor class on
$\cX$ such that $\cO_{\cX_\reg}(K_{\cX/S})=\om_{\cX_\reg/S}$. We then define: 
\begin{itemize} 
\item[(i)] the \emph{canonical divisor} $K_\cX:=K_{\cX/S}+\pi^*K_S$;
\item[(ii)] the \emph{log canonical divisor} $K_\cX^\lo:=K_\cX+\cX_{0,\red}$; 
\item[(iii)] the \emph{relative log canonical divisor}
$$
K^\lo_{\cX/S}:=K^\lo_\cX-\pi^*K^\lo_S=K_{\cX/S}+\cX_{0,\red}-\cX_0. 
$$
\end{itemize}
Note that $K_\cX^\lo$ is $\Q$-Cartier if and only if $K_{\cX/S}^\lo$ is $\Q$-Cartier.
\begin{ex}\label{ex:log} 
Assume that $\cX$ is snc, and write as above $\cX_0=\sum_{i\in I} b_i E_i$. Pick a closed point $\xi\in\cX_0$, and denote by $J=\{0,\dots,p\}\subset I$ the set of components of $\cX_0$ passing through $\xi$. We may choose a regular system of parameters $z_0,\dots,z_n\in\cO_{\cX,\xi}$ such that $z_i$ is a local equation of $E_i$ for $0\le i\le p$, \ie $\unipar=u z_0^{b_0}\dots z_p^{b_p}$ for some unit $u\in\cO^*_{\cX,\xi}$. The logarithmic form 
$$
\Omega:=\frac{dz_0}{z_0}\wedge\dots\wedge\frac{dz_p}{z_p}\wedge dz_{p+1}\wedge\dots\wedge dz_n
$$
is then a local generator of $K^\lo_{\cX}$, and induces a local generator 
$$
\Omega^\rel:=\Omega\otimes(\frac{d\unipar}{\unipar})^{-1}
$$
of $K^\lo_{\cX/S}$.
 \end{ex}

\begin{rmk} When $\cX$ is snc, $\cO_\cX(K^\lo_{\cX/S})$ coincides with the relative logarithmic dualizing sheaf $\om_{\cX^+/S^+}$ of~\cite[(3.2.2)]{NX13}. When $\cX$ is regular, $\cO_\cX(K_\cX)$ is described in~\cite[Appendix A]{dFEM} as the determinant of the locally free sheaf $\Omega'_{\cX/k}\subset\Omega^1_{\cX/k}$ of \emph{special differentials}, corresponding to derivations $D$ of $\cO_\cX$ such that $D(f)=f'(\unipar)d\unipar$ for $f\in k\cro{\unipar}$. 
\end{rmk}
%
%
%
%
\subsection{Log discrepancies}
We refer to~\cite{dFKX},~\cite[\S2.2]{NX13} and~\cite{KNX} for more details and references on what follows. 

Let $\cX$ be a model with $K^\lo_\cX$ $\Q$-Cartier, and recall that $\cX^\div$ denotes the set of divisorial valuations $v$ on $\cX$ such that $v(\unipar)=1$. We define the \emph{log discrepancy} $A_\cX(v)$ as the log discrepancy of $v$ with respect to the pair $(\cX,\cX_{0,\red})$, in the usual sense of the Minimal Model Program. 

The log discrepancy function $A_\cX\colon\cX^\div\to\Q$ is characterized by the following property: if $\cX'$ is a model over $\cX$ with proper birational morphism $\rho\colon\cX'\to\cX$, then 
\begin{equation}\label{equ:logdisc}
K^\lo_{\cX'}=\rho^*K^\lo_\cX+\sum_E b_E A_\cX(v_E)E, 
\end{equation}
with $E$ running over the irreducible components of $\cX'_0$. 

We say that a model $\cX$ is \emph{log canonical} (\emph{lc} for short) 
and \emph{divisorially log terminal} (\emph{dlt}) if the pair $(\cX,\cX_{0,\red})$ 
has the corresponding property, in the sense of the Minimal Model Program. 

Since the generic fiber $X$ is smooth, a model $\cX$ is thus lc
if and only if $K^\lo_\cX$ is $\Q$-Cartier, with log
discrepancy function $A_\cX\colon\cX^\div\to\Q$ taking non-negative values. 
If $\cX$ is lc, then the center $c_\cX(v)\in\cX_0$ of a valuation $v\in\cX^\div$ with $A_\cX(v)=0$ is called an \emph{lc center} of $\cX$, and an lc model $\cX$ is dlt if and only if $\cX_\snc$ contains all lc centers. The irreducible components
of each non-empty $E_J$ are then normal, with generic point contained in 
$\cX_\snc$~\cite[4.16]{KollarBook}.

\begin{ex} 
  Assume that $\dim X=1$, and let $\cX$ be a dlt model. Each
  irreducible component $E_i$ is then a smooth curve. 
  At a point $\xi\in E_i\cap E_j$, $i\ne j$, $\cX$ is snc. 
  At a closed point $\xi\in\mathring{E}_i$, 
  $\cX$ is either regular, or has a cyclic quotient singularity. 
\end{ex}
\begin{ex} If $\cX$ is toroidal, then $\cX$ is lc, and $\cX$ is dlt 
  if and only if it is snc. Following~\cite{dFKX,KNX}, we could say
  that an lc model $\cX$ is \emph{qdlt} (for \emph{quotient of dlt}) 
  if its lc centers are contained in a toroidal open subset $\cU\subset\cX$. 
\end{ex}

\begin{ex}\label{ex:invadj} 
  If $\cX$ is any model such that $\cX_0$ has klt singularities (and hence is reduced), then $\cX$ is dlt, by inversion of adjunction. 
\end{ex}
%
%
%
%
\subsection{The skeleton of a dlt model}\label{sec:skel}
The \emph{dual complex} $\D(\cX)$ of an snc model $\cX$ is defined as the dual complex of the snc divisor $\cX_0=\sum_{i\in I} b_i E_i$, as in \S\ref{sec:dual}. It is equipped with a natural integral affine structure, in which the face $\sigma$ corresponding to a component $Y$ of a non-empty $E_J$ is identified with the simplex
\begin{equation*}
  \sigma=\left\{w\in\R_+^J\mid\sum_{i\in J} b_i w_i=1\right\},
\end{equation*}
in such a way that $M_\sigma=\Z^J$. 

As explained in~\cite[\S3]{siminag} and~\cite[\S3]{MN}, there is a
natural embedding 
\begin{equation*}
  \emb_\cX\colon\D(\cX)\to\cX^\an
\end{equation*}
that takes a point $w\in\sigma$ to the corresponding monomial valuation. 
In particular, the vertex corresponding to $E_i$ is 
sent to the divisorial valuation $v_{E_i}=b_i^{-1}\ord_{E_i}$. 
The value group of a valuation $v=\emb_\cX(w)$, $w\in\sigma$,
is given by 
$$
v(F(X)^*)= M_\sigma(w):=\{f(w)\mid f\in M_\sigma\}.
$$
Further, if $w\in\mathring{\sigma}$, then $Y_\sigma$ is the 
closure of the center of $\emb_\cX(w)$.

The resulting subspace
$\Sk(\cX):=\emb_\cX(\Delta_X)\subset\cX^\an\subset X^\an$ 
is called the \emph{skeleton} of $\cX$. It is naturally a $\Z$-PA space, the $\Z$-PA functions on $\Sk(\cX)$ being precisely the restrictions of model functions $\phi_D$ determined by a Cartier divisor $D$ on some proper modification $\cX'\to\cX$. 

We further have a natural retraction $r_\cX\colon\cX^\an\to\Sk(\cX)$, mapping a valuation $v$ centered on $\cX_0$ to the monomial valuation $r_\cX(v)$ taking the same values on the $E_i$'s. 
These retractions induce a homeomorphism 
$$
X^\an\simto\varprojlim_\cX\Sk(\cX), 
$$ 
where $\cX$ runs over all proper (or projective) snc models, compare~\eqref{e306}.

If $\cX'\to\cX$ is a proper morphism of snc models, then, by~\cite[3.1.7]{MN},
$$
\Sk(\cX)\subset\Sk(\cX')\subset\cX'^\an=\cX^\an,
$$
the first inclusion being $\Z$-PA\@. Further, 
$$
\bigcup_{\cX\ \text{snc}}\Sk(\cX)\subset X^\an
$$
coincides with the set of (quasi)monomial, or Abhyankar, valuations. 

For a dlt model $\cX$, the dual complex $\D(\cX)$ and skeleton
$\Sk(\cX)\subset X^\an$ are simply defined as those of $\cX_\snc$,
cf.~\cite{NX13}. The retraction $r_\cX\colon\cX^\an\to\Sk(\cX)$ can be
defined as above when $\cX$ is $\Q$-factorial, but its existence is
otherwise unclear (at least to us!).  

By~\cite{KKMS}, any toroidal model $\cX$ has a dual complex $\D(\cX)$
endowed with a natural integral affine structure. This dual complex is
canonically realized as a subspace $\Sk(\cX)\subset X^\an$, for
instance by setting $\Sk(\cX):=\Sk(\cX')$ for any toroidal
modification $\cX'\to\cX$ with $\cX'$ snc. Thus $\Sk(\cX)$
is equipped with a $\Z$-PA structure.
%
%
%
%
\subsection{From log discrepancies to Temkin's metric}
As noted in~\cite{valtree,psh,JM} in increasing order of generality,
log discrepancy functions extend in a natural way to Berkovich
spaces. More precisely, let $\cX$ be any model of $X$ 
such that $K^\lo_\cX$ is $\Q$-Cartier, with log discrepancy function $A_\cX\colon\cX^\div\to\Q$. For each snc model $\cX'$ properly dominating $\cX$, a simple computation going back (at least) to~\cite[Lemma 3.11]{KollarPairs} shows the following:
\begin{itemize} 
\item[(i)] the restriction of $A_\cX$ to $\Sk(\cX')$ is $\Z$-affine on each face of $\D(\cX')$; 
\item[(ii)] we have $A_\cX\ge A_\cX\circ r_{\cX'}$, the inequality being strict outside $\Sk(\cX')$. 
\end{itemize}
We may thus extend $A_\cX$ to an lsc function $A_\cX\colon\cX^\an\to[0,+\infty]$ by setting 
\begin{equation}\label{equ:Asup}
A_\cX(v):=\sup_{\cX'} A_\cX(r_{\cX'}(v))
\end{equation}
for any $v\in\cX^\an$. When $\cX$ is dlt, the log discrepancy function $A_\cX$ determines the skeleton as follows. 

\begin{prop}\label{prop:skdlt} If $\cX$ is dlt, then $\Sk(\cX)=\left\{v\in\cX^\an\mid A_\cX(v)=0\right\}$. 
\end{prop}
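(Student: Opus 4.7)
The plan is to prove the two inclusions separately, fixing throughout an snc model $\rho\colon\cX'\to\cX$ that is projective, birational, and an isomorphism over $\cX_\snc$; such a model exists by the Hironaka statement recalled earlier in the excerpt. Under this choice, $\Sk(\cX)=\Sk(\cX_\snc)$ embeds as a subcomplex of $\Sk(\cX')$: via the isomorphism $\rho^{-1}(\cX_\snc)\simeq\cX_\snc$, a monomial valuation along a stratum of $(\cX_\snc)_0$ is the same as the monomial valuation on $\cX'$ along the strict transform of that stratum, and in particular the vertex $v_{E_i}\in\D(\cX_\snc)$ coincides with $v_{E'_i}\in\D(\cX')$ whenever $E'_i$ is the strict transform of a component $E_i$ of $\cX_0$ meeting $\cX_\snc$.

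For the inclusion $\Sk(\cX)\subseteq\{A_\cX=0\}$: the log discrepancy formula~\eqref{equ:logdisc} applied to $\rho$ says that the coefficient of $E'$ in $K^\lo_{\cX'}-\rho^*K^\lo_\cX$ is $b_{E'}A_\cX(v_{E'})$, and this coefficient vanishes at every $E'$ along which $\rho$ is a local isomorphism, that is, at every vertex of $\D(\cX_\snc)\subseteq\D(\cX')$. By property~(i), $A_\cX$ is $\Z$-affine on each face of $\D(\cX_\snc)$, so it vanishes on all of $\Sk(\cX)$.

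For the reverse inclusion, let $v\in\cX^\an$ with $A_\cX(v)=0$. The lc hypothesis and property~(i) imply $A_\cX\ge 0$ on $\Sk(\cX')$, so property~(ii) gives
\[
0=A_\cX(v)\ge A_\cX(r_{\cX'}(v))\ge 0,
\]
and the strict-inequality clause of~(ii) forces $v=r_{\cX'}(v)\in\Sk(\cX')$. Let $\tau'$ be the minimal face of $\D(\cX')$ whose relative interior contains $v$, with vertices $v_{E'_j}$ for $j\in J'$. Since the barycentric coordinates of $v$ in $\tau'$ are strictly positive, affinity of $A_\cX$ on $\tau'$ together with $A_\cX\ge 0$ at the vertices forces $A_\cX(v_{E'_j})=0$ for every $j\in J'$.

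The main obstacle is the remaining structural step: showing that each such $v_{E'_j}$ lies in $\Sk(\cX_\snc)$ and that the $v_{E'_j}$ all lie in a common face of $\D(\cX_\snc)$, so that the monomial combination $v$ lies in $\Sk(\cX)$. Here the dlt hypothesis is essential. First, since $A_\cX(v_{E'_j})=0$, the center $c_\cX(v_{E'_j})=\overline{\rho(E'_j)}$ is an lc center of $\cX$, hence has its generic point in $\cX_\snc$ by the definition of dlt recalled above. Since $\rho$ is an isomorphism over $\cX_\snc$, no $\rho$-exceptional prime divisor on $\cX'$ can have its generic point mapping into $\cX_\snc$; therefore $E'_j$ is not $\rho$-exceptional, but rather the strict transform of some component $E_{i_j}$ of $\cX_0$ whose generic point lies in $\cX_\snc$, and $v_{E'_j}=v_{E_{i_j}}$ is a vertex of $\D(\cX_\snc)$. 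Finally, the barycenter $w$ of $\tau'$ is a quasi-monomial lc place (as an affine combination of the $v_{E'_j}$ with positive weights), its $\cX'$-center is the stratum $Y'\subseteq\bigcap_j E'_j$, so by dlt the image $\overline{\rho(Y')}\subseteq\bigcap_j E_{i_j}$ meets $\cX_\snc$. Thus the $E_{i_j}$ share a common stratum in $\cX_\snc$ and the $v_{E_{i_j}}$ are the vertices of a single face of $\D(\cX_\snc)$; the monomial combination $v$ with weights $(w'_j)$ at these vertices therefore lies in $\Sk(\cX_\snc)=\Sk(\cX)$, completing the proof.
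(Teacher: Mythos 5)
Your strategy is sound and uses the same basic ingredients as the paper---properties (i) and (ii) of the extended log discrepancy, the vanishing of $A_\cX$ at the (strict transforms of the) components of $\cX_0$, and the dlt condition that lc centers lie in $\cX_\snc$---but you organize the argument around one fixed log resolution $\rho\colon\cX'\to\cX$ which is an isomorphism over $\cX_\snc$, whereas the paper first settles the snc case directly from (i)--(ii), then proves Lemma~\ref{lem:lccenter} (the center of \emph{any} $v\in\cX^\an$ with $A_\cX(v)=0$ is an lc center) and concludes by applying the snc case to the snc model $\cX_\snc$, using $\Sk(\cX)=\Sk(\cX_\snc)$ and $A_\cX=A_{\cX_\snc}$ on $\cX_\snc^\an$. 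Your forward inclusion, your use of the strict-inequality clause of (ii) to force $v=r_{\cX'}(v)\in\Sk(\cX')$, the vanishing of $A_\cX$ at the vertices of the minimal face $\tau'$, and the identification of each $E'_j$ as the strict transform of a component $E_{i_j}$ of $\cX_0$ with generic point in $\cX_\snc$ are all correct.

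Two points in your final paragraph need tightening. First, to invoke the dlt hypothesis you must know that the center of your interior point is an lc center in the paper's sense, i.e.\ the center of a \emph{divisorial} valuation with vanishing log discrepancy; ``quasi-monomial lc place'' is not literally covered by the definition. This is harmless---the barycenter of $\tau'$ has rational coordinates, hence is divisorial (or take any rational point of $\mathring{\tau'}$, as in the proof of Lemma~\ref{lem:lccenter})---but it should be said. Second, and more seriously, the concluding inference is a jump as written: knowing that the $v_{E_{i_j}}$ are vertices of a common face $\sigma_Z$ of $\D(\cX_\snc)$ does not by itself show that $v$---which is by construction the monomial valuation attached to the stratum $Y'$ of $\cX'_0$ with weights $w'_j$---coincides with the point of $\sigma_Z$ with those weights; if $\rho$ contracted $Y'$ these would be different valuations with the same weights. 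The missing link is available from what you have already proved: since $c_\cX(v)$, the generic point of $\overline{\rho(Y')}$, lies in $\cX_\snc$ and $\rho$ is an isomorphism over $\cX_\snc$, the generic point of $Y'$ is the unique preimage of that point, so $\rho$ is a local isomorphism there; hence $Y'$ is the strict transform of the stratum $Z$ and the two monomial valuations agree, giving $v\in\Sk(\cX_\snc)=\Sk(\cX)$. Insert this step. Alternatively, once you know $c_\cX(v)\in\cX_\snc$, you can avoid the face-matching entirely and finish as the paper does, by applying the snc case of the statement to the (non-proper) snc model $\cX_\snc$, on which $A_{\cX_\snc}=A_\cX$.
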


\begin{lem}\label{lem:lccenter} Assume that $\cX$ is lc, and pick $v\in\cX^\an$ with $A_\cX(v)=0$. Then $c_\cX(v)$ is an lc center of $\cX$. 
\end{lem}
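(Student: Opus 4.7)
Plan: The plan is to exhibit a divisorial lc place $w_0 \in \cX^\div$ with $c_\cX(w_0) = c_\cX(v)$, so that $c_\cX(v)$ qualifies as an lc center by definition.

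First I would fix any snc model $\rho\colon\cX'\to\cX$ dominating $\cX$ and set $w := r_{\cX'}(v)\in\Sk(\cX')$. The sup formula~\eqref{equ:Asup} yields $A_\cX(w)\le A_\cX(v)=0$; combined with $A_\cX\ge 0$ on $\cX^\an$ (which follows from the lc condition on $\cX^\div$ and the same sup formula), we obtain $A_\cX(w)=0$. The valuation $w$ is quasi-monomial and lies in the relative interior of a unique face $\sigma$ of $\Delta(\cX')$; by property~(i) recalled just before~\eqref{equ:Asup}, $A_\cX$ restricts to a $\Z$-affine function on $\sigma$. Being nonnegative and vanishing at an interior point, $A_\cX|_\sigma\equiv 0$, so every vertex $v_{E_i}$, $i\in J$, of $\sigma$ is a divisorial lc place of $\cX$. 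Choosing any rational interior point $w_0\in\mathring\sigma$ then produces a divisorial lc place with $A_\cX(w_0)=0$ and center $c_\cX(w_0)=\rho(\eta_{Y_\sigma})$, exhibiting the latter as an lc center of $\cX$.

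To complete the proof I need $c_\cX(v)=c_\cX(w_0)$, which requires choosing $\cX'$ carefully adapted to the Zariski closure $Z:=\overline{\{c_\cX(v)\}}$. The idea is to invoke Hironaka's theorem to take $\cX'\to\cX$ to be a log resolution of the pair $(\cX, Z+\cX_{0,\red})$, and then to perform an additional blowup along the strict transform $\tilde Z$ of $Z$ so that $\tilde Z$ becomes a stratum $Y_\sigma$ of $\cX'_0$. Since the resolution is an isomorphism above the generic point of $Z$, this arrangement yields $c_{\cX'}(v)=\eta_{\tilde Z}=\eta_{Y_\sigma}$, which coincides with $c_{\cX'}(w_0)$ for any rational $w_0\in\mathring\sigma$. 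Pushing down via $\rho$ then gives $c_\cX(v)=\rho(\eta_{Y_\sigma})=c_\cX(w_0)$, as desired.

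The main obstacle is the last maneuver: verifying that after the additional blowup along $\tilde Z$ the center of $v$ is exactly the generic point of the new exceptional component and not some further specialization. In local coordinates this amounts to $v$ taking equal values on the generators of the defining ideal of $\tilde Z$, which is not automatic for an arbitrary element of $\cX^\an$. The expected resolution is to iterate: if the center of $v$ is a proper subvariety of the exceptional divisor, one blows up further, and the hypothesis $A_\cX(v)=0$ combined with $A_\cX\ge 0$ (from the lc condition) should force termination, since each iteration that did not stabilize the center would produce an additional divisor of positive log discrepancy in any snc model containing it, contradicting the affine vanishing of $A_\cX$ on the relevant face. Once $v$ is centered at the generic point of a prime exceptional component $F$, the DVR structure of $\cO_{\cX',\eta_F}$ forces $v$ to be a positive scalar multiple of $v_F$, so that $v$ itself is divisorial; the equality $A_\cX(v)=0$ then shows that $F$ is an lc place, making $c_\cX(v)=\rho(\eta_F)$ directly an lc center.
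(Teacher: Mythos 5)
Your first paragraph is fine and matches the second half of the paper's argument: the retraction $w=r_{\cX'}(v)$ satisfies $A_\cX(w)=0$, the face $\sigma$ containing it in its relative interior has $A_\cX|_\sigma\equiv 0$, and any rational interior point gives a divisorial lc place whose center on $\cX$ is $\rho(\eta_{Y_\sigma})=c_\cX(w)$. The genuine gap is the remaining step, $c_\cX(v)=c_\cX(r_{\cX'}(v))$ for a suitable $\cX'$, and your proposed mechanism for it does not work. You try to choose $\cX'$ so that the center of $v$ on $\cX'$ itself is the generic point of a stratum lying over $Z=\overline{\{c_\cX(v)\}}$, and when this fails you suggest iterating blowups until $v$ is centered at the generic point of a prime divisor, concluding that $v$ is divisorial. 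But $v\in\cX^\an$ with $A_\cX(v)=0$ need not be divisorial, nor even quasi-monomial: for instance a monomial valuation with rationally independent irrational weights in the interior of a face on which $A_\cX$ vanishes is an lc place whose center never becomes a prime divisor after finitely many blowups, and worse, non-Abhyankar points can also occur. So the iteration need not terminate, and any argument whose endpoint is ``$v$ itself is divisorial'' proves something false in general. The claimed contradiction driving termination is also unsubstantiated: blowing up centers inside the locus where $A_\cX$ vanishes can perfectly well produce new divisors with $A_\cX=0$ (blowups of strata of $\sigma$ do exactly this), and non-stabilization of the center reflects the valuation-theoretic structure of $v$, not a log-discrepancy obstruction.

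The paper closes this gap differently, and only needs equality of centers on $\cX$, not on $\cX'$: one always has that $c_\cX(v)$ is a specialization of $c_\cX(r_{\cX'}(v))$, and conversely, since $c_\cX$ is anticontinuous, the set $c_\cX^{-1}(\overline{\{c_\cX(v)\}})$ is open and contains $v$; because $r_{\cX'}(v)\to v$ along the net of snc models, for all sufficiently high $\cX'$ the retraction $r_{\cX'}(v)$ lies in that open set, giving the reverse specialization and hence $c_\cX(r_{\cX'}(v))=c_\cX(v)$. If you replace your second and third paragraphs by this limit-plus-anticontinuity argument (no special choice of $\cX'$ adapted to $Z$, and no claim about the center of $v$ on $\cX'$), your first paragraph then completes the proof.
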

\begin{proof} We claim that, for every sufficiently high snc model $\cX'$ proper over $\cX$, $v':=r_{\cX'}(v)$ and $v$ have the same center on $\cX$. Indeed, the center of $v$ on $\cX'$ is a specialization of that of $r_{\cX'}(v)$, and hence $c_\cX(v)\in\overline{c_\cX(r_{\cX'}(v))}$. On the other hand, we have $\lim_{\cX'} r_{\cX'}(v)=v$. Since $c_\cX\colon\cX^\an\to\cX_0$ is anticontinuous, $c_\cX^{-1}(\overline{\{c_\cX(v)\}})$ is open, and hence contains $v':=r_{\cX'}(v)$ for some snc model $\cX'$ proper over $\cX$. As a result, $c_\cX(v')$ is a specialization of $c_\cX(v)$, and the claim follows. 

By~\eqref{equ:Asup}, we have $A_\cX(v')=0$, and it is thus enough to prove the result for $v'\in\Sk(\cX')$. If $\sigma$ is the unique face of $\D(\cX')$ containing $v'$ in its interior, then $A_\cX\equiv 0$ on $\sigma$, since $A_\cX$ is non-negative and affine on $\sigma$.  For any divisorial point $w$ in the relative interior of $\sigma$, we thus have $A_\cX(w)=0$ and $c_{\cX'}(v')=c_{\cX'}(w)$, which shows that $c_\cX(v')=c_\cX(w)$ is an lc center. 
\end{proof}

\begin{proof}[Proof of Proposition~\ref{prop:skdlt}] When $\cX$ is snc, the result is a direct consequence of (i) and (ii) above. When $\cX$ is dlt, we have by definition
$$
\Sk(\cX)=\Sk(\cX_\snc)\subset\cX_\snc^\an\subset\cX^\an,
$$
and $A_\cX=A_{\cX_\snc}$ on $\cX_\snc^\an$. It is thus enough to show that any $v\in\cX^\an$ with $A_\cX(v)=0$ belongs to $\cX_\snc^\an$, \ie satisfies $c_\cX(v)\in\cX_\snc$. But $c_\cX(v)$ is an lc center by Lemma~\ref{lem:lccenter}, and hence $c_\cX(v)\in\cX_\snc$ by definition of dlt singularities. 
\end{proof} 

Let $\cX$ be a proper model with $K^\lo_{\cX/S}$ $\Q$-Cartier. Viewed as a $\Q$-line bundle, the latter is then a model of $K_X$, and hence defines a model metric 
$\phi_{K^\lo_{\cX/S}}$ on $K_X^\an$. 
Further,~\eqref{equ:logdisc} shows that the lsc metric
\begin{equation}\label{equ:logdiscr}
  A_X:=\phi_{K^\lo_{\cX/S}}+A_\cX
\end{equation}
on $K_X^\an$ is independent of $\cX$. This is a special case of
Temkin's canonical metrization of the canonical bundle~\cite{TemkinMetric}.\footnote{That we obtain Temkin's metric follows from~\cite[Theorem~8.1.2]{TemkinMetric}. Note that Temkin uses multiplicative terminology.}
The \emph{weight function} of~\cite{MN}
associated to a pluricanonical form $\omega\in H^0(X,mK_X)$
is the function $A_X-\frac1m\log|\omega|$ on $X^\an$.
%
%
%
%
\subsection{The skeleton of a metric on $K_X$}
The purpose of this section is to introduce and study a slight generalization of the \emph{Kontsevich--Soibelman skeleton} introduced in~\cite{KS06} and further analyzed in~\cite{MN,NX13}. 
\begin{defi} 
  If $\p$ is a continuous (or usc) metric on $K_X^\an$, 
  set $\kappa:=A_X-\p$ and $\kappa_{\min}:=\inf_{X^\an}\kappa$. 
  The \emph{skeleton} of $\p$ is the compact set
  \begin{equation*}
    \Sk(\p)=\left\{x\in\Xan\mid \kappa(x)=\kappa_{\min}\right\}.
  \end{equation*}
\end{defi}
Note that $\kappa$ is an lsc function $X^\an\to(-\infty,+\infty]$, and
hence achieves its infimum. 

\begin{defi} 
  Let $\cL$ be a model of $K_X$ determined on a proper dlt model $\cX$. 
  We denote by $\D(\cL)$ the subcomplex of $\D(\cX)$ such that a face $\sigma$ 
  of $\D(\cX)$ is in $\D(\cL)$ if and only if each vertex of $\sigma$ achieves 
  $\min_i\kappa(v_i)$ with $\kappa=A_X-\phi_\cL$. 
\end{defi} 
Concretely, the values $\kappa(v_i)$ are computed as follows: we have 
$$
K^\lo_{\cX/S}=\cL+\sum_{i\in I} a_i E_i
$$
with $a_i\in\Q$, and $\kappa(v_{E_i})=a_i/b_i$. Note that each face of
$\D(\cX)$ contains at most one maximal face of $\D(\cL)$. 
\begin{prop}\label{prop:skelmetr} 
  Assume that $\p$ is a model metric on $K_X^\an$, 
  determined by a model $\cL$ of $K_X$ on a proper dlt model $\cX$ of
  $X$. Then $\Sk(\p)\subset\Sk(\cX)$, and $\kappa=A_X-\p$ is affine on 
  each face of $\D(\cX)$. In particular, 
  \begin{equation}\label{equ:skinf}
    \kappa_{\min}=\min_i\kappa(v_i),
  \end{equation}
  where $v_i$ runs over the vertices in $\D(\cX)$, 
  and $\Sk(\p)$ is the subset of $\Sk(\cX)\subset X^\an$ 
  corresponding to the subcomplex $\D(\cL)$ of $\D(\cX)$. 
\end{prop}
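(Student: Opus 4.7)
\smallskip

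The plan is to exploit the canonical decomposition of $\kappa$ coming from Temkin's formula~\eqref{equ:logdiscr}, combined with the vanishing of $A_\cX$ on the dlt skeleton and the fact that a model function attains its infimum at a divisorial vertex.

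First I would write $\kappa=A_X-\phi_\cL$ with $A_X$ computed on $\cX$ itself. Since $\cX$ is dlt, $K^\lo_{\cX/S}$ is $\Q$-Cartier, so by~\eqref{equ:logdiscr}
\[
\kappa=\phi_{K^\lo_{\cX/S}}-\phi_\cL+A_\cX=\phi_D+A_\cX,
\]
where $D:=K^\lo_{\cX/S}-\cL=\sum_{i\in I}a_iE_i$ is a $\Q$-Cartier divisor supported on $\cX_0$. Note that $\phi_D$ is an honest (piecewise $\Z$-)affine model function, while $A_\cX$ is lsc and everywhere nonnegative.

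Next I would analyse $\kappa$ on the skeleton $\Sk(\cX)$. By Proposition~\ref{prop:skdlt}, $A_\cX\equiv 0$ on $\Sk(\cX)$, so $\kappa|_{\Sk(\cX)}=\phi_D|_{\Sk(\cX)}$. For any face $\sigma$ of $\Delta(\cX)$ corresponding to a stratum cut out by $(E_i)_{i\in J}$, a point $w\in\sigma$ embeds as the monomial valuation with $w(E_i)=w_i$ for $i\in J$ and $w(E_k)=0$ for $k\notin J$ (since such $E_k$'s do not pass through the center). Hence
\[
\phi_D(w)=\sum_{i\in J}a_iw_i=\sum_{i\in J}\kappa(v_{E_i})\,(b_iw_i),
\]
which, using $\sum_{i\in J}b_iw_i=1$, exhibits $\kappa|_\sigma$ as a $\Z$-affine function and in fact as a convex combination of the vertex values $\kappa(v_{E_i})$. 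This gives the affinity statement and shows that the minimum of $\kappa$ on $\sigma$ is attained at a vertex of $\sigma$ with smallest $\kappa_i$.

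Then I would prove~\eqref{equ:skinf} by applying~\eqref{equ:minmod} to the model function $\phi_D$: since $D$ is supported on $\cX_0$,
\[
\inf_{X^\an}\phi_D=\min_{i\in I}\phi_D(v_{E_i})=\min_{i\in I}\kappa(v_{E_i}).
\]
Combined with $\kappa=\phi_D+A_\cX\ge\phi_D$ and the equality $\kappa(v_{E_i})=\phi_D(v_{E_i})$ at each vertex (where $A_\cX=0$), this forces $\kappa_{\min}=\min_i\kappa(v_i)$.

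Finally, to identify $\Sk(\psi)$: if $\kappa(v)=\kappa_{\min}$ then $\phi_D(v)\le\kappa_{\min}$, but $\phi_D\ge\kappa_{\min}$ everywhere, so $\phi_D(v)=\kappa_{\min}$ and $A_\cX(v)=0$. By Proposition~\ref{prop:skdlt}, $v\in\Sk(\cX)$, proving the inclusion $\Sk(\psi)\subset\Sk(\cX)$. By the affinity already established, a face $\sigma$ of $\Delta(\cX)$ lies entirely in $\Sk(\psi)$ if and only if every vertex of $\sigma$ realises $\kappa_{\min}$, which is exactly the definition of the subcomplex $\Delta(\cL)$. There is no genuine obstacle here; the only point requiring a little care is verifying that $K^\lo_{\cX/S}$ is $\Q$-Cartier on the possibly non-snc dlt model $\cX$ (so that the decomposition $\kappa=\phi_D+A_\cX$ makes sense globally), and that the lsc extension~\eqref{equ:Asup} of $A_\cX$ is compatible with the $\phi_D$ coming from $\cX$ rather than from a higher snc resolution.
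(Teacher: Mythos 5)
Your proposal is correct and follows essentially the same route as the paper: decompose $\kappa=\phi_D+A_\cX$ with $D=K^\lo_{\cX/S}-\cL$ via~\eqref{equ:logdiscr}, use $A_\cX\ge 0$ with vanishing locus $\Sk(\cX)$ (Proposition~\ref{prop:skdlt}) and $\inf\phi_D=\min_i\phi_D(v_i)$ from~\eqref{equ:minmod} to force $A_\cX(v)=0$ at any minimizer and to get~\eqref{equ:skinf}. Your explicit convex-combination computation of $\phi_D$ on a face just spells out the affinity step that the paper states directly.
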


\begin{proof} 
  Since the relative log canonical divisor $K^\lo_{\cX/S}$ and $\cL$ are both models of $K_X$, $D:=K^\lo_{\cX/S}-\cL$ is a $\Q$-Cartier divisor supported on $\cX_0$. The corresponding model function $\phi_D$ satisfies
  $\kappa=A_\cX+\phi_D$,  which shows that
  $\kappa|_{\Sk(\cX)}=\phi_D|_{\Sk(\cX)}$ is affine 
  on each face of $\D(\cX)$. Now pick $v\in\Sk(\p)$. By~\eqref{equ:minmod}, we get
\begin{equation*}
  \kappa(v)
  =A_\cX(v)+\phi_D(v)
  \ge\inf\phi_D
  =\min_i\phi_D(v_i)
  =\min_i(A_\cX+\phi_D)(v_i)\ge\inf_{X^\an}\kappa.
\end{equation*}
It follows that $A_\cX(v)=0$, and hence $v\in\Sk(\cX)$, by Proposition~\ref{prop:skdlt}. 
\end{proof}
%
%
\subsection{Residual boundaries}\label{S311} 
The following construction plays a crucial role for the understanding of the limit measure appearing in Corollary~B.

Consider a model metric $\cL$ of $K_X$ defined on a proper dlt model
$\cX$. Following~\S\ref{sec:rescplex} we explain how to associate 
a subklt pair $(Y,B^\cL_Y)$ to each stratum $Y$ of $\cX_0$ 
corresponding to a maximal simplex in $\D(\cL)$.

Let us first recall a few facts about adjunction. When $\cX$ is an snc
model, each stratum $Y$ comes with a boundary $B_Y:=\sum_{i\notin J_Y}
E_i\cap Y$. Here $(Y,B_Y)$ is log smooth, and
\begin{equation}\label{equ:poinca}
  K^\lo_{\cX/S}\big|_Y=K_{(Y,B_Y)}:=K_Y+B_Y, 
\end{equation}
the identification being provided by Poincar\'e residues. When $\cX$ is merely dlt, each stratum $Y$ is normal, and comes with a canonically defined effective $\Q$-divisor $B_Y$ such that $(Y,B_Y)$ is dlt and still satisfies~\eqref{equ:poinca} (cf.~\cite[4.19]{KollarBook}). We have
$$
B_Y=\sum_{i\notin J_Y} E_i\cap Y+B'_Y
$$
where $B'_Y$ is an effective $\Q$-divisor supported in the complement of $\cX_\snc$. 

\begin{ex} For each $i$, $E_i\cap(\cX\setminus\cX_\snc)$ contains finitely many prime divisors $F_{ik}$ of $E_i$. At the generic point of $F_{ik}$, $\cX$ has cyclic quotient singularities, and
$$
B_{E_i}=\sum_{j\ne i}E_j\cap E_i+\sum_k\left(1-\frac{1}{m_{ik}}\right)F_{ik}
$$
with $m_{ik}$ the order of the corresponding cyclic groups, cf.~\cite[3.36.3]{KollarBook}.
\end{ex}
Now let $\p$ be a model metric on $K_X^\an$, determined by a model $\cL$ of $K_X$ 
on a proper dlt model $\cX$ of $X$. Introduce as before the function 
$\kappa:=A_X-\p$ on $X^\an$, and note that the $\Q$-Cartier divisor
$$
D:=K^\lo_{\cX/S}-\cL-\kappa_{\min}\cX_0=\sum_i(\kappa(v_{E_i})-\kappa_{\min})b_i E_i
$$
is effective. 

\begin{lem}\label{lem:boundaries} 
  If $Y$ is a stratum of $\cX_0$ corresponding to a face $\sigma$ of $\D(\cL)$, 
  then $Y\not\subset\supp D$. It follows that the $\Q$-Cartier divisor 
  $$
  B^\cL_Y:=B_Y-D|_Y
  $$ 
  is well-defined, and we have a canonical identification 
  $\cL|_Y=K_{(Y,B^\cL_Y)}$ as $\Q$-line bundles. 
  Further, if $\sigma$ is a maximal face of $\D(\cL)$, 
  then the pair $(Y,B^\cL_Y)$ is subklt. 
\end{lem}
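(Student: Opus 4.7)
The plan is to verify the three assertions—that $Y \not\subset \supp D$, that $\cL|_Y = K_{(Y, B^\cL_Y)}$, and that subkltness holds for maximal $\sigma$—in sequence, leveraging both the hypothesis on $\sigma$ and the structure of dlt strata recalled in the excerpt.

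First, for $Y \not\subset \supp D$: since $\sigma$ is a face of $\Delta(\cL)$, every vertex $v_{E_i}$ with $i \in J_Y$ satisfies $\kappa(v_{E_i}) = \kappa_{\min}$ by definition of $\Delta(\cL)$, so these $E_i$ contribute with coefficient $0$ in $D$. On the other hand, $Y$ is an irreducible normal component of $E_{J_Y}$ whose generic point lies in $\cX_\snc$, so $Y$ is contained in $E_i$ exactly for $i \in J_Y$. Therefore $Y$ misses every component of $\supp D$, making $D|_Y$ and hence $B^\cL_Y = B_Y - D|_Y$ well-defined $\Q$-Cartier divisors on $Y$.

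Next, for the canonical identification, I would rewrite the defining relation as
\[
K^\lo_{\cX/S} = \cL + \kappa_{\min}\, \cX_0 + D
\]
and restrict both sides to $Y$. Since $\cO_\cX(\cX_0) \simeq \pi^*\cO_S([0])$ is trivial on any fiber of $\pi$, the term $\kappa_{\min}\cX_0$ restricts $\Q$-trivially to $Y$. Combined with the dlt adjunction $K^\lo_{\cX/S}|_Y = K_{(Y, B_Y)}$ recalled above, this yields $\cL|_Y = K_Y + (B_Y - D|_Y) = K_{(Y, B^\cL_Y)}$ as $\Q$-line bundles.

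Finally, for the subklt claim when $\sigma$ is maximal in $\Delta(\cL)$, I would decompose $B_Y = \sum_{j \notin J_Y} E_j|_Y + B'_Y$, where the first sum comes from Poincar\'e adjunction on the snc locus (coefficients equal to $1$) and $B'_Y$ is effective, supported in the non-snc locus, with coefficients strictly less than $1$ by the standard description of dlt adjunction (as in the example with $F_{ik}$ recalled in the excerpt). Since $Y \not\subset E_j$ for $j \notin J_Y$, the restriction $D|_Y$ splits cleanly, and the coefficient of $E_j|_Y$ in $B^\cL_Y$ becomes $1 - (\kappa(v_{E_j}) - \kappa_{\min})\, b_j$. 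The key step is to show $\kappa(v_{E_j}) > \kappa_{\min}$ for every $j \notin J_Y$ with $E_j \cap Y \neq \emptyset$: such a $j$ would enlarge $\sigma$ to a face $\tau$ of $\Delta(\cX)$ indexed by $J_Y \cup \{j\}$, and maximality of $\sigma$ in $\Delta(\cL)$ forces $\tau \not\subset \Delta(\cL)$, whence $v_{E_j}$ is not a vertex of $\Delta(\cL)$ and the strict inequality holds. The hard part is the bookkeeping around dlt (rather than snc) adjunction—correctly tracking the $B'_Y$ coefficients and using normality of strata to make sense of $E_j|_Y$—but once this is in place, comparing coefficients gives subkltness.
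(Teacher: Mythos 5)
Your treatment of the first two assertions is correct and essentially the paper's (which calls them clear): the components of $\supp D$ are the $E_i$ with $\kappa(v_{E_i})>\kappa_{\min}$, and since the generic point of $Y$ lies in $\cX_\snc$, $Y$ is contained only in the $E_i$ with $i\in J_Y$, whose associated vertices lie in $\D(\cL)$; the identification $\cL|_Y=K_{(Y,B^\cL_Y)}$ then follows from the dlt adjunction $K^\lo_{\cX/S}\big|_Y=K_{(Y,B_Y)}$ together with the triviality of $\cO_\cX(\cX_0)$. One slip of wording: $Y$ does not ``miss'' the components of $\supp D$ --- it typically meets them, which is the whole point of the last assertion --- it is merely not \emph{contained} in any of them, which is exactly what makes $D|_Y$ well defined.

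The subklt part, however, has a genuine gap. You reduce it to checking that every divisorial coefficient of $B^\cL_Y$ is $<1$ and then conclude by ``comparing coefficients''. That inference is valid only for log smooth pairs; here $\cX$ is merely dlt, so $Y$ may be singular and $B^\cL_Y$ need not have snc support, and a pair with boundary coefficients $<1$ can fail to be subklt (three lines through a point of the plane, each with coefficient $9/10$, already give a non-klt pair). What is needed, and what the paper's one-line proof supplies, is control of all non-klt places of $(Y,B_Y)$: by maximality of $\sigma$, every $E_i$ meeting $Y$ with $i\notin J_Y$ satisfies $\kappa(v_{E_i})>\kappa_{\min}$, so $\supp(D|_Y)$ contains each $E_i\cap Y$ and hence every lc center of the dlt pair $(Y,B_Y)$; since $B^\cL_Y=B_Y-D|_Y\le B_Y$, any divisorial valuation over $Y$ whose log discrepancy with respect to $(Y,B_Y)$ vanishes has center in an lc center, hence in $\supp(D|_Y)$, and therefore acquires strictly positive log discrepancy with respect to $(Y,B^\cL_Y)$, while all other valuations already have positive log discrepancy. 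Your coefficient bookkeeping can be promoted to this argument (the coefficient-one part of $B_Y$ is exactly $\sum_{i\notin J_Y}E_i\cap Y$, and every lc center of $(Y,B_Y)$ lies in it), but as written the decisive step is absent --- indeed you identify the ``hard part'' as tracking coefficients, which is not where the difficulty lies. Relatedly, your claim that $B'_Y$ has coefficients $<1$ is true but does not follow from the cyclic-quotient example you cite; it follows because a coefficient-one component of $B'_Y$ would be an lc center of $(Y,B_Y)$, hence an lc center of $(\cX,\cX_{0,\red})$, which must lie in $\cX_\snc$, contradicting $\supp B'_Y\subset\cX\setminus\cX_\snc$; and on the lc-center route one never needs the precise coefficients of $B'_Y$ at all.
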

We emphasize that $B^\cL_Y$ is not effective in general. 
\begin{proof} 
  The first two points are clear. When $\sigma$ is a maximal face, 
  each $E_i$ meeting $Y$ satisfies $\kappa(v_{E_i})>\kappa_{\min}$. 
  As a result, $D|_Y$ contains each lc center $E_i\cap Y$ of $(Y,B_Y)$, 
  which yields the last assertion.
\end{proof}
%
%
\subsection{Skeleta and base change}
Now we study how skeleta of snc models and of metrics behave under 
base change. 

For $m\in\Z_{>0}$ consider the Galois extension $K':=k\lau{\unipar^{1/m}}$ of
$K=k\lau{\unipar}$, with Galois group $G=\Z/m\Z$, and set $X'=X_{K'}$. 
Then $G$ acts on $X'^{\an}$ and the canonical map 
$p\colon X'^{\an}\to X^{\an}$ induces a homeomorphism 
\begin{equation*}
  X'^{\an}/G\simto X^{\an}.
\end{equation*}

If $\cX$ is a model of $X$, then its normalized base change yields a model 
$\cX'$ of $X'$ with a finite morphism $\rho\colon\cX'\to\cX$. 
 If $D$ is a $\Q$-divisor on $\cX$ defining a model function $\phi_D$ on $X^\an$, then 
\begin{equation}\label{equ:pullmodel}
  \phi_{\rho^*D}=m p^*\phi_D. 
\end{equation}

When $\cX$ is an snc model, $\cX'$ is toroidal,
by~\cite[pp.98--102]{KKMS}. 
The following rather detailed description will be useful later on. 

\begin{lem}\label{lem:base} 
  We have $p^{-1}(\Sk(\cX))=\Sk(\cX')$. 
  Further, for each face $\sigma$ of $\D(\cX)$, there exist positive
  integers $e_\sigma$, $f_\sigma$ and $g_\sigma$ satisfying 
  \begin{equation*}
    e_\sigma=\frac{m}{\gcd(m,b_\sigma)}
    \qand
    f_\sigma g_\sigma=\gcd(m,b_\sigma)
  \end{equation*}
  and such that the following properties hold:
  $p^{-1}(\sigma)$ is a union of $g_\sigma$ faces $\sigma'_\a$ of $\D(\cX')$, 
  and these are permuted by $G$. For each $\a$:
  \begin{itemize}
  \item[(a)]
      $p$ induces a $\Q$-affine isomorphism $\sigma'_\a\simto\sigma$;
  \item[(b)]
    $p$ induces a generically finite map $Y_{\sigma'_\a}\to Y_\sigma$, 
    of degree $f_\sigma$;
  \item[(c)]
    $mp^*M_\sigma\subset M_{\sigma'_\a}$, and 
    $[M_{\sigma'_\a}:mp^*M_\sigma]=e_\sigma$.
  \end{itemize}
  Furthermore, we have:
  \begin{itemize}
  \item[(i)] 
    $M_{\sigma'_\a}=p^*\left(mM_\sigma+\Z 1_{\sigma}\right)$;
  \item[(ii)] 
    $\vol(\sigma'_\a)=m^{\dim\sigma}\vol(\sigma)$;
  \item[(iii)] 
    $b_{\sigma'_\a}=b_\sigma/\gcd(m,b_\sigma)$. 
  \end{itemize}
\end{lem}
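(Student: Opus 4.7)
My plan is to reduce the statement to a purely toric, étale-local calculation near a point of $\mathring{Y}_\sigma$. Choose adapted coordinates $(z_0,\dots,z_n)$ so that $z_i$ cuts out $E_i$ for $0\le i\le p$ and $\unipar=\prod_{i=0}^p z_i^{b_i}$; the transverse coordinates $z_{p+1},\dots,z_n$ play no role and will be suppressed. In this chart, the local structure of $\cX'\to\cX$ is the normalization of
\begin{equation*}
R':=k\cro{s}[z_0,\dots,z_p]/(\textstyle\prod_{i=0}^p z_i^{b_i}-s^m).
\end{equation*}

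I will first split $\Spec R'$ into components. Setting $d:=\gcd(m,b_\sigma)$, $b_i=db_i'$, and $m=dm'$ with $\gcd(b_0',\dots,b_p',m')=1$, the defining polynomial factors as
\begin{equation*}
\prod_{i=0}^p z_i^{b_i}-s^m=\prod_{\zeta\in\mu_d(k)}\bigl(\textstyle\prod_{i=0}^p z_i^{b_i'}-\zeta s^{m'}\bigr),
\end{equation*}
and each factor is irreducible by a Newton polytope argument (the primitive displacement $(b_0',\dots,b_p',-m')$ is coprime). This produces $d$ irreducible components $V_\zeta$, permuted transitively by $G=\mu_m\twoheadrightarrow\mu_d$. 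Since $k$ is algebraically closed of characteristic zero, each $V_\zeta$ contributes one stratum above $Y_\sigma$ with residue field $k$, so I expect $f_\sigma=1$ and $g_\sigma=d$.

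Next I will analyze a single component $V_1$ torically. Its normalization $\widetilde V_1$ is the affine toric variety for the cone $\sigma_V:=\R_{\ge0}^{p+2}\cap(N_V)_\R$ in the saturated rank-$(p+1)$ lattice $N_V:=\ker((b_0',\dots,b_p',-m'))$. The rays of $\sigma_V$ are the coordinate axes, and the primitive lattice vector on the $i$-th ray yields a divisor $E'_i$ with $b_{E'_i}=b_i'/\gcd(m',b_i')$. The simplex $\sigma'\subset\D(\widetilde V_1)$ corresponding to the closed stratum is the cross-section $\sigma_V\cap\{v_s=1\}$, realized in $\R^{p+1}$ as $\{v\ge0:\sum b_i'v_i=m'\}$ with $\Z$-affine structure inherited from the ambient. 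The Berkovich map acts as $p(v')=m^{-1}v'|_X$, i.e.\ on the simplex as $v\mapsto v/m$, a $\Q$-affine isomorphism $\sigma'\simto\sigma$, which establishes (a). Property (b) is immediate since $Y_{\sigma'}$ maps identically to $Y_\sigma$ via the transverse coordinates, so $f_\sigma=1$.

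The remaining items are direct computations using $M_\sigma=\Z^{p+2}/\Z(b_0,\dots,b_p,-1)$ and $M_{\sigma'}=\Z^{p+2}/\Z(b_0',\dots,b_p',-m')$. For (c), the map $mp^*$ sends $w_i\mapsto v_i$ and $1\mapsto m$, so $M_{\sigma'}/mp^*M_\sigma=\Z/m'=\Z/e_\sigma$; for (i), $p^*(mM_\sigma+\Z 1_\sigma)$ is generated by the $v_i$ and by $1$, which is already all of $M_{\sigma'}$. Identity (iii) reduces to the elementary fact that $\gcd_i(b_i'/\gcd(m',b_i'))=B'/\gcd(m',B')$ with $B':=\gcd_i b_i'$. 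For (ii), direct integration (the tangent lattice $\{v\in\Z^{p+1}:\sum b_i'v_i=0\}$ has covolume $b_0'/B'$ in the $(v_1,\dots,v_p)$-chart) gives $\vol(\sigma')=B'm'^p/(p!\prod b_i')$, which combined with Lemma~\ref{lem:integral} for $\sigma$ yields the ratio $m^p$. Globally, $p^{-1}(\Sk(\cX))=\Sk(\cX')$ follows by patching these local descriptions across all faces of $\D(\cX)$. I expect the main obstacle to be (ii): the $\Z$-affine structure on $\sigma'$ is the intrinsic one inherited from the toric lattice $N_V$, not the naive structure associated to the weights $b_{E'_i}$ via Lemma~\ref{lem:integral}, so the naive formula $b_{\sigma'}/(p!\prod b_{E'_i})$ gives the \emph{wrong} volume, and $\vol(\sigma')$ must be computed directly from the ambient lattice structure.
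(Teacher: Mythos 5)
Your local toric analysis (the splitting into $\gcd(m,b_\sigma)$ formal branches, the identification of $M_{\sigma'}$, and the computations behind (a), (c), (i)--(iii)) runs parallel to the paper's appeal to the KKMS computation and is essentially sound. The genuine gap is the passage from the formal-local picture to the global strata: from ``each $V_\zeta$ contributes one stratum above $Y_\sigma$ with residue field $k$'' you conclude $f_\sigma=1$ and $g_\sigma=\gcd(m,b_\sigma)$. Local analytic branches at a closed point of $\mathring{Y}_\sigma$ need not correspond to distinct global strata of $\cX'_0$: the monodromy of $Y_\sigma$ can permute the branches and glue several of them into a single stratum $Y_{\sigma'_\a}$, which then maps onto $Y_\sigma$ with degree $f_\sigma>1$. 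Concretely, take $n=1$ and $\cX$ smooth with $\cX_0=2E$ a multiple fiber, e.g.\ $\cX=(\DD_s\times E_\tau)/(\Z/2)$ for an elliptic curve $E_\tau$, with the free action $(s,z)\mapsto(-s,z+\e)$ for a $2$-torsion point $\e$, and $\unipar=s^2$. For $m=2$ the normalized base change is $\DD_s\times E_\tau$, whose central fiber is the single stratum $E_\tau$ mapping $2:1$ onto $E=E_\tau/\langle\e\rangle$: here $g_\sigma=1$ and $f_\sigma=2$, contradicting your claim, and only the product $f_\sigma g_\sigma=\gcd(m,b_\sigma)$ is determined in general. (Note also that the residue field of a closed point being $k$ is irrelevant; the residue degree that computes $f_\sigma$ is that of the monomial valuation attached to a general point of $\sigma$, whose residue field is the function field of $Y_\sigma$.)

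This is precisely why the paper supplements the local toric computation with ramification theory of valuations: for a general $w$ in the relative interior of $\sigma$, the corresponding valuation of $F(X)$ has $g_\sigma$ extensions to the Galois extension $F(X')=F(X)(\unipar^{1/m})$, one in each face $\sigma'_\a$, with residue degree $f_\sigma=\deg(Y_{\sigma'_\a}\to Y_\sigma)$ and ramification index $e_\sigma=m/\gcd(m,b_\sigma)$ computed from the value groups, whence $e_\sigma f_\sigma g_\sigma=m$ by~\cite{ZS} and $f_\sigma g_\sigma=\gcd(m,b_\sigma)$ --- with no claim that $f_\sigma=1$. To salvage your approach you would have to replace the ``one stratum per branch'' step by such a valuation-theoretic (or monodromy/covering-space) argument; as written, part (b) and the count of the faces $\sigma'_\a$ are not established.
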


\begin{proof} 
  The proof uses the toroidal theory of~\cite{KKMS} together with 
  elementary ramification theory of valuations~\cite{ZS}.

  Let $\sigma$ be the face of $\D(\cX)$ corresponding to an irreducible component 
  $Y$ of $E_0\cap\dots\cap E_p$. Set $b_i=\ord_{E_i}(\unipar)$. 
  With the identification
  \begin{equation*}
    \sigma=\{w\in\R_+^{p+1}\mid\sum_i b_i w_i=1\},
  \end{equation*}
  the integral affine structure $M_\sigma$ is given 
  by the lattice $\Z^{p+1}$. Note that $b_\sigma=\gcd_i b_i$.

  Given a closed point $\xi\in\mathring{Y}$, 
  we can find local coordinates $z_0,\dots,z_n$ in the formal 
  completion $\widehat\cO_{\cX,\xi}\simeq k\cro{z_0,\dots,z_n}$ 
  such that $\unipar=\prod_{i=0}^p z_i^{b_i}$. 
  A toric computation (cf.~\cite[pp.98--102]{KKMS}) 
  shows that $\xi$ has $\gcd(m,b_\sigma)$ preimages $\xi'_\a$ in 
  $\cX'_0$, with $\cX'$ formally isomorphic, at each $\xi'_\a$, 
  to the product of $\A_k^{n-p}$ with the affine toric $k$-variety 
  corresponding to the cone $\R_+^{p+1}\subset\R^{p+1}$ with lattice 
  $$
  M':=\Z^{p+1}+\Z\left(\frac{b_0}{m},\dots,\frac{b_p}{m}\right). 
  $$
  It follows that $p^{-1}(\sigma)$ is the union of the corresponding
  faces $\sigma'_\a$ of $\D(\cX')$, each isomorphic to 
  $$
  \sigma'=\left\{w'\in\R_+^{p+1}\mid\sum_i b_i w'_i=m\right\},
  $$
  with integral affine structure induced by $M'$. 
  Now $p$ restricts to a homeomorphism 
  $\sigma'_\a\simto\sigma$ given by $w=w'/m$. 
  Thus $M_{\sigma'_\a}=m p^*M_\sigma+\Z 1_{\sigma'_\a}$. 
  This implies~(i), and~(ii)--(iii) easily follow. 

  Now note that 
  \begin{multline*}
    [M_{\sigma'_\a}':mp^*M_\sigma]
    =[mp^*M_\sigma+\Z 1_{\sigma'_\a}:mp^*M_\sigma]\\
    =[\Z^{p+1}+\Z(\frac{b_0}{m},\dots,\frac{b_p}{m}):\Z^{p+1}]
    =\frac{m}{\gcd(m,b_\sigma)}
    =:e_\sigma.
  \end{multline*}

  It remains to analyze the degree $f_\sigma$ of the restriction $Y_{\sigma'_\a}\to Y_\sigma$.
  For this we use ramification theory.
  
  The function field $F(X')=F(X)(\unipar^{1/m})$ is a Galois extension of $F(X)$ of degree $m$, 
  with Galois group $G$. For any valuation $v'\in X'^\val$, we have $v'|_{F(X)}=m p(v')$. 
  
  Let $v\in X^\an$ be a valuation corresponding to a point $w\in\sigma$. 
  Assume $w$ is ``general'' in the sense that $\dim_\Q\sum_{i=0}^p\Q w_i=p$. 
  The point $w$ has $g_\sigma$ preimages $w'_\a$ under $p$, one in each $\sigma'_\a$, 
  and the valuations $v'_\a:=m^{-1}w'_\a$ are all the extensions of $v$ to $F(X')$. 
  Let us compute the residue degree and ramification index of these extensions.

  The residue fields of $v$ and $v'_\a$ are exactly the function fields of $Y$ and $Y'_\a$, 
  respectively, so the residue degree of the extension $v'_\a$ of $v$ is equal to $f_\sigma$.

  The value group $\Gamma_v=v(F(X))$ of $v$ is given by $\Gamma_v=\sum_{i=0}^p\Z w_i$.
  Similarly, the value group of $v'_\a$ is given by 
  $\Gamma_{v'_\a}=\frac1m\Z+\frac1m\sum_{i=0}^p\Z w'_i=\frac1m\Z+\sum_{i=0}^p\Z w_i$.
  It follows that the ramification index of the extension $v'_\a$ of $v$ is given by 
  \begin{equation*}
    [\Gamma_{v'_\a}:\Gamma_v]
    =[\frac1m\Z+\sum_{i=0}^p\Z w_i:\sum_{i=0}^p\Z w_i]
    =\gcd(\Z\cap m\sum_{i=0}^p\Z w_i)
    =\frac{m}{\gcd(m,b_\sigma)}
    =e_\sigma.
  \end{equation*}
  By~\cite[p.77]{ZS} we now have $e_\sigma f_\sigma g_\sigma=m$,
  which completes the proof.
\end{proof}

Next we study skeleta of metrics. Generalizing~\cite[Lemma 4.1.9]{NX13}, we prove: 
\begin{lem}\label{lem:skelmetrbase} 
  Let $\p$ be a continuous metric on $K_X^\an$, 
  $\p'$ the metric on $K_{X'}^\an\simeq p^*K_X^\an$ 
  corresponding to $p^*\p$, and set $\kappa':=A_{X'}-\p'$. 
  Then $\kappa'=m p^*\kappa$. 
  As a consequence, $\Sk(\p')=p^{-1}\Sk(\p)$ and 
  $\kappa'_{\min}=m\kappa_{\min}$.
\end{lem}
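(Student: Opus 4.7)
The proof proceeds by establishing the functional identity $\kappa'=m\,p^*\kappa$; the two consequences then follow directly from the surjectivity of $p$, giving $\kappa'_{\min}=\inf(m\,p^*\kappa)=m\kappa_{\min}$ and $\Sk(\p')=\{\kappa'=\kappa'_{\min}\}=p^{-1}\{\kappa=\kappa_{\min}\}=p^{-1}\Sk(\p)$. By density of model metrics among continuous ones, we may assume $\p=\phi_\cL$ for some model $\cL$ of $K_X$ on a proper snc model $\cX$, in which case $\p'=\phi_{\rho^*\cL}$ for the normalized base change $\rho\colon\cX'\to\cX$. Using Temkin's formula~\eqref{equ:logdiscr} in the form $A_X=\phi_{K^\lo_{\cX/S}}+A_\cX$, and similarly for $A_{X'}$, we rewrite $\kappa=\phi_D+A_\cX$ and $\kappa'=\phi_{D'}+A_{\cX'}$, where $D:=K^\lo_{\cX/S}-\cL$ and $D':=K^\lo_{\cX'/S'}-\rho^*\cL$ are $\Q$-Cartier divisors supported on the central fibers.

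The identity $\kappa'=m\,p^*\kappa$ thereby splits into two parallel scaling identities: $\phi_{D'}=m\,p^*\phi_D$ and $A_{\cX'}=m\,p^*A_\cX$. The first is a direct application of~\eqref{equ:pullmodel} once one verifies $D'=\rho^*D$; the divisor equality itself follows from the logarithmic ramification formula $K^\lo_{\cX'}=\rho^*K^\lo_\cX$ (standard for Kummer covers in log geometry) together with $\rho^*\cX_0=m\cX'_0$, an equality at the level of Cartier divisors reflecting $\unipar=(\unipar^{1/m})^m$. The second identity is verified first on divisorial valuations $v'\in X'^\div$ via~\eqref{equ:logdisc} applied to a common snc refinement dominating both $\cX$ and $\cX'$, with the ramification data described by Lemma~\ref{lem:base}; it then extends to all of $X'^\an$ via the supremum formula~\eqref{equ:Asup} and the compatibility of the associated retraction maps with $p$. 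Note that both scaling identities are trivial on the skeleta, where the log discrepancies vanish by Proposition~\ref{prop:skdlt}.

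The principal difficulty lies in the second scaling identity $A_{\cX'}=m\,p^*A_\cX$: off the skeleton one must carefully unpack the definition of the extended log discrepancy via higher snc models and match it against the Kummer ramification profile. This essentially generalizes \cite[Lemma~4.1.9]{NX13} from the case of skeletal points (where both sides vanish) to arbitrary valuations.
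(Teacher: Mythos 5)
Your overall strategy is sound, and its first half is correct: reducing to model metrics by density, identifying $\p'$ with the model metric of $\rho^*\cL$ on the normalized base change, the divisor identity $K^\lo_{\cX'/S'}=\rho^*K^\lo_{\cX/S}$ (hence $D'=\rho^*D$), and the resulting scaling $\phi_{D'}=m\,p^*\phi_D$ via~\eqref{equ:pullmodel} are all fine, as is the derivation of the two consequences from $\kappa'=m\,p^*\kappa$. The gap is in the second scaling identity $A_{\cX'}=m\,p^*A_\cX$, which you correctly single out as the principal difficulty but do not actually prove, and which is strictly stronger than anything the paper needs. The tools you point to do not reach it: \eqref{equ:logdisc} applied to an snc model of $X'$ dominating $\cX'$ only computes $A_{\cX'}$ at the divisorial valuations attached to components of its central fiber, i.e.\ at points of the corresponding skeleton, and Lemma~\ref{lem:base} only describes the ramification of $p$ over the faces of $\D(\cX)$, i.e.\ over $\Sk(\cX)$. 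Neither gives $A_{\cX'}(v')=m\,A_\cX(p(v'))$ for a divisorial $v'$ whose center is not a stratum, which is exactly the case where both sides are nonzero. What would close this step is the crepant finite pullback formula for log discrepancies (log Riemann--Hurwitz for the Kummer cover $\rho\colon\cX'\to\cX$, using $K^\lo_{\cX'}=\rho^*K^\lo_\cX$, the homogeneity of log discrepancy under scaling of valuations, and $v'|_{F(X)}=m\,p(v')$); this is true, but it is not among the ingredients you invoke, and supplying it is genuinely the hard part of your route.

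The paper's proof sidesteps this entirely, and that is the missing idea: after reducing to divisorial $v'\in X'^\div$ via~\eqref{equ:Asup}, one may replace $\cX$ by any higher snc model on which the model metric is still determined and which satisfies $p(v')\in\Sk(\cX)$; then $v'\in\Sk(\cX'')$ for an snc toroidal resolution $\cX''$ of the base change, because $p^{-1}(\Sk(\cX))=\Sk(\cX')$ by Lemma~\ref{lem:base}, so by Proposition~\ref{prop:skdlt} both $A_{\cX''}(v')$ and $A_\cX(p(v'))$ vanish and the identity at $v'$ reduces to precisely the model-function scaling you already established (log ramification formula plus~\eqref{equ:pullmodel}). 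In other words, the log-discrepancy comparison off the skeleton never has to be made; if you try to prove $A_{\cX'}=m\,p^*A_\cX$ at a general divisorial valuation by passing to models adapted to it, you will find yourself re-running exactly this skeletal reduction. So either insert the paper's model-adapted-to-$v'$ maneuver, or supply the finite crepant pullback formula explicitly; as written, the proposal leaves its key step unestablished.
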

\begin{proof} 
  By~\cite[Theorem~7.12]{Gub98} (see also~\cite[Corollary~2.3]{siminag}), 
  model metrics are dense in the set of continuous metrics
  on $K_X^\an$. 
  Hence we may assume $\p$ is a model
  metric. Using~\eqref{equ:Asup}, it is enough to show that
  $\kappa'(v')=m\kappa(p(v'))$ for a divisorial valuation $v'\in
  X'^\div$. Let $\cX$ be an snc model with $p(v')\in\Sk(\cX)$, and
  such that $\p=\phi_\cL$ for a model $\cL$ of $K_X$ on $\cX$. Since
  the normalized base change $\cX'$ of $\cX$ is toroidal, we can
  choose a toroidal modification $\cX''\to\cX'$ with $\cX''$ snc. The
  induced morphism $\rho\colon\cX''\to\cX$ is toroidal; 
  hence it satisfies the log ramification formula
  $$
  mK^\lo_{\cX''/S'}=\rho^*K^\lo_{\cX/S}. 
  $$
  By~\eqref{equ:pullmodel}, we infer 
  $\phi_{K^\lo_{\cX''/S'}}-\p'=p^*(\phi_{K^\lo_{\cX/S}}-\p)$, 
  which gives the desired result since $v'\in\Sk(\cX'')$, 
  $p(v')\in\Sk(\cX)$ imply $A_{\cX''}(v')=A_{\cX}(p(v'))=0$. 
\end{proof}
%
%
%
%
\section{Skeletal measures}\label{sec:skeletal}
From now on, we assume that $k=\C$, and that 
$X$ is a smooth, projective, geometrically connected
variety over the non-Archimedean field $K=\C\lau{\unipar}$. 
Our goal is to construct measures of the types appearing
in Theorem~A and Corollary~B.
%
%
\subsection{Residually metrized models}
As explained above, to any model $\cL$ of a line bundle $L$ on $X$,
defined on a proper dlt model $\cX$ of $X$, 
we can associate a skeleton $\Sk(\cL)\subset\Sk(\cX)\subset X^\an$. 
To produce a measure on $\Sk(\cL)$ we need additional data.
\begin{defi} 
  Let $L$ be a line bundle on $X$. A \emph{residually metrized model}
  of $L$ is a pair $\cL^{\#}=(\cL,\psi_0)$ where $\cL$ is a model of
  $L$, determined on a proper dlt model $\cX$ of $X$, and $\psi_0$ is a
  continuous Hermitian metric on $\cL_0:=\cL|_{\cX_0}$, viewed as a
  holomorphic line bundle over the complex space $\cX_0$. A
  \emph{residually metrized model metric} $\psi^{\#}$ on $L$ 
  is an equivalence class of such pairs, modulo pull-back to a higher model. 
\end{defi}

\begin{ex} If $L$ is trivial, then any choice of trivialization $s\in
  H^0(X,L)$ defines a residually metrized model metric $\psi^{\#}$ 
  on $L$, determined on any model $\cX$ by $\cL=\cO_{\cX}$ 
  and $\psi_0$ the trivial metric on $\cO_{\cX_0}$. 
\end{ex}
%
%
\subsection{Residual measures}\label{S309}
Let $\cL^{\#}=(\cL,\p_0)$ be a residually metrized model of $K_X$,
determined on a proper dlt model $\cX$. If $Y$ is a stratum
corresponding to a top-dimensional face of $\D(\cL)$, 
Lemma~\ref{lem:boundaries} shows that the restriction of $\p_0$ to $\cL|_Y$ 
induces a Hermitian metric $\p_Y$ on $K_{(Y,B^\cL_Y)}:=K_Y+B^\cL_Y$, 
with $(Y,B^\cL_Y)$ subklt. 
By Lemma~\ref{lem:subklt}, we may thus introduce: 
\begin{defi} 
  Let $Y$ be a stratum corresponding to a top-dimensional face of
  $\D(\cL)$. 
  The \emph{residual measure} of $\cL^{\#}$ on $Y$ is the (finite) positive measure 
  $$
  \Res_Y(\cL^{\#}):=\exp\left(2(\p_Y-\phi_{B^\cL_Y})\right). 
  $$ 
\end{defi}
This definition is of course compatible with one
in~\S\ref{sec:rescplex}, and can be more explicitly described as
follows. 
Let $\xi$ be a (closed) point of $Y\cap\cX_\snc$, index the
irreducible components $E_0,\dots,E_p$ passing through $\xi$ so that $Y$ is a component of $\bigcap_{0\le i\le d} E_i$ with $d=\dim\D(\cL)\le p$. In the notation of Example~\ref{ex:log}, the Poincar\'e residue
\begin{equation*}
  \Res_Y(\Omega)
  =\left(\frac{dz_{d+1}}{z_{d+1}}\wedge\dots\wedge\frac{dz_p}{z_p}\wedge 
    dz_{p+1}\wedge\dots\wedge dz_n\right)\bigg|_Y
\end{equation*}
is a generator of $K_{(Y,B_Y)}=K^\lo_{\cX/S}\big|_Y$. 
Setting $a_i:=\kappa(v_{E_i})b_i\in\Q$, we have 
\begin{equation*}
  K^\lo_{\cX/S}=\cL+\sum_i a_i E_i,
\end{equation*}
and we may thus view 
\begin{equation*}
  \tau
  :=\unipar^{\kappa_{\min}}\prod_{i=0}^p z_i^{a_i-\kappa_{\min}b_i}\Omega^\rel
  =\unipar^{\kappa_{\min}}\prod_{i=d+1}^p z_i^{a_i-\kappa_{\min}b_i}\Omega^\rel
\end{equation*}
as a local $\Q$-generator of $\cL$. 
Further, $B^\cL_Y=\sum_{i=d+1}^p(1-(a_i-\kappa_{\min}b_i))E_i|_Y$, 
and $\tau|_Y$ corresponds to  
\begin{equation*}
  \prod_{i=d+1}^p z_i^{a_i-\kappa_{\min} b_i}\Res_Y(\Omega)
\end{equation*}
under the identification $\cL|_Y=K_{(Y,B^\cL_Y)}$. We arrive at
\begin{align}\label{equ:res}
  \Res_Y(\cL^{\#})
  &=\frac{\prod_{i=d+1}^p|z_i|^{2(a_i-\kappa_{\min} b_i)}}
    {\left|\unipar^{\kappa_{\min}}\prod_{i=d+1}^p z_i^{a_i-\kappa_{\min}b_i}\Omega^\rel\right|^2_{\p_0}}
    \left|\Res_Y(\Omega)\right|^2\notag\\
  &=\frac{\prod_{i=d+1}^p|z_i|^{2(a_i-\kappa_{\min} b_i-1)}}
    {\left|\unipar^{\kappa_{\min}}\prod_{i=d+1}^p z_i^{a_i-\kappa_{\min}b_i}\Omega^\rel\right|^2_{\p_0}}
    |dz_{d+1}\wedge\dots\wedge dz_n|^2.
\end{align}
%
%
\subsection{Measures on dual complexes}
We now define measures associated to residually metrized model metrics.
\begin{defi} 
  Let $\cL^{\#}$ be a residually metrized model of $K_X$, 
  determined on a proper dlt model $\cX$ of $X$. 
  To $\cL^{\#}$ we associate a positive measure $\mu_{\cL^{\#}}$ on 
  $\D(\cL)\subset\D(\cX)$ defined by
  \begin{equation*}
    \mu_{\cL^{\#}}
    =\sum_\sigma\left(\int_{Y_\sigma}\Res_{Y_\sigma}(\cL^{\#})\right)
    b_\sigma^{-1}\la_\sigma, 
  \end{equation*}
  where $\sigma$ runs over the top-dimensional faces of $\D(\cL)$. 
\end{defi}
By Lemma~\ref{lem:integral}, we have
$$
\mu_{\cL^{\#}}(\sigma)=\frac{\int_{Y_\sigma}\Res_{Y_\sigma}(\cL^{\#})}{d!\prod_{i\in J} b_i}
$$
for each face $\sigma$ corresponding to a component of some $E_J$. 
%
%
%
%
\subsection{Skeletal mesures on Berkovich spaces} 
Now consider a residually metrized model metric $\p^{\#}$ on $K_X$.
Pick any representative $\cL^{\#}=(\cL,\psi_0)$ for $\p^{\#}$,
where $\cL$ is a model of $K_X$ determined on a proper dlt model
$\cX$ of $X$, and where $\p_0$ is a continuous metric on $\cL_0:=\cL|_{\cX_0}$.
\begin{defi} 
  The \emph{skeletal measure} $\mu_{\p^{\#}}$ is the image of the measure 
  $\mu_{\cL^{\#}}$ under the embedding 
  $\D(\cL)\hookrightarrow X^\an$.
  We view it as a positive measure on $X^\an$, supported on the 
  skeleton $\Sk(\p^{\#}):=\Sk(\phi_\cL)$. 
\end{defi} 
This definition makes sense, in view of the following result.
\begin{lem}\label{L305}
  The skeletal measure $\mu_{\cL^{\#}}$ is independent of the choice
  of representative $\cL^{\#}$ for $\p^{\#}$.
\end{lem}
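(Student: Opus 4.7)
The plan is to reduce the claim to a local computation for simple blowups. Two representatives of $\p^{\#}$ agree, by definition, on some common higher proper dlt model, and by Hironaka's theorem we may take this common model to be snc. It therefore suffices to show that if $\rho\colon \cX'\to \cX$ is a proper birational morphism of proper snc models and $\cL^{\#}=(\cL,\psi_0)$ is a residually metrized model of $K_X$ on $\cX$, then $\mu_{\cL^{\#}}=\mu_{\rho^*\cL^{\#}}$ as measures on $X^{\an}$. By Lemma~\ref{L303} we may further assume that $\rho$ is a single simple blowup along a smooth connected center $W\subset \cX_0$.

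Let $Z$ be the smallest stratum of $\cX_0$ containing $W$, and let $E'\subset \cX'_0$ be the exceptional divisor, with corresponding vertex $v'=v_{E'}\in \D(\cX')$. As recalled in the proof of Lemma~\ref{L302}, the dual complex $\D(\cX')$ is obtained from $\D(\cX)$ either by subdividing $\sigma_Z$ barycentrically from the interior vertex $v'$ (when $W=Z$), or by raising a tent over $\sigma_Z$ with apex $v'$ (when $W\subsetneq Z$). In both cases the two subcomplexes $\D(\cL)$ and $\D(\rho^*\cL)$ realize the same compact subset $\Sk(\p)\subset X^{\an}$ by Proposition~\ref{prop:skelmetr}. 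A direct computation using the log discrepancy formula~\eqref{equ:logdisc} shows that $\kappa(v')=\kappa_{\min}$ precisely when $W=Z$ and $\sigma_Z\subset \D(\cL)$; consequently, in the tent-raising case no new top-dimensional simplices of $\D(\rho^*\cL)$ arise, while in the subdivision case the top-dimensional faces of $\D(\rho^*\cL)$ contained in $\sigma_Z$ form a $\Z$-simplicial subdivision of the top-dimensional faces of $\D(\cL)$ inside $\sigma_Z$.

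The invariance now reduces, face by face, to a local computation near $W$. For each pair $(\sigma',\sigma)$ with $\sigma'\subset \D(\rho^*\cL)$ mapping to $\sigma\subset \D(\cL)$ under $r_{\cX\cX'}$, the induced morphism on strata $\rho\colon Y'\to Y$ is proper birational; using the explicit formula~\eqref{equ:res} in adapted local coordinates, together with the toroidal log ramification identity associated to a simple blowup of an snc divisor, one checks that $\rho_*\Res_{Y'}(\rho^*\cL^{\#})=\Res_Y(\cL^{\#})$. Combined with the additivity $\sum_{\sigma'}b_{\sigma'}^{-1}\la_{\sigma'}=b_\sigma^{-1}\la_\sigma$ of normalized Lebesgue measure under $\Z$-simplicial subdivisions (Lemma~\ref{lem:integral}), this yields $\mu_{\cL^{\#}}=\mu_{\rho^*\cL^{\#}}$.

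The main obstacle is the toric bookkeeping of the arithmetic factors $b_\sigma$ and of the exponents $a_i-\kappa_{\min}b_i$ appearing in~\eqref{equ:res}: under a simple blowup, the multiplicities of $E'$ in $\rho^*K^\lo_{\cX/S}$ and in $\rho^*\cX_0$ are determined by the multiplicities of the components of $\cX_0$ containing $W$, and one must verify that the Fubini-type integration along the exceptional fibers of $\rho|_{Y'}\colon Y'\to Y$ produces precisely the missing factors for the residual measure and normalized Lebesgue measure to match. Once this direct toric calculation is in place, the required equality follows by summing over top-dimensional faces.
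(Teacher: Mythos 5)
Your core local computation (compatibility of residual measures under a simple blowup, together with $b_{\sigma'}=b_\sigma$ and additivity of normalized Lebesgue measure under $\Z$-subdivision) is sound, but the reduction you wrap it in does not prove the lemma as stated. The representatives of $\p^{\#}$ live on proper \emph{dlt} models, and what must be shown is $\mu_{\cL^{\#}}=\mu_{\rho^*\cL^{\#}}$ for an arbitrary proper birational morphism $\rho\colon\cX'\to\cX$ of proper dlt models. Your first step replaces both models by snc ones, but if one of the original representatives is defined on a genuinely dlt, non-snc model $\cX_1$, the comparison between the measure computed on $\cX_1$ (whose residual measures involve the boundary contribution $B'_Y$ supported outside $\cX_{1,\snc}$, cf.~\S\ref{S311}) and the measure computed on an snc model dominating it is never addressed, and it cannot be reached by simple blowups of snc models. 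This dlt case is exactly where the paper's argument does its work: for each top-dimensional face $\sigma'$ of $\D(\cL')$, viewed as a rational subsimplex of a face $\sigma$ inside the intrinsic skeleton, one has $b_{\sigma'}=b_\sigma$ and $\la_\sigma|_{\sigma'}=\la_{\sigma'}$, and the adjunction identifications give $(\rho|_{Y'})_*B^{\cL'}_{Y'}=B^{\cL}_Y$ and $\Res_{Y'}(\cL'^{\#})=\rho^*\Res_Y(\cL^{\#})$ directly, with no factorization into blowups.

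Even within the snc category your scheme does not close logically. Lemma~\ref{L303} is one-sided: it produces $\tau\colon\cX''\to\cX'$ such that $\cX''\to\cX$ is a composition of simple blowups, but $\tau$ itself is again an arbitrary morphism of snc models. Invariance under simple blowups then yields $\mu_{(\rho\circ\tau)^*\cL^{\#}}=\mu_{\cL^{\#}}$, whereas you still need $\mu_{\tau^*(\rho^*\cL^{\#})}=\mu_{\rho^*\cL^{\#}}$; applying the lemma again only regresses. To close this one would need something like weak factorization, or simply the direct face-by-face argument for an arbitrary morphism -- which is the paper's route and renders the blowup analysis unnecessary. A further, more minor, point: Lemma~\ref{L303} is proved for complex-analytic models over the disc, while here the models are schemes over $\Spec\C\cro{\unipar}$, so you would at least need its algebraic counterpart before invoking it.
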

\begin{proof}
  Let $\cX$, $\cX'$ be proper dlt models of $X$, with $\cX'$
  dominating $\cX$ via a proper birational morphism
  $\rho\colon\cX'\to\cX$. 
  Let $\cL^{\#}=(\cL,\p_0)$ be a residually metrized model of $K_X$
  consisting of a model $\cL$ of $K_X$ determined on $\cX$
  and a continuous metric $\p_0$ on $\cL_0$.
  Set $\cL'=\rho^*\cL$, $\p'_0=\rho^*\p_0$ and
  $\cL'^{\#}=(\cL',\p'_0)$. 
  We must prove that $\mu_{\cL'^{\#}}=\mu_{\cL^{\#}}$.

  Let $\sigma'$ be a top-dimensional face of $\D(\cL')$,
  $Y'$ the associated stratum of $\cX'_0$, $Y$ the 
  minimal stratum of $\cX_0$ containing $\rho(Y')$
  and $\sigma=\sigma_Y$ the associated simplex of $\Delta(\cX)$.
  Then $\sigma$ and $\sigma'$ have the same dimension, and
  if we (somewhat abusively) identify $\sigma$ and $\sigma'$ with their images 
  in $\Sk(\phi_\cL)\subset X^\an$, then $\sigma'$ is a 
  rational subsimplex of $\sigma$.
  It suffices to prove that $\mu_{\cL'^{\#}}(\sigma')=\mu_{\cL^{\#}}(\sigma')$.

  Now $\rho$ restricts to a birational morphism of 
  $Y'\to Y$, so since $\la_{\sigma}|_{\sigma'}=\la_{\sigma'}$ and
  $b_{\sigma}=b_{\sigma'}$,
  it suffices to prove that $\Res_{Y'}(\cL'^{\#})=\rho^*\Res_Y(\cL^{\#})$.  
  But this is formal. Indeed, we have 
  $(\rho|_Y)_*(B_{Y'}^{\cL'})=B_Y^\cL$ and we can identify 
  $(\rho|_Y)^*K_{(Y,B_Y^\cL)}$ with $K_{(Y',B_{Y'}^{\cL'})}$
  in such a way that the restriction of $\p'_0$ to 
  $\cL'|_{Y'}=K_{(Y',B_{Y'}^{\cL'})}$ coincides with the pullback
  under $\rho|_{Y'}$ of the restriction of $\psi_0$ to $\cL|_Y=K_{(Y,B_Y^\cL)}$.
\end{proof}
%
%
\subsection{Behavior under base change}
Fix $m\in\Z_{>0}$. 
As before, denote by $X'$ the base change of $X$ to $K'=\C\lau{\unipar^{1/m}}$, 
with induced map $p\colon X'^\an\to X^\an$. 
\begin{thm}\label{thm:skelmesfinite} 
  Let $\psi^{\#}$ be a residually metrized model metric on $K_X$, 
  and let $\psi'^{\#}$ be its pull-back to $X'$. Then
  \begin{equation*}
    p_*\mu_{\psi'^{\#}}=m^d\mu_{\psi^{\#}}
  \end{equation*}
  with $d=\dim\Sk(\psi^{\#})$. 
\end{thm}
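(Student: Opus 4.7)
The plan is to localize the identity to each top-dimensional face of $\Sk(\psi^{\#})$ and then carry out a toric change-of-variables computation. By Lemma~\ref{lem:skelmetrbase}, $\Sk(\psi'^{\#})=p^{-1}\Sk(\psi^{\#})$, both of dimension $d$; the $d$-dimensional faces of $\Sk(\psi'^{\#})$ are therefore precisely the components of $p^{-1}(\sigma)$ as $\sigma$ ranges over the $d$-dimensional faces of $\Sk(\psi^{\#})$. It thus suffices to prove, for each such $\sigma$, that
\[
  p_*\bigl(\mu_{\psi'^{\#}}|_{p^{-1}(\sigma)}\bigr)=m^d\,\mu_{\psi^{\#}}|_{\sigma}.
\]
Fix $\sigma$; Lemma~\ref{lem:base} decomposes $p^{-1}(\sigma)$ into $g_\sigma$ faces $\sigma'_\alpha$, each $p|_{\sigma'_\alpha}\colon\sigma'_\alpha\to\sigma$ a $\Q$-affine homeomorphism of volume ratio $m^d$ and with $b_{\sigma'_\alpha}=b_\sigma/\gcd(m,b_\sigma)$. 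Since an affine homeomorphism pushes normalized Lebesgue measure to a constant multiple of the target measure, matching total masses yields $p_*\la_{\sigma'_\alpha}=m^d\la_\sigma$.

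Substituting into the definition of $\mu_{\psi'^{\#}}$ and summing over the preimage faces, the sought-after identity becomes equivalent to the residue identity
\[
  \sum_{\alpha=1}^{g_\sigma}\int_{Y_{\sigma'_\alpha}}\Res_{Y_{\sigma'_\alpha}}(\cL'^{\#})=\frac{1}{\gcd(m,b_\sigma)}\int_{Y_\sigma}\Res_{Y_\sigma}(\cL^{\#}).
\]
I would establish this pointwise. At a point $\xi\in Y_\sigma\cap\cX_{\snc}$, choose adapted local coordinates $(z_0,\ldots,z_n)$ as in Example~\ref{ex:log}, so that $\unipar=u\prod_{i=0}^p z_i^{b_i}$ and $Y_\sigma$ is a component of $\{z_0=\cdots=z_d=0\}$. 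The toric description of $\cX'\to\cX$ used in the proof of Lemma~\ref{lem:base} supplies, above each preimage sheet, local coordinates on $\cX'$ obtained by adjoining suitable $m$-th roots of monomials in the $z_i$. Writing the pullback of the local $\Q$-generator $\tau$ of $\cL$ in these coordinates and applying formula~\eqref{equ:res} on the $\cX'$ side, one obtains a pointwise identity $\Res_{Y_{\sigma'_\alpha}}(\cL'^{\#})=c\,(\rho|_{Y_{\sigma'_\alpha}})^*\Res_{Y_\sigma}(\cL^{\#})$ with an explicit constant $c$ depending only on the base-change combinatorics (and independent of $\alpha$ by Galois symmetry). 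Since each map $\rho|_{Y_{\sigma'_\alpha}}\colon Y_{\sigma'_\alpha}\to Y_\sigma$ is generically finite of degree $f_\sigma$, integration and summation give $\sum_\alpha\int_{Y_{\sigma'_\alpha}}\Res_{Y_{\sigma'_\alpha}}(\cL'^{\#})=c\,g_\sigma f_\sigma\int_{Y_\sigma}\Res_{Y_\sigma}(\cL^{\#})=c\,\gcd(m,b_\sigma)\int_{Y_\sigma}\Res_{Y_\sigma}(\cL^{\#})$, using $e_\sigma f_\sigma g_\sigma=m$ and $e_\sigma=m/\gcd(m,b_\sigma)$; the identity then amounts to $c=\gcd(m,b_\sigma)^{-2}$.

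The hard part is this final constant computation. Carrying it out requires tracking three ingredients simultaneously: the transformation of the local $\Q$-generator $\tau$ of $\cL$ into one for $\cL'=\rho^*\cL$, which involves the Poincar\'e residues along the $E'_{i,\alpha}$ relative to those along the $E_i$ and depends on the ramification data implicit in Lemma~\ref{lem:base}; the behavior of the Hermitian metric $\psi_0$ under the pullback $\rho_0^*$; and the combinatorial interaction with the subklt boundary divisors $B^{\cL}_{Y_\sigma}$ and $B^{\cL'}_{Y_{\sigma'_\alpha}}$, which transform by log-adjunction. The expected value $c=\gcd(m,b_\sigma)^{-2}$ is driven by the log ramification formula (so that $\rho^*K^{\lo}_{\cX/S}$ and $mK^{\lo}_{\cX'/S'}$ agree up to a vertical correction accounting for $\rho^*\pi^*K^{\lo}_S=m\pi'^*K^{\lo}_{S'}$), combined with the arithmetic already encoded in the relation $b_{\sigma'_\alpha}=b_\sigma/\gcd(m,b_\sigma)$.
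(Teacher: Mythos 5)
Your reduction is exactly the one the paper uses: decompose over the $d$-dimensional faces $\sigma$ of $\D(\cL)$, invoke Lemma~\ref{lem:base} for the combinatorics ($g_\sigma$ preimage faces, volume ratio $m^d$, $b_{\sigma'_\a}=b_\sigma/\gcd(m,b_\sigma)$, degree $f_\sigma$ on strata, $e_\sigma f_\sigma g_\sigma=m$), and reduce everything to a pointwise comparison of residual measures with a constant $c$ that must equal $\gcd(m,b_\sigma)^{-2}$. The bookkeeping you do to get from the measure identity to the residue identity is correct.

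The genuine gap is that this constant is asserted, not proved, and it is precisely the heart of the proof: it is the content of the paper's Lemma~\ref{lem:resfinite}, whose verification occupies a full local computation. Saying that $c$ is ``driven by the log ramification formula'' together with $b_{\sigma'_\a}=b_\sigma/\gcd(m,b_\sigma)$ does not pin it down: one must actually write $\rho^*\Omega_1=\pm e_\sigma\Omega''_1+\cdots$ using that the exponent matrix has determinant $\pm e_\sigma$, observe that $\rho^*\Omega_2=q\,\Omega''_2+\cdots$ and that the \emph{same} auxiliary factor $q$ appears in $(\rho|_{Y''})^*\Res_Y(\Omega)=q\,\Res_{Y''}(\Omega'')$ and in $\rho^*\Omega=\pm q\,e_\sigma(1+h)\Omega''$ (so that $q$ cancels in the ratio defining the residual measure), and track the extra factor $1/m$ coming from $d\unipar/\unipar=m\,d\unipar'/\unipar'$ when passing from $\Omega$ to $\Omega^\rel$. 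These three inputs combine to give $(m/e_\sigma)^2=\gcd(m,b_\sigma)^2$, and without them a constant such as $e_\sigma^{-2}$ or $\gcd(m,b_\sigma)^{-1}$ cannot be excluded. A secondary but real issue is that you work directly on the normalized base change $\cX'$, which is only toroidal: it need not be dlt, its strata can lie in the singular (quotient-singularity) locus, so neither the definition of $\Res_{Y_{\sigma'_\a}}(\cL'^{\#})$ via a dlt model nor the snc coordinate formula~\eqref{equ:res} applies there as stated. The paper circumvents this by passing to a toroidal modification $\cX''\to\cX'$ with $\cX''$ snc, working with the subdivided faces $\sigma''_{\a\b}$ and strata $Y''_{\a\b}$, and using invariance of the skeletal measure under pullback (Lemma~\ref{L305}); your argument needs the same device, or an explicit justification that the formal local description at singular points of $\cX'$ still yields the claimed residue comparison.
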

\begin{proof} 
  Pick a representative $\cL^{\#}=(\cL,\p_0)$ of $\p^{\#}$
  such that $\cL$ is defined on a proper snc model $\cX$.
  Let $\cX'$ be the
  normalized base change by $\unipar=\unipar'^m$.
  
  Let $\sigma$ be a $d$-dimensional face of $\D(\cL)$. 
  By Lemma~\ref{lem:base}, $p^{-1}(\sigma)$ is the union of 
  $g_\sigma$ distinct isomorphic faces $\sigma'_\a$ of $\D(\cX)$ such that 
  \begin{equation}\label{equ:bprime}
    b_{\sigma'_\a}=b_\sigma/\gcd(m,b_\sigma)
  \end{equation} 
  \begin{equation}\label{equ:volprime}
    \vol(\sigma'_\a)=m^d\vol(\sigma).
  \end{equation}
  Further, the induced map $Y'_{\sigma'_\a}\to Y$ is generically
  finite, of degree $f_\sigma$ independent of $\a$, and we have 
  $f_\sigma g_\sigma=\gcd(m,b_\sigma)$.
  Pick a toroidal modification $\cX''\to\cX'$ with $\cX''$ snc, 
  denote by $\rho\colon\cX''\to\cX$ the composition, and set
  $\cL'':=\rho^*\cL$.

  Each face $\sigma'_\a$ above is subdivided into 
  simplices $\sigma''_{\a\b}$ of $\D(\cL'')$ of dimension $d$,
  each corresponding to a stratum $Y''_{\a\b}$ of $\cX''_0$,
  and $\rho|_{Y''_{\a\b}}\colon Y''_{\a\b}\to Y$ is generically finite,
  of degree $f_\sigma$.
  Further,~\eqref{equ:bprime} implies that
  \begin{equation}\label{e309}
    b_{\sigma''_{\a\b}}=b_{\sigma'_\a}=b_\sigma/\gcd(m,b_\sigma)
    \quad\text{for all $\a,\b$}.
  \end{equation}
  We shall need the following result:
  \begin{lem}\label{lem:resfinite} 
    With notation as above, we have, for all $\a$, $\b$:
    \begin{equation*}
      \Res_{Y''_{\a\b}}(\cL''^{\#})=\gcd(m,b_\sigma)^{-2}\rho^*\Res_Y(\cL^{\#}).
    \end{equation*}
  \end{lem}
  Grant this result for the moment.
  Lemma~\ref{lem:resfinite} implies
  \begin{equation*}
    \int_{Y''_{\a\b}}\Res_{Y''_{\a\b}}(\cL''^{\#})=f_\sigma\gcd(m,b_\sigma)^{-2}\int_Y\Res_Y(\cL^{\#}),
  \end{equation*}
  and hence 
  \begin{multline*}
    (p_*\mu')(\sigma) 
    =\sum_{\a,\b}\mu'(\sigma''_{\a\b}) 
    =\sum_{\a,\b}\left(\int_{Y''_{\a\b}}\Res_{Y''_{\a\b}}(\cL''^{\#})\right) 
    b_{\sigma''_{\a\b}}^{-1}\vol(\sigma''_{\a\b})\\
    =f_\sigma\gcd(m,b_\sigma)^{-2}\left(\int_Y\Res_Y(\cL^{\#})\right)b_\sigma^{-1}
    \gcd(m,b_\sigma) \sum_\a \vol(\sigma'_\a)\\
    =m^d\left(\int_Y\Res_Y(\cL^{\#})\right)b_\sigma^{-1}\vol(\sigma)=m^d\mu(\sigma),
  \end{multline*}
  thanks to~\eqref{equ:volprime} and~\eqref{e309}.
\end{proof}
\begin{proof}[Proof of Lemma~\ref{lem:resfinite}]
  Pick a closed point $\xi''\in\mathring{Y''}$ and set 
  $\xi=\rho(\xi'')\in\mathring{Y}$. 
  We use the notation at the end of~\S\ref{S309} with $p=d$.
  Namely, pick local coordinates $(z_i)_{0\le i\le n}$ at $\xi$ and
  $(z''_j)_{0\le j\le n}$ at $\xi''$ such that $E_i=\{z_i=0\}$ for $0\le i\le d$ 
  and $E''_j=\{z_j''=0\}$ for $0\le j\le d$. 
  We have $\rho^*z_i=u_i\prod_{j=0}^d(z''_j)^{c_{ij}}$ for $0\le i\le d$, 
  where  $c_{ij}\in\Z_{\ge0}$ and $u_i\in\cO_{\cX'',\xi''}$ is a unit.
  Further, by Lemma~\ref{lem:base}, 
  the matrix $(c_{ij})$ has determinant $\pm e_\sigma$,
  where $e_\sigma=m/\gcd(m,b_\sigma)$.

  Set 
  \begin{equation*}
    \Omega_1:=\frac{dz_0}{z_0}\wedge\dots\wedge\frac{dz_d}{z_d}
    \qand
    \Omega_2:=dz_{d+1}\wedge\dots\wedge dz_n,
  \end{equation*}
  and define $\Omega_1''$, $\Omega''_2$ similarly.
  Then $\Omega:=\Omega_1\wedge\Omega_2$ 
  and $\Omega'':=\Omega''_1\wedge\Omega_2''$ are 
  local $\Q$-generators of $K^\lo_\cX$ and $K^\lo_{\cX''}$
  at $\xi$ and $\xi''$, respectively.
  Further, 
  \begin{equation*}
    \Res_Y(\Omega)=\Omega_2|_Y
    \qand
    \Res_{Y''}(\Omega'')=\Omega''_2|_{Y''}.
  \end{equation*}
  Now
  \begin{equation*}
    \rho^*\Omega_1
    =\pm e_\sigma\Omega''_1
    +\frac1{z_0''\dots z''_d}\tilde\Omega''_1,
  \end{equation*}
  where $\tilde\Omega''_1$ is a regular $(d+1)$-form vanishing at
  $\xi''$,
  and 
  \begin{equation*}
    \rho^*\Omega_2
    =q\Omega''_2+\tilde\Omega''_2,
  \end{equation*}
  where $q\in\cO_{\cX,\xi''}$ and $\tilde\Omega''_2$ is a regular 
  $(n-d)$-form at $\xi''$ satisfying $\Omega''_1\wedge\tilde\Omega''_2=0$.
  On the one hand, this leads to 
  \begin{equation*}
    (\rho|_{Y''})^*\Res_Y(\Omega)
    =(\rho|_{Y''})^*(\Omega_2|_Y)
    =q\Omega''_2|_{Y''}
    =q\Res_{Y''}(\Omega'').
  \end{equation*}
  On the other hand, we also get
  \begin{equation*}
    \rho^*\Omega=\pm qe_\sigma(1+h)\Omega'',
  \end{equation*}
  with $q$ as above and $h\in\cO_{\cX'',\xi''}$ vanishing along $Y''$.

  Define $\Omega^\rel$ and $\Omega^{''\rel}$ by 
  $\frac{d\unipar}{\unipar}\otimes\Omega^\rel=\Omega$ and 
  $\frac{d\unipar'}{\unipar'}\otimes\Omega^{''\rel}=\Omega''$,
  respectively.
  Then
  \begin{equation*}
    m\frac{d\unipar'}{\unipar'}\otimes\rho^*\Omega^\rel
    =\rho^*(\frac{d\unipar}{\unipar})\otimes\rho^*\Omega^\rel
    =\rho^*\Omega
    =\pm qe_\sigma(1+h)\Omega''
    =\pm qe_\sigma(1+h)\frac{d\unipar'}{\unipar'}\otimes\Omega^{''\rel},
  \end{equation*}
  so that
  \begin{equation*}
    \rho^*\Omega^\rel=\pm\frac{e_\sigma}{m}q(1+h)\Omega^{''\rel}.
  \end{equation*}
  As a consequence, 
  \begin{equation*}
    \rho^*|\unipar^{\kappa_{\min}}\Omega^\rel|_{\psi_0}
    =\frac{e_\sigma}{m}|q||(1+h)||(\unipar')^{\kappa'_{\min}}\Omega^{''\rel}|_{\psi'_0}.
  \end{equation*}
  Since $h$ vanishes along $Y''$, this finally leads to 
  \begin{multline*}
    (\rho|_{Y''})^*\Res_Y(\cL^\#)
    =\frac{(\rho|_{Y''})^*|\Res_Y(\Omega)|^2}
    {(\rho|_{Y''})^*|\unipar^{\kappa_{\min}}\Omega^\rel|^2_{\psi_0}}  
    =\frac{|(\rho|_{Y''})^*(\Omega_2|_Y)|^2}
    {\frac{e^2_\sigma}{m^2}|q|^2|(\unipar')^{\kappa'_{\min}}\Omega^{''\rel}|_{\psi'_0}^2}\\
    =\left(\frac{m}{e_\sigma}\right)^2
    \frac{|\Res_{Y''}(\Omega'')|^2}
    {|(\unipar')^{\kappa'_{\min}}\Omega^{''\rel}|_{\psi'_0}^2}
    =\left(\frac{m}{e_\sigma}\right)^2\Res_{Y''}(\cL^{''\#}),
  \end{multline*}
  which completes the proof since $e_\sigma=\frac{m}{\gcd(b_\sigma,m)}$.
\end{proof}
%
%
%
%
\section{The Calabi--Yau case}\label{S317}
As in~\S\ref{sec:skeletal}, we assume that $X$ is a smooth, projective, 
geometrically connected variety over $\C\lau{\unipar}$.
Now we further assume that $K_X$ is trivial.
Pick a trivializing section $\eta\in H^0(X,K_X)$,
and denote by $\log|\eta|$ the associated model metric on $K_X^\an$, 
determined on any model $\cX$ by $\cL=\cO_\cX$, with $\eta$ providing the identification $\cL|_X\simeq K_X$. Denote also by $\log|\eta|^{\#}$ the residually metrized model metric induced by the trivial Hermitian metric $\p_0=0$ on $\cO_{\cX_0}$. 

The function $\kappa:=A_X-\log|\eta|=-\log|\eta|_{A_X}$ 
coincides with the weight function of~\cite{MN,NX13}. 
By definition, the \emph{Kontsevich--Soibelman skeleton} of $X$ is 
$\Sk(X):=\Sk(\log|\eta|^{\#})$. It is indeed independent of the choice of 
$\eta$, since any other trivializing section of $K_X$ is of the form 
$\eta'=f\eta$ with $f\in\C\lau{\unipar}^*$, and hence $\kappa'=\kappa+\ord_0(f)$. 
%
%
\subsection{Topology of the skeleton}
By~\cite[Theorem 4.2.4]{NX13}, the $\Z$-PA-space $\Sk(X)$ is connected, of pure dimension $d$, and is a deformation retract of $X^\an$. Further, 
$\Sk(X)$ is a \emph{pseudomanifold with boundary}, \ie for some (or, equivalently, any) triangulation $\D$ of $\Sk(X)$, we have: 
\begin{itemize}
\item[(a)] Non-branching property: every $(d-1)$-simplex of $\D$ is contained in at most two $d$-simplices 
\item[(b)] Strong connectedness: every pair of $n$-simplices $\sigma$, $\sigma'$ is joined by a chain of $n$-simplices $\sigma=\sigma_1,\dots,\sigma_N=\sigma'$ with $\sigma_i$ and $\sigma_{i+1}$ sharing a common $(n-1)$-face. 
\end{itemize}
In the maximally degenerate case $d=n$, if $X$ has semistable reduction, then 
$\Sk(X)$ is even a \emph{pseudomanifold}, \ie~(a) is replaced by
\begin{itemize}
\item[(a')] 
  every $(n-1)$-simplex of $\D$ is contained in exactly two $n$-simplices.
\end{itemize}
See also~\cite{KX} for even more precise results on the structure of $\Sk(X)$.
For example, $\Sk(X)$ is homeomorphic to a sphere if $n\le 3$
(still in the maximally degenerate case and $X$ having semistable reduction).
%
%
\subsection{The skeletal measure}
Consider the skeletal measure $\mu_{\log|\eta|^{\#}}$ on $\Sk(X)$. Choose an snc model $\cX$, and write as usual $\cX_0=\sum_{i\in I} b_i E_i$. The form $\eta$ defines an identification $K^\lo_{\cX/S}=\sum_{i\in I} a_i E_i$, and Proposition~\ref{prop:skelmetr} yields 
\begin{equation}\label{equ:kappamin}
\kappa_{\min}=\min_i\frac{a_i}{b_i}.
\end{equation}
If $\kappa_{\min}\in\Z$, then 
$$
\om:=\frac{d\unipar}{\unipar^{\kappa_{\min}+1}}\wedge\eta
$$ 
is a logarithmic form on $\cX$. For each face $\sigma$ of $\D(\cL)$,
ordering the set $J\subset I$ of components cutting out the stratum $Y=Y_\sigma$
yields a well-defined Poincar\'e residue
$\Res_Y(\om)$.
By Lemma~\ref{lem:boundaries}, $\Res_Y(\om)$ is a rational section of $K_Y$, with divisor 
$$
-B^\cL_Y=\sum_{i\notin J}(a_i-\kappa_{\min}b_i-1)E_i|_Y
$$ 
When $\sigma$ is a maximal face, $\Res_Y(\om)$ is thus a holomorphic
form on $Y$; using the formulas in~\S\ref{S309}, 
it is easy to see that the residual measure on $Y$ is given by
$$
\Res_Y(\log|\eta|^{\#})=|\Res_Y(\om)|^2. 
$$
The following result corresponds to Theorem~C in the introduction.
\begin{thm}\label{thm:resCY} 
  Assume that $X$ is maximally degenerate, \ie $\dim\Sk(X)=n$, and that 
  $X$ has semistable reduction, 
  Then the skeletal measure $\mu_{\log|\eta|^{\#}}$ is a multiple of the integral Lebesgue 
  measure of $\Sk(X)$. 
\end{thm}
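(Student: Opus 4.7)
The plan is to reduce Theorem~\ref{thm:resCY} to showing that the coefficient of $\la_\sigma$ in $\mu_{\log|\eta|^{\#}}$ is independent of the top-dimensional ($n$-dimensional) face $\sigma$ of $\Sk(X)$. Choose an snc model $\cX$ of $X$ with reduced central fiber $\cX_0=\sum_{i\in I}E_i$, which exists by semistable reduction. Identifying $\cL=\cO_\cX$ via $\eta$, the relation $K^\lo_{\cX/S}=\cL+\sum_i a_iE_i$ becomes $K_\cX=\sum_i a_iE_i$ as divisor classes (since $\cX_{0,\red}=\cX_0$), with $a_i\in\Z$; hence $\kappa_{\min}=\min_i a_i\in\Z$ and $b_\sigma=1$ for every face $\sigma$. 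The logarithmic form
$\omega:=(dt/t^{\kappa_{\min}+1})\wedge\eta$ is then a rational section of $K^\lo_\cX$ with effective divisor $\sum_i(a_i-\kappa_{\min})E_i$. At the $0$-stratum $Y_\sigma$ corresponding to a top-dimensional face $\sigma$ of $\D(\cL)$, local coordinates $z_0,\dots,z_n$ with $t=z_0\cdots z_n$ give $\omega=f\,\frac{dz_0}{z_0}\wedge\cdots\wedge\frac{dz_n}{z_n}$ for a unit $f$ (since all $a_i=\kappa_{\min}$ at such a stratum), so $c_\sigma:=|\Res_{Y_\sigma}(\omega)|^2=|f(Y_\sigma)|^2>0$. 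The definition of $\mu_{\log|\eta|^{\#}}$ then gives $\mu_{\log|\eta|^{\#}}=\sum_\sigma c_\sigma\la_\sigma$, and it suffices to show that $c_\sigma$ is independent of $\sigma$.

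Since $\Sk(X)$ is a strongly connected pseudomanifold without boundary by~\cite{NX13}, this reduces to proving $c_{\sigma_1}=c_{\sigma_2}$ whenever two top-dimensional faces $\sigma_1,\sigma_2$ share an $(n-1)$-face $\tau$. The associated stratum $C:=Y_\tau$ is a smooth connected projective curve, cut out by the $n$ components $E_i$ indexed by some $J\subset I$, all satisfying $a_i=\kappa_{\min}$. The crux of the argument is to analyze the iterated Poincar\'e residue $\Res_C(\omega)$, obtained by successively residuing $\omega$ along the components indexed by $J$: this is a meromorphic $1$-form on the compact Riemann surface $C$, and the key claim is that its polar divisor consists of exactly two simple poles, at $Y_{\sigma_1}$ and $Y_{\sigma_2}$, with respective residues $\pm\Res_{Y_{\sigma_i}}(\omega)$. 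Granted this, the classical residue theorem on $C$ forces $\Res_{Y_{\sigma_1}}(\omega)=\pm\Res_{Y_{\sigma_2}}(\omega)$, and taking squared moduli yields $c_{\sigma_1}=c_{\sigma_2}$.

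To establish the key claim, the pseudomanifold property~(a') produces exactly two components $E_{k_1},E_{k_2}$ outside $J$ with $a_{k_i}=\kappa_{\min}$ meeting $C$; the corresponding points are $Y_{\sigma_i}=C\cap E_{k_i}$, and a local iterated residue computation shows that $\Res_C(\omega)$ has a simple pole there with residue $\pm\Res_{Y_{\sigma_i}}(\omega)$. At any other boundary point $q=C\cap E_k$ of $C$, we have $a_k>\kappa_{\min}$, so $\omega$ acquires a vanishing factor $z_k^{a_k-\kappa_{\min}}$ along $E_k$ in coordinates adapted to $\cX_0$; the iterated residue along $J$ then yields a $1$-form vanishing to order $a_k-\kappa_{\min}-1\ge 0$ at $q$, and in particular holomorphic there. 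The main obstacle is the sign and ordering bookkeeping inherent to iterated Poincar\'e residues: one must check that $\Res_C(\omega)$ is a well-defined meromorphic $1$-form on $C$ up to a global sign depending only on the ordering of $J$, and that its residue at each $Y_{\sigma_i}$ agrees, up to sign, with $\Res_{Y_{\sigma_i}}(\omega)$ for a compatibly chosen ordering. Since only the squared moduli $c_{\sigma_i}$ appear in the final comparison, all such sign ambiguities drop out.
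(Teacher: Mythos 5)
Your overall strategy is exactly the paper's: reduce to the point strata of the top-dimensional faces, where (using semistability, so $b_\sigma=1$ and $\kappa_{\min}\in\Z$) the residual measures are the point masses $|\Res_{y_\sigma}(\om)|^2$, and compare adjacent top faces by applying the residue theorem to the iterated Poincar\'e residue of $\om$ on the curve stratum of a shared $(n-1)$-face, invoking strong connectedness and non-branching of $\Sk(X)$. There is, however, one step you gloss over, and it is precisely the step the paper spends half its proof on: the properties (a$'$) and (b) are stated (following~\cite{NX13}) for \emph{triangulations} of $\Sk(X)$, whereas the decomposition of $\Sk(X)$ by the faces of $\D(\cL)$ on your semistable model need not be a triangulation, because a non-empty intersection $E_J$ may have several connected components, so distinct faces can share all their vertices. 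The paper handles this by passing to the barycentric subdivision of $\D(\cX)$, realized by a toroidal modification $\cX'$ (snc, with possibly non-reduced central fiber but still $b_\sigma=1$ on top faces), on which one has an honest triangulation, and then runs the residue-theorem argument there. Your version can be repaired either by the same device or by a short argument transferring non-branching and strong connectedness from a triangulation (e.g.\ the barycentric subdivision) to the coarser face decomposition; as written, that justification is missing.

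Relatedly, your assertion that non-branching ``produces exactly two components $E_{k_1},E_{k_2}$ outside $J$ with $a_{k_i}=\kappa_{\min}$ meeting $C$, the corresponding points being $C\cap E_{k_i}$'' is imprecise: what must be counted are the \emph{point strata} on $C$ belonging to $\D(\cL)$ (i.e.\ the $n$-faces containing $\tau$); a priori a single component $E_k$ could meet $C$ in several points, or both adjacent faces could involve the same $E_k$. This does not damage the analytic core of your argument --- what matters is that the polar set of $\Res_C(\om)$ consists of exactly those two points, with simple poles, and that $\Res_C(\om)$ is holomorphic at any point of $C\cap E_k$ with $a_k>\kappa_{\min}$, which your local computation gets right --- but the ``exactly two'' again rests on the triangulation issue above and should be phrased in terms of strata. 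Everything else (the reduction to constancy of $c_\sigma$, the identification $\Res_{y_\sigma}(\log|\eta|^{\#})=|\Res_{y_\sigma}(\om)|^2$, and the disappearance of residue signs after taking moduli) matches the paper's proof.
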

\begin{proof} Let $\cX$ be a semistable model, \ie $\cX$ is snc with $\cX_0$ reduced. By~\eqref{equ:kappamin}, we have $\kappa_{\min}\in\Z$. Since some non-empty $E_J$ might have several components, 
the dual complex $\D(\cX)$ is possibly not a triangulation of $\Sk(X)$. However, the barycentric subdivision $\D'$ of $\D(\cX)$ is a triangulation; the corresponding toroidal modification $\cX'$ is snc, with $\cX'_0$ is possibly non-reduced, but $b_{\sigma}=1$ for each $n$-simplex $\sigma$ of $\D'$. Applying the above discussion to $\cX'$, we infer
$$
\mu_{\log|\eta|^{\#}}=\sum_{\sigma}|\Res_{y_\sigma}(\om)|^2\la_\sigma,
$$
with $\sigma$ ranging over the $n$-dimensional faces of $\D'$, with corresponding strata $y_\sigma\in\cX'_0$ reduced to single points. It will thus be enough to show that $|\Res_{y_\sigma}(\om)|$ is independent of $\sigma$. 

By the strong connectedness property, any two $n$-simplices $\sigma$, $\sigma'$ of $\D'$ can be joined by a chain of $n$-simplices $\sigma=\sigma_1,\dots,\sigma_N=\sigma'$ with $\sigma_i$ and $\sigma_{i+1}$ sharing a common $(n-1)$-face $\tau_i$. Denoting by $y_i=y_{\sigma_i}$ and $Y_i=Y_{\tau_i}$ the corresponding strata in $\cX'$, we thus have $y_i,y_{i+1}\in Y_i$. Further, the Poincar\'e residue $\Res_{Y_i}(\om)$ has poles precisely at $y_i,y_{i+1}$, since any other pole would correspond to an $n$-simplex of $\D'$ containing $\tau_i$, contradicting the non-branching property. Since $\Res_{y_i}\Res_Y(\om)=\Res_{y_i}(\om)$, the residue theorem applied to the Riemann surface $Y_i$ yields $\Res_{y_i}(\om)+\Res_{y_{i+1}}(\om)=0$, and hence $|\Res_{y_1}(\om)|=\dots=|\Res_{y_N}(\om)|$.
\end{proof}

\begin{rmk} Theorem~\ref{thm:resCY} fails in general when $X$ does not
  have semistable reduction. Indeed, the semistable reduction theorem~\cite{KKMS} shows that the base change $p\colon X'\to X$ to $\C\lau{\unipar^{1/m}}$ has semistable reduction for some $m$ divisible enough. By Lemma~\ref{lem:skelmetrbase}, $\dim\Sk(X')=n$, and $\mu_{\log|\eta'|^{\#}}$ is thus a multiple of the integral Lebesgue measure $\la'$ of $\Sk(X')$, by Theorem~\ref{thm:resCY}. By Theorem~\ref{thm:skelmesfinite}, $\mu_{\log|\eta|^{\#}}=m^{-n}p_*\la'$. However, $p_*\la'$ is not proportional to the integral Lebesgue measure $\la$ of $\Sk(X)$ in general. Indeed, for each $n$-simplex $\sigma$ of $\D(\cL)$, Lemma~\ref{lem:base} shows that $(p_*\la')_{\sigma}=m^n b_\sigma\la_{\sigma}$, and $b_\sigma$ is in general not independent of $\sigma$. 
\end{rmk}
%
%
%
%
\section{Extensions}\label{S312}
In this section we extend the main results in various directions.
%
%
\subsection{A singular version of Theorem A}
Let $\pi\colon\cX\to\DD$ be a projective, flat holomorphic map
of a normal complex space onto the disc, with $X:=\pi^{-1}(\DD^*)$
smooth over $\DD^*$. 
Since $\pi$ is projective, it defines a smooth projective
variety $X_{\C\lau{\unipar}}$ over $\C\lau{\unipar}$, as well as a model
$\cX_{\C\cro{\unipar}}$.

Let $\cL$ be a $\Q$-line bundle on
$\cX$ extending $K_{X/\DD^*}$, and $\p$ a continuous Hermitian metric on $\cL$. 
This data induces a continuous Hermitian metric $\psi_t$
on $K_{X_t}$ for $t\in\DD^*$, as well as a residually metrized model $\cL^{\#}$ of
$K_{X_{\C\lau{\unipar}}}$, the model given by $\cL_{\C\cro{\unipar}}$ and the metric 
by the restriction of $\p$ to $\cL_0=\cL|_{\cX_0}$.
Thus we obtain a skeletal measure $\mu_{\cL^{\#}}$ on $X^\an_{\C\lau{\unipar}}$.

Denote by $\cL'$ (resp.\ $\p'$) the pull-back of $\cL$ (resp.\ $\p$) to a log resolution $\cX'\to\cX$. By invariance of skeletal measures under pull-back, we have $\mu_{\cL'^{\#}}=\mu_{\cL^{\#}}$, and Theorem~\ref{thm:genconv} therefore implies: 

\begin{thm}\label{thm:gencvsing} The rescaled measures
$$
\mu_t:=\frac{e^{2\p_t}}{|t|^{2\kappa_{\min}}(2\pi\log|t|^{-1})^d}, 
$$
viewed as measures on $\cX'^\hyb$, converge weakly to $\mu_{\cL^{\#}}$. 
\end{thm}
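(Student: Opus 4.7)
The plan is to reduce Theorem~\ref{thm:gencvsing} to Theorem~\ref{thm:genconv} (Theorem~A) by passing to a log resolution of $\cX$. Using Hironaka's theorem, I would first pick a proper bimeromorphic morphism $\rho\colon\cX'\to\cX$ with $\cX'$ an snc model of $X$ and with $\rho$ an isomorphism over $X=\pi^{-1}(\DD^*)$. Setting $\cL':=\rho^*\cL$ and $\psi':=\rho^*\psi$, we obtain an snc model $\cX'$, together with a $\Q$-line bundle $\cL'$ extending $K_{X/\DD^*}$ and a continuous metric $\psi'$ on $\cL'$. Because $\rho$ restricts to the identity over $X$, we have $\psi'_t=\psi_t$ for every $t\in\DD^*$, and hence the rescaled measures $\mu_t$ on each fibre $X_t$ agree whether computed from $\psi$ or from $\psi'$.

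Next I would observe that the numerical invariants $\kappa_{\min}$ and $d$ are intrinsic to the residually metrized model $\cL^{\#}$: by the framework of~\S\ref{S316}, they depend only on the underlying non-Archimedean model metric $\phi_\cL$ on $K_X^\an$ via $\kappa_{\min}=\inf_{X^\an}(A_X-\phi_\cL)$ and $d=\dim\Sk(\phi_\cL)$, and both quantities are preserved under pull-back along the proper birational morphism $\rho$. Theorem~\ref{thm:genconv} then applies directly to the snc datum $(\cX',\cL',\psi')$ and produces the weak convergence, on $\cX'^{\hyb}$, of the measures $\mu_t$ to
$$
\mu_{\cL'^{\#}}=\sum_\sigma\left(\int_{Y_\sigma}\Res_{Y_\sigma}(\psi')\right)b_\sigma^{-1}\la_\sigma,
$$
where $\sigma$ runs over the $d$-dimensional faces of $\D(\cL')\subset\D(\cX')$.

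The final step is to identify this limit with the skeletal measure $\mu_{\cL^{\#}}$, which is precisely the content of Lemma~\ref{L305}: skeletal measures are invariant under pull-back to higher models, so $\mu_{\cL'^{\#}}=\mu_{\cL^{\#}}$ as measures on the common skeleton $\D(\cL')\subset\cX'^{\hyb}$. This is the only non-formal point in the argument, since the behaviour of the stratum-by-stratum Poincar\'e residues under the log resolution $\rho$ must be tracked carefully to see that the contributions indexed by top-dimensional faces of $\D(\cL')$ reorganize into the single expression $\mu_{\cL^{\#}}$; however, this has already been verified in~\S\ref{sec:skeletal}. Granted Lemma~\ref{L305}, the theorem follows at once from the chain $\mu_t\to\mu_{\cL'^{\#}}=\mu_{\cL^{\#}}$ on $\cX'^{\hyb}$.
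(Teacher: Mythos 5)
Your argument is exactly the paper's: the proof given there consists of pulling back $\cL$ and $\psi$ to a log resolution $\cX'\to\cX$, applying Theorem~\ref{thm:genconv} to the snc datum, and invoking the invariance of skeletal measures under pull-back (Lemma~\ref{L305}) to identify the limit with $\mu_{\cL^{\#}}$. Your proposal is correct and takes essentially the same route, with the extra (harmless) remark that $\kappa_{\min}$ and $d$ are intrinsic to the underlying model metric.
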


\begin{cor}\label{cor:mass} If $\cX$ (\ie the pair $(\cX,\cX_{0,\red})$) is dlt, then 
$$
\lim_{t\to 0}\frac{\int_{\cX_t} e^{2\psi_t}}{|t|^{2\kappa_{\min}}(2\pi\log|t|^{-1})^d}=\sum_{\sigma}\left(\int_{Y_\sigma}\Res_{Y_\sigma}(\cL^{\#})\right) b_\sigma^{-1}\vol(\sigma), 
$$
where $\sigma$ runs over the $d$-dimensional faces of $\D(\cL)$. 
\end{cor}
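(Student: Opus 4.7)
The plan is to derive the corollary from Theorem~\ref{thm:gencvsing} by testing the weak convergence against a suitable cutoff, and then to identify the total mass of the skeletal limit with the right-hand side using the definition of $\mu_{\cL^{\#}}$ (which requires the dlt hypothesis on $\cX$) together with its invariance under pull-back.

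First, I would fix a log resolution $\rho\colon\cX'\to\cX$ of $(\cX,\cX_{0,\red})$, so that $\cX'$ is snc and $\rho$ is a proper birational morphism that is an isomorphism over $X$. Setting $\cL'^{\#}:=(\rho^*\cL,\rho^*\p_0)$, Theorem~\ref{thm:gencvsing} gives the weak convergence $\mu_t\to\mu_{\cL'^{\#}}$ on $\cX'^\hyb$. Next, to pass to total masses, I would fix $r\in(0,1)$ so that the hybrid topology is defined on $\cX'^\hyb_{\bar{\DD}_r}:=\pi^{-1}(\bar{\DD}_r)$; since $\pi\colon\cX'^\hyb\to\DD$ is proper, this set is compact. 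Choose $\chi\in C^0_c(\cX'^\hyb)$ with $\chi\equiv 1$ on $\cX'^\hyb_{\bar{\DD}_{r/2}}$ and $\supp\chi\subset\cX'^\hyb_{\bar{\DD}_r}$. For $|t|<r/2$ the measure $\mu_t$ is supported on $\cX'_t\subset\cX'^\hyb_{\bar{\DD}_{r/2}}$, and $\mu_{\cL'^{\#}}$ is supported on $\D(\cL')\subset(\cX'^\hyb)_0\subset\cX'^\hyb_{\bar{\DD}_{r/2}}$, so $\chi$ is identically $1$ on both supports. Since $\rho$ is an isomorphism over $\DD^*$, we have $\mu_t(\cX'_t)=\mu_t(\cX_t)$, and weak convergence against $\chi$ yields
\[
  \lim_{t\to 0}\mu_t(\cX_t)
  =\lim_{t\to 0}\int\chi\,d\mu_t
  =\int\chi\,d\mu_{\cL'^{\#}}
  =\mu_{\cL'^{\#}}(\D(\cL')).
\]

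Finally, I would identify this limit with the desired sum. Since $\cX$ is dlt by hypothesis and $\cX'$ is snc (hence dlt), Lemma~\ref{L305} applies and gives $\mu_{\cL'^{\#}}=\mu_{\cL^{\#}}$ as Radon measures on $X^\an_{\C\lau{\unipar}}$, where $\cL^{\#}=(\cL_{\C\cro{\unipar}},\p|_{\cL_0})$ is the residually metrized model attached to $\cX$ itself. Plugging in the definition of the skeletal measure associated to a residually metrized dlt model,
\[
  \mu_{\cL^{\#}}(\D(\cL))
  =\sum_\sigma\left(\int_{Y_\sigma}\Res_{Y_\sigma}(\cL^{\#})\right)b_\sigma^{-1}\vol(\sigma),
\]
where $\sigma$ ranges over the $d$-dimensional faces of $\D(\cL)$, which is exactly the claimed formula. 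The main obstacle is essentially bookkeeping: weak convergence of Radon measures on a locally compact space does not automatically preserve total mass, so one must verify that no mass escapes; this is precisely what the compact cutoff step above accomplishes, using properness of $\pi\colon\cX'^\hyb\to\DD$ and the confinement of all supports to a single compact slice $\cX'^\hyb_{\bar{\DD}_{r/2}}$.
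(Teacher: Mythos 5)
Your proposal is correct and is essentially the paper's (implicit) argument: the corollary is stated as an immediate consequence of Theorem~\ref{thm:gencvsing}, obtained exactly as you do by noting that no mass escapes (properness of $\pi\colon\cX'^\hyb\to\DD$ and a cutoff equal to $1$ near the central fiber give convergence of total masses) and then computing the total mass of the skeletal measure on the dlt model $\cX$ itself via its pull-back invariance (Lemma~\ref{L305}) and the definition of $\mu_{\cL^{\#}}$.
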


When $d=0$, this implies the following slight generalization of~\cite[Lemma 1]{Li}. 

\begin{cor} Assume that $\cX_0$ has klt singularities (and hence $\cX$ is dlt by inversion of adjunction). Let $\psi$ be a continuous metric on $K_{\cX/\DD}$. Then $t\mapsto\int_{\cX_t} e^{2\psi_t}$ is continuous at $t=0$. 
\end{cor}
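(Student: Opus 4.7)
The plan is to apply Corollary~\ref{cor:mass} with the choice $\cL=K_{\cX/\DD}$ (viewed as a $\Q$-line bundle extending $K_{X/\DD^*}$), which is legitimate because $\cX$ is dlt by Example~\ref{ex:invadj}. What I have to verify is that $d=0$ and $\kappa_{\min}=0$ in this situation, and then identify the resulting limiting mass with the finite integral $\int_{\cX_0}e^{2\psi_0}$ provided by Lemma~\ref{lem:subklt} applied to the subklt pair associated to $(\cX_0,B_{\cX_0})$.

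For the invariants, the klt assumption forces $\cX_0$ to be normal, so its irreducible components $E_i$ are pairwise disjoint, each occurring with multiplicity $b_i=1$. Consequently $\D(\cX)$ is a finite disjoint union of vertices, so $\dim\D(\cX)=0$. Since $\cX_0$ is reduced, $K^\lo_{\cX/\DD}=K_{\cX/\DD}=\cL$, so every coefficient $a_i$ in the decomposition $K^\lo_{\cX/\DD}=\cL+\sum_i a_iE_i$ vanishes; hence $\kappa_{\min}=0$, $\D(\cL)=\D(\cX)$, and $d=0$.

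With these values, each $0$-dimensional face $\sigma$ of $\D(\cL)$ satisfies $b_\sigma=1$ and $\vol(\sigma)=1$, so Corollary~\ref{cor:mass} collapses to
\[
\lim_{t\to 0}\int_{\cX_t}e^{2\psi_t}=\sum_i\int_{E_i}\Res_{E_i}(\cL^{\#}).
\]
To match this with $\int_{\cX_0}e^{2\psi_0}$, I use that the $\Q$-divisor $D=K^\lo_{\cX/\DD}-\cL-\kappa_{\min}\cX_0$ in Lemma~\ref{lem:boundaries} vanishes identically, so $B^{\cL}_{E_i}=B_{E_i}$ is exactly the different supplied by dlt adjunction $\cL|_{E_i}=K_{(E_i,B_{E_i})}$. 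Under this identification $\Res_{E_i}(\cL^{\#})=e^{2(\psi_0|_{E_i}-\phi_{B_{E_i}})}$ is the adapted measure of the subklt pair $(E_i,B_{E_i})$. Since the $E_i$ are disjoint and $\psi_0$ globally defines a continuous metric on $K_{\cX/\DD}|_{\cX_0}=K_{(\cX_0,B_{\cX_0})}$, the sum reassembles into the adapted measure of $(\cX_0,B_{\cX_0})$, i.e., precisely the value $\int_{\cX_0}e^{2\psi_0}$ assigned at $t=0$.

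Since $t\mapsto\int_{\cX_t}e^{2\psi_t}$ is manifestly continuous on $\DD^*$ (continuous metric on a smooth proper submersion), the displayed limit identification finishes the proof. The main obstacle is not really an obstacle, but the one point needing care is the compatibility between the residual measures furnished by Corollary~\ref{cor:mass} and the intrinsic adapted measure on $\cX_0$, which is exactly what the dlt adjunction formula together with the vanishing of $D$ guarantees; everything else follows from the fact that klt forces normality of $\cX_0$ and reducedness of the central fiber.
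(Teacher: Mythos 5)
Your proof is correct and follows exactly the route the paper intends: the corollary is stated as an immediate consequence of Corollary~\ref{cor:mass} in the case $d=0$, and your verification that $\cX_0$ klt forces $b_i=1$, disjoint components, $\kappa_{\min}=0$, $d=0$, $b_\sigma^{-1}\vol(\sigma)=1$, together with the identification of the residual measures with the adapted measure on $(\cX_0,B_{\cX_0})$ via $D=0$, is precisely the bookkeeping the paper leaves implicit. No gaps.
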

%
%
\subsection{Corollary~B for pairs}
Suppose $(X,B)$ is a projective subklt pair over $\DD^*$ that is meromorphic
at $0\in\DD$.

By Bertini's theorem (see~\cite[4.8]{KollarPairs} and also below), 
the pair $(X_t,B_t)$ is subklt for all $t\in\DD^*$ outside a discrete 
subset $Z$.
Let $\p$ be a continuous metric on $K_{{(X,B)}/\DD^*}$. 
As explained in~\S\ref{S310}, $\p$ induces a finite positive measure 
$e^{2(\p_t-\phi_{B_t})}$ on $X_t$ for $t\in\DD^*\setminus Z$.

Assume that $\p$ has analytic singularities in the sense that 
there exists a flat projective map $\cX\to\DD$ extending $X\to\DD^*$,
with $\cX$ normal, and a $\Q$-line bundle $\cL$ on $\cX$ extending 
$K_{(X,B)/\DD^*}$ such that $\p$ extends continuously to $\cL$. 

Our assumptions imply that $X$ is defined over the Banach ring $A_r$
described in Appendix~\ref{S308} for $0<r\ll1$. 
Let $X^\hyb$ be the analytification of 
the base change $X_{A_r}$. Recall that $X^\hyb$ naturally fibers over
$\overline{\DD}_r$, with $X^\hyb_{\overline{\DD}^*_r}\simeq X_{\overline{\DD}^*_r}$
and $X^\hyb_0\simeq X_{\C\lau{\unipar}}^\an$.
\begin{thm}\label{T301}
  The pair $(X_t,B_t)$ is subklt for $0<|t|\ll1$. Further, there 
  exist $\kappa_{\min}\in\Q$ and $d\in\N^*$ such that 
  the rescaled measures
  \begin{equation*}
    \mu_t:=\frac{e^{2\p_t}}{|t|^{2\kappa_{\min}}(2\pi\log|t|^{-1})^d}, 
  \end{equation*}
  viewed as measures on $X^\hyb$, converge weakly, as $t\to0$, to 
  a finite positive measure $\mu_0$ on $X^\hyb_0=X_{\C\lau{\unipar}}^\an$.
\end{thm}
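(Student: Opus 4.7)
The plan is to reduce this logarithmic version to the already-proved Theorem~\ref{thm:genconv} via a log resolution, then adapt the local analysis of~\S\ref{S318} to accommodate the mild singularities introduced by the boundary divisor.

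First, to establish that $(X_t,B_t)$ is subklt for $0<|t|\ll 1$, I would take a log resolution $\rho\colon\cX'\to\cX$ of the pair $(\cX,B+\cX_{0,\red})$ over $\DD$, so that $\cX'$ is smooth, the total transform has snc support, and $\rho^*K_{(\cX,B)/\DD}=K_{(\cX',B')/\DD}$ with $B'$ of coefficients $<1$ (as $(X,B)$ is subklt). Restricting to a generic fiber $X'_t$ yields a log resolution of $(X_t,B_t)$ with the same discrepancies in the horizontal part, so subklt of $(X_t,B_t)$ holds off a discrete set $Z\subset\DD^*$; by openness of the subklt condition in a log smooth family, the locus can be shrunk to $|t|\ll 1$.

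Second, by invariance of residual and skeletal measures under log resolution (compare Lemma~\ref{L305} and the invariance used in Theorem~\ref{thm:gencvsing}), and since $\rho|_{X'_t}\colon X'_t\to X_t$ is birational so preserves the pair measure $e^{2(\p_t-\phi_{B_t})}$, it suffices to prove the theorem on $\cX'$. Decompose $B'=B'_h+B'_v$ into horizontal and vertical parts. Setting $\tilde\cL:=\cL'-B'_h$ as a $\Q$-line bundle (with $\cL':=\rho^*\cL$) and $\tilde\p:=\p'-\phi_{B'_h}$, the restriction of $\tilde\cL$ to $X'$ is $K_{X'/\DD^*}$, and the measure $e^{2\p_t}$ equals $e^{2\tilde\p_t}$ on $X'_t\setminus B'_{h,t}$. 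With the decomposition $K^\lo_{\cX'/\DD}=\tilde\cL+B'_h+\sum_i a_iE_i-\sum_h c_hB'_h$ one reads off rational numbers $\kappa_i=a_i/b_i$ together with $\kappa_{\min}:=\min_i\kappa_i$ and $d:=\dim\D(\tilde\cL)$, exactly as in Theorem~A.

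Third, I would redo the local computation of Lemma~\ref{lem:gencv} with $\tilde\p$ in place of $\p$. In an adapted chart $(\cU,z)$ at a point of $\cX'_0\cap B'_h$, indexing $z_0,\dots,z_p$ as equations of components of $\cX'_0$ and $z_{p+1},\dots,z_q$ as equations of components of $B'_h$ with coefficients $c_{p+1},\dots,c_q\in(-\infty,1)$, the local $\Q$-generator $\tau$ of $\tilde\cL$ acquires extra factors $\prod_{j=p+1}^q z_j^{-c_j}$. Consequently the expression~\eqref{equ:rescplex} for the residual measure gains a factor $\prod_{j=p+1}^q|z_j|^{-2c_j}$, which is locally integrable since $c_j<1$. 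The Fubini decomposition of~\S\ref{S302} and~\eqref{equ:normmeas} then proceeds verbatim, with the $B'_h$-coordinates $z_{p+1},\dots,z_q$ playing the role of transverse "base" directions $y$ rather than "tropical" directions $w$, yielding the same weak-limit statement with residual measures now of pair type $\exp(2(\tilde\p_t|_{Y_\sigma}-\phi_{B_{Y_\sigma}^{\tilde\cL}}))$.

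The main obstacle is ensuring the dominated-convergence step in~\eqref{e303} remains uniform in $t$ near $B'_h$. Concretely, one needs a $t$-uniform $L^1$-bound on the integrand $\chi e^{2\tilde g}\rho_{t,w,y}$, with $\tilde g=-\log|\tau|_{\tilde\p}$ now blowing up along $B'_h$. The resolution is that the $z_j$ defining $B'_h$ do not appear in the tropicalization map $\Log_\cU$ (they correspond to smooth horizontal divisors, not to components of $\cX'_0$), so integrability along the fiber $K_{t,w}$ of $\Log_t^{-1}(w,y)$ degenerates only in the $y$-direction, and the subklt bound $c_j<1$ combined with Fubini yields integrability uniformly in $t$. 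Once this is in place, weak convergence on $\cX'^\hyb$ follows by the partition-of-unity argument of~\S\ref{S320}, and one transfers to $X^\hyb$ by Corollary~B.
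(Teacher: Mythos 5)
Your proposal is correct and follows essentially the same route as the paper: reduce to the log smooth case via a log resolution (the paper proves the log smooth case first and then pushes the measures forward under the induced map $q^\hyb\colon (X')^\hyb\to X^\hyb$, which amounts to the same reduction), and rerun the local analysis of Lemma~\ref{lem:gencv} with the extra factor $\prod_i|z_i|^{-2c_i}$ coming from the horizontal boundary components, integrable precisely because the coefficients are $<1$. Your twist $\tilde\p=\p'-\phi_{B'_h}$ is just the paper's pair measure $e^{2(\p_t-\phi_{B_t})}$ in different bookkeeping, and your resolution of the dominated convergence issue (the boundary factor is constant on the torus fibers of $\Log_t$ and $L^1$ in the transverse $y$-directions) is exactly how the paper inserts that factor into~\eqref{equ:normmeas}--\eqref{e310}.
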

A special case of Theorem~\ref{T301} is the log Calabi--Yau setting,
when the $\Q$-line bundle $K_{(X,B)/\DD^*}$ is trivial. 
In general, we are not able to give a very precise description of the limit measure 
$\mu_0$, but the proof will show that $\mu_0$ is a skeletal measure when the pair
$(X,B)$ is log smooth.
\begin{proof}[Proof of Theorem~\ref{T301}]
  Let us first treat the case when $(X,B)$ is log smooth. 
  In this case we need not assume that $X\to\DD^*$ is projective. 
  It follows from the normal crossings condition that 
  $(X_t,B_t)$ is subklt for $0<|t|\ll1$. After reparametrizing we may
  assume this is true for all $t\in\DD^*$, that is, $Z=\emptyset$.
  Set 
  \begin{equation*}
    \nu_t=e^{2(\p_t-\phi_{B_t})}.
  \end{equation*}
  This is a positive measure on $X_t$, smooth outside the support of $B_t$.
  Pick an snc model $(\cX,\cB)$ of $(X,B)$, where $\cB$ is the 
  closure of $B$ in $\cX$, such that $\p$ extends to a continuous metric
  on a $\Q$-line bundle $\cL$ on $\cX$ extending $K_{(X,B)/\DD^*}$.

  We can then prove a version of Theorem~A inside the hybrid
  space $\cX^\hyb$. By letting $\cX$ vary, we obtain 
  Theorem~\ref{T301} as a consequence, just as Corollary~B follows
  from Theorem~A. 

  The proof is very similar to the proof of Theorem~A, so we will only indicate the 
  modifications needed.
  Let us write
  \begin{equation*}
    K^\lo_{(\cX,\cB)/\DD}=\cL+\sum_{i\in I}a_i E_i
  \end{equation*}
  with $a_i\in\Q$. 
  Set $\kappa_i:=a_i/b_i$ and $\kappa_{\min}:=\min_i\kappa_i$. 
  Here $\cX_0=\sum_ib_iE_i$ as before.

  Define $\D(\cL)$ as the subcomplex of $\D(\cX)$ spanned by 
  the vertices such that $\kappa_i=\kappa_{\min}$. This will be the 
  support of the measure $\mu_0$. 
  For every stratum $Y$ corresponding to a maximal face of $\Delta(\cL)$,
  define a subklt pair $(Y,B_Y^\cL)$ using
  \begin{equation*}
    B^\cL_Y:=\cB|_Y+\sum_{i\notin J}(1-(a_i-\kappa_{\min}b_i))E_i|_Y
  \end{equation*}
  The residual measure $\Res_Y(\p)$ is given by
  \begin{equation*}
    \Res_Y(\p):=\exp\left(2(\p|_Y-\phi_{B^\cL_Y})\right).
  \end{equation*}
  Finally set 
  \begin{equation*}
    \mu_0:=\sum_{\sigma}\left(\int_{Y_\sigma}\Res_{Y_\sigma}(\p)\right)b_\sigma^{-1}\la_\sigma, 
  \end{equation*}
  where $\sigma$ ranges over the $d$-dimensional faces of $\D(\cL)$, 
  with $d=\dim\D(\cL)$.
  
  We then prove a version of Theorem~\ref{thm:genconv}. Namely,  if 
  \begin{equation*}
    \mu_t:=\frac{\la(t)^d}{(2\pi)^d|t|^{2\kappa_{\min}}}e^{2(\p_t-\phi_{B_t})}.
  \end{equation*}
  then we show that  $\mu_t$ converges to $\mu_0$ in $\cX^\hyb$ as $t\to0$.
  This is done via a local convergence result as in Lemma~\ref{lem:gencv}. 
  Namely, given a point $\xi\in\cX_0$, we choose local coordinates 
  $(z_0,\dots,z_n)$ at $\xi$ as in~\S\ref{S320}, 
  but further require that these coordinates 
  also cut out the irreducible components of $\cB$ containing $\xi$.
  More precisely, there exist $m$ with $p\le m\le n$ such that 
  these irreducible components are given by $B_i=\{z_i=0\}$
  for $p<i\le m$. 
  Also set $c_i:=\ord_{B_i}(\cB)<1$.

  A local $\Q$-generator for $\cL$ at $\xi$ is then given by 
  \begin{equation*}
    \tau=\prod_{i=0}^pz_i^{a_i}\prod_{i=p+1}^mz_i^{-c_i}\ \Omega^\rel,
  \end{equation*}
  with $\Omega^\rel$ as before. For a stratum $Y$ corresponding to a $d$-dimensional 
  simplex in $\D(\cL)$, the residual measure is given by
  \begin{equation}
    \Res_Y(\p)=|\tau|^{-2}_\p
    \prod_{i=d+1}^p|z_i|^{2(a_i-\kappa_{\min} b_i-1)}
    \prod_{i=p+1}^m|z_i|^{-2c_i}
    \bigg|\bigwedge_{i=d+1}^ndz_i\bigg|^2.
  \end{equation}
  
  The measure $\mu_t$ can be written near $\xi$ as 
  \begin{equation*}
    \mu_t=\frac{\la(t)^d}{(2\pi)^d}
    \frac{\prod_{i=p+1}^m|z_i|^{-2c_i}|\Omega_t|^2}
    {\big|\prod_{i=p+1}^mz_i^{-c_i}\Omega_t\big|^2_{\psi_t}}.
  \end{equation*}
  The proof now proceeds exactly as in~\S\ref{S318} except that we need to insert a factor 
  $\prod_{i=p+1}^m|z_i|^{-2c_i}$ 
  in the last two lines of~\eqref{equ:normmeas} and~\eqref{equ:normbis},
  the second line and the second factor of the last line of~\eqref{e303}, 
  and the right-hand sides of~\eqref{equ:rescplexbis}
  and~\eqref{e310}.
  This completes the proof in the log smooth case.

  \medskip
  Now we consider the general case, assuming $X\to\DD^*$ is projective.
  Pick a log resolution $q\colon(X',B')\to(X,B)$. 
  Since $(X'_t,B'_t)$ is subklt for $0<|t|\ll1$, the same is true for $(X_t,B_t)$.
  We have an induced continuous map $q^\hyb\colon (X')^\hyb\to X^\hyb$.
  By what precedes, there exist $\kappa\in\Q$ and $d\in\N$ such that the
  measure 
  $\mu'_t:=\frac{e^{2(\p'_t-\phi_{B'_t})}}{|t|^{2\kappa_{\min}}(2\pi\log|t|^{-1})^d}$
  on $X'_t=X^\hyb$ converges to a nonzero positive measure $\mu'_0$ on $(X')^\hyb$.
  By continuity, it follows that $\mu_t=q^\hyb_*\mu'_t$ converges to the nonzero
  positive measure $\mu_0=q^\hyb_*\mu'_0$ on $X^\hyb$.
  This completes the proof.
\end{proof}
%
%
\subsection{Degenerations of Ricci-flat K\"ahler manifolds}
Let $M$ be a Ricci-flat K\"ahler manifold, \ie a compact K\"ahler manifold with trivial first Chern class $c_1(M)\in H^2(M,\C)$. Then $M$ carries a canonical probability measure $\mu$, given by $\mu=e^{2\psi}/\int_M e^{2\psi}$ where $\psi$ is a Hermitian metric on $K_M$ with curvature $0$ (and hence unique up to a constant). 

By the Calabi-Yau theorem, each K\"ahler $(1,1)$-class on $M$ further contains a unique Ricci-flat K\"ahler metric $\om$, characterized by
$$
\frac{\om^n}{\int_M\om^n}=\mu.
$$
Recall also that $K_M$ is torsion, \ie $r K_M\simeq\cO_M$ for some positive integer $r$. Indeed, this is a consequence of the Beauville-Bogomolov theorem~\cite{Beau,Bog}, which implies that $M$ admits a finite \'etale cover $p:M'\to M$ with $K_{M'}=p^*K_M$ trivial. A trivializing section $\eta$ of $rK_M$ defines a metric $\psi=\tfrac 1 r\log|\eta|$ on $K_M$ as above, and hence $\mu=|\eta|^{2/r}/\int|\eta|^{2/r}$. 

As a consequence of Theorem A, we shall prove: 
\begin{thm}\label{thm:CYkahler}
  Let $\pi\colon X\to\DD^*$ be a holomorphic family of Calabi-Yau K\"ahler manifolds $X_t$, meromorphic at $t=0$, and let $\mu_t$ be the corresponding family of canonical probability measures. For any snc model $\cX$, $\mu_t$ converges in $\cX^\hyb$  to a skeletal measure $\mu_0$ supported in $\D(\cX)$. 
\end{thm}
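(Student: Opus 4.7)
The plan is to apply Theorem~A to a curvature-zero metric on $K_{X/\DD^*}$ coming from a global trivialization of $rK_{X/\DD^*}$ for some $r\in\Z_{>0}$, and then to normalize the resulting weak limit to a probability measure. The crux is therefore to produce a nowhere-vanishing global section $\eta\in H^0(X,rK_{X/\DD^*})$.

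For each $r\ge 1$, set $S_r:=\{t\in\DD^*\mid rK_{X_t}\simeq\cO_{X_t}\}$. For a line bundle $L$ on a compact K\"ahler manifold with $c_1(L)=0$, triviality is equivalent to $h^0(L)\ge 1$, since any nonzero section has no zeros. Hence $S_r=\{t\mid h^0(rK_{X_t})\ge 1\}$ is an analytic subset of $\DD^*$ by Grauert's upper semicontinuity theorem. Any proper analytic subset of the connected Riemann surface $\DD^*$ is discrete, hence nowhere dense; since $\bigcup_{r\ge 1} S_r=\DD^*$ by Beauville--Bogomolov, Baire category forces some $S_r$ to equal $\DD^*$. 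Fix such $r$. Then $h^0(rK_{X_t})=1$ is constant on $\DD^*$, so Grauert gives $\pi_*(rK_{X/\DD^*})$ locally free of rank $1$, hence trivial since $\DD^*$ is Stein; a global generator yields the desired section $\eta$.

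Given any snc model $\cX\to\DD$, view $\eta$ as a meromorphic section of $rK_{\cX/\DD}$; its divisor has the form $\sum_i n_i E_i$ with $n_i\in\Z$, supported on $\cX_0$ (as $\eta$ is nowhere vanishing on $X$). Setting $\cL:=K_{\cX/\DD}-\tfrac{1}{r}\sum_i n_i E_i$ gives a $\Q$-line bundle extending $K_{X/\DD^*}$ on which $\eta$ is a nowhere-vanishing section of $r\cL$. Accordingly, $\psi:=\tfrac{1}{r}\log|\eta|$ is a smooth metric on $\cL$, with curvature zero on each fiber $X_t$. Theorem~A applied to $(\cX,\cL,\psi)$ then yields $\kappa_{\min}\in\Q$, $d\in\N^*$ and a positive Lebesgue-type limit measure $\tilde\mu_0$ supported on $\D(\cL)\subset\D(\cX)$ such that
\begin{equation*}
\tilde\mu_t:=\frac{e^{2\psi_t}}{|t|^{2\kappa_{\min}}(2\pi\log|t|^{-1})^d}\longrightarrow\tilde\mu_0
\end{equation*}
weakly in $\cX^\hyb$. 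Since $\psi_t$ has curvature zero, $e^{2\psi_t}$ is proportional to the canonical probability measure $\mu_t$, so $\mu_t=\tilde\mu_t/\tilde\mu_t(X_t)$; as $\tilde\mu_t(X_t)\to\tilde\mu_0(\D(\cL))>0$, we conclude that $\mu_t\to\tilde\mu_0/\tilde\mu_0(\D(\cL))$ weakly in $\cX^\hyb$, a skeletal measure supported on $\D(\cL)\subset\D(\cX)$.

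The only non-formal step is the uniform trivialization: showing that a single $r$ works for all fibers simultaneously, as opposed to the a priori possibility of the torsion order of $K_{X_t}$ being unbounded in $t$. The argument combines Beauville--Bogomolov (each fiber's canonical bundle is torsion), analyticity of the semicontinuity loci $S_r$ (each $S_r$ is either discrete or all of $\DD^*$), and Baire category on the connected one-dimensional base; thereafter the reduction to Theorem~A and the passage from $\tilde\mu_t$ to the probability measure $\mu_t$ are formal.
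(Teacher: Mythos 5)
Your overall strategy---establish a uniform torsion order $r$, trivialize $rK_{X/\DD^*}$ by a single section $\eta$, feed the flat metric $\tfrac1r\log|\eta|$ into Theorem~A, and renormalize to probability measures---is the same as the paper's. Your treatment of the uniform $r$ (triviality of $rK_{X_t}$ equivalent to $h^0\ge1$ via $c_1=0$ on a compact K\"ahler fiber, analyticity of the loci $S_r$, Baire category on $\DD^*$) is a careful version of the semicontinuity argument the paper invokes, and the final normalization is fine because Theorem~A also gives convergence of the total masses to a positive limit.

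There is, however, a genuine gap at the step ``view $\eta$ as a meromorphic section of $rK_{\cX/\DD}$.'' You construct $\eta$ as an arbitrary global generator of $\pi_*(rK_{X/\DD^*})$, a line bundle trivialized over the Stein curve $\DD^*$. Such a generator is only determined up to multiplication by a unit of $\cO(\DD^*)$, e.g.\ $e^{1/t}$, so nothing in your construction excludes an essential singularity along $\cX_0$: if $\eta$ is one admissible choice, so is $e^{1/t}\eta$, which admits no meromorphic extension to $\cX$, hence no divisor $\sum_i n_iE_i$, no $\Q$-line bundle $\cL$, and no way to apply Theorem~A. The assertion that $\eta$ extends meromorphically is exactly what must be proved, and it fails for a general generator. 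The fix is the route the paper takes: work over the whole disc with the model, i.e.\ consider the Grauert direct image $\pi_*\cO_\cX(rK_{\cX/\DD})$ on $\DD$; it is coherent and torsion-free of rank one, hence a line bundle on $\DD$, hence trivial, and a trivializing section gives an $\eta$ that is holomorphic on all of $\cX$ and restricts, for $t\ne0$, to a generator of $H^0(X_t,rK_{X_t})$, hence to a nowhere-vanishing section. With that replacement (the rest of your argument unchanged: twist by $-\tfrac1r\div(\eta)$ to define $\cL$, apply Theorem~A, normalize) your proof goes through and coincides with the paper's.
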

\begin{proof} As recalled above, $K_{X_t}$ is torsion for each fixed $t$. Equivalently, $h^0(X_t,rK_{X_t})=1$ for some positive integer $r$. Since $t\mapsto h^0(X_t,rK_{X_t})$ is upper semicontinuous in the Zariski topology, it follows that $rK_{X_t}$ is trivial for a fixed $r$ independent of $t$. Given any snc model $\pi\colon\cX\to\DD$, $\pi_*\cO(rK_{\cX/\DD})$ is torsion free of rank one, and hence a line bundle. The choice of a trivializing section yields a holomorphic section $\eta$ of $K_{\cX/\DD}$, inducing a holomorphic family $\eta_t$ of trivializing sections of $rK_{X_t}$ for $t\ne 0$. As a consequence,  the family of volume forms $\nu_t:=|\eta_t|^{2/r}$ has analytic singularities at $t=0$, and the result is thus a consequence of Theorem A, since $\mu_t=\nu_t/\nu_t(X_t)$. 
\end{proof}
%
%
%
%
\appendix
\section{Berkovich spaces over Banach rings}\label{S308}
In this appendix we review the construction of the analytification of a scheme of finite
type defined over a Banach ring. The main reference for this 
is~\cite{BerkHodge}; see also~\cite{PoineauAsterisque,PoineauLocale,amoebae}.
For suitable choices of Banach rings, this leads to spaces that
contain both Archimedean and non-Archimedean data.
%
%
\subsection{Berkovich spectra}
Let $A$ be a Banach ring, that is, a commutative ring that is complete with respect to 
a submultiplicative norm $\|\cdot\|$.
The Berkovich spectrum $\cM(A)$ is the set of all bounded multiplicative seminorms 
on $A$. In other words, a point $x\in\cM(A)$ corresponds to a function 
$|\cdot|_x\colon A\to\R_{\ge0}$ such that $|\cdot|_x\le\|\cdot\|$, 
$|1|_x=1$, 
$|f+g|_x\le|f|_x+|g|_x$ 
and $|fg|_x=|f|_x|g|_x$ for $f,g\in A$.
The spectrum is a nonempty, compact Hausdorff space 
with respect to the topology of pointwise convergence.

For $x\in\cM(A)$, denote by $\fp_x$ the kernel of $|\cdot|_x$.
This is a prime ideal of $A$, and 
$|\cdot|_x$ defines a multiplicative norm on $A/\fp_x$.
The completion of the fraction field of $A/\fp_x$ with respect to this norm is a
valued field $\cH(x)$. We write $f(x)$ for the image of $f\in A$ in $\cH(x)$;
then $|f(x)|=|f|_x$.
The assignment $x\mapsto\fp_x$ yields 
a map $\cM(A)\to\Spec(A)$ that is continuous for the Zariski topology $\Spec(A)$. 

\begin{ex}\label{ex:field} 
  If $k$ is a valued field (\ie a field with a multiplicative norm),
  then $\cM(k)$ is a singleton.
\end{ex}

\begin{ex}\label{ex:complexbanach} 
  When $A$ is a complex Banach algebra, the Gelfand-Mazur Theorem
  implies that the Berkovich spectrum agrees with the maximal ideal  spectrum.
\end{ex}
%
%
\subsection{Analytification of a scheme} 
To any scheme $X$ of finite type over a Banach ring $A$, Berkovich associates 
an \emph{analytification}\footnote{We use the term analytification even though we shall only consider $X^{\An}$ as a topological space. In particular, $X^{\An}$ only depends on the reduced scheme structure of $X$.}
$X^{\An}$, a locally compact topological space with a continuous morphism $X^{\An}\to\cM(A)$, defined as follows. 

When $X=\Spec B$ is affine, with $B$ a 
finitely generated $A$-algebra, $X^{\An}$ is defined as the set of
multiplicative seminorms $|\cdot|_x$ on $B$ 
whose restriction to $A$ is bounded
by the given norm on $A$, \ie belongs to $\cM(A)$. The topology on $X^{\An}$ is the weakest one for
which $x\mapsto|f|_x=|f(x)|$ is continuous for every $f\in B$. 

In the general case, the analytification $X^{\An}$ is defined by gluing together the analytifications of an affine open cover, and yields a covariant functor $X\mapsto X^{\An}$. If $X\hookrightarrow Y$ is an open (resp.\ closed) embedding, then so is $X^{\An}\hookrightarrow Y^{\An}$. If $X\to Y$ is surjective, then so is $X^{\An}\to Y^{\An}$.

The topological space $X^{\An}$ is Hausdorff (resp.\ compact) if $X$ is separated
(resp.\ projective). 
The assignment 
$x\mapsto\fp_x$ above globalizes to a continuous map 
\begin{equation*}
  \pi\colon X^{\An}\to X,
\end{equation*}
where $X$ is equipped with the Zariski topology.

When $A$ is a valued field, it is more common to write $X^\an$ instead
of $X^\An$~\cite{BerkBook}.
\begin{ex} 
  For $A=\C$, the Gelfand-Mazur theorem shows that $X^{\An}$ coincides with the usual analytification of $X$, \ie the set $X(\C)$ of complex points of $X$ endowed with the euclidean topology. 
\end{ex}
%
%
\subsection{The hybrid norm on $\C$}\label{S107}
Denote by $\C_{\hyb}$ the Banach field $(\C,\|\cdot\|_{\hyb})$, 
where the \emph{hybrid norm} is defined as
\begin{equation*}
  \|\cdot\|_{\hyb}:=\max\{|\cdot|_0,|\cdot|_\infty\},
\end{equation*}
with $|\cdot|_0$ the trivial absolute value and $|\cdot|_\infty$ the
usual absolute value. 

The elements of the Berkovich spectrum $\cM(\C_{\hyb})$ are of the form 
$|\cdot|_\infty^\rho$ for $\rho\in[0,1]$, 
interpreted as the trivial absolute value $|\cdot|_0$ for $\rho=0$.
This yields a homeomorphism 
$\cM(\C_{\hyb})\simeq [0,1]$.
%
%
\subsection{Hybrid geometry over $\C$}
If $X$ is a scheme of finite type over $\C$, we denote by
$X^{\hol}=X(\C)$ its analytification with respect to the usual
absolute value $|\cdot|_\infty$, by $X^{\an}_0$ its analytification
with respect to the trivial absolute value, and by $X^{\hyb}$ its
analytification with respect to the hybrid norm
$\|\cdot\|_{\hyb}$. 

From the structure morphism $X\to\Spec\C$ we obtain a 
continuous map $\la\colon X^{\hyb}\to\cM(\C_{\hyb})\simeq[0,1]$.
The fiber $\lambda^{-1}(\rho)$ is equal to the analytification of
$X$ with respect to the multiplicative norm $|\cdot|_\infty^\rho$ 
on $\C$. In particular, we have canonical identifications $\lambda^{-1}(1)\simeq X^{\hol}$ and $\la^{-1}(0)\simeq X^{\An}_0$. 
For $0<\rho\le1$, the fiber $\lambda^{-1}(\rho)$ is also 
homeomorphic to $X^{\hol}$. In fact, we have a a homeomorphism
\begin{equation*}
  \lambda^{-1}\left((0,1]\right)\simeq(0,1]\times X^{\hol},
\end{equation*}
see~\cite[Lemma~2.1]{BerkHodge}. 
%
%
\subsection{The hybrid circle}
Now consider the \emph{hybrid circle} of radius $r\in(0,1)$,
that is, $C_\hyb(r):=\{|\unipar|=r\}\subset\A^{1,\hyb}=(\Spec\C[\unipar])^\hyb$.
By~\cite[Prop 2.1.1]{PoineauAsterisque}, this is compact and 
realized as the Berkovich spectrum of the Banach ring
\begin{equation*}
  A_r:=\left\{f=\sum_{\a\in\Z} c_\a \unipar^\a\in\C\lau{\unipar}\ \bigg|\ 
    \|f\|_{\hyb}:=\sum_{\a\in\Z}\|c_\a\|_{\hyb}r^\a<+\infty\right\}.
\end{equation*}
Since $\|c_\a\|_{\hyb}\ge|c_\a|_\infty$,
every $f\in A_r$ defines a continuous function $f^{\hol}$ on the
punctured closed disc $\overline{\DD}^*_r$ that is holomorphic on $\DD^*_r$
and meromorphic at 0.
\begin{prop}\label{prop:hybcirc} 
  There is a homeomorphism $\overline{\DD}_r\simto\cM(A_r)\simeq C_{\hyb}(r)$, 
  that maps $z\in\overline{\DD}_r\subset\C$ to the seminorm on $A_r$ defined by
  \begin{equation}\label{e201}
    |f|=
    \begin{cases}
      r^{\ord_0(f)}&\ \text{if $z=0$} \\ 
      r^{\frac{\log|f^{\hol}(z)|_\infty}{\log|z|_\infty}}\ &\text{otherwise},
    \end{cases}
  \end{equation}
  and via which the map $\la\colon C_{\hyb}(r)\to[0,1]$ is given by 
  $\la(z)=\frac{\log r}{\log|z|_\infty}$. 
\end{prop}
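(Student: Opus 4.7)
My plan is to build the map $\phi\colon z\mapsto|\cdot|_z$ from $\overline{\DD}_r$ to $\cM(A_r)$ directly from the formula, show it is a continuous bijection, and conclude by compactness. First I would rewrite~\eqref{e201} in a more tractable form: for $z\in\overline{\DD}^*_r$, setting $\rho:=\log r/\log|z|_\infty\in(0,1]$, the formula reads $|f|_z=|f^\hol(z)|_\infty^\rho$ (so in particular $|z|_\infty^\rho=r$), while for $z=0$ one recovers the standard $\unipar$-adic valuation $r^{\ord_0(\cdot)}$ on $\C\lau{\unipar}$.

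Next I would verify that each $|\cdot|_z$ is a bounded multiplicative seminorm on $A_r$. Multiplicativity is immediate: at $z\ne 0$ the formula is a norm composed with the ring homomorphism $f\mapsto f^\hol(z)$, and at $z=0$ it is a valuation. Boundedness at $z=0$ is trivial from $\|c_{\ord_0(f)}\|_\hyb\ge 1$. The substantive check is boundedness at $z\ne 0$, where I would combine the subadditivity $(\sum_\a a_\a)^\rho\le\sum_\a a_\a^\rho$ for $\rho\in(0,1]$ and $a_\a\ge 0$ with the crucial identity $|z|_\infty^{\a\rho}=r^\a$ and the termwise inequality $|c_\a|_\infty^\rho\le\|c_\a\|_\hyb$ (coming from $\|c_\a\|_\hyb=\max(1,|c_\a|_\infty)\ge 1$ whenever $c_\a\ne 0$, together with $\rho\le 1$) to deduce $|f|_z\le\|f\|_\hyb$.

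The hard part will be surjectivity. Given $x\in\cM(A_r)$, the bounds $|\unipar|_x\le\|\unipar\|_\hyb=r$ and $|\unipar^{-1}|_x\le\|\unipar^{-1}\|_\hyb=r^{-1}$, together with $|\unipar|_x\cdot|\unipar^{-1}|_x=1$, force $|\unipar|_x=r$. The restriction of $|\cdot|_x$ to $\C\subset A_r$ is then a bounded multiplicative seminorm on $\C$, and by the description of $\cM(\C_\hyb)$ in~\S\ref{S107} it is either trivial or of the form $|\cdot|_\infty^\rho$ for some $\rho\in(0,1]$. In the trivial case, multiplicativity together with $|\unipar|_x=r<1$ forces $|\cdot|_x=r^{\ord_0(\cdot)}$, corresponding to $z=0$. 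The archimedean case is the main obstacle: I would invoke Gelfand--Mazur on the completed residue field $\cH(x)$, which contains $\C$ as a closed subfield under the archimedean norm $|\cdot|_\infty^\rho$ and must therefore equal $\C$; the induced evaluation $A_r\to\C$ then sends $\unipar$ to a unique $z\in\C^*$ with $|z|_\infty^\rho=r$, i.e.\ $|z|_\infty=r^{1/\rho}\in(0,r]$, and unwinding recovers~\eqref{e201}. Injectivity of $\phi$ is automatic since the pair $(z,\rho)$ (and hence $z$) is intrinsically reconstructed.

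Finally, continuity of $\phi$ on $\overline{\DD}^*_r$ is routine since $z\mapsto f^\hol(z)$ and $z\mapsto\rho$ are continuous; at $z=0$, writing $N=\ord_0(f)$ and using the leading behavior $f^\hol(z)\sim c_N z^N$ together with the identity $|z|_\infty^{N\rho}=r^N$ and $\rho\to 0$, one checks $|f|_z\to r^N$. Since $\overline{\DD}_r$ is compact and $\cM(A_r)$ is Hausdorff, the continuous bijection $\phi$ is automatically a homeomorphism. The formula $\la(z)=\log r/\log|z|_\infty$ then drops out of the identification $\cM(\C_\hyb)\simeq[0,1]$ of~\S\ref{S107}, since the restriction of $|\cdot|_z$ to $\C$ is precisely $|\cdot|_\infty^\rho$ for this value of $\rho$.
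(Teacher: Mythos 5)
Your proof is correct and follows essentially the same route as the paper's: the same continuity argument at $z=0$ via the factorization $f^{\hol}=z^{\ord_0(f)}u$, bijectivity via the decomposition of $\cM(A_r)$ over $\cM(\C_{\hyb})\simeq[0,1]$ (Gelfand--Mazur in the Archimedean fibers, the valuation $r^{\ord_0}$ over the trivial point), and compactness to upgrade the continuous bijection to a homeomorphism. The only difference is one of detail: you verify boundedness of the seminorms and surjectivity directly on $\cM(A_r)$, whereas the paper declares the map well defined and quotes the fiberwise description of the hybrid circle $C_{\hyb}(r)$.
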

\begin{proof} 
  The map $\tau\colon\overline{\DD}_r\to\cM(A_r)$ given by~\eqref{e201} is
  clearly well defined. It is also continuous on $\overline{\DD}^*_r$.
  To prove continuity at $0$, we note that for 
  each $f\in A_r$, we can write $f^{\hol}=z^{\ord_0(f)}u$, 
  where $u$ is a continuous function on $\overline{\DD}_r$ that is holomorphic
  on $\DD_r$ with $u(0)\ne0$. 
  As a consequence, we get 
  $\lim_{z\to0}\frac{\log|f^{\hol}(z)|_\infty}{\log|z|_\infty}=\ord_0(f)$.
  
  Now, for each $\rho\in(0,1]$, $\la^{-1}(\rho)\subset C_{\hyb}(r)$
  can be identified with the circle of radius $r$ with respect to the 
  absolute value $|\cdot|_\infty^\rho$, while $\la^{-1}(0)$ is the
  non-Archimedean absolute value $r^{-\ord_0}$ on $\C\lau{\unipar}$. 
  This proves that the map $\tau$ above is bijective,
  and hence a homeomorphism by compactness. 
\end{proof}
\begin{rmk}
  When $r<s$, the identity gives a bounded map from $A_s$ to $A_r$,
  and $\varinjlim_{r\to0}A_r$ is the fraction field of $\cO_{\C,0}$, \ie the ring of
  meromorphic germs at the origin of $\C$. 
\end{rmk}
%
%
\subsection{Geometry over the hybrid circle}\label{S319}
Let now $X$ be a scheme of finite type over $A_r$. 
We will associate to $X$ three kinds of analytic spaces. 

First, since $X$ is obtained by gluing together finitely many affine
schemes cut out by polynomials with coefficients holomorphic 
on $\DD^*_r\subset\C$ and meromorphic at $0$, 
we can associate to $X$ in a functorial way a complex analytic space
$X^{\hol}$ over $\DD^*_r$, which we call its \emph{holomorphic analytification}. 

Second, since $A_r$ is contained in $\C\lau{\unipar}$, we may also 
consider the base change $X_{\C\lau{\unipar}}$ and its 
\emph{non-Archimedean analytification} 
$X^{\an}_{\C\lau{\unipar}}$
with respect to the non-Archimedean absolute value $r^{\ord_0}$ on
$\C\lau{\unipar}$.

Third, we denote by $X^{\hyb}$ the analytification of $X$ as a
scheme of finite type over the Banach ring $A_r$, 
and call it the \emph{hybrid analytification} of $X$. 
In view of Proposition~\ref{prop:hybcirc}, 
it comes with a continuous structure map 
\begin{equation*}
  \pi\colon X^{\hyb}\to\overline{\DD}_r\simeq\cM(A_r), 
\end{equation*}
Recall further that $X^{\hyb}$ is locally compact, Hausdorff if $X$ is
separated, and compact if $X$ is proper over $A_r$. 
The discussion above implies:
\begin{lem}\label{lem:hyban} 
  We have canonical homeomorphisms
  \begin{equation}\label{equ:hybident}
    \pi^{-1}(0)\simeq X_{\C\lau{\unipar}}^\an
    \qand
    \pi^{-1}(\DD^*_r)\simeq X^{\hol}
  \end{equation}
  compatible with the projection to $\DD_r$. 
\end{lem}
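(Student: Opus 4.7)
The plan is to identify the fibers of $\pi\colon X^{\hyb}\to\overline{\DD}_r$ pointwise by base change to the completed residue field $\cH(z)$ at each $z\in\overline{\DD}_r\simeq\cM(A_r)$, and then verify that these pointwise identifications assemble into the two claimed homeomorphisms. By the gluing construction of the analytification, one may reduce to the affine case $X=\Spec B$ with $B=A_r[T_1,\dots,T_N]/I$, so that a point of $X^{\hyb}$ is a multiplicative seminorm on $B$ whose restriction to $A_r$ belongs to $\cM(A_r)$, and the topology on $X^{\hyb}$ is the coarsest making $x\mapsto|g|_x$ continuous for every $g\in B$.

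For any $z\in\overline{\DD}_r$, a seminorm $|\cdot|_x$ on $B$ restricting to $|\cdot|_z$ on $A_r$ factors through $B\otimes_{A_r}\cH(z)$ and extends uniquely by continuity, so $\pi^{-1}(z)$ is canonically identified, as a topological space, with the Berkovich analytification of $X_{\cH(z)}$ over the valued field $\cH(z)$. For $z=0$ we have $\cH(0)=\C\lau{\unipar}$ with the norm $r^{\ord_0}$, by Proposition~\ref{prop:hybcirc}, hence $\pi^{-1}(0)\simeq X_{\C\lau{\unipar}}^{\an}$ as required. For $z\in\DD^*_r$, $\cH(z)$ is isomorphic to $(\C,|\cdot|_\infty^{\rho(z)})$ with $\rho(z)=\log r/\log|z|_\infty>0$; the Berkovich analytification of a finite type $\C$-scheme with respect to any nontrivial power of $|\cdot|_\infty$ is homeomorphic to its usual complex analytification (Example~\ref{ex:complexbanach}, or the Gelfand--Mazur theorem applied fiberwise to residue fields). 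This identifies $\pi^{-1}(z)$ with the fiber of $X^{\hol}\to\DD^*_r$ over $z$.

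To globalize the second identification, I would define a map $\iota\colon X^{\hol}\to\pi^{-1}(\DD^*_r)$ as follows: given $x\in X^{\hol}$ over $z\in\DD^*_r$, every $g\in B$ restricts to a holomorphic function on the fiber $X^{\hol}_z$, and I set $|g|_{\iota(x)}:=|g(x)|_\infty^{\rho(z)}$. This is indeed a multiplicative seminorm on $B$, bounded by $\|\cdot\|_{\hyb}$ because for $f\in A_r$ the function $f^{\hol}$ satisfies $|f^{\hol}(z)|_\infty^{\rho(z)}=|f|_z$ by~\eqref{e201}. Continuity of $\iota$ follows from continuity of $z\mapsto\rho(z)$ on $\DD^*_r$ and of $x\mapsto g(x)$ on $X^{\hol}$. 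Bijectivity onto $\pi^{-1}(\DD^*_r)$ is the pointwise identification of the previous paragraph. Finally, $\iota^{-1}$ is continuous because the topology on $X^{\hol}$ is the coarsest for which the complex-valued evaluations $g(\cdot)$ on any affine open are continuous, and these can be recovered from the seminorms $|g|_{\iota(x)}$ together with the arguments of $g(x)$---the latter being continuous on $X^{\hol}$ and extractible by elementary manipulations once $\rho$ and the absolute value are fixed; alternatively, one works locally over a compact $K\Subset\DD^*_r$, where $X^{\hol}|_K$ is locally compact Hausdorff and the map $\iota|_K$ is a continuous bijection onto $\pi^{-1}(K)$, which is Hausdorff, so one invokes properness of $X^{\hol}|_K\to K$ (inherited from properness of $X$, or argued locally in the non-proper case by restricting to compact neighborhoods of fibers).

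The main obstacle is the last point: verifying that $\iota$ is a genuine homeomorphism rather than just a continuous bijection. The cleanest route is to check directly that the inverse image under $\iota$ of a subbasic open set of $X^{\hyb}$ of the form $\{y\mid |g|_y\in U\}$, with $U\subset\R_{\ge0}$ open, is open in $X^{\hol}$, which reduces to continuity of $(z,x)\mapsto|g(x)|_\infty^{\rho(z)}$ on $X^{\hol}|_{\DD^*_r}$---a straightforward verification from the formulas defining $\rho$ and $g^{\hol}$. Compatibility of both homeomorphisms with the projections to $\DD_r$ is built into the construction.
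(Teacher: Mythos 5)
Your overall strategy is the natural one and matches what the paper leaves implicit (the lemma is stated there as an immediate consequence of Proposition~\ref{prop:hybcirc} and the preceding discussion, with the analogous statement over $\C_{\hyb}$ delegated to~\cite[Lemma~2.1]{BerkHodge}): identify each fiber $\pi^{-1}(z)$ with the analytification over $\cH(z)$, note $\cH(0)=\C\lau{\unipar}$ and $\cH(z)\simeq(\C,|\cdot|_\infty^{\rho(z)})$ for $z\in\DD^*_r$ by Gelfand--Mazur, and assemble the latter fibers into $X^{\hol}$. The gap is at the step you yourself flag as the main obstacle. Your ``cleanest route'' checks that $\iota$-preimages of the subbasic sets $\{y\mid|g|_y\in U\}$ are open in $X^{\hol}$; that is exactly the continuity of $\iota$, which you had already established, and it says nothing about continuity of $\iota^{-1}$, i.e.\ about ruling out that the hybrid topology on $\pi^{-1}(\DD^*_r)$ is strictly coarser than the complex one. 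The fallbacks do not close this in general: the compactness argument works only when $X$ is proper over $A_r$ (for non-proper $X$ a continuous bijection from a locally compact space onto a Hausdorff space need not be a homeomorphism, so ``restricting to compact neighborhoods of fibers'' proves nothing by itself), and the claim that the evaluations $g(x)$ are ``extractible by elementary manipulations'' is unsubstantiated and, as phrased (invoking continuity of the arguments on $X^{\hol}$), circular. A similar unproved point occurs in your fiberwise identification: that $\pi^{-1}(0)$ \emph{with its subspace topology} is $X^{\an}_{\C\lau{\unipar}}$ requires showing that $x\mapsto|h|_x$ is continuous for the coarser topology for every $h\in B\otimes_{A_r}\C\lau{\unipar}$, not just $h\in B$; ``extends uniquely by continuity'' asserts this rather than proves it.

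Both gaps can be closed with one elementary device: constants. Since $\C\subset A_r$, for $y=\iota(x)$ over $z$ one has $|c\unipar|_y=|c|_\infty^{\rho(z)}r$ for every $c\in\C^*$, so $\rho(z)$ (hence $|z|=r^{1/\rho(z)}$) is a continuous function of the seminorm; then for every $g\in B$ and every constant $c\in\C$, $|g(x)-c|_\infty=|g-c|_y^{1/\rho(z)}$ is continuous in $y$, and for a net $y_\a\to y$ the choice $c=g(x)$ gives $g(x_\a)\to g(x)$. Applied to $g=\unipar$ and to affine coordinates $T_1,\dots,T_N$, this shows $\iota^{-1}$ is continuous, with no properness or Hausdorffness hypotheses. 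For the fiber over $0$, use the density of $\C[\unipar,\unipar^{-1}]\subset A_r$ in $\C\lau{\unipar}$: writing $h=\sum_i b_i\otimes c_i$ and approximating the $c_i$, the estimate $\bigl||h|_x-|h_n|_x\bigr|\le\sum_i|b_i|_x\,|c_i-c_i^{(n)}|$ is locally uniform in $x$ (the $|b_i|_x$ are locally bounded), so $|h|_x$ is continuous for the subspace topology, and the same estimate gives uniqueness and existence of the extension. With these two verifications inserted, your argument is complete; alternatively, both points are covered by the results of~\cite{BerkHodge} that the paper is implicitly relying on.
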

In~\S\ref{S315} we give a topological description of $X^\hyb$.

\end{document}